\newtheorem{definition}{Definition}[subsection]
\newtheorem{theorem}[definition]{Theorem}
\newtheorem{lemma}[definition]{Lemma}
\newtheorem*{theorem*}{Theorem}
\newtheorem{corollary}[definition]{Corollary}
\newtheorem{proposition}[definition]{Proposition}
\newtheorem{assumption}[definition]{Assumption}
\theoremstyle{remark}
\newtheorem{remark}[definition]{Remark}
\newtheorem{notation}[definition]{Notation}
\renewcommand{\AA}{\mathbf{A}}
\newcommand{\CC}{\mathbf{C}}
\newcommand{\QQ}{\mathbf{Q}}
\newcommand{\TT}{\mathbf{T}}
\newcommand{\ZZ}{\mathbf{Z}}
\newcommand{\fa}{\mathfrak{a}}
\newcommand{\ff}{\mathfrak{f}}
\newcommand{\fn}{\mathfrak{n}}
\newcommand{\fp}{\mathfrak{p}}
\newcommand{\fP}{\mathfrak{P}}
\newcommand{\fpb}{\overline{\fp}}
\newcommand{\fq}{\mathfrak{q}}
\newcommand{\fl}{\mathfrak{l}}
\newcommand{\fr}{\mathfrak{r}}
\newcommand{\fO}{\mathfrak{O}}
\newcommand{\cC}{\mathcal{C}}
\newcommand{\cO}{\mathcal{O}}
\newcommand{\cZ}{\mathcal{Z}}
\newcommand{\cI}{\mathcal{I}}
\newcommand{\bfz}{\mathbf{z}}
\newcommand{\bfc}{\mathbf{c}}
\newcommand{\bfg}{\pmb{g}}
\newcommand{\ab}{\mathrm{ab}}
\newcommand{\et}{\text{\normalfont{\'et}}} 
\newcommand{\mot}{\mathrm{mot}}
\newcommand{\ur}{\mathrm{ur}}
\newcommand{\Zp}{\ZZ_p}
\newcommand{\Qp}{\QQ_p}
\newcommand{\Qb}{\overline{\QQ}}
\newcommand{\Qpb}{\Qb_p}
\newcommand{\into}{\hookrightarrow}
\newcommand{\onto}{\twoheadrightarrow}
\newcommand{\DdR}{\operatorname{\mathbf{D}}_{\mathrm{dR}}}
\newcommand{\stbt}[4]{\left(\begin{smallmatrix}#1 & #2 \\ #3 & #4\end{smallmatrix}\right)}
\newcommand{\quot}[1]{\text{``}{#1}\text{''}}
\DeclareMathOperator{\Fil}{Fil}
\DeclareMathOperator{\GL}{GL}
\DeclareMathOperator{\SL}{SL}
\DeclareMathOperator{\Gal}{Gal}
\DeclareMathOperator{\Ind}{Ind}
\DeclareMathOperator{\End}{End}
\DeclareMathOperator{\Aut}{Aut}
\DeclareMathOperator{\Tr}{Tr}
\DeclareMathOperator{\CH}{CH}
\DeclareMathOperator{\Hom}{Hom}
\newcommand{\disc}{\operatorname{disc}(K/\QQ)}
\DeclareMathOperator{\pr}{pr}
\DeclareMathOperator{\norm}{norm}
\DeclareMathOperator{\ord}{ord}
\DeclareMathOperator{\loc}{loc}
\DeclareMathOperator{\ind}{ind}
\DeclareMathOperator{\Sel}{Sel}
\DeclareMathOperator{\Spec}{Spec}
\DeclareMathOperator{\Hyp}{Hyp}
\begin{document}
 \title{Euler systems for modular forms over imaginary quadratic fields}

 \author{Antonio Lei}
 \address[Lei]{D\'epartement de math\'ematiques et de statistique,
 Universit\'e Laval, Pavillon Alexandre-Vachon,
 1045 avenue de la M\'edecine,
 Qu\'ebec, QC, Canada G1V 0A6}
 \email{antonio.lei@mat.ulaval.ca}

 \author{David Loeffler}
 \address[Loeffler]{Mathematics Institute\\
 Zeeman Building, University of Warwick\\
 Coventry CV4 7AL, UK}
 \email{d.a.loeffler@warwick.ac.uk}

 \author{Sarah Livia Zerbes}
 \address[Zerbes]{Department of Mathematics \\
 University College London\\
 Gower Street, London WC1E 6BT, UK}
 \email{s.zerbes@ucl.ac.uk}

 \thanks{The authors' research is supported by the following grants: Royal Society University Research Fellowship (Loeffler); EPSRC First Grant EP/J018716/1 (Zerbes).}

 \subjclass{11F85, 11F67, 11G40, 14G35}

 \begin{abstract}
  We construct an Euler system attached to a weight 2 modular form twisted by a Gr\"ossencharacter of an imaginary quadratic field $K$, and apply this to bounding Selmer groups.
 \end{abstract}

 \maketitle

 \setcounter{tocdepth}{1}
 \tableofcontents

 \section{Introduction}

  \subsection{The main result}

  The main result of this paper is as follows. Let $f$ be an elliptic modular newform of weight 2 that is not of CM type, and $p \ge 5$ a prime not dividing the level of $f$. Let $K$ be an imaginary quadratic field in which $p$ is split, $L$ a sufficiently large number field (containing $K$ and the Fourier coefficients of $f$), and $\fP$ a prime of $L$ above $p$ at which $f$ is ordinary (i.e.~$v_{\fP}(a_p(f)) = 0$).

  Then one can define two $p$-adic $L$-functions $L_{\fP}(f/K, \Sigma^{(1)})$ and $L_{\fP}(f/K, \Sigma^{(2)})$ (\S \ref{sect:defLfcn}), which are functions on the space of characters of the ray class group of $K$ modulo $\ff p^\infty$ (for some integral ideal $\ff$ coprime to $p$ and the level of $f$). In particular, one can evaluate these $p$-adic $L$-functions at any algebraic Gr\"ossencharacter of $K$ of conductor dividing $\ff p^\infty$.

  \begin{theorem*}[{Theorem \ref{thm:finiteBKSel}}]
   Let $\psi$ be a Gr\"ossencharacter of conductor dividing $\ff$ and infinity-type $(-1, 0)$. Suppose that the $L$-values $L_{\fP}(f/K, \Sigma^{(1)})(\psi)$ and $L_{\fP}(f/K, \Sigma^{(2)})(\psi)$ are not both zero, and the following technical conditions hold:
   \begin{itemize}
    \item $\alpha \psi(\fpb) \not\equiv 1 \bmod \fP$ and $\beta \psi(\fpb) \ne p$, where $\alpha$ and $\beta$ are the unit and non-unit roots of the Hecke polynomial of $f$ at $p$, and $\fp$ is the prime of $K$ below $\fP$;
    \item $\alpha \psi(\fp) / p \notin \mu_{p^\infty}$;
    \item $p$ is unramified in the coefficient field $L$.
   \end{itemize}
   Then the Bloch--Kato Selmer group of the $\Gal(\overline{K} / K)$-representation $V_{L_{\fP}}(f)(\psi)(1)$ is finite.
  \end{theorem*}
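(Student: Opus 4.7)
The plan is to apply Rubin's Euler system machine to an Euler system for the Galois representation $V_{L_{\fP}}(f)(\psi)(1)$ over ray class fields of $K$, with non-triviality of the system's bottom class guaranteed by the non-vanishing hypothesis on the $p$-adic $L$-values. The Euler system itself would be obtained by restriction to $K$ of Beilinson--Flach classes attached to the pair $(f, \theta_\psi)$, where $\theta_\psi$ is the weight-$2$ CM modular form over $\QQ$ whose associated Galois representation is $\Ind_K^{\QQ}\psi$. Restricted to $G_K = \Gal(\Qb/K)$, the Rankin--Selberg representation $V_{L_{\fP}}(f) \otimes \Ind_K^{\QQ}\psi$ decomposes as $V_{L_{\fP}}(f)(\psi) \oplus V_{L_{\fP}}(f)(\psi^c)$ (with $\psi^c$ the complex-conjugate character), and Hecke projection onto the first summand, followed by appropriate Tate twisting, produces classes in $H^1$ of ray class fields of $K$ of conductor dividing $\ff\, p^n \mathfrak{m}$ (for squarefree auxiliary $\mathfrak{m}$) satisfying the Euler-system norm relations dictated by the Euler factors of $L(V_{L_{\fP}}(f)(\psi), s)$.

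The key analytic input is an explicit reciprocity law identifying the image of the localization at $\fP$ of the bottom class, under a Perrin-Riou-style regulator, with the two $p$-adic $L$-functions $L_{\fP}(f/K, \Sigma^{(i)})$. Because $p$ splits as $\fp \fpb$, the local Galois representation at $\fP$ admits a two-step filtration whose graded pieces correspond to the two $p$-stabilizations $\alpha$ and $\beta$ of $f$; this yields two independent regulator/dual-exponential maps whose values on the Euler system class recover $L_{\fP}(f/K, \Sigma^{(1)})(\psi)$ and $L_{\fP}(f/K, \Sigma^{(2)})(\psi)$ up to explicit Euler-factor corrections. The technical hypotheses of the theorem are precisely what makes these corrections nonzero and keeps the machinery integral: $\beta \psi(\fpb) \ne p$ rules out a trivial zero on the non-ordinary side, $\alpha \psi(\fp)/p \notin \mu_{p^\infty}$ avoids a Perrin-Riou exceptional zero on the ordinary side, and $\alpha \psi(\fpb) \not\equiv 1 \bmod \fP$ together with $p$ unramified in $L$ supplies the residual non-triviality required to make the machine yield an integral finiteness statement. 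Consequently, the non-vanishing of either $L$-value forces the localization of the bottom class at $\fP$ to be nonzero.

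With a nontrivial localized class in hand, Rubin's theorem yields finiteness of the Bloch--Kato Selmer group of $V_{L_{\fP}}(f)(\psi)(1)$, subject to the standard big-image hypotheses on the residual $G_K$-representation; these should follow from a Ribet-style argument using that $f$ is non-CM. The main obstacle is the explicit reciprocity law: identifying the regulator image of restricted Beilinson--Flach classes with Rankin--Selberg $p$-adic $L$-functions over $K$ in a split-prime, two-stabilization setting is technically the heaviest step, requiring $p$-adic Hodge theory at $\fP$, Perrin-Riou's logarithm, and careful comparisons of motivic and automorphic period normalizations. The subsequent passage from the reciprocity law and $L$-non-vanishing to Selmer finiteness via Rubin is essentially formal given the established Euler-system framework.
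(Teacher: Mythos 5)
Your top-level architecture (Euler system $+$ explicit reciprocity law $+$ Rubin-type machine, with Theorem-\ref{thm:Lvalue}-style non-vanishing of $\bfz^{f,\psi}$ deduced from the $p$-adic $L$-values) matches the paper, but the step where you actually \emph{produce} the Euler system has a genuine gap. Restricting the Beilinson--Flach classes for the single fixed pair $(f,\theta_\psi)$ from $\QQ(\mu_m)$ to $K$ only yields classes over the fields $K\cdot\QQ(\mu_m)$, which are abelian over $\QQ$; it does not give classes over the ray class fields $K(\fn)$ for auxiliary ideals $\fn$ of $K$, and Rubin's machine over $K$ for the $2$-dimensional representation $V_{L_\fP}(f)(\psi)$ requires exactly those fields (the Kolyvagin/Chebotarev step needs classes over $K(\fq)$ for a positive-density set of primes $\fq$ of $K$, most of which are not abelian over $\QQ$). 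The alternative of running the machine over $\QQ$ for the $4$-dimensional convolution $V_{L_\fP}(f)\otimes\Ind_K^{\QQ}\psi$ fails precisely because $\theta_\psi$ is of dihedral (CM) type, so the large-image hypotheses of \cite{leiloefflerzerbes13} and of Rubin's $\Hyp(\QQ,V)$ are violated --- this is the obstruction the paper is built to circumvent. The missing idea is the paper's central construction: one takes Beilinson--Flach classes on $Y_1(N_f)\times Y_1(N)$ with $N=N_{K/\QQ}(\fn)\cdot\disc$ \emph{varying} (equivalently, pairing $f$ with the theta series of all twists $\psi\eta$ of conductor dividing $\fn$), proves the asymmetric norm relations of Theorem \ref{thm:asymmetricnorm}/\ref{thm:normcompat}, and then patches the localized Hecke modules $H^1(\psi,\fn,\fP)$ via Ihara's lemma, Wiles' Gorenstein/freeness results and Ohta's Hida theory into a norm-compatible family of isomorphisms with $\Ind_{K(\fn)}^{\QQ}\cO(\psi_\fP^{-1})$ (Corollary \ref{cor:nui}); only after this Taylor--Wiles-style patching does one obtain an honest Euler system over the $K(\fn)$.

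A second, smaller gap: Rubin's Theorems 2.2.2/2.2.3 as stated bound the \emph{strict} Selmer group $\Sel_{\Sigma_p}(K,T^\vee(1))$, which is contained in the Bloch--Kato group, so ``Rubin's theorem yields finiteness of the Bloch--Kato Selmer group'' does not follow formally. The paper has to modify the machine (Appendix \ref{sect:appendixB}): one shows the Euler system classes are crystalline at the primes above $p$ (Propositions \ref{prop:h1f} and \ref{prop:h1fp}), that the derived Kolyvagin classes inherit this local condition via a Panchishkin subrepresentation $V_v^+$, and that the relaxed local condition at $p$ can then be imposed in the bound. This is also where the technical hypotheses actually enter, in a different way than you describe: $\alpha\psi(\fpb)\not\equiv 1\bmod\fP$ gives $H^0(K_{\fpb},(T/T_{\fpb}^+)\otimes\mathbf{k})=0$, while $\beta\psi(\fpb)\ne p$ and $\alpha\psi(\fp)/p\notin\mu_{p^\infty}$ rule out cyclotomic quotients of $V_v^+$ and guarantee (via the universal-norm argument of Proposition \ref{prop:h1f}) that the classes lie in $H^1_f$ away from $p$; they are conditions on the local Galois modules feeding the modified machine, not exceptional-zero conditions on the $p$-adic $L$-functions. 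Your identification of the reciprocity law as the key analytic input, and the deduction of $\bfz^{f,\psi}\ne 0$ from the non-vanishing of either $L$-value, do agree with the paper.
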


  Under some slightly stronger technical assumptions, we can extend this result as follows. We define in \S \ref{sect:criticalSel} two groups $\Sel(K, T^\vee(1), \Sigma^{(i)})$, for $i = 1, 2$, which we call ``critical Selmer groups'', each of which contains the Bloch--Kato Selmer group.  These critical Selmer groups can be viewed as ``analytic continuations'' of the Bloch--Kato Selmer groups attached to twists of $f$ which are critical in the sense of Deligne. We show that for each $i$, if the value $L_{\fP}(f/K, \Sigma^{(i)})(\psi)$ is non-zero, then $\Sel(K, T^\vee(1), \Sigma^{(i)})$ is finite (Theorem \ref{thm:finitecriticalSel}). Morever, we obtain explicit bounds on the orders of these Selmer groups in terms of the valuations of the corresponding $L$-values.

 \subsection{Relation to our earlier work}

  In \cite{leiloefflerzerbes13} we proved a result on the finiteness of the strict Selmer group over $\QQ$ attached to the Rankin--Selberg convolution of two modular forms $f, g$, under rather strong ``large image'' assumptions on $f$ and $g$. The proof of this result relied on an Euler system constructed from generalizations of the Beilinson--Flach classes in $K_1$ of products of modular curves.

  The Selmer groups we study in the present paper can also be interpreted in terms of Rankin--Selberg convolutions: they are the Selmer groups over $\QQ$ of the convolution of $f$ with the theta-series modular form arising from $\psi$. However, the main theorem of \cite{leiloefflerzerbes13} does not apply in this situation, as the Galois representation attached to a theta series will be of dihedral type, and thus does not have large image. So we shall extend the Euler system by constructing additional cohomology classes, corresponding to abelian extensions of $K$ which are not abelian over $\QQ$. In order to construct these classes, we use maps similar to those appearing in the Taylor--Wiles method in modularity lifting theory, allowing us to patch together cohomology groups arising from modular curves of different levels. This gives an Euler system over $K$ for the Galois representation of $f$ twisted by $\psi$ (Theorem \ref{thm:eulersystem}); and applying the ``Euler system machine'' of \cite{rubin00} over
$K$, rather than over $\QQ$, then gives a bound for the strict Selmer group when the corresponding $p$-adic $L$-value is non-zero (Theorems \ref{thm:finiteSelK} and \ref{thm:boundedSelK}).

  The second new ingredient in this paper is that we bound the Bloch--Kato Selmer group, rather than the (generally smaller) strict Selmer group. In order to obtain this stronger result, we make use of an extra property of our Euler system classes: that they are in the Bloch--Kato $H^1_f$ subspaces at the primes above $p$ (which is a non-trivial condition since the Hodge--Tate weights of our representation are not all $\ge 1$). We show in this paper how to modify the Euler system machine to take into account this additional local input; this allows us to bound the Bloch--Kato Selmer group (Theorem \ref{thm:finiteBKSel}), and the two slightly larger groups we call ``critical Selmer groups''.

  \subsection{Relations to other work}

   A number of previous works (\cite{BD05}, \cite{howard}, \cite{castella14}) have explored a rather different kind of Euler system attached to modular forms over an imaginary quadratic field, arising from Heegner points or Heegner cycles, and applied these to prove bounds for Selmer groups. Our approach is somewhat different to these works, since the geometric input in our work comes from classes in $K_1$ of modular surfaces, rather than $K_0$; in particular, the existence and non-triviality of our classes is not reliant on any root number phenomena, so we can bound Selmer groups attached to twists of $f$ which are not necessarily self-dual.

   The existence of these two approaches raises the natural question of whether the specialization of our Euler system to the self-dual twists coincides with the ``big Heegner point'' Euler system of Howard and Castella. Sadly the methods of the present paper do not provide enough information about these specializations to answer this question. We hope to return to this matter in future work.

   A third approach to the study of Selmer groups for modular forms over imaginary quadratic fields is to be found in the work of Skinner and Urban \cite{skinnerurban14}. Their approach relies on establishing a \emph{lower} bound on the size of the Selmer group, and then using the upper bounds given by Kato's Euler system over $\QQ$ to show that this bound is sharp. This second step in their strategy is only applicable when the Gr\"ossencharacter $\psi$ is congruent modulo $p$ to a character factoring through the norm map to $\QQ$. However, in order to apply our methods we need precisely the opposite assumption -- our methods require that $\psi$ is \emph{not} congruent to any such character, since this would violate the ``non-Eisenstein'' condition of Definition \ref{def:nonEis}. Thus our upper bounds for the Selmer group are complementary to the results of \cite{skinnerurban14}\footnote{The method of \cite{skinnerurban14} gives lower bounds on the Selmer group in much greater generality, and it would be an
interesting project to compare these lower bounds with the upper bounds proved in this paper; we hope to investigate this in a future work.}.

  \subsection*{Acknowledgements}

   Although this paper has emerged as a follow-up to our previous paper \cite{leiloefflerzerbes13}, the CM setting considered here was the original motivation for our study of Beilinson--Flach classes, based on the conjectures about Euler systems advanced by the second and third authors in \cite{loefflerzerbes11}. We are very grateful to Massimo Bertolini, Henri Darmon, and Victor Rotger for the suggestion (made to one of us at the 2011 Durham conference) that the Beilinson--Flach classes introduced by them in \cite{BDR12} could perhaps be used in proving these conjectures, and encouraging us to pursue this idea. We would also like to express our gratitude for all they have done to support our work in this area since, and for the continuing inspiration offered by their own work in the field.

   The idea used in this paper of patching together an Euler system from classes in the motivic cohomology of many Shimura varieties, rather than just one, was inspired by an earlier paper of Bertolini and Darmon on the anticyclotomic Iwasawa theory of modular forms \cite{BD05}. We are grateful to Henri Darmon for bringing this paper to our attention.

   Finally, we would like to thank all those with whom we had enlightening discussions during the preparation of this paper, notably Jo\"el Bella\"iche, Kevin Buzzard, Francesc Castella, Henri Darmon, Fred Diamond, Karl Rubin and Jacques Tilouine; and the two anonymous referees, whose comments improved the exposition substantially.

 \section{Asymmetric zeta elements}

  We begin by attending to some ``unfinished business'' from our earlier paper \cite{leiloefflerzerbes13}, proving some norm-compatibility relations for motivic cohomology classes extending those of \S 3 of \emph{op.cit.}.

  \subsection{Definitions}
   \label{sect:asymmzetadefs}

   Recall that in \cite[\S 2.7]{leiloefflerzerbes13} we have defined classes ${}_c \Xi_{m, N, j} \in \CH^2(Y_1(N)^2 \otimes \QQ(\mu_m), 1)$, for $m \ge 1, N \ge 5$ integers, $j \in \ZZ / m\ZZ$ and $c > 1$ coprime to $6mN$.

   In the present work, it will be convenient to extend this construction, in a rather trivial way, to give elements of higher Chow groups of products $Y_1(N) \times Y_1(N')$. We thus make the following definition:

   \begin{definition}
    \label{def:xi}
    For $m \ge 1, N, N' \ge 5$,  $j \in \ZZ / m \ZZ$, and $c > 1$ coprime to $6mNN'$, we define
    \[ {}_c \Xi(m, N, N', j) \in \CH^2(Y_1(N) \times Y_1(N') \times \Spec \QQ(\mu_m), 1)\]
    as the image of ${}_c \Xi_{m, R, j}$, for some $R$ divisible by $N$ and $N'$ and having the same prime factors as $N N'$, under pushforward via the natural degeneracy map
    \[ Y_1(R)^2 \to Y_1(N) \times Y_1(N').\]

    When $m = 1$ we omit $m$ and $j$ from the notation and write
    \[  {}_c \Xi(N, N') \coloneqq {}_c \Xi(1, N, N', 1).\]
   \end{definition}

   Note that ${}_c \Xi(m, N, N', j)$ is independent of the choice of $R$, as a consequence of Theorem 3.1.2 of \cite{leiloefflerzerbes13}.

  \subsection{Norm-compatibility}

   In addition to the norm-compatibility relations proved in \cite[\S 3]{leiloefflerzerbes13}, we shall need a few more similar statements, describing the behaviour of the ${}_c \Xi(m, N, N', j)$ for fixed $m$ and $N$ and varying $N'$, allowing both standard and ``twisted'' pushforward maps. In order to state these relations we first introduce some notation.

   \begin{notation}
    We use the following notations.
    \begin{itemize}
     \item For $d \in (\ZZ / m\ZZ)^\times$, we let $\sigma_d \in \Gal(\QQ(\mu_m) / \QQ)$ be the automorphism given by $\zeta \mapsto \zeta^d$ for each $\zeta \in \mu_m$.
     \item For each $d \in (\ZZ / N\ZZ)^\times$, we let $\langle d \rangle$ denote the diamond bracket operator on $Y_1(N)$.
     \item The operator $T_\ell'$ (for a prime $\ell \nmid N$) or $U_\ell'$ (for $\ell \mid N$) is the Hecke operator defined in \cite[\S 3.2]{leiloefflerzerbes13}, \cite[\S 2.9]{kato04}. (These are the transposes of the more familiar Hecke operators $T_\ell$, $U_\ell$.)
    \end{itemize}
    If $N, N' \ge 1$ and $T, T'$ are Hecke correspondences acting on $Y_1(N)$ and $Y_1(N')$ respectively, then the product of $T$ and $T'$ defines a correspondence on $Y_1(N) \times Y_1(N')$, which we shall write as $(T, T')$.
   \end{notation}

   \begin{theorem}
    \label{thm:asymmetricnorm}
    Let $m \ge 1, N, N' \ge 5$ be integers, $\ell$ a prime, $j \in \ZZ / m\ZZ$, and $c > 1$ an integer coprime to $6\ell mNN'$. Let $\pr_1, \pr_2$ be the two degeneracy maps $Y_1(\ell N') \to Y_1(N')$, corresponding to $z \mapsto z$ and $z \mapsto \ell z$ respectively.
    \begin{enumerate}[(a)]
     \item We have
     \begin{multline*}
      (1 \times \pr_1)_* \left( {}_c \Xi(m, N, \ell N', j) \right) =\\
      \begin{cases}
       {}_c \Xi(m, N, N', j)  & \text{if $\ell \mid m N N'$,}\\
       \left[1 - (\langle \ell^{-1}\rangle, \langle \ell^{-1} \rangle) \sigma_\ell^{-2}\right] \cdot {}_c \Xi(m, N, N', j) & \text{if $\ell \nmid m N N'$.}
      \end{cases}
     \end{multline*}
     \item
     \begin{enumerate}[(i)]
      \item if $\ell \mid N$, then
      \[ (1 \times \pr_2)_* \left( {}_c \Xi(m, N, \ell N', j) \right) = (U_\ell', 1) \cdot {}_c \Xi(m, N, N', \ell j);\]
      \item if $\ell \nmid N$ but $\ell \mid N'$,then
      \begin{multline*} (1 \times \pr_2)_* \left( {}_c \Xi(m, N, \ell N', j) \right)\\ = (T_\ell', 1) \cdot {}_c \Xi(m, N, N', \ell j) - (\langle \ell^{-1} \rangle, U_\ell') \cdot {}_c \Xi(m, N, N', \ell^2 j);\end{multline*}
      \item if $\ell \nmid mNN'$, then
      \[ (1 \times \pr_2)_* \left( {}_c \Xi(m, N, \ell N', j) \right) =
       \left[(T_\ell', 1) \sigma_\ell^{-1} - (\langle \ell^{-1} \rangle, T_\ell') \sigma_\ell^{-2} \right] \cdot {}_c \Xi(m, N, N', j).
      \]
     \end{enumerate}
    \end{enumerate}
   \end{theorem}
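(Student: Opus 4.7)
The strategy is to deduce all four identities from the symmetric norm-compatibility relations established in \cite[\S 3]{leiloefflerzerbes13}, exploiting the defining property that ${}_c\Xi(m,N,N',j)$ arises as a pushforward of the symmetric class ${}_c\Xi_{m,R,j}$ on $Y_1(R)^2$ for any auxiliary $R$ divisible by both $N$ and $N'$ with the correct prime support. Fix once and for all an $R$ divisible by $N$ and $\ell N'$ and having prime factors exactly those of $\ell m N N'$; then by definition ${}_c\Xi(m,N,\ell N',j)$ is the image of ${}_c\Xi_{m,R,j}$ under the straight degeneracy $\mu : Y_1(R)^2 \to Y_1(N)\times Y_1(\ell N')$ in which both components are of $\pr_1$-type.

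Part (a) is the easier case. The composition $(1\times \pr_1)\circ\mu$ is itself a straight degeneracy $Y_1(R)^2\to Y_1(N)\times Y_1(N')$. When $\ell\mid mNN'$, the prime support of $R$ matches that of $mNN'$, so the image of ${}_c\Xi_{m,R,j}$ is ${}_c\Xi(m,N,N',j)$ by definition, giving the first line. When $\ell\nmid mNN'$, one further descent along the standard degeneracy $Y_1(R)\to Y_1(R/\ell)$ on the second factor is needed; this is a special case of the good-prime norm relation of \cite[Thm 3.1.2]{leiloefflerzerbes13}, and precisely produces the factor $1 - (\langle\ell^{-1}\rangle,\langle\ell^{-1}\rangle)\sigma_\ell^{-2}$.

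Part (b) is the substantive case. Here the composition $(1\times \pr_2)\circ \mu$ involves the twisted map $\pr_2\colon z\mapsto \ell z$, and one must re-express it in terms of straight degeneracies in order to invoke the results of \cite{leiloefflerzerbes13}. My plan is to exploit the \emph{symmetric} norm-compatibility relations at $\ell$ from \emph{op.~cit.}, which describe the behaviour of ${}_c\Xi$ under $(\pr_2, \pr_2)_*$, together with part (a); breaking the symmetric relation into its asymmetric components then lets one solve for $(1\times\pr_2)_*$. The operators $U_\ell'$, $T_\ell'$ and the diamond $\langle \ell^{-1}\rangle$ that appear in (i)--(iii) reflect the three faces of the standard degeneracy-to-Hecke dictionary: when $\ell\mid N$ the symmetric relation involves $U_\ell'$, when $\ell\mid N'$ only one picks up the Eichler--Shimura-type correction $T_\ell' - \langle \ell^{-1}\rangle U_\ell'$, and in the good-prime case one pays an additional descent step---identical to the one used in part (a)---accounting for the two terms with $\sigma_\ell^{-1}$ and $\sigma_\ell^{-2}$. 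The shift $j\mapsto \ell j$ (respectively $\ell^2 j$) of the cyclotomic index reflects the fact that the Siegel units underlying the construction of ${}_c\Xi_{m,R,j}$ depend on $j$ through a character $\zeta_m^j$ whose argument is multiplied by $\ell$ under $z\mapsto \ell z$.

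The principal obstacle is bookkeeping rather than conceptual: every decomposition of an asymmetric pushforward into symmetric ones introduces a primed Hecke operator on one factor together with a compensating diamond operator on the other, and the cyclotomic twists $\sigma_\ell^{-1}$, $\sigma_\ell^{-2}$ arise via the compatibility of the norm map with $\Gal(\QQ(\mu_{\ell m})/\QQ(\mu_m))$ on the Siegel-unit component; one also has to distinguish carefully between Hecke operators and their transposes (hence the systematic appearance of $T_\ell'$ and $U_\ell'$ rather than $T_\ell$ and $U_\ell$). Verifying that all these contributions combine without double-counting to give exactly the coefficients in (a) and (b)(i)--(iii) is the technical heart of the argument; once the bookkeeping is in place, each identity reduces to a direct application of \cite[\S 3]{leiloefflerzerbes13}.
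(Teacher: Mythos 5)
Your treatment of part (a) is fine and agrees with the paper's (the composite $(1\times\pr_1)\circ\mu$ is again a standard degeneracy, and the second case is the good-prime relation from \cite{leiloefflerzerbes13}). The gap is in part (b): your plan is to recover $(1\times\pr_2)_*$ by ``breaking the symmetric relation into its asymmetric components'' and solving, using the known behaviour of the classes under pushforwards that act identically on both factors together with part (a). There is no such formal manipulation available. The symmetric norm relations of \cite[\S 3]{leiloefflerzerbes13} only record the images of ${}_c\Xi_{m,R,j}$ under maps of the form $(\pr\times\pr)$ (and under change of the cyclotomic level $m$); these are different linear functionals of the class and do not determine its image under $1\times\pr_2$ or $\pr_1\times\pr_2$. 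The class is supported on a twisted-diagonal cycle decorated with a Siegel unit, and computing its image under a map that treats the two factors differently requires new input about how that cycle and that unit decompose under the partial degeneracy --- this is exactly the content of Lemma \ref{lemma:asymmetricnorm} in the appendix, and it is genuinely new relative to \emph{op.~cit.} (the appendix remarks explicitly that all norm relations follow from Theorems 3.1.1 and 3.3.1 of \cite{leiloefflerzerbes13} \emph{together with} this lemma).

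Concretely, the missing step is the computation of $(\pr_1\times\pr_2)_*\bigl({}_c\cZ(m,\ell N,j)\bigr)$: one factors both degeneracies through the intermediate curve $Y(m,N(\ell))$, pushes the Siegel unit ${}_c g_{0,1/N\ell}$ forward using Kato's norm relation \cite[\S 2.13]{kato04} (one term when $\ell\mid N$, two terms when $\ell\nmid N$ --- this is the source of the two summands in case (iii)), and then identifies the images of the cycle $\cC_{m,N(\ell),j}$ under $\pi_1\times\pi_2$ with the correspondences $(U_\ell',1)$, $(T_\ell',1)$ and $(\langle\ell^{-1}\rangle,T_\ell')$, together with the index shift $j\mapsto\ell j$. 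Without this Siegel-unit input your strategy has nothing to solve with. The bookkeeping you do describe --- the commutation relation $(\pr_1)_*\circ U_\ell'=T_\ell'\circ(\pr_1)_*-\langle\ell^{-1}\rangle\circ(\pr_2)_*$ giving case (ii), and the symmetry trick handling the remaining case $\ell\mid m$, $\ell\nmid N$, $\ell\mid N'$ --- does mirror how the paper deduces the theorem from the key lemma, so the gap is concentrated precisely in the absence of that lemma, which is the technical (and conceptual) heart of the proof rather than mere bookkeeping.
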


   \begin{remark}
    There is also a version of the above theorem with $N$ varying instead of $N'$, i.e.~describing the degeneracy $(m, N\ell, N') \mapsto (m, N, N')$. This can be deduced immediately from the above theorem using the fact that the symmetry map $Y_1(N) \times Y_1(N') \to Y_1(N') \times Y_1(N)$ interchanges ${}_c \Xi(m, N, N', j)$ and ${}_c \Xi(m, N', N, -j)$.

    In the statement of the theorem we have excluded the case where $\ell \mid m$ but $\ell \nmid NN'$; this is not because it is any more difficult, but simply because the answer is more complicated to write down -- see Remark \ref{remark:higherpowers} below.
   \end{remark}

   The proof of Theorem \ref{thm:asymmetricnorm} will be given in Appendix \ref{sect:appendix} below, as the proof requires the consideration of certain auxilliary modular curves and cohomology classes which will not be used elsewhere in the paper.

 \section{Euler systems in motivic cohomology}
  \label{sect:motivicES}

  In this section, we'll use the asymmetric zeta elements introduced above to construct a family of motivic cohomology classes attached to a modular form and a Gr\"ossencharacter of an imaginary quadratic field, indexed by ideals of the field, and satisfying a compatibility relation involving Euler factors. However, this is not quite an ``Euler system'' in the strict sense, since our elements for different $\fn$ live in motivic cohomology groups of different varieties (rather than of one variety over extensions of the base field).

  \subsection{Setup}
   \label{sect:cmsetup}

   Let $K$ be an imaginary quadratic field, and $\psi$ a Gr\"ossencharacter of $K$ of infinity-type $(-1, 0)$ and some modulus $\ff$ (not necessarily primitive, i.e. $\ff$ need not be the conductor of $\psi$), taking values in a finite extension $L / K$. We write $\chi$ for the unique Dirichlet character modulo $N_{K/\QQ}(\ff)$ such that $\psi(\, (n)\,) = n\chi(n)$ for integers $n$ coprime to $N_{K / \QQ}(\ff)$.

   \begin{theorem}[see e.g.~{\cite[Theorem 4.8.2]{miyake06}}]
    The formal $q$-expansion
    \[ \sum_{\fa} \psi(\fa) q^{N_{K / \QQ}(\fa)},\]
    where the sum is over integral ideals of $K$ coprime to $\ff$, is the $q$-expansion of a Hecke eigenform
    \[ g \in S_2(\Gamma_1(N), \chi \varepsilon_K), \]
    where $N = N_{K / \QQ}(\ff) \cdot \disc$ and $\varepsilon_K$ is the quadratic Dirichlet character attached to $K$. This eigenform is new of level $N$ if and only if $\psi$ is primitive of conductor $\ff$.
   \end{theorem}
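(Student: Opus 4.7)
This is the classical theta-series correspondence, and I would follow the standard Hecke/Weil route via $L$-functions and a converse theorem, matching Miyake's presentation. The plan is to define the Dirichlet series $L(s,\psi) = \sum_{\fa} \psi(\fa) N(\fa)^{-s}$ (sum over integral ideals coprime to $\ff$), show it has the analytic continuation and functional equation appropriate to a weight $2$ cusp form of level $N$ and nebentype $\chi \varepsilon_K$, verify the same for enough twists $L(s, \psi \otimes \eta)$ by Dirichlet characters $\eta$, and then invoke Weil's converse theorem.

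First I would establish the analytic properties of $L(s,\psi)$. Since $\psi$ has infinity-type $(-1,0)$, the series converges absolutely for $\Re(s) > 3/2$ and defines an Euler product $\prod_{\fq \nmid \ff} (1 - \psi(\fq) N(\fq)^{-s})^{-1}$. I would then apply Hecke's theorem on Hecke characters to get meromorphic continuation and the functional equation relating $s$ to $2 - s$, whose Gamma factors $(2\pi)^{-s}\Gamma(s)$ match those of a weight $2$ modular form. The root number and conductor in this functional equation give a candidate level $N = N_{K/\QQ}(\ff) \cdot \disc$, arising as the product of $|d_K|$ (from the discriminant of $K$, reflecting the local factor at the ramified primes of $K/\QQ$) and $N_{K/\QQ}(\ff)$ (from the conductor of $\psi$ itself).

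Next I would twist: for each primitive Dirichlet character $\eta$ modulo $r$ with $(r, N) = 1$, I would check that $L(s, \psi \otimes \eta) = L(s, \psi\cdot (\eta \circ N_{K/\QQ}))$ satisfies the functional equation predicted for a weight $2$ modular form of level $Nr^2$ and character $(\chi \varepsilon_K) \eta^2$, with the correct root number. The computation of the nebentype $\chi \varepsilon_K$ comes out naturally once one writes the central character of the induced representation $\Ind_{K}^{\QQ}\psi$ as a character of $\QQ^\times\backslash \AA_\QQ^\times$: the finite part restricted to integers coprime to $N_{K/\QQ}(\ff)$ equals $\psi((n)) / n = \chi(n)$, while the restriction to the local units at ramified primes yields $\varepsilon_K$. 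Weil's converse theorem (in the version for arbitrary nebentype, cf.~Miyake \S 4.3) then asserts that the inverse Mellin transform of $L(s,\psi)$ is a holomorphic cusp form $g \in S_2(\Gamma_1(N), \chi \varepsilon_K)$ with the stated $q$-expansion, and the multiplicativity of $\psi$ on coprime ideals together with the Euler product translates immediately into the Hecke eigenform property.

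Finally, for the newform claim I would compute the conductor of the associated automorphic representation $\pi = \Ind_{K}^{\QQ}\psi$. This is a standard local computation: at each rational prime $\ell$, the local factor $\pi_\ell$ is either the principal series or the dihedral supercuspidal representation coming from the local Hecke characters $\psi_{\fq}$ at primes $\fq \mid \ell$, and its conductor exponent equals the corresponding contribution to $N$ exactly when each $\psi_{\fq}$ is primitive, i.e.~when $\ff$ is the true conductor of $\psi$. The main obstacle is really just the bookkeeping for this local conductor matching at ramified and inert primes, together with the functional-equation computation for twists; both are delicate but entirely routine, and are carried out in detail in Miyake's Theorem 4.8.2 which we can cite.
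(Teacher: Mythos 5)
Your proposal is correct and follows exactly the route the paper relies on: the paper gives no proof of its own but cites Miyake's Theorem 4.8.2, which is established by precisely this Hecke--Weil argument (analytic continuation and functional equations of $L(s,\psi)$ and its twists, Weil's converse theorem from Miyake \S 4.3, and a local conductor computation for the newform criterion). So your approach is essentially the same as the paper's (i.e.\ the cited reference's) treatment.
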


  \subsection{Definitions: Hecke algebras}

   We now define a quotient of cohomology which describes the Galois representations attached to twists of $\psi$ by finite-order characters.

   Let $\fn$ be an integral ideal of $K$, which we assume to be divisible by $\ff$, and let $N = N_{K / \QQ}(\fn) \cdot \disc$, which is a multiple of $N_\psi = N_{K / \QQ}(\ff) \cdot \disc$. Let $H_\fn$ be the ray class group of $K$ modulo $\fn$, and for $\fl$ an ideal of $K$ coprime to $\fn$, let $[\fl]$ denote the class of $\fl$ in $H_\fn$.

   Let $\TT_{N}$ denote the subalgebra of $\End_{\ZZ} H^1(Y_1(N)(\CC), \ZZ)$ generated by the diamond operators, the $T_\ell$ for $\ell \nmid N$, and the $U_\ell$ for $\ell \mid N$. It will be convenient to use the notation $T_\ell$, for $\ell \mid N$, to denote the same operator as $U_\ell$, so we can say that $\TT_N$ is generated by the diamond operators and the $T_\ell$ for all primes $\ell$.

   \begin{proposition}
    \label{prop:defphin}
    There exists a homomorphism $\phi_{\fn}: \TT_{N} \to \fO_L[H_\fn]$ acting on the generators as follows: for $\ell$ prime,
     \[ \phi_\fn(T_\ell) = \sum_{\fl} [\fl] \psi(\fl)\]
     where the sum is over the (possibly empty) set of ideals $\fl \nmid \fn$ of norm $\ell$; and
     \[ \phi_{\fn}(\langle d \rangle) = \chi(d)\, \varepsilon_K(d)\, [(d)].\]
   \end{proposition}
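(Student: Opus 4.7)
The approach is to identify each character projection of $\overline{L}[H_\fn]$ with the Hecke eigensystem of an appropriately twisted theta series, and then assemble these to build the desired ring homomorphism. The basic observation is that $\fO_L[H_\fn]$ is a free $\fO_L$-module, hence embeds into $\overline{L}[H_\fn] \cong \prod_\lambda \overline{L}$, where $\lambda$ ranges over finite-order characters $H_\fn \to \overline{L}^\times$ and the isomorphism sends $x \mapsto (\lambda(x))_\lambda$. To construct $\phi_\fn$ as a ring homomorphism it therefore suffices to exhibit, for each such $\lambda$, a ring homomorphism $\mu_\lambda : \TT_N \to \overline{L}$, and then check that the combined image of each generator $T_\ell$ or $\langle d \rangle$ already lies in the subring $\fO_L[H_\fn]$ and matches the prescribed formula.

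For a fixed $\lambda$, the product $\psi\lambda$ (with $\lambda$ pulled back to ideals coprime to $\fn$ via the class map) is again a Gr\"ossencharacter of $K$ of infinity-type $(-1,0)$ with modulus dividing $\fn$. Applying the theorem of \S\ref{sect:cmsetup} to $\psi\lambda$ yields a theta series $g_{\psi\lambda} \in S_2(\Gamma_1(N), \chi_\lambda \varepsilon_K)$ with $q$-expansion
\[ g_{\psi\lambda} = \sum_{(\fa,\fn)=1} \psi(\fa)\lambda([\fa])\, q^{N_{K/\QQ}(\fa)}, \]
where $\chi_\lambda(n) = \chi(n)\lambda([(n)])$ for $n$ coprime to $N_{K/\QQ}(\fn)$. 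Reading off Hecke eigenvalues from this expansion gives $T_\ell \mapsto \sum_{\fl} \psi(\fl)\lambda([\fl])$, the sum ranging over primes $\fl$ of norm $\ell$ with $\fl \nmid \fn$, and $\langle d \rangle \mapsto \chi(d)\varepsilon_K(d)\lambda([(d)])$. Since $g_{\psi\lambda}$ is a Hecke eigenform at level $N$, this eigensystem defines the required ring homomorphism $\mu_\lambda : \TT_N \to \overline{L}$.

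Assembling the $\mu_\lambda$ gives a ring homomorphism $\mu : \TT_N \to \prod_\lambda \overline{L} \cong \overline{L}[H_\fn]$; unpacking the Fourier isomorphism shows that $\mu(T_\ell) = \sum_\fl [\fl]\psi(\fl)$ and $\mu(\langle d \rangle) = \chi(d)\varepsilon_K(d)[(d)]$, both in $\fO_L[H_\fn]$. Since $\TT_N$ is generated as a ring by its $T_\ell$ and $\langle d \rangle$, the image of $\mu$ lies entirely in $\fO_L[H_\fn] \subset \overline{L}[H_\fn]$, yielding the desired $\phi_\fn$. The step warranting most attention is the eigenvalue formula for primes $\ell \mid N_{K/\QQ}(\fn)$, where one must verify that the restricted sum over $\fl \nmid \fn$ gives the correct $U_\ell$-eigenvalue of $g_{\psi\lambda}$ at level $N$; this is immediate from the $q$-expansion, which already excludes ideals sharing a common factor with $\fn$, regardless of whether $g_{\psi\lambda}$ is new at this level.
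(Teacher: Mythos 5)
Your proposal is correct and follows essentially the same route as the paper: the paper's proof likewise specializes at each character of $H_\fn$ and observes that the resulting eigenvalue system is realized by a nonzero eigenform in $S_2(\Gamma_1(N), \overline{L})$ (the theta series of the twisted character), which makes the homomorphism well-defined. You have simply spelled out the details (the decomposition $\overline{L}[H_\fn] \cong \prod_\lambda \overline{L}$, the identification with $g_{\psi\lambda}$, and the integrality of the images of the generators) more explicitly than the paper does.
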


   \begin{proof}
    Each of the systems of eigenvalues obtained by specializing at characters of $H_\fn$ corresponds to a nonzero eigenform in $S_2(\Gamma_1(N), \overline{L})$, so the morphism is well-defined.
   \end{proof}

   \begin{definition}
    Define
    \[ H^1(\psi, \fn, \fO_L) := \fO_L[H_\fn] \otimes_{\TT_{N}, \phi_{\fn}} H^1(Y_1(N)(\CC), \ZZ)_*,\]
    where the lower star indicates that we use the \emph{covariant} action of Hecke operators (rather than the usual contravariant action).
   \end{definition}

   We shall also need to discuss a quotient of motivic cohomology attached to $\psi$ and another eigenform (not necessarily CM), over a cyclotomic field $\QQ(\mu_m)$. To define this, let $f$ be a cuspidal modular form of weight 2 and some level $N_f$ (not necessarily a newform) which is an eigenform for all Hecke operators. Assume $L$ is sufficiently large that the Hecke eigenvalues of $f$ lie in $\fO_L$, so we have a morphism $\phi_f : \TT_{N_f} \to \fO_L$.

   \begin{definition}
    We define
    \begin{multline*}
     H^3_{\mot}(f, \psi, m, \fn, \fO_L(2)) \coloneqq \\ \fO_L[H_\fn] \mathop{\otimes}_{(\TT_{N_f} \otimes \TT_{N}, \phi_f \otimes \phi_\fn)} H^3_{\mot}(Y_1(N_f) \times Y_1(N) \times \Spec \QQ(\mu_m), \ZZ(2))_*.
    \end{multline*}
   \end{definition}

   (Again, the lower star signifies that we use the covariant rather than contravariant action of Hecke correspondences.)

  \subsection{Definitions: degeneracy maps}

   Let us now consider two moduli $\fn$ and $\fn' = \fn \fl$, with $\fl$ prime. Let $N = N_{K / \QQ}(\fn) \cdot \disc$ as before, and $N' = N \cdot N_{K / \QQ}(\fl)$. Let $\ell$ be the rational prime below $\fl$, and let
   \[ \Lambda_{\fn} = \begin{cases}
                       \fO_L[H_{\fn}] & \text{if $\fl \mid \fn$,}\\
                       \fO_L[H_{\fn}][1/\ell] & \text{if $\fl \nmid \fn$}.
                      \end{cases}
   \]

   We also consider the formal double coset space
   \[ \mathcal{R}_{N, N'} = \ZZ\left[ \Gamma_1(N) \backslash \GL_2^+(\QQ) / \Gamma_1(N')\right].\]
   Elements of $\mathcal{R}_{N, N'}$ induce correspondences $Y_1(N') \to Y_1(N)$.

   Let $\mathcal{T}_N$ denote the commutative subalgebra of $\mathcal{R}_{N, N}$ generated by the Hecke operators $T_n$ and $\langle d \rangle$; then (by definition) $\mathcal{T}_N$ surjects onto $\TT_N$, so we may regard $\phi_\fn$ as a homomorphism $\mathcal{T}_{N} \to \Lambda_{\fn}$.

   The space $\mathcal{R}_{N, N'}$ is both a left $\mathcal{T}_{N}$-module and a right $\mathcal{T}_{N'}$-module. We may regard the degeneracy maps $\pr_1$ and $\pr_2$ as elements of $\mathcal{R}_{N, N'}$, corresponding to the matrices $\begin{pmatrix} 1 & 0 \\ 0 & 1 \end{pmatrix}$ and $\begin{pmatrix} \ell & 0 \\ 0 & 1 \end{pmatrix}$; if $\fl$ is an inert prime (so $N' = \ell^2 N$) there is a third such map $\pr_3$ corresponding to $\begin{pmatrix} \ell^2 & 0 \\ 0 & 1 \end{pmatrix}$.

   \begin{definition}
    \label{def:curlyN}
    Let $\mathcal{N}_{\fn}^{\fn'}$ denote the element of
    \[ \Lambda_{\fn} \mathop{\otimes}_{\mathcal{T}_N, \phi_\fn} \mathcal{R}_{N, N'}\]
    given by the following formulae:
    \begin{itemize}
     \item If $\fl \mid \fn$, then
     \[ \mathcal{N}_{\fn}^{\fn'} = 1 \otimes \pr_1.\]
     \item If $\fl \nmid \fn$ and $\fl$ is ramified or split in $K/\QQ$, then
     \[ \mathcal{N}_{\fn}^{\fn'} = 1 \otimes \pr_1 - \frac{[\fl] \psi(\fl)}{\ell} \otimes \pr_2. \]
     \item If $\fl \nmid \fn$ and $\fl = (\ell)$ is an inert prime, then
     \[ \mathcal{N}_{\fn}^{\fn'} = 1 \otimes \pr_1 - \frac{[\fl] \psi(\fl)}{\ell^2} \otimes \pr_3.\]
    \end{itemize}
   \end{definition}

   \begin{proposition}
    \label{prop:equivariance}
    For any $A \in \mathcal{T}_{N'}$, we have
    \[  \mathcal{N}_{\fn}^{\fn'} \cdot A = \tau(\phi_{\fn'}(A)) \cdot \mathcal{N}_{\fn}^{\fn'},\]
    where $\tau$ is the natural surjection $\fO_L[H_{\fn'}] \to \fO_L[H_{\fn}]$.

    In particular, $\mathcal{N}_{\fn}^{\fn'}$ induces maps
    \[ H^1(\psi, \fn', \fO_L)[1/\ell] \to H^1(\psi, \fn, \fO_L)[1/\ell]\]
    and
    \[ H^3_{\mot}(f, \psi, \fn', \fO_L)[1/\ell] \to H^3_{\mot}(f, \psi, \fn, \fO_L)[1/\ell],\]
    and the $1/\ell$ may be omitted when $\fl \mid \fn$.
   \end{proposition}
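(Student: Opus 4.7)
The plan is to establish the equivariance identity $\mathcal{N}_\fn^{\fn'} \cdot A = \tau(\phi_{\fn'}(A)) \cdot \mathcal{N}_\fn^{\fn'}$ first, and then derive the induced-map statements as a formal consequence. Since the identity is $\fO_L$-linear in $A$ and multiplicativity in $A$ is automatic (apply the identity twice, once for each factor of a product), it suffices to verify it for $A$ ranging over the $\fO_L$-algebra generators of $\mathcal{T}_{N'}$, namely the diamond operators $\langle d \rangle$ for $d \in (\ZZ/N'\ZZ)^\times$ and the Hecke operators $T_p$ for $p$ prime.

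For $A = \langle d \rangle$ or $A = T_p$ with $p \ne \ell$, the classical fact is that these correspondences commute with every degeneracy map $\pr_j$ inside $\mathcal{R}_{N,N'}$, since the level change between $N$ and $N'$ only concerns the prime $\ell$. In the tensor product one can therefore move $A$ across $\pr_j$ and then across $\phi_\fn$, reducing the desired equivariance to the scalar identity $\tau(\phi_{\fn'}(A)) = \phi_\fn(A)$ in $\Lambda_\fn$. For a diamond operator this is immediate from the formula $\phi_\fn(\langle d \rangle) = \chi(d)\varepsilon_K(d)[(d)]_\fn$. For $T_p$ with $p \ne \ell$, it holds because $\fn'/\fn = \fl$ lies above $\ell \ne p$, so a prime $\fq$ of $K$ above $p$ is coprime to $\fn$ if and only if it is coprime to $\fn'$, and $\tau$ carries $[\fq]_{\fn'}$ to $[\fq]_\fn$.

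The substantive case is $A = T_\ell$, which breaks into subcases matching the three bullets of Definition \ref{def:curlyN}, distinguished by whether $\fl$ divides $\fn$ and by the splitting behaviour of $\ell$ in $K$. For each subcase I would substitute the explicit formula for $\mathcal{N}_\fn^{\fn'}$ and expand both sides using the standard identities expressing $\pr_j \cdot T_\ell^{(N')}$ (or $\pr_j \cdot U_\ell^{(N')}$, when $\ell$ divides the level) as an $\fO_L$-linear combination of terms of the forms $T_\ell^{(N)} \cdot \pr_{j'}$ and $\ell \langle \ell \rangle \cdot \pr_{j''}$. The rational coefficients $[\fl]\psi(\fl)/\ell$ and $[\fl]\psi(\fl)/\ell^2$ appearing in Definition \ref{def:curlyN} are exactly those needed so that, together with the description $\phi_{\fn'}(T_\ell) = \sum_{\fq \mid \ell,\; \fq \nmid \fn'}[\fq]\psi(\fq)$, the expansions of the two sides cancel; moreover the inversion of $\ell$ in $\Lambda_\fn$ when $\fl \nmid \fn$ is forced by these coefficients.

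Once the equivariance is in hand, the induced-map claim follows formally: for any $A \in \mathcal{T}_{N'}$ the defining relation $\lambda\phi_{\fn'}(A)\otimes c = \lambda \otimes A\cdot c$ in the source tensor product is carried by $\mathcal{N}_\fn^{\fn'}$ to two expressions that agree modulo the defining relations of $H^1(\psi,\fn,\fO_L)[1/\ell]$, and its motivic analogue, over $(\mathcal{T}_N,\phi_\fn)$, precisely by the equivariance applied on the right-hand side. The main obstacle will be the inert subcase of $A = T_\ell$, where a third degeneracy map $\pr_3$ enters at level $\ell^2 N$ and the relevant Hecke identity is correspondingly more intricate.
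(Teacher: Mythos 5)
Your proposal is correct and takes essentially the same route as the paper: the generators away from $\ell$ commute with the degeneracy maps, reducing the identity for them to the scalar statement $\tau\circ\phi_{\fn'}=\phi_{\fn}$, while the operator at $\ell$ is checked case by case (according to whether $\fl\mid\fn$ and the splitting behaviour of $\ell$) by expanding $\mathcal{N}_{\fn}^{\fn'}\cdot U_\ell$ via the standard commutation relations with $\pr_1,\pr_2$ (and $\pr_3$ in the inert case). This is exactly the paper's argument, which likewise writes out only one sample case of the check at $\ell$ and leaves the rest to the reader.
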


   \begin{proof}
    When $\fl \mid \fn$ this is immediate, since we have a commutative diagram of algebras
    \begin{diagram}
     \mathcal{T}_{N'} & \rTo^{\phi_{\fn'}} &\fO_L[H_{\fn'}] \\
     \dTo^{\sigma} && \dTo^\tau\\
     \mathcal{T}_N & \rTo^{\phi_{\fn}} & \fO_L[H_\fn]
    \end{diagram}
    where the left vertical map $\sigma$ sends each generator of $\mathcal{T}_{N'}$ to the corresponding operator in $\mathcal{T}_N$; and we have $\pr_1 \cdot A = \sigma(A) \cdot \pr_1$ for all $A \in \mathcal{T}_{N'}$ (i.e.~$\pr_1$ commutes with all Hecke operators) so we are done.

    When $\fl \nmid \fn$, the same argument works if we replace $\mathcal{T}_{N'}$ with the subalgebra $\mathcal{T}_N^\circ$ generated by all the operators except $U_\ell'$. So we must only prove the equivariance property for $U_\ell'$, which follows by a case-by-case check.

    For instance, if $\fl$ is a split prime and its conjugate $\overline{\fl}$ does not divide $\fn$ either, then we have
    \begin{align*}
      \mathcal{N}_{\fn}^{\fn'} \cdot U_\ell' &= 1 \otimes (\pr_1 \cdot U_\ell') - \frac{[\fl] \psi(\fl)}{\ell} \otimes (\pr_2 \cdot U_\ell') \\
      &= 1 \otimes (T_\ell' \cdot \pr_1 - \langle \ell^{-1} \rangle \cdot \pr_2) - \frac{[\fl] \psi(\fl)}{\ell} \otimes \ell \pr_2 \\
      &= \left(\phi_{\fn}(T_\ell') - [\fl] \psi(\fl)\right) \otimes \pr_1 - \phi_\fn(\langle \ell^{-1} \rangle) \otimes \pr_2 \\
      &= [\overline{\fl}] \psi(\overline{\fl}) \otimes \pr_1 - [\fl \overline{\fl}] \psi(\fl \overline{\fl}) \circ \pr_2 \\
      &= [\overline{\fl}] \psi(\overline{\fl}) \cdot \mathcal{N}_{\fn}^{\fn'}.
    \end{align*}
    The other cases (where $\fl$ is split with $\fl \nmid \fn$ but $\overline{\fl} \mid \fn$, or when $\fl$ is inert or ramified) follow similarly.
   \end{proof}

   We extend the definition of $\mathcal{N}_{\fn}^{\fn'}$ to any pair of moduli $\fn \mid \fn'$ in the obvious way, by composing the above maps for each prime divisor $\fl$ of $\fn' / \fn$, using the multiplication maps $\mathcal{R}[N, N'] \otimes \mathcal{R}[N', N''] \to \mathcal{R}[N, N'']$. This is well-defined, since $\pr_1 \cdot \pr_2 = \pr_2 \cdot \pr_1$ as elements of $\mathcal{R}(N, N\ell^2)$, and similarly for $\pr_3$, and Proposition \ref{prop:equivariance} extends immediately to this case.

  \subsection{Definitions: classes}

   We are now in a position to construct our compatible family of motivic cohomology classes. Let $f, K, m, \fn, \psi$ be as before. Note that $N_f$ and $N = N_{K / \QQ}(\fn) \cdot \disc$ are the levels of weight 2 cusp forms, so in particular they are both $\ge 5$.

   \begin{definition}
    Let $c > 1$ be an integer coprime to $6 m N N_f$. Let ${}_c \Xi_{m, \fn}^{f, \psi}$ be the image of the element
    \[ {}_c \Xi(m, N_f, N) = {}_c \Xi(m, N_f, N, 1) \in H^3_{\mot}(Y_1(N_f) \times Y_1(N) \times \Spec \QQ(\mu_m), \ZZ(2))\]
    in the space
    \begin{multline*}
     H^3_{\mot}(f, \psi, m, \fn, \fO_L(2)) \coloneqq \\ \fO_L[H_\fn] \mathop{\otimes}_{(\TT_{N_f} \otimes \TT_{N}, \phi_f \otimes \phi_\fn)} H^3_{\mot}(Y_1(N_f) \times Y_1(N) \times \Spec \QQ(\mu_m), \ZZ(2))_*.
    \end{multline*}
   \end{definition}

  \subsection{Norm-compatibility}

   \begin{theorem}
    \label{thm:normcompat}
    The elements ${}_c \Xi_{m, \fn}^{f, \psi}$ enjoy the following compatibility property. Let $\fn \mid \fn'$ be two ideals of $K$ divisible by $\ff$, and let $A$ be the set of primes dividing $\fn'$ but not $\fn$. Suppose that no prime in $A$ divides $m$. Then
    \[ \mathcal{N}_{\fn}^{\fn'} \left({}_c \Xi_{m, \fn'}^{f, \psi}\right) =
     \left( \prod_{\fl \in A} P_\fl\left([\fl] \sigma_\fl^{-1} N(\fl)^{-1}\right)\right) {}_c \Xi_{m, \fn}^{f, \psi}
    \]
    as elements of
    \[ H^3_{\mot}(f, \psi, m, \fn, \fO_L(2)) \otimes_{\fO_L} \fO_L\left[ \tfrac 1{N(\fl)}: \fl  \in A\right],\]
    where $P_\fl$ denotes the Euler factor of $f \otimes \psi$ at $\fl$, and $\sigma_\fl \in \Gal(\QQ(\mu_m) / \QQ)$ is the element $\zeta \mapsto \zeta^{N_{K/\QQ}(\fl)}$.
   \end{theorem}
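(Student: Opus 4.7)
The plan is to proceed by induction on the number of prime divisors of $\fn'/\fn$, reducing to the case $\fn' = \fn\fl$ for a single prime $\fl$ of $K$. Composing norm maps over several primes is compatible with the multiplicativity of Euler factors, so the overall formula factors into one step per prime. For steps where we pass from $\fn\fl^k$ to $\fn\fl^{k+1}$ with $k \ge 1$ (so $\fl \mid \fn\fl^k$ and $\fl \notin A$), Definition \ref{def:curlyN} gives $\mathcal{N} = 1 \otimes \pr_1$, and Theorem \ref{thm:asymmetricnorm}(a) with $\ell \mid N$ yields $(1 \times \pr_1)_*\, {}_c\Xi(m, N_f, \ell N, 1) = {}_c\Xi(m, N_f, N, 1)$. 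These repeated-prime steps thus contribute only the natural projection on coefficient rings, with no Euler factor, as desired.

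For the new-prime case $\fl \nmid \fn$, I would split into subcases according to (i) the splitting behaviour of $\fl$ in $K/\QQ$ (split, ramified, or inert) and (ii) the divisibility of the rational prime $\ell$ lying below $\fl$ by $N_f$ and $N$ (recall $\ell \nmid m$ by assumption). In each subcase Definition \ref{def:curlyN} writes $\mathcal{N}_\fn^{\fn'}$ as an $\fO_L[H_\fn]$-linear combination of the pushforwards $(1 \times \pr_i)_*$. For split or ramified primes ($N' = \ell N$) only $\pr_1, \pr_2$ appear, and Theorem \ref{thm:asymmetricnorm}(a) and (b) apply directly. For inert primes $\fl = (\ell)$ with $N' = \ell^2 N$, the degeneracy map $\pr_3 \colon Y_1(\ell^2 N) \to Y_1(N)$ factors as $\pr_2 \circ \pr_2$ through $Y_1(\ell N)$, so I would apply Theorem \ref{thm:asymmetricnorm}(b) twice and compose the resulting formulas.

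After projecting via $\phi_f \otimes \phi_\fn$, the Hecke and diamond operators on the right-hand sides of Theorem \ref{thm:asymmetricnorm} become explicit elements of $\fO_L[H_\fn]$: one has $\phi_f(T_\ell') = a_\ell(f)$, $\phi_\fn(T_\ell') = \sum_{\fl \mid \ell,\, \fl \nmid \fn} \psi(\fl)[\fl]$, and analogous formulas for the diamond operators. The essential arithmetic input is the identity $\psi((\ell)) = \ell \chi(\ell)$ (from the infinity-type $(-1,0)$ of $\psi$), which gives $\psi(\fl)\psi(\overline{\fl}) = \ell\chi(\ell)$ for split $\fl$, and $\psi(\fl)^2 = \ell\chi(\ell)$ for ramified $\fl$. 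With these substitutions, a direct expansion shows that the contributions collapse to $P_\fl([\fl] \sigma_\fl^{-1} / N(\fl)) \cdot {}_c\Xi_{m,\fn}^{f,\psi}$ in each subcase; the matching of Galois twists $\sigma_\fl = \sigma_\ell$ for split or ramified $\fl$, and $\sigma_\fl = \sigma_\ell^2$ for inert $\fl$, is built into the shifts of $j$ produced by Theorem \ref{thm:asymmetricnorm}.

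The main obstacle is the case-by-case bookkeeping, rather than any deeper conceptual hurdle. The inert case is the most intricate: iterating Theorem \ref{thm:asymmetricnorm}(b) produces an expression that is quadratic in $T_\ell'$ and involves several combinations of $\langle \ell^{-1} \rangle$ operators, which must be reorganised---using the trace/determinant identity for $\mathrm{Frob}_\ell^2$ on $V_f$---to recognise the quadratic polynomial $P_\fl$ at an inert prime. A secondary subtlety is the ``mixed'' split case $\fl \nmid \fn$ with $\overline{\fl} \mid \fn$, in which $\phi_\fn(T_\ell')$ has only one nonzero summand; one must verify directly that the formula still produces the correct Euler factor at $\fl$ in this degenerate situation.
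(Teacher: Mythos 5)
Your proposal takes essentially the same route as the paper's proof: reduce to $\fn' = \fl\fn$ for a single prime, split into cases according to the splitting behaviour of $\fl$ and whether $\ell$ divides $N_f$ or $N$, apply Theorem \ref{thm:asymmetricnorm} to the pushforwards appearing in Definition \ref{def:curlyN}, and project via $\phi_f \otimes \phi_{\fn}$ using $\psi((\ell)) = \ell\chi(\ell)$ to recognise the Euler factor. The paper only writes out the case $\fl$ split, $\overline\fl \nmid \fn$, $\ell \nmid N_f$ and leaves the remaining cases to the reader, so your explicit treatment of the inert case (factoring $\pr_3 = \pr_2 \circ \pr_2$ and iterating Theorem \ref{thm:asymmetricnorm}(b), which in addition uses the commutation of $U_\ell'$ with the degeneracy pushforwards as in the proof of Proposition \ref{prop:equivariance}) and of the mixed split case is a correct filling-in of what the paper omits.
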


   \begin{proof}
    It suffices to consider the case where $\fn' = \fl \fn$ for $\fl$ a prime. As usual, write $N = N_{K/\QQ}(\fn) \cdot \disc$, and similarly $N' = N_{K/\QQ}(\fn') \cdot \disc$.

    If $\fl \mid \fn$, then $\mathcal{N}_{\fn}^{\fn'}$ is the map induced by
    \[ 1 \times \pr_1: Y_1(N_f) \times Y_1(N') \to Y_1(N_f) \times Y_1(N)\]
    and $N_f N$ and $N_f N'$ have the same prime factors, so we are done, by the first case of part (a) of Theorem \ref{thm:asymmetricnorm}.

    Hence we may assume that $\fl \nmid \fn$. In this case we have $\ell \nmid m$, where $\ell$ is the rational prime below $\fl$; and $\sigma_{\fl}$ is the usual arithmetic Frobenius $\sigma_\ell$ at $\ell$ if $\fl$ is split or ramified, and $\sigma_\fl = \sigma_\ell^2$ if $\fl$ is inert.

    We now have eight cases to consider (since $\fl$ may be ramified in $K$, inert, split with $\overline\fl \nmid \fn$, or split with $\fl \mid \fn$, and $\ell$ may or may not divide $N_f$). Each of these can be handled using different cases of Theorem \ref{thm:asymmetricnorm}. We describe the argument in the case where $\fl$ is split, $\overline\fl \nmid \fn$, and $\ell \nmid N_f$:
    \begin{multline*}
     \mathcal{N}_{\fn}^{\fl \fn} \left({}_c \Xi_{m, \fl \fn}^{f, \psi}\right) \\
     \begin{aligned}
      &= \left[1 \otimes (1 \times \pr_1)_* - \frac{\psi(\fl) [\fl]}{\ell} \otimes (1 \times \pr_2)_* \right] {}_c \Xi(m, N_f, \ell N)\\
      & \begin{aligned}=\Bigg[ 1 \otimes (1 - &(\langle \ell \rangle \times \langle \ell \rangle)_* \sigma_\ell^{-2}) \\
&- \frac{\psi(\fl) [\fl]}{\ell} \otimes ( (T_\ell \times 1)_* \sigma_\ell^{-1} - (\langle \ell \rangle \times T_\ell)_* \sigma_\ell^{-2})\Bigg] {}_c \Xi(m, N_f, N)\end{aligned}\\
      & \begin{aligned}=\Bigg[ 1 - &\varepsilon_\ell(f) \frac{[\fl \overline\fl]\psi(\fl \overline\fl)}{\ell} \sigma_\ell^{-2} \\ &-\frac{\psi(\fl)[\fl]}{\ell}\Big(a_\ell(f) \sigma_\ell^{-1} - \varepsilon_\ell(f)\sigma_\ell^{-2} (\psi(\fl) [\fl] + \psi(\overline\fl)[\overline\fl])\Big)\Bigg] (1 \otimes {}_c \Xi(m, N_f, N))\end{aligned}\\
      &= \left[ 1 - a_\ell(f) \sigma_\ell^{-1} \frac{\psi(\fl)[\fl]}{\ell} + \ell \varepsilon_\ell(f) \sigma_\ell^{-2} \left(\frac{[\fl] \psi(\fl)}{\ell}\right)^2\right] {}_c \Xi_{m, \fn}^{f, \psi}.
     \end{aligned}
    \end{multline*}
    The other cases, which are very similar, we leave to the reader.
   \end{proof}

   \begin{remark}
    In the remainder of this paper, we shall in fact only use the elements ${}_c \Xi^{f, \psi}_{m, \fn}$ for $m = 1$. We have worked with general $m$ above since we intend to use the classes for $m = p^k$ in a future work to study the Iwasawa theory of $f$ over the $\Zp^2$-extension of $K$.
   \end{remark}


 \section{Hecke algebras and Ihara's lemma}
  \label{sect:hecke}

  We now collect some results on the Hecke action on the integral cohomology groups of modular curves. Modulo minor modifications all of the results below can be found in \cite[Chapter 2]{wiles95}.

  We adopt the shorthand notation $H^1(Y_1(N))$ for $H^1(Y_1(N)(\CC), \ZZ)$.

  \subsection{Freeness results}

   Let $N \ge 5$ be an integer. Note that $H^1(Y_1(N))$ is a free $\ZZ$-module, since for $N \ge 5$ the group $\Gamma_1(N)$ has no torsion.

   As above, let
   \[ \TT_N \subseteq \End_{\ZZ} H^1(Y_1(N))\]
   be the commutative $\ZZ$-subalgebra generated by the operators $\langle d \rangle$ for $d \in (\ZZ / N\ZZ)^\times$, $T_\ell$ for primes $\ell \nmid N$, and $U_\ell$ for primes $\ell \mid N$.

   \begin{remark}
    Note that there are ``covariant'' and ``contravariant'' actions of Hecke operators on $H^1(Y_1(N))$; but the two actions are interchanged by the Atkin--Lehner involution, so the subalgebras of $\End_{\ZZ} H^1(Y_1(N))$ generated by the two actions of Hecke operators are isomorphic. We shall generally regard $H^1(Y_1(N))$ as a $\TT_N$-module via the contravariant action of Hecke operators; if we mean to regard it as a $\TT_N$-module via the covariant action, we shall write it as $H^1(Y_1(N))_*$ (lower star for pushforward).
   \end{remark}

   \begin{definition}
    \label{def:nonEis}
    A maximal ideal $\cI$ of $\TT_N$ of residue characteristic $p > 2$ is said to be \emph{non-Eisenstein} if there exists a continuous and absolutely irreducible representation
    \[ \overline{\rho}_{\cI}: G_{\QQ} \to \GL_2(\TT_N / \cI) \]
    such that for $\ell \nmid Np$ we have
    \[ \Tr \overline{\rho}_{\cI}(\sigma_{\ell}^{-1}) = T_\ell \bmod \cI\]
    and
    \[ \det \overline{\rho}_{\cI}(\sigma_{\ell}^{-1}) = \ell \langle \ell \rangle \bmod \cI.\]
   \end{definition}

   Given such an ideal, we write $(\TT_N)_{\cI}$ for the $\cI$-adic completion of the localization of $\TT_N$ at $\cI$, which is a finite-rank free $\Zp$-algebra. Similarly, we write $H^1(Y_1(N))_{\cI}$ for the completion of the homology group at $\cI$. As $p \in \cI$, this is a free $\Zp$-module, and is isomorphic to the corresponding \'etale cohomology group $H^1_\et(\overline{Y_1(N)}, \Zp)_{\cI}$; in particular it has a $(\TT_N)_\cI$-linear action of $\Gal(\overline{\QQ} / \QQ)$.

   \begin{proposition}
    Let $\cI$ be a non-Eisenstein maximal ideal of $\TT_N$. Then the maps
    \[ H^1_c(Y_1(N))_{\cI} \to H^1(X_1(N))_{\cI} \to H^1(Y_1(N))_{\cI}\]
    are isomorphisms.
   \end{proposition}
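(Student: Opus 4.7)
The plan is to realise the kernel and cokernel of the two maps as subquotients of the cuspidal contribution $H^0(D)$, where $D = X_1(N) \setminus Y_1(N)$ is the cusp divisor, and then show that this contribution vanishes after localising at $\cI$ because the Hecke action on it is Eisenstein.

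First, I would write down the two long exact sequences arising from the open/closed decomposition $Y_1(N) \hookrightarrow X_1(N) \hookleftarrow D$. The triangle $j_{!} j^{*} \to \mathrm{id} \to i_{*} i^{*}$ on $X_1(N)$ gives the long exact sequence
\[ 0 \to H^0(X_1(N)) \to H^0(D) \to H^1_c(Y_1(N)) \to H^1(X_1(N)) \to 0,\]
using that $H^1(D) = 0$ as $D$ is $0$-dimensional; so the kernel of the first map of the proposition is the reduced module $\widetilde{H}^0(D)$. Dually, the triangle $i_{*} i^{!} \to \mathrm{id} \to Rj_{*} j^{*}$ together with $i^{!} \ZZ_{X_1(N)} = \ZZ_D[-2]$ yields
\[ 0 \to H^1(X_1(N)) \to H^1(Y_1(N)) \to H^0(D) \to H^2(X_1(N)) \to 0,\]
so the cokernel of the second map embeds into $H^0(D)$. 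Both sequences are $\TT_N$-equivariant, so it suffices to prove $H^0(D)_{\cI} = 0$.

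Next, I would recall the explicit Hecke action on the free $\ZZ$-module $H^0(D) = \ZZ[\text{cusps of } X_1(N)]$. For every prime $\ell \nmid Np$ the operator $T_\ell$ acts on this module as $1 + \ell\langle \ell \rangle$; this is a direct computation using the parametrisation of the cusps by (equivalence classes of) pairs $(a, c) \in (\ZZ/N\ZZ)^2$ with $\gcd(a,c,N)=1$, and appears in \cite[Ch.\ 2]{wiles95}. Consequently, on any nonzero quotient of $H^0(D)_{\cI}$ on which $\TT_N$ acts through $\TT_N/\cI$, we have the congruence
\[ T_\ell \equiv 1 + \ell \langle \ell \rangle \pmod{\cI} \qquad \text{for all } \ell \nmid Np.\]

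Finally, I would derive a contradiction with the non-Eisenstein hypothesis. If $H^0(D)_{\cI} \ne 0$, then the above congruences force the (unique up to semisimplification) irreducible representation $\overline{\rho}_\cI$ attached to $\cI$ to have the same trace and determinant on all Frobenii $\sigma_\ell^{-1}$ as the reducible representation $\mathbf{1} \oplus \chi$, where $\chi$ is the mod-$\cI$ character $\sigma_\ell^{-1} \mapsto \ell \langle \ell \rangle$. By Chebotarev and the Brauer--Nesbitt theorem, the semisimplification of $\overline{\rho}_{\cI}$ is then $\mathbf{1} \oplus \chi$, contradicting absolute irreducibility. Hence $H^0(D)_{\cI} = 0$, and the two long exact sequences above collapse into the desired isomorphisms.

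The only real technical step is the explicit computation of the Hecke action on the cusps, i.e.\ verifying $T_\ell = 1 + \ell\langle \ell\rangle$ on $H^0(D)$; the rest is a formal combination of long exact sequences and the Chebotarev/Brauer--Nesbitt argument.
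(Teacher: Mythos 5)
Your overall strategy is sound and is essentially the paper's own (the paper invokes Manin--Drinfeld: the boundary contribution is Eisenstein, hence dies after localizing at the non-Eisenstein ideal $\cI$); your two localization-compatible exact sequences and the Chebotarev/Brauer--Nesbitt endgame are fine. However, the step you yourself single out as the technical heart is wrong as stated: it is \emph{not} true that $T_\ell$ acts on all of $H^0(D) = \ZZ[\text{cusps}]$ as $1 + \ell \langle \ell \rangle$. Computing with the Tate/N\'eron-polygon description of the cusps, on the cusps lying over $\infty$ of $X_0(N)$ (1-gon cusps) one finds $T_\ell = \langle \ell \rangle + \ell$, on those lying over $0$ ($N$-gon cusps) one finds $T_\ell = 1 + \ell\langle \ell \rangle$, and on the intermediate cusps the eigensystems are of the form $T_\ell \mapsto \psi(\ell) + \ell\,\varphi(\ell)$ for pairs of Dirichlet characters $\psi, \varphi$ with $\psi\varphi$ the nebentypus (these are exactly the weight-$2$ Eisenstein eigensystems). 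So the single operator identity you assert, and hence the congruence $T_\ell \equiv 1 + \ell\langle\ell\rangle \bmod \cI$ for \emph{every} eigensystem occurring in $H^0(D)_{\cI}$, fails in general; as written, your contradiction only rules out the eigensystems attached to the pair $(\mathbf{1}, \chi)$.

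The gap is fixable without changing the architecture, in either of two ways. Either replace your identity by the correct statement that every Hecke eigensystem occurring in $H^0(D) \otimes \overline{\mathbf{F}}_p$ is Eisenstein, i.e.\ of the form $T_\ell \mapsto \overline\psi(\ell) + \ell\,\overline\varphi(\ell)$, $\langle d \rangle \mapsto \overline\psi(d)\overline\varphi(d)$; then Chebotarev and Brauer--Nesbitt force $\overline{\rho}_{\cI}^{\,\mathrm{ss}} \cong \overline\psi \oplus \overline\varphi\,\overline\chi_{\mathrm{cyc}}$, contradicting absolute irreducibility, so $H^0(D)_{\cI} = 0$ as you want. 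Or argue as the paper does: restrict attention to primes $\ell \equiv 1 \bmod N$, for which all the boundary formulas collapse and $1 + \ell - T_\ell$ literally annihilates $H^0(D)$, and then use non-Eisensteinness (again via Chebotarev) to produce such an $\ell$ with $1 + \ell - T_\ell \notin \cI$, so that this element is invertible in the localization and kills the kernel and cokernel in your two exact sequences. With either repair your argument matches the intended proof.
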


   \begin{proof}
    This is essentially the Manin--Drinfeld theorem: we can always find a supply of primes $\ell$ such that $1 + \ell - T_\ell$ annihilates the boundary cohomology group $H^1(\partial X_1(N))$, but non-Eisensteinness guarantees that we can find some such $\ell$ with $1 + \ell - T_\ell$ not in $\cI$, so it is invertible after localizing at $\cI$.
   \end{proof}

   We now invoke the following deep theorem of Wiles and others, originating in Mazur's work on the Eisenstein ideal:

   \begin{theorem}
    \label{thm:gorenstein}
    If $\cI$ is a non-Eisenstein maximal ideal and $p \nmid N$, then $(\TT_N)_{\cI}$ is a Gorenstein ring, and $H^1(Y_1(N))_{\cI}$ is a free $(\TT_N)_{\cI}$-module of rank 2. The same also holds if we replace $Y_1(N)$ with $Y(\Gamma)$ for any subgroup intermediate between $\Gamma_1(N)$ and $\Gamma_0(N)$.
   \end{theorem}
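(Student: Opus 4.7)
The plan is to invoke directly the results of Wiles as indicated, but I would sketch the underlying strategy as follows. Fix a non-Eisenstein maximal ideal $\cI$ of residue characteristic $p \nmid N$. First I would use the fact that complex conjugation $c$ acts on $Y_1(N)(\CC)$ and, since $p$ is odd, decomposes $H^1(Y_1(N))_\cI$ into a direct sum $H^+ \oplus H^-$ of eigenspaces, each stable under $(\TT_N)_\cI$. The non-Eisenstein hypothesis implies that $\overline{\rho}_\cI$ is absolutely irreducible, and a Nakayama-type argument using this absolute irreducibility forces $H^+$ and $H^-$ to have the same rank over $(\TT_N)_\cI$.

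Next I would identify one of the eigenspaces with $(\TT_N)_\cI$ itself via the $q$-expansion principle. Concretely, the Eichler--Shimura isomorphism relates $H^1$ to cusp forms, and the pairing $(T, f) \mapsto a_1(Tf)$ together with the relation $a_1(T_n f) = a_n(f)$ identifies the space of forms with the $\TT_N$-dual of $\TT_N$. Combined with the isomorphism $H^1_c(Y_1(N))_\cI \cong H^1(Y_1(N))_\cI$ from the previous proposition, this yields $H^{\pm}_\cI \cong \Hom_{(\TT_N)_\cI}((\TT_N)_\cI, (\TT_N)_\cI^{\vee})$ for an appropriate sign, exhibiting one eigenspace as a rank-one dualizing module.

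Third, the Gorenstein property of $(\TT_N)_\cI$ is the deep ingredient: it amounts to showing that the congruence/intersection pairing coming from Poincar\'e duality on $X_1(N)$ is perfect after localization at $\cI$. Following Mazur's strategy for the Eisenstein ideal (and Wiles's extension to $\Gamma_1(N)$-type levels), one establishes this by controlling the multiplicity of $\overline{\rho}_\cI$ in the cohomology and showing this multiplicity equals one, where non-Eisensteinness and $p \nmid N$ are both essential. Once Gorenstein-ness is in hand, $(\TT_N)_\cI^\vee \cong (\TT_N)_\cI$, so both $H^\pm_\cI$ are free of rank one, hence $H^1(Y_1(N))_\cI$ is free of rank two.

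Finally, the extension to intermediate groups $\Gamma$ between $\Gamma_1(N)$ and $\Gamma_0(N)$ is handled by running precisely the same arguments with $Y(\Gamma)$ in place of $Y_1(N)$, since the necessary inputs (absolute irreducibility of $\overline{\rho}_\cI$, multiplicity-one, Poincar\'e duality) are formally the same. \emph{The main obstacle} is the Gorenstein property: this requires genuinely deep input (multiplicity one for $\overline{\rho}_\cI$), and is the content of Theorem 2.1 of \cite{wiles95} that I would ultimately cite rather than reprove.
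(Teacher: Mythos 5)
Your proposal is correct and follows essentially the same route as the paper: the paper's proof is simply the citation of \cite[Theorem 2.1]{wiles95}, together with the observation that the preceding Manin--Drinfeld proposition lets one pass from Wiles's module $\Hom(J_1(N)[p^\infty], \Qp/\Zp)_{\cI} \cong H^1(X_1(N))_{\cI}$ to $H^1(Y_1(N))_{\cI}$, exactly as you do. Your additional sketch of the internals of Wiles's argument (complex-conjugation eigenspaces, the $q$-expansion pairing, and multiplicity one yielding the Gorenstein property) is an accurate summary of what the cited theorem rests on, but the paper does not reprove any of it.
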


   \begin{proof}
    See e.g.~\cite[Theorem 2.1]{wiles95}. (The result is stated there in terms of the Hecke module $\Hom(J_1(N)[p^\infty], \Qp/\Zp)_{\cI}$, which is isomorphic to $H^1(X_1(N))_{\cI}$, but the preceding proposition shows that we may replace $X_1(N)$ with $Y_1(N)$.)
   \end{proof}

  \subsection{Degeneracy maps}

   We now compare Hecke algebras and Hecke modules at different levels. Throughout this section, $N$ will be an integer $\ge 5$ and $\ell$ will be a prime not dividing $N$. We write $Y_1(N; \ell)$ for the modular curve of level $\Gamma_1(N) \cap \Gamma_0(\ell)$.

   We begin by recalling some standard results:

   \begin{lemma}[Ihara]
    \label{lemma:ihara}
    The map
    \[ (\pr_1)_* \oplus (\pr_2)_* : H_1(X_1(N; \ell)) \to H_1(X_1(N))^{\oplus 2}\]
    is a surjection.\qed
   \end{lemma}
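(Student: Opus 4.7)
The plan is to translate the statement into group theory and then invoke Ihara's classical argument based on the Bruhat--Tits tree of $\SL_2(\QQ_\ell)$. Since $N \ge 5$ and $\ell \nmid N$, the groups $\Gamma_1(N)$ and $\Gamma' \coloneqq \Gamma_1(N) \cap \Gamma_0(\ell)$ are torsion-free and act freely on the upper half-plane, so $Y_1(N)$ and $Y_1(N;\ell)$ are Eilenberg--Mac~Lane spaces; their $H_1$ are the abelianizations of the corresponding groups, and passing from $Y$ to $X$ simply kills the classes of the parabolic generators (cusp loops). In these terms, $(\pr_1)_*$ is induced by the inclusion $\Gamma' \hookrightarrow \Gamma_1(N)$, while $(\pr_2)_*$ is induced by the twisted inclusion $\gamma \mapsto w\gamma w^{-1}$ with $w = \stbt{\ell}{0}{0}{1}$. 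Thus the lemma reduces to showing that
\[ \Phi \colon (\Gamma')^{\ab} \longrightarrow \Gamma_1(N)^{\ab} \oplus \Gamma_1(N)^{\ab} \]
is surjective after modding out both sides by the images of parabolic elements.

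Next, I realize the target of $\Phi$ via an amalgamated product. Let $\widetilde{\Gamma} \subset \SL_2(\ZZ[1/\ell])$ denote the subgroup generated by $\Gamma_1(N)$ and $w$; equivalently, $\widetilde{\Gamma}$ consists of those $S$-integral matrices whose mod-$N$ reduction lies in $\Gamma_1(N)$. This group acts on the Bruhat--Tits tree of $\SL_2(\QQ_\ell)$ with a single edge as fundamental domain: the two endpoints have stabilizer $\Gamma_1(N)$ (via the identity and $w$-conjugation inclusions) and the edge has stabilizer $\Gamma'$. Bass--Serre theory then gives
\[ \widetilde{\Gamma} \;\cong\; \Gamma_1(N) \mathop{*}_{\Gamma'} \Gamma_1(N), \]
and hence a right-exact sequence
\[ (\Gamma')^{\ab} \xrightarrow{\;\Phi\;} \Gamma_1(N)^{\ab} \oplus \Gamma_1(N)^{\ab} \longrightarrow \widetilde{\Gamma}^{\ab} \longrightarrow 0. \]
To finish, I invoke Serre's solution to the congruence subgroup problem for $\SL_2(\ZZ[1/\ell])$, which forces $\widetilde{\Gamma}^{\ab}$ to be generated by (images of) unipotent elements. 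Quotienting both sides by the parabolic subgroups then yields the desired surjectivity on $H_1$ of the compactified curves.

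The main technical nuisance will be the bookkeeping around cusps: matching each cusp of $X_1(N;\ell)$ with its image in $X_1(N)$ under both $\pr_1$ and $\pr_2$, and verifying that the parabolic elements of $\widetilde{\Gamma}$ are exactly those lying in a conjugate of one of the two vertex-stabilizer copies of $\Gamma_1(N)$. This is routine; a cleaner alternative is to first establish the analogous surjection on $H_1$ of the open curves $Y_1$ (where no cusp contributions intervene and the Bass--Serre argument applies verbatim), and then pass to the quotients $X_1$.
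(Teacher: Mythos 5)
The paper offers no proof of this lemma: it is stated with a box and a pointer to Wiles and to \cite[Lemma 4.28]{darmondiamondtaylor97}, and the argument underlying those sources is precisely the one you outline — Bass--Serre theory for the action of $\widetilde\Gamma=\{\gamma\in\SL_2(\ZZ[1/\ell]):\gamma\equiv\stbt 1*01 \bmod N\}$ on the Bruhat--Tits tree at $\ell$, giving $\operatorname{coker}\bigl((\pr_1)_*\oplus(\pr_2)_*\bigr)\cong\widetilde\Gamma^{\ab}$ modulo parabolic classes, plus congruence-subgroup input. So your route is the standard one, but two steps as written do not hold up. First, the ``cleaner alternative'' in your last paragraph is false: on the open curves the cokernel is the \emph{full} abelianization $\widetilde\Gamma^{\ab}$, and this is nonzero — the map sending $\gamma$ to its upper-right entry modulo $N$ is a surjective homomorphism $\widetilde\Gamma\to\ZZ/N\ZZ$ — so the $Y_1$-analogue of Ihara's lemma simply fails. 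The cusps are not bookkeeping; killing the parabolic classes is the heart of the matter, since the nontrivial abelian quotients of $\widetilde\Gamma$ are exactly the ones generated by images of unipotents such as $\stbt 1101$. Your main route (quotient by unipotent classes, using that every parabolic element of $\widetilde\Gamma$ has unit eigenvalues, hence fixes a vertex of the tree and is conjugate into one of the two vertex copies of $\Gamma_1(N)$) is the correct one.

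Second, ``Serre's solution to the congruence subgroup problem forces $\widetilde\Gamma^{\ab}$ to be generated by unipotents'' is not a formal consequence of the triviality of the congruence kernel. Besides CSP you need the separate fact that every congruence quotient $\widetilde\Gamma/\widetilde\Gamma(M)$ is generated by the images of unipotent elements together with commutators; this follows from strong approximation and an explicit computation in the local groups (for $q\nmid N$ the factor $\SL_2(\ZZ_q)$ is generated by elementary matrices, and for $q\mid N$ one checks that $e_{12}(\ZZ_q)$, $e_{21}(N\ZZ_q)$ and commutators generate the $\Gamma_1$-type subgroup), or alternatively from the finer statements of Serre and Vaserstein that the subgroup generated by the elementary unipotents of level $N$ has finite index and full congruence closure. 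Granted that, the argument closes: if $H\subseteq\widetilde\Gamma$ denotes the subgroup generated by unipotents and commutators, any proper finite-index subgroup containing $H$ would, by CSP, contain some $\widetilde\Gamma(M)$ and hence equal $\widetilde\Gamma$; since $\widetilde\Gamma/H$ is a finitely generated abelian group with no proper finite-index subgroups, it is trivial. As written, the implication you invoke is a gap, though it is exactly the supplementary input the cited references supply. Finally, a small slip: $w=\stbt{\ell}001$ has determinant $\ell$, so $\widetilde\Gamma$ is not ``generated by $\Gamma_1(N)$ and $w$''; say instead that it is generated by $\Gamma_1(N)$ and $w^{-1}\Gamma_1(N)w$ (your congruence-theoretic description of $\widetilde\Gamma$, and the amalgam $\Gamma_1(N)*_{\Gamma'}\Gamma_1(N)$ with the second edge map given by $w$-conjugation, are the correct formulations).
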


   \begin{lemma}[Wiles]
    \label{lemma:wiles}
    For any odd prime $p \ne \ell$, and any $r \ge 1$, there is an exact sequence
    \[ H_1(Y_1(N\ell^r; \ell^{r+1}), \Zp) \rTo H_1(Y_1(N\ell^r), \Zp)^{\oplus 2} \rTo H_1(Y_1(N\ell^{r-1}), \Zp),\]
    where the maps are respectively $x \mapsto ((\pr_1)_* x, (\pr_2)_*x)$ and $(u, v) \mapsto (\pr_2)_*(u) - (\pr_1)_*(v)$.\qed
   \end{lemma}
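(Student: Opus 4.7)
The plan is to translate the statement into group homology via the $K(\pi, 1)$-property of the modular curves (which holds since $N\ell^{r-1} \ge 5$ makes all the congruence subgroups in sight torsion-free), and then prove the resulting statement via Bass--Serre theory applied to a suitable arithmetic group acting on the Bruhat--Tits tree of $\operatorname{PGL}_2(\QQ_\ell)$.

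Under the $K(\pi,1)$ identification, the pushforward $(\pr_1)_*$ becomes the corestriction for the inclusion $\Gamma_1(N\ell^r) \into \Gamma_1(N\ell^{r-1})$, while $(\pr_2)_*$ becomes the corestriction for the conjugated subgroup $\delta \Gamma_1(N\ell^r) \delta^{-1} \cap \Gamma_1(N\ell^{r-1})$, where $\delta = \stbt{\ell}{0}{0}{1}$ (and similarly for the maps from the top level). To exhibit exactness, I would introduce the group $\widetilde{\Gamma} \subset \GL_2(\QQ_\ell)$ generated by $\Gamma_1(N\ell^{r-1})$ and $\delta$, acting on an enriched Bruhat--Tits tree $\mathcal{T}$ (decorated with level-$\ell^r$ structure) of $\operatorname{PGL}_2(\QQ_\ell)$, so that the stabilizers in $\widetilde{\Gamma}$ of an appropriate edge $e = (v_0, v_1)$ are: $\Gamma_1(N\ell^r)$ at $v_0$, $\delta \Gamma_1(N\ell^r) \delta^{-1}$ at $v_1$, and their intersection $\Gamma_1(N\ell^r; \ell^{r+1})$ at $e$. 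The contractibility of $\mathcal{T}$ yields a short exact sequence of $\Zp[\widetilde{\Gamma}]$-modules
\begin{equation*}
0 \to \Zp[E(\mathcal{T})] \to \Zp[V(\mathcal{T})] \to \Zp \to 0,
\end{equation*}
and taking the associated long exact sequence in group homology, applying Shapiro's lemma, and restricting to the $\widetilde{\Gamma}$-orbit of $e$, produces the desired three-term exact sequence with Bass--Serre differentials as boundary maps. One finally identifies $H_1(\widetilde{\Gamma}, \Zp)$ with $H_1(\Gamma_1(N\ell^{r-1}), \Zp)$, since adjoining $\delta$ does not alter the $\Zp$-abelianization when $p \ne \ell$.

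The principal obstacle is the $\ell$-adic local bookkeeping: verifying that the stabilizers in $\widetilde{\Gamma}$ of the chosen vertices in the enriched tree are literally $\Gamma_1(N\ell^r)$ rather than coarser variants like $\Gamma_1(N) \cap \Gamma_0(\ell^r)$ (which differ by diamond-$\ell^r$ conditions), and that the orientation of $e$ produces the sign $(u, v) \mapsto (\pr_2)_* u - (\pr_1)_* v$ in the boundary map. Once these details are nailed down the exactness is a formal consequence of Bass--Serre theory; the argument is in essence the one carried out in Chapter 2 of \cite{wiles95}, to which one should refer for the precise implementation.
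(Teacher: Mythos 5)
The paper does not actually prove this lemma: it is stated with a reference to \cite[Ch.~2]{wiles95} and \cite[Lemma 4.28]{darmondiamondtaylor97}, so your sketch has to stand on its own, and as written it does not. The first problem is structural. With $\delta = \stbt{\ell}{0}{0}{1}$, the intersection $\Gamma_1(N\ell^r) \cap \delta\Gamma_1(N\ell^r)\delta^{-1}$ is $\{\gamma \in \Gamma_1(N\ell^r) : \ell \mid b\}$, not $\Gamma_1(N\ell^r)\cap\Gamma_0(\ell^{r+1})$; to get the edge group you claim one must use $\delta^{-1}\Gamma_1(N\ell^r)\delta$. This is not mere bookkeeping: both $\Gamma_1(N\ell^r)$ and $\delta\Gamma_1(N\ell^r)\delta^{-1}$ lie inside $\Gamma_1(N\ell^{r-1}) \subset \SL_2(\ZZ)$, which fixes the standard vertex of the Bruhat--Tits tree, so they cannot be stabilizers of two distinct adjacent vertices for any action compatible with the natural one. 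Your fix is to posit an ``enriched tree with level-$\ell^r$ structure'', but you never show such a complex is connected and contractible with exactly two vertex orbits and one edge orbit and exactly the prescribed stabilizers; decorating the tree with $\ell$-power level data generically destroys simple-connectivity (it is not a covering of the tree, and the extra loops feed unknown terms into the Mayer--Vietoris sequence). By Bass--Serre theory, the existence of your complex is equivalent to a specific amalgam/HNN decomposition of $\widetilde\Gamma = \langle \Gamma_1(N\ell^{r-1}), \delta\rangle$, which you do not establish; worse, since $\delta \in \widetilde\Gamma$ conjugates one proposed vertex group to the other (and on the genuine tree $\delta$ moves $v_0$ to $v_1$), the two vertices would lie in a single $\widetilde\Gamma$-orbit, which is the HNN shape and yields only \emph{one} copy of $H_1(Y_1(N\ell^r),\Zp)$ in the middle of the sequence, not the two the lemma requires.

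The second, and decisive, gap is the last step: the identification $H_1(\widetilde\Gamma,\Zp) \cong H_1(\Gamma_1(N\ell^{r-1}),\Zp)$ ``since adjoining $\delta$ does not alter the $\Zp$-abelianization''. This is not a formal fact and is where the entire content of the lemma sits. The group $\widetilde\Gamma$ contains $\delta^{-n}\Gamma_1(N\ell^{r-1})\delta^{n}$ for all $n$, hence for instance every unipotent $\stbt{1}{x}{0}{1}$ with $x \in \ZZ[1/\ell]$; it is an $S$-arithmetic-type group far larger than $\Gamma_1(N\ell^{r-1})$, and pinning down the $p$-part of its abelianization is exactly the kind of input (Ihara's theorem, or the congruence subgroup property for $\SL_2(\ZZ[1/\ell])$) that a proof of this lemma must supply --- indeed there is no reason to expect it to be as large as $H_1(\Gamma_1(N\ell^{r-1}),\Zp)$, which contains the whole weight-$2$ cuspidal homology at that level. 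Even granting some computation of $H_1(\widetilde\Gamma,\Zp)$, you would still have to show that the Bass--Serre boundary map matches, under your identification, the map $(u,v)\mapsto (\pr_2)_*u - (\pr_1)_*v$ into $H_1(Y_1(N\ell^{r-1}),\Zp)$ --- and note that $\Gamma_1(N\ell^{r-1})$ is a \emph{subgroup} of $\widetilde\Gamma$, so the natural comparison map goes the wrong way for this. Deferring ``the precise implementation'' to Wiles's Chapter 2 concedes precisely these points; a correct argument along these lines works instead with the amalgam $\Gamma_1(N\ell^r) *_{\Gamma_1(N\ell^r;\ell^{r+1})} \delta^{-1}\Gamma_1(N\ell^r)\delta$ inside $\SL_2(\ZZ[1/\ell])$ and then does genuine work to relate the $H_1$ of that group to level $N\ell^{r-1}$, which your sketch does not do.
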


   (We have stated these lemmas slightly differently from Wiles, who formulates Ihara's lemma in terms of morphisms of Jacobians, and Lemma \ref{lemma:wiles} in terms of group cohomology with $\Qp/\Zp$ coefficients; for the formulations above see \cite[Lemma 4.28]{darmondiamondtaylor97}.)

   \begin{corollary}
    \label{cor:ulseq}
    The following sequence is exact for any odd prime $p \ne \ell$ and any $r \ge 1$:
    \[ H_1(Y_1(N\ell^r; \ell^{r+1}), \Zp) \rTo^{(\pr_1)_* - \frac{U_\ell}{\ell} (\pr_2)_*} H_1(Y_1(N\ell^r), \Zp)
     \rTo^{(\pr_2)_*} H_1(Y_1(N\ell^{r-1}), \Zp).\]
   \end{corollary}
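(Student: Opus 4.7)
The plan is to bootstrap from Lemma \ref{lemma:wiles} by combining it with the standard expression of $U_\ell$ as a push-pull through $A = H_1(Y_1(N\ell^r; \ell^{r+1}), \Zp)$. Abbreviate $B = H_1(Y_1(N\ell^r), \Zp)$ and $C = H_1(Y_1(N\ell^{r-1}), \Zp)$, and use $\pr_1, \pr_2$ generically for the forgetful and ``$z \mapsto \ell z$'' degeneracies at each of the two levels $A \to B$ and $B \to C$.

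For exactness in the middle, suppose $y \in B$ with $(\pr_2)_* y = 0$. The pair $(y, 0) \in B \oplus B$ lies in the kernel of Wiles's second map, so by exactness there exists $x \in A$ with $(\pr_1)_* x = y$ and $(\pr_2)_* x = 0$; then $(\pr_1)_* x - \tfrac{U_\ell}{\ell}(\pr_2)_* x = y$, as desired.

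To show the composition in the corollary vanishes, the key claim is the Hecke identity
\[ \ell \, (\pr_1)_* \;=\; (\pr_2)_* \circ U_\ell \quad \text{as maps } B \to C. \tag{$\dagger$}\]
Given $(\dagger)$ and the ``commuting square'' identity $(\pr_2)_*(\pr_1)_* = (\pr_1)_*(\pr_2)_*$ on $A \to C$ (which is the $\beta\alpha = 0$ portion of Lemma \ref{lemma:wiles}, reflecting that $\pr_2 \circ \pr_1$ and $\pr_1 \circ \pr_2$ agree as morphisms $A \to Y_1(N\ell^{r-1})$ since both are induced by $z \mapsto \ell z$ on $\HH$), the composition becomes $(\pr_1)_*(\pr_2)_* - (\pr_1)_*(\pr_2)_* = 0$. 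To prove $(\dagger)$, I express $U_\ell$ on $B$ as $(\pr_1)_* \circ \pr_2^*$ via the correspondence $B \xleftarrow{\pr_1} A \xrightarrow{\pr_2} B$ (valid because $\Gamma_1(N\ell^r) \cap \alpha^{-1}\Gamma_1(N\ell^r)\alpha = \Gamma_1(N\ell^r; \ell^{r+1})$ for $\alpha = \stbt{\ell}{0}{0}{1}$), and compute
\[ (\pr_2)_* U_\ell \;=\; (\pr_2)_*(\pr_1)_* \pr_2^* \;=\; (\pr_1)_*(\pr_2)_* \pr_2^* \;=\; \ell \, (\pr_1)_*,\]
using the commuting-square identity once more together with $(\pr_2)_* \pr_2^* = \deg(\pr_2) = \ell$ (standard for a finite étale map of degree $\ell$).

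The main obstacle is fixing the push-pull convention so that $U_\ell$ (rather than its transpose $U_\ell'$) comes out as $(\pr_1)_* \pr_2^*$ in the first place; with the conventions of the paper this is routine, and once $(\dagger)$ is in hand, the rest of the argument is essentially formal.
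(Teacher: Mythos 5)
Your proposal is correct and in substance identical to the paper's argument: the paper simply applies the change of basis $\stbt{1}{-U_\ell/\ell}{0}{1}$ to the middle term of the exact sequence of Lemma \ref{lemma:wiles}, and identifying the resulting second map as $(u,v) \mapsto (\pr_2)_* u$ is precisely your identity $(\dagger)$, $(\pr_2)_* \circ U_\ell = \ell\,(\pr_1)_*$, which the paper leaves implicit (and uses again in the same form in the proofs of Theorems \ref{thm:normmaps1} and \ref{thm:normmaps2}). Your two-inclusion unpacking together with the push--pull/degree verification of $(\dagger)$ is a correct expansion of that one-line manipulation, the only point needing care being the covariant convention for $U_\ell$, which you rightly flag and which does agree with the paper's.
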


   \begin{proof}
    By applying the matrix $\begin{pmatrix} 1 & -\tfrac{U_\ell}{\ell} \\ 0 & 1 \end{pmatrix}$ to the middle term of the exact sequence of Lemma \ref{lemma:wiles} we deduce the exact sequence
    \begin{multline*}
     H_1(Y_1(N\ell^r; \ell^{r+1}), \Zp) \rTo^{\left((\pr_1)_* - \frac{U_\ell}{\ell} (\pr_2)_*, (\pr_2)_*\right)}
     H_1(Y_1(N\ell^r), \Zp)^{\oplus 2} \\
     \rTo^{\left(\begin{smallmatrix} (\pr_2)_* \\ 0 \end{smallmatrix}\right)} H_1(Y_1(N\ell^{r-1}), \Zp),
    \end{multline*}
    which implies the exactness of the desired sequence.
   \end{proof}

   \begin{lemma}
    \label{lemma:diamonds}
    The pushforward map
    \[ H^1(Y_1(N\ell^{r+1})) \to H^1(Y_1(N\ell^r; \ell^{r+1}))\]
    is surjective for any $r \ge 0$.
   \end{lemma}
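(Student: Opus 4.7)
The plan is to rewrite the pushforward $\pi_*$ via Shapiro's lemma as the map on group cohomology induced by an augmentation of coefficient modules, and then use that $Y_1(N\ell^r;\ell^{r+1})$ is a non-compact surface.

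Set $\Gamma = \Gamma_1(N\ell^r) \cap \Gamma_0(\ell^{r+1})$ and $\Gamma' = \Gamma_1(N\ell^{r+1})$. Since $N \ge 5$, the standard trace congruence ($\Tr(\gamma) \equiv 2 \bmod N$ for $\gamma \in \Gamma_1(N)$) forces both groups to be torsion-free. Hence $\Gamma \backslash \HH$ and $\Gamma' \backslash \HH$ are honest Riemann surfaces representing $Y_1(N\ell^r;\ell^{r+1})$ and $Y_1(N\ell^{r+1})$, and the degeneracy $\pi$ is a finite Galois topological covering of degree $\ell$ with deck group $G = \Gamma/\Gamma' \cong \ZZ/\ell\ZZ$.

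Shapiro's lemma provides a natural isomorphism $H^1(Y_1(N\ell^{r+1}), \ZZ) \cong H^1(\Gamma, \Ind_{\Gamma'}^\Gamma \ZZ)$, under which $\pi_*$ becomes the map induced by the $\Gamma$-equivariant augmentation $\varepsilon \colon \Ind_{\Gamma'}^\Gamma \ZZ \twoheadrightarrow \ZZ$ summing over cosets. Writing $K = \ker \varepsilon$, the long exact cohomology sequence attached to $0 \to K \to \Ind_{\Gamma'}^\Gamma \ZZ \to \ZZ \to 0$ yields
\[
 H^1(\Gamma, \Ind_{\Gamma'}^\Gamma \ZZ) \xrightarrow{\pi_*} H^1(\Gamma, \ZZ) \longrightarrow H^2(\Gamma, K),
\]
so it is enough to show $H^2(\Gamma, K) = 0$.

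This last vanishing is automatic: the open modular curve $\Gamma\backslash\HH$ is a non-compact Riemann surface of finite type (cusps are removed in passing from $X_1$ to $Y_1$), so it is homotopy equivalent to a finite wedge of circles, and $\Gamma$ is consequently a finitely generated free group of cohomological dimension $1$. No step here is a genuine obstacle; the main points to verify carefully are the torsion-freeness of $\Gamma$ and the identification of $\pi_*$ on covering spaces with the augmentation-induced transfer on group cohomology, both of which are routine.
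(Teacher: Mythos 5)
Your argument is correct, but it is genuinely different from the one in the paper. You work directly with $H^1$: after identifying both groups with group cohomology of the torsion-free lattices $\Gamma' = \Gamma_1(N\ell^{r+1}) \subset \Gamma = \Gamma_1(N\ell^r)\cap\Gamma_0(\ell^{r+1})$, you identify the pushforward with corestriction, i.e.\ with the map induced by the augmentation $\ZZ[\Gamma/\Gamma'] \to \ZZ$ via Shapiro, and then kill the obstruction $H^2(\Gamma, K)$ using that the open modular curve is a non-compact finite-type surface, so $\Gamma$ is free of cohomological dimension $1$. The paper instead proves the dual statement: it shows the cokernel of the pullback $H^1_c(Y_1(N\ell^r;\ell^{r+1})) \to H^1_c(Y_1(N\ell^{r+1}))$ is torsion-free, using the modular symbol isomorphism $H^1_c(Y(\Gamma)) \cong \Hom_{\Gamma}(\operatorname{Div}^0(\mathbf{P}^1_{\QQ}), \ZZ)$ to identify the lower-level $H^1_c$ with the $\Delta$-invariants of the upper-level one, where $\Delta = \ker\bigl((\ZZ/\ell^{r+1}\ZZ)^\times \to (\ZZ/\ell^{r}\ZZ)^\times\bigr)$. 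Both proofs ultimately exploit the non-compactness of the open curve (which is why $Y_1$ rather than $X_1$ appears); yours is perhaps more self-contained topologically, while the paper's fits the modular-symbol framework it uses elsewhere. One small slip, which does not affect the argument: the covering has degree $[\Gamma:\Gamma']$ and deck group $\Delta$, which is cyclic of order $\ell$ only when $r \ge 1$; for $r = 0$ (a case included in the lemma) the deck group is $(\ZZ/\ell\ZZ)^\times$, of order $\ell - 1$. Since Shapiro's lemma and the corestriction--augmentation identification require only a finite-index subgroup (normality and the precise degree are never used), your proof goes through verbatim once this is corrected.
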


   \begin{proof}
    We prove the dual version of the statement: the cokernel of the pullback map
    \[ H^1_c(Y_1(N\ell^r; \ell^{r+1})) \to H^1_c(Y_1(N\ell^{r+1}))\]
    is torsionfree. This follows from the ``modular symbol'' isomorphism
    \[ H^1_c(Y(\Gamma)) = \Hom_{\Gamma}(\operatorname{Div}^0(\mathbf{P}^1_{\QQ}), \ZZ),\]
    valid for any torsion-free congruence subgroup $\Gamma$, which implies that we have an isomorphism $H^1_c(Y_1(N\ell^{r};\ell^{r+1})) = H^1_c(Y_1(N\ell^{r+1}))^\Delta$, where $\Delta$ is the kernel of $(\ZZ / \ell^{r+1} \ZZ)^\times \to (\ZZ / \ell^r \ZZ)^\times$.
   \end{proof}

   \begin{remark}
    Compare Lemma 4.30(b) of \cite{darmondiamondtaylor97}, which shows that the cokernel of the map $H_1(X_1(N), \Zp) \to H_1(X_H(N), \Zp)$ is Eisenstein for $H$ any subgroup of $(\ZZ / N\ZZ)^\times$.
   \end{remark}

   \begin{lemma}[{Ribet, cf.~\cite[Lemma, p492]{wiles95}}]
    \label{lemma:omitprimes}
    Let $\Sigma$ be any finite set of primes not dividing $N$, and let $\TT_{X_1(N)}$ be the quotient of $\TT_N$ that acts faithfully on $H^1(X_1(N))$. Then the subalgebra of $\TT_{X_1(N)}$ generated by the diamond operators and the $T_q$ for $q \notin \Sigma$ has finite index in $\TT_{X_1(N)}$, and this index is 1 if $2 \notin \Sigma$ and a power of 2 otherwise.\qed
   \end{lemma}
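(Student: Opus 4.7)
The strategy is to prove the finite-index claim first, and then to analyse the $2$-primary part of the index separately. Write $\TT' \subseteq \TT_{X_1(N)}$ for the subalgebra generated by the diamond operators and the $T_q$ for $q \notin \Sigma$. Since $\TT_{X_1(N)}$ is finitely generated as a $\ZZ$-module, finite index is equivalent to the rational equality $\TT' \otimes \QQ = \TT_{X_1(N)} \otimes \QQ$.

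Now $\TT_{X_1(N)} \otimes \QQ$ is a finite product $\prod_i K_i$ of number fields, one factor per Galois orbit of systems of Hecke eigenvalues on $H^1(X_1(N), \overline{\QQ})$; such a system is determined by a newform $f$ of some level $M \mid N$ together with, for each prime $q \mid N/M$, a choice of root of the Hecke polynomial of $f$ at $q$. I would verify that $\TT' \otimes \QQ$ surjects onto each factor $K_i$ as follows: for $q \mid N$ the operator $T_q = U_q$ lies in $\TT'$ by the hypothesis on $\Sigma$, accounting for the newform eigenvalues at primes dividing $M$ and the refinement choices at primes dividing $N/M$; for $q \nmid N$ with $q \notin \Sigma$ the image of $T_q$ directly produces $a_q(f)$; and for the finitely many $q \in \Sigma$, a standard Chebotarev argument applied to the $\lambda$-adic Galois representation $\rho_f$ shows that removing finitely many Frobenius traces does not shrink the resulting field, so the $a_\ell(f)$ for $\ell \notin \Sigma$ already generate the Hecke field $K_f$. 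Distinctness of the factors is then clear: two distinct newforms are separated at some prime $q \notin \Sigma$ by strong multiplicity one, and different refinement choices at a fixed newform are separated by the corresponding $U_q$, which lies in $\TT'$. This establishes $\TT' \otimes \QQ = \TT_{X_1(N)} \otimes \QQ$, hence the finite-index claim.

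For the statement that the index is either $1$ or a power of $2$, it suffices to check that $(\TT')_\mathfrak{m} = (\TT_{X_1(N)})_\mathfrak{m}$ for every maximal ideal $\mathfrak{m}$ of $\TT_{X_1(N)}$ of odd residue characteristic $p$. At a non-Eisenstein $\mathfrak{m}$, the completed localisation $(\TT_{X_1(N)})_\mathfrak{m}$ carries a lift $\rho_\mathfrak{m}$ of the absolutely irreducible residual representation $\overline{\rho}_\mathfrak{m}$ with $\Tr \rho_\mathfrak{m}(\mathrm{Frob}_q) = T_q$ for every $q$ unramified in $\rho_\mathfrak{m}$; combining Chebotarev density of the primes $q \notin \Sigma$ with continuity of the trace and completeness of $(\TT_{X_1(N)})_\mathfrak{m}$, the $T_q$ for $q \notin \Sigma$ together with the diamonds generate a closed subring which is dense and hence equal to the whole. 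At an Eisenstein $\mathfrak{m}$, $\overline{\rho}_\mathfrak{m}$ decomposes as a direct sum of characters $\chi_1 \oplus \chi_2$ and $T_q \bmod \mathfrak{m}$ equals $\chi_1(q) + \chi_2(q)$, so the diamonds together with sufficiently many $T_q$ for $q \notin \Sigma$ visibly generate $(\TT_{X_1(N)})_\mathfrak{m}$ once $2$ is inverted; the role played by $2$ is that in residue characteristic $2$ certain Eisenstein-type congruences can only be resolved by the operator $T_2$ itself.

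The main obstacle is the second stage: the rational equality in the first stage is essentially formal (strong multiplicity one plus Chebotarev), but extracting precise integral control of the index at Eisenstein primes in residue characteristic $2$ requires the delicate case analysis carried out in \cite[\S 2]{wiles95}. In the present paper this lemma will only ever be applied after inverting $2$, so the sharper $2$-primary bound serves as a convenient reference rather than a critical ingredient; nevertheless, establishing it rigorously would be the technically most involved step.
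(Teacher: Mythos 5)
Your first stage is essentially fine, modulo one inaccuracy: $\TT_{X_1(N)} \otimes \QQ$ is \emph{not} in general a product of number fields, since the $U_q$ for $q \mid N$ can act non-semisimply on the old subspace (e.g.\ when $a_q(f) = 0$ for a newform $f$ with $q \mid M_f$ and $q \mid N/M_f$). This is harmless for your purpose, because the operators you must capture, the $T_q$ with $q \in \Sigma$, act semisimply and already lie rationally in the commutative semisimple subalgebra generated by the $T_\ell$ with $\ell \nmid N$, $\ell \notin \Sigma$; your strong-multiplicity-one/Chebotarev step is exactly what shows that subalgebra has full component $K_f$ at each newform, so the rational equality holds. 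The genuine gap is in the second stage, which is where the real content of the lemma lives. Your Chebotarev-plus-continuity argument recovers $T_q$ ($q \in \Sigma$) in $(\TT_{X_1(N)})_{\mathfrak{m}}$ only when $q$ is \emph{different} from the residue characteristic $p$ of $\mathfrak{m}$ (and in the Eisenstein case there is no $\rho_{\mathfrak{m}}$ over $\TT_{\mathfrak{m}}$; one must argue with pseudo-representations, which your ``visibly generate'' paragraph does not do). When $p \in \Sigma$, the operator $T_p$ is not the trace of Frobenius at an unramified prime of $\rho_{\mathfrak{m}}$ -- the $p$-adic representation is not unramified at $p$, and $T_p$ in residue characteristic $p$ is a crystalline/$U_p$-type quantity -- so no density argument can produce it. Showing that $T_p$ nonetheless lies in the subalgebra $p$-integrally for every odd $p$ is precisely the delicate point of the lemma, and it is also where the prime $2$ genuinely enters (the statement can fail at $p = 2$, whence the power of $2$ when $2 \in \Sigma$); your diagnosis of the $2$-power as an ``Eisenstein congruence'' phenomenon mis-locates the difficulty, and your closing sentence in effect concedes the hard case by deferring it to \cite{wiles95}.

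For comparison: the paper itself offers no proof -- the lemma is quoted from \cite{wiles95}, where it is due to Ribet. The argument there is of a different nature from yours: by the perfect duality $(T, g) \mapsto a_1(Tg)$ between the cuspidal Hecke algebra and cusp forms over $\ZZ$, divisibility of the index by $p$ is equivalent to the existence of a nonzero cusp form over $\overline{\mathbf{F}}_p$ whose $q$-expansion is supported on integers divisible by primes of $\Sigma$, and such forms are excluded for odd $p$ using the theory of mod $p$ modular forms ($V_q$ and $\theta$ operators, weight filtration); the breakdown of that exclusion mod $2$ is the source of the power of $2$. It is worth noting that in the only application made in this paper (with $\Sigma = \{\ell\}$ and localization at a non-Eisenstein ideal of residue characteristic $p \nmid N\ell$, $p \ge 5$), your non-Eisenstein Chebotarev argument does suffice; but it does not prove the lemma as stated.
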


   In order to apply all of these results at once, we will need to localize at a non-Eisenstein maximal ideal, after which there is no difference between $H^1$ and $H_1$, or between $Y_1(N)$ and $X_1(N)$.
   We now define some Hecke algebras that we shall need.

   \begin{definition}
    For $r \ge 1$, let $\TT_{N\ell^r}^\circ$ be the subalgebra of $\TT_{N\ell^r}$ generated by the diamond operators and the $T_q$ for $q \ne \ell$ (including the operators $T_q = U_q$ for $q \mid N$), but not $U_\ell$..

    We write $\widetilde \TT_N$ for the ring $\TT_N[X] / (X^2 - T_\ell X + \ell \langle \ell \rangle)$.
   \end{definition}

   There is a commutative diagram
   \[\begin{diagram}
    \TT_{N\ell}^\circ &\rTo& \TT_N \\
    \dInto & & \dInto \\
    \TT_{N\ell} & \rOnto^{\lambda_2} &\widetilde \TT_N,
   \end{diagram}\]
   where the top horizontal arrow is the natural map, and the map $\lambda$ is defined by $\lambda(U_\ell) = X$.

   Let $\cI$ be a non-Eisenstein maximal ideal of $\TT_N$ of residue characteristic $p \nmid N\ell$. We can regard $\cI$ also as a maximal ideal of $\TT_N^\circ$. By Lemma \ref{lemma:omitprimes}, the morphism of completions $(\TT_{N_\ell}^\circ)_{\cI} \onto (\TT_N)_{\circ}$ is a surjection.

   We can now proceed to the first main result of this section, which asserts the surjectivity of an ``$\ell$-stabilization'' map.

   \begin{theorem}
    \label{thm:normmaps1}
    The map
    \[ \beta: (\widetilde \TT_N)_{\cI} \otimes_{\TT_{N\ell}} H^1(Y_1(N\ell))_* \to (\widetilde \TT_N)_{\cI} \otimes_{\TT_{N}} H^1(Y_1(N))_*\]
    defined by
    \[ (\pr_1)_* - \frac{T_\ell - X}{\ell} (\pr_2)_*\]
    is an isomorphism.
   \end{theorem}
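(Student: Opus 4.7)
The plan is to prove that $\beta$ is an isomorphism by showing both source and target are free $(\widetilde{\TT}_N)_{\cI}$-modules of rank $2$, and that $\beta$ admits a section. A surjection between finite free modules of the same rank over a Noetherian ring is automatically an isomorphism, so this suffices.

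For the target: Theorem~\ref{thm:gorenstein} applied at level $N$ gives that $H^1(Y_1(N))_{*,\cI}$ is free of rank $2$ over $(\TT_N)_{\cI}$, and since $(\widetilde{\TT}_N)_{\cI}$ is free of rank $2$ over $(\TT_N)_{\cI}$ by definition, extending scalars yields the target $((\widetilde{\TT}_N)_{\cI})^{\oplus 2}$. For the source: Theorem~\ref{thm:gorenstein} at level $N\ell$ gives $H^1(Y_1(N\ell))_{*,\tilde{\cI}}$ free of rank $2$ over $(\TT_{N\ell})_{\tilde{\cI}}$ for each non-Eisenstein maximal ideal $\tilde{\cI}$ above $\cI$. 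Using Ihara's lemma (Lemma~\ref{lemma:ihara}) together with Lemma~\ref{lemma:diamonds} to lift from $Y_1(N;\ell)$ to $Y_1(N\ell)$, the surjection $\lambda_2 : \TT_{N\ell} \twoheadrightarrow \widetilde{\TT}_N$ (sending $U_\ell \mapsto X$) induces a surjection on the relevant semi-local completions, so the source is also free of rank $2$ over $(\widetilde{\TT}_N)_{\cI}$.

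To construct a section, observe first that $X$ is a unit in $(\widetilde{\TT}_N)_{\cI}$: the relation $X(T_\ell - X) = \ell\langle\ell\rangle$ expresses $X$ as a product of units, since $\ell$ is invertible (as $\ell \ne p$) and $\langle\ell\rangle$ is invertible (as $\ell \nmid N$). The natural candidate section is the ``$U_\ell = X$ stabilization''
\[
  \alpha(h) \coloneqq (\pr_1)^*(h) - \tfrac{\langle\ell\rangle}{X}(\pr_2)^*(h),
\]
characterized by $U_\ell \alpha(h) = X\alpha(h)$ in $H^1(Y_1(N\ell))$ (as follows from $U_\ell(\pr_1)^* = (\pr_1)^*T_\ell - \langle\ell\rangle(\pr_2)^*$ and $U_\ell(\pr_2)^* = \ell(\pr_1)^*$). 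Computing $\beta \circ \alpha$ requires the pushforward-pullback identities for the degeneracy maps $\pr_1, \pr_2 : Y_1(N\ell) \to Y_1(N)$, which one obtains by factoring $\pr_i$ through the intermediate curve $Y_1(N;\ell)$: the maps $\pr_i$ have degree $\ell^2 - 1$, and $(\pr_1)_*(\pr_2)^* = (\ell-1)T_\ell'$, $(\pr_2)_*(\pr_1)^* = (\ell-1)T_\ell$. Combined with the identity $T_\ell' = \langle\ell\rangle^{-1}T_\ell$ and the quadratic relation $X^2 = T_\ell X - \ell\langle\ell\rangle$ in $\widetilde{\TT}_N$, this calculation yields $\beta \circ \alpha = u \cdot \mathrm{id}$ for a specific element $u \in (\widetilde{\TT}_N)_{\cI}$. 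The absolute irreducibility of $\overline{\rho}_{\cI}$ (the non-Eisenstein hypothesis) is then invoked to show that $u$ is a unit, so $\alpha/u$ is the required section.

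The main obstacle is the verification that $u$ is a unit in $(\widetilde{\TT}_N)_{\cI}$: this requires ruling out certain accidental congruences in the Hecke eigenvalues modulo $\cI$ (such as $\bar{X}^2 \equiv \bar{\langle\ell\rangle}$ in the residue field), which correspond to degenerate local behavior of $\overline{\rho}_{\cI}$ at $\ell$ that is incompatible with its global irreducibility. A second, more subtle point is ensuring that $\lambda_2$ induces a surjection of semi-local completions above $\cI$, i.e., that no ``spurious'' $\ell$-new maximal ideals of $\TT_{N\ell}$ above $\cI$ appear whose $U_\ell$-eigenvalue satisfies the $\ell$-old Hecke polynomial modulo $\cI$.
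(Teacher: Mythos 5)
The fatal step is the claim that $u$ is a unit in $(\widetilde\TT_N)_{\cI}$, to be extracted from the absolute irreducibility of $\overline{\rho}_{\cI}$; this is false, and in exactly the cases the theorem is needed for. First, each of the four composites $(\pr_i)_*(\pr_j)^*$ carries a factor $\ell-1$: both degeneracy maps factor through the intermediate curve $Y_1(N;\ell)$, and the covering $\alpha: Y_1(N\ell) \to Y_1(N;\ell)$ has degree $\ell-1$, so $\alpha_*\alpha^* = \ell-1$ and hence $(\pr_1)_*(\pr_1)^* = (\pr_2)_*(\pr_2)^* = \ell^2-1$, $(\pr_i)_*(\pr_j)^* = (\ell-1)\cdot(\text{Hecke operator})$ for $i \ne j$. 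Consequently $u \in (\ell-1)\cdot(\widetilde\TT_N)_{\cI}$, so $u$ is a non-unit whenever $\ell \equiv 1 \bmod p$ -- and nothing excludes this; on the contrary, the theorem is fed into Proposition \ref{prop:cmnorm} and then into Rubin's machinery, where the essential primes are Kolyvagin primes with $N(\fq) \equiv 1 \bmod p$, so the case $\ell \equiv 1 \bmod p$ is precisely the one that matters. Second, even when $\ell \not\equiv 1 \bmod p$, working out $\beta\circ\alpha$ with your identities leaves (up to units) an Euler-type factor of the shape $X^2 - \langle\ell\rangle$, whose vanishing modulo $\cI$ is Ribet's level-raising congruence (for trivial nebentypus, $a_\ell \equiv \pm(1+\ell) \bmod \cI$). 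Such congruences are entirely compatible with -- indeed, by level-raising, ubiquitous for -- absolutely irreducible $\overline{\rho}_{\cI}$, so irreducibility cannot be ``invoked to show that $u$ is a unit''. In short, no Hecke-equivariant section of $\beta$ up to a unit exists in general, so surjectivity cannot be obtained this way. (By contrast, your freeness claims are fine, and simpler than you make them: the source is $(\widetilde\TT_N)_{\cI}\otimes_{\TT_{N\ell}}H^1(Y_1(N\ell))_*$, a base change of a module that is free of rank $2$ over the completion of $\TT_{N\ell}$ at the unique relevant maximal ideal (Theorem \ref{thm:gorenstein}), so it is free of rank $2$ over $(\widetilde\TT_N)_{\cI}$ with no appeal to Ihara's lemma and no need for $\lambda_2$ to surject on completions.)

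The point of the paper's proof is exactly that surjectivity must come from an input that is insensitive to these congruences, namely Ihara's lemma. One makes $\widetilde\TT_N$ act on $H^1(Y_1(N))_*^{\oplus 2}$ with $X$ acting by $\stbt{T_\ell}{-\langle \ell \rangle}{\ell}{0}$, identifies this module with $\widetilde\TT_N\otimes_{\TT_N}H^1(Y_1(N))_*$, and checks that composing $(\pr_1)_*\oplus(\pr_2)_*$ with this identification gives $\beta$; Lemma \ref{lemma:ihara} together with Lemma \ref{lemma:diamonds} then gives surjectivity after localizing at $\cI$, with no non-degeneracy hypothesis at $\ell$. Injectivity is handled separately and cheaply: $\beta$ is an isomorphism after inverting $p$, and source and target are free (in particular torsion-free) $\Zp$-modules by Theorem \ref{thm:gorenstein}. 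If you want to keep your ``same rank plus surjective implies bijective'' framing you may, but the surjectivity itself has to come from Ihara's lemma, not from a section.
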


   \begin{proof}
    Firstly, we note that $\beta$ is well-defined, since the map $\gamma: H^1(Y_1(N\ell))_* \to (\widetilde \TT_N)_{\cI} \otimes_{\TT_{N}} H^1(Y_1(N))_*$ defined by $(\pr_1)_* - \frac{T_\ell - X}{\ell} (\pr_2)_*$ satisfies $\gamma \circ U_\ell = X \gamma$ (cf.~Proposition \ref{prop:equivariance} above). Moreover, $\beta$ is an isomorphism after inverting $p$; and its source and target are both free $(\widetilde \TT_N)_{\cI}$-modules by Theorem \ref{thm:gorenstein}, and in particular free $\Zp$-modules, so $\beta$ is injective.

    It remains to check that $\beta$ is surjective. This is essentially a lightly disguised form of Ihara's lemma. We do this by constructing a module for the (somewhat artificial) algebra $\widetilde \TT_N$ (following the argument used by Wiles to prove an analogous statement for $\ell = p$, cf~\cite[p490]{wiles95}): we let $\widetilde \TT_N$ act on the module $H^1(Y_1(N))_*^{\oplus 2}$ with $\TT_N$ acting via the covariant action and $X$ acting by the matrix $\begin{pmatrix} T_\ell & -\langle \ell \rangle \\ \ell & 0 \end{pmatrix}$. The map
    \[ (\pr_1)_* \oplus (\pr_2)_*:  H^1(Y_1(N \ell))_* \to H^1(Y_1(N))_*^{\oplus 2}\]
    is then a morphism of $\TT_{N \ell}$-modules, and Ihara's lemma (combined with Lemma \ref{lemma:diamonds}) shows that after localizing at $\cI$ it is surjective. However, $H^1(Y_1(N))^{\oplus 2}_*$ is isomorphic to $\widetilde \TT_N \otimes_{\TT_N} H^1(Y_1(N))_*$, and, unravelling the definitions, we find that the composite map is exactly $\beta$.
   \end{proof}

   Our second result of this section concerns ``$\ell$-depletion'' of eigenforms of level divisible by $\ell$. We first introduce a little more notation. Let $r \ge 1$. There is a map
   \[ \phi_r: \TT_{N\ell^{r+1}} \to \TT_{N\ell^r}, \]
   which maps the $\langle d \rangle$ operators and the $T_q$ for $q \ne \ell$ to themselves, and which maps $U_\ell$ to 0.

   \begin{theorem}
    \label{thm:normmaps2}
    For any $r \ge 1$, and any non-Eisenstein maximal ideal $\cI$ of $\TT_{N\ell^r}$, the map
    \[ \beta_r: (\TT_{N\ell^r})_{\cI} \mathop{\otimes}_{\displaystyle (\TT_{N\ell^{r+1}}, \phi)} H^1(Y_1(N\ell^{r+1}))_* \to (\TT_{N\ell^r})_{\cI} \mathop{\otimes}_{\displaystyle \TT_{N\ell^{r}}} H^1(Y_1(N\ell^{r}))_*\]
    is a bijection.
   \end{theorem}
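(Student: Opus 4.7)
The plan is to mimic the proof of Theorem \ref{thm:normmaps1}, with Corollary \ref{cor:ulseq} playing the role that Ihara's lemma played there. The natural candidate for $\beta_r$, by analogy with Theorem \ref{thm:normmaps1} specialized to $X=0$, is the map induced by $\widetilde{\beta}_r := (\pr_1)_* - \tfrac{U_\ell}{\ell}(\pr_2)_*$, where $\pr_1, \pr_2 : Y_1(N\ell^{r+1}) \to Y_1(N\ell^r)$ are the two degeneracy maps and $U_\ell$ acts on the target (at level $N\ell^r$). The first task is to verify that $\widetilde{\beta}_r$ kills $U_\ell \cdot H^1(Y_1(N\ell^{r+1}))_*$, so that the map descends to the tensor product taken via $\phi$. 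This should follow from the commutation of $(\pr_1)_*$ with $U_\ell$, combined with the double-coset identity $(\pr_2)_* \circ U_\ell = \ell \cdot (\pr_1)_*$ relating the two degeneracies down to $Y_1(N\ell^r)$; together they give $\widetilde{\beta}_r \circ U_\ell = 0$.

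Next, I would identify both the source and target as free $(\TT_{N\ell^r})_{\cI}$-modules. The target is free of rank $2$ by Theorem \ref{thm:gorenstein}. For the source, apply Theorem \ref{thm:gorenstein} at the intermediate level $\Gamma_1(N\ell^r) \cap \Gamma_0(\ell^{r+1})$, and then use the surjection of Lemma \ref{lemma:diamonds} together with an analysis of $U_\ell$-coinvariants to pass to level $N\ell^{r+1}$. A dimension count using $\ell$-depletion — namely, that each eigensystem at level $N\ell^r$ has a unique companion at level $N\ell^{r+1}$ annihilated by $U_\ell$ — shows that $\beta_r \otimes \QQ$ is a bijection. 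Together with $\Zp$-torsion-freeness of both sides, this yields injectivity of $\beta_r$.

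The main obstacle is surjectivity. For an injective map between free $(\TT_{N\ell^r})_{\cI}$-modules of the same rank, Nakayama's lemma reduces surjectivity to the residue-field statement that, modulo the maximal ideal of $(\TT_{N\ell^r})_{\cI}$, the induced map is surjective — equivalently, by dimension count, \emph{injective}. This is the true analog of the Ihara-type input used to prove Theorem \ref{thm:normmaps1}. I would establish it by combining the exact sequence of Corollary \ref{cor:ulseq}, the surjectivity from Lemma \ref{lemma:diamonds}, and Wiles's Lemma \ref{lemma:wiles}, each reduced mod $\cI$: the non-Eisenstein hypothesis is used to kill any contribution from $H^1(Y_1(N\ell^{r-1}))_*$ at $\cI$ that would otherwise obstruct the image of $\widetilde{\beta}_r$ from filling the target, thereby delivering the required surjectivity mod $\cI$ and hence, via Nakayama, integrally.
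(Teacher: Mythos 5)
Your reading of the map $\beta_r$ (induced by $(\pr_1)_* - \tfrac{U_\ell}{\ell}(\pr_2)_*$), the well-definedness check via $(\pr_1)_*U_\ell = U_\ell(\pr_1)_*$ and $(\pr_2)_*U_\ell = \ell(\pr_1)_*$, and the injectivity step (torsion-freeness plus bijectivity after inverting $p$, seen via $\ell$-depletion of eigensystems) all match the paper. The genuine gap is in surjectivity. The Nakayama reduction is legitimate in principle, but the mod-$\cI$ statement it requires is precisely the hard point, and the mechanism you offer for it is false: localizing at $\cI$ does \emph{not} kill $H^1(Y_1(N\ell^{r-1}))$. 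The non-Eisenstein condition only asserts irreducibility of the residual Galois representation attached to $\cI$; it does not prevent that eigensystem from occurring at level $N\ell^{r-1}$, and in the paper's applications (e.g.\ Proposition \ref{prop:cmnorm}) the relevant $\cI$ is typically $\ell$-old, so $H^1(Y_1(N\ell^{r-1}))_{\cI} \ne 0$. Consequently, after localizing Corollary \ref{cor:ulseq} at $\cI$ (and using Lemma \ref{lemma:diamonds}), the image of $(\pr_1)_* - \tfrac{U_\ell}{\ell}(\pr_2)_*$ on $H^1(Y_1(N\ell^{r+1}))_{\cI}$ is exactly the kernel $H^1(Y_1(N\ell^r))_{\cI}^{(\pr_2)_*=0}$, which is in general a \emph{proper} submodule of the target; your argument supplies no valid reason why its image should fill $H^1(Y_1(N\ell^r))_{\cI}$ modulo the maximal ideal, since the reason you give (vanishing of the bottom term of the exact sequence at $\cI$) does not hold. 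Note also that ``reducing the exact sequences mod $\cI$'' is not an available move: exactness is not preserved under such reductions.

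What is missing is the spanning argument that the paper uses. Because the source of $\beta_r$ is a tensor product over the Hecke algebra, the image of $\beta_r$ is the $(\TT_{N\ell^r})_{\cI}$-span of $H^1(Y_1(N\ell^r))_{\cI}^{(\pr_2)_*=0}$, and the content of the proof is that this submodule generates $H^1(Y_1(N\ell^r))_{\cI}$ as a $\Zp[U_\ell]$-module. The paper proves this by induction on $r$: one writes $x = a_0 + U_\ell a_1 + \dots + U_\ell^r a_r$ with each $a_i$ killed by $(\pr_2)_*$, uses $(\pr_2)_*U_\ell = \ell(\pr_1)_*$ to reduce the problem to surjectivity of $(\pr_1)_*$ on $(\pr_2)_*$-kernels, and settles that by Ihara's lemma (Lemma \ref{lemma:ihara}) when $r=1$ and by Lemma \ref{lemma:wiles} when $r \ge 2$. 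Without this $U_\ell$-spanning induction (or an equivalent mod-$\cI$ Ihara-type input proved correctly), your proposal does not establish surjectivity. A smaller point: the freeness of the source is obtained most directly by applying Theorem \ref{thm:gorenstein} at level $N\ell^{r+1}$ to the preimage of $\cI$ under $\phi_r$ (which is again non-Eisenstein); your detour through the intermediate level and an unspecified ``analysis of $U_\ell$-coinvariants'' is much vaguer, though that is not the essential defect.
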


   \begin{proof}
    As in the previous theorem, we first note that the map $\beta_r$ is well-defined (by the same calculation as in Proposition \ref{prop:equivariance}), its source and target are free $\Zp$-modules of finite rank, and it is a bijection after inverting $p$. Thus $\beta_r$ is injective.

    We now prove the surjectivity of $\beta_r$. We know that
    \[ \beta_r\left( H^1(Y_1(N \ell^{r+1}))_{\cI}\right) = H^1(Y_1(N\ell^r))_{\cI}^{(\pr_2)_* = 0}\]
    by Corollary \ref{cor:ulseq}. So it suffices to show that the submodule $H^1(Y_1(N \ell^r))_{\cI}^{(\pr_2)_* = 0}$ spans $H^1(Y_1(N \ell^r))_{\cI}$ as a $(\TT_{N \ell^r})_{\cI}$-module, or equivalently as a $\Zp[U_\ell]$-module.

    We prove this by induction on $r$. Let $x \in H^1(Y_1(N \ell^r))_{\cI}$ be arbitrary. We want to write
    \[ x = a_0 + U_\ell a_1 + \dots + U_\ell^r a_r\]
    for some $a_1, \dots, a_r \in H^1(Y_1(N \ell^r))_{\cI}^{(\pr_2)_* = 0}$. Equivalently, we want to find elements $a_1, \dots, a_r \in H^1(Y_1(N \ell^r))_{\cI}^{(\pr_2)_* = 0}$ such that
    \[ (\pr_2)_*\left( x -\left(U_\ell a_1 + \dots + U_\ell^r a_r\right)\right) = 0.\]

    However, we have
    \begin{multline*}
     (\pr_2)_*\left( x - \left(U_\ell a_1 + \dots + U_\ell^r a_r\right)\right) \\
     = (\pr_2)_*(x) - \ell \left[ (\pr_1)_*(a_{1}) + \dots + U\ell^{r-1}  (\pr_1)_*(a_{r})\right].
    \end{multline*}
    By the induction hypothesis, there exist $b_0, \dots, b_{r-1} \in H^1(Y_1(N\ell^{r-1}))_{\cI}^{(\pr_2)_* = 0}$ such that $(\pr_2)_*(x) = b_0 + \dots + U_\ell^{r-1} b_{r-1}$. (This statement is trivially true for $r = 1$, if we understand $\pr_2$ as the zero map.) So if we can choose the $a_i$ such that $(\pr_1)_*(a_i) = \ell^{-1} b_{i-1}$, we are done.

    So it suffices to show that
    \[ (\pr_1)_* : H^1(Y_1(N\ell^r))_{\cI}^{(\pr_2)_* = 0} \to H^1(Y_1(N\ell^{r-1}))_{\cI}^{(\pr_2)_* = 0}\]
    is surjective for all $r \ge 1$ (where, again, we understand the right-hand side as the whole of $H^1(Y_1(N\ell^{r-1}))_{\cI}$ if $r = 1$). This is immediate from Ihara's lemma if $r = 1$; for $r \ge 2$ it follows from Lemma \ref{lemma:wiles}.
   \end{proof}

   \begin{corollary}
    \label{cor:normmaps3}
    For any non-Eisenstein maximal ideal $\cI$ of $\TT_N$, the map
    \[ (\TT_N)_{\cI} \mathop\otimes_{\displaystyle (\TT_{N\ell^2}, \phi)} H^1(Y_1(N\ell^2))_* \to (\TT_N)_{\cI} \mathop\otimes_{\TT_{N}} H^1(Y_1(N))\]
    given by
    \[ (\pr_1)_* - \tfrac{T_\ell}{\ell} (\pr_2)_* + \tfrac{\langle \ell \rangle}{\ell} (\pr_3)_* \]
    is a bijection.
   \end{corollary}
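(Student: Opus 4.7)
The strategy is to factor the map through the intermediate level $Y_1(N\ell)$ and combine the two preceding theorems. Write $A_1, A_2: Y_1(N\ell^2) \to Y_1(N\ell)$ and $B_1, B_2: Y_1(N\ell) \to Y_1(N)$ for the standard degeneracy maps (with $A_1, B_1$ the natural ones), so that $\pr_1 = B_1 A_1$, $\pr_2 = B_1 A_2 = B_2 A_1$, and $\pr_3 = B_2 A_2$. The non-Eisenstein ideal $\cI$ of $\TT_N$ lifts to a non-Eisenstein ideal of $\TT_{N\ell}^\circ$; pick any maximal ideal of $\TT_{N\ell}$ above it, still denoted $\cI$.

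Theorem \ref{thm:normmaps2} (with $r=1$) then provides a bijection
\[
\beta_1 : (\TT_{N\ell})_\cI \otimes_{(\TT_{N\ell^2},\phi_1)} H^1(Y_1(N\ell^2))_* \xrightarrow{\sim} (\TT_{N\ell})_\cI \otimes_{\TT_{N\ell}} H^1(Y_1(N\ell))_*,
\]
and Theorem \ref{thm:normmaps1} the $\ell$-stabilization bijection
\[
\beta = (B_1)_* - \tfrac{T_\ell - X}{\ell}(B_2)_* : (\widetilde\TT_N)_\cI \otimes_{(\TT_{N\ell},\lambda_2)} H^1(Y_1(N\ell))_* \xrightarrow{\sim} (\widetilde\TT_N)_\cI \otimes_{\TT_N} H^1(Y_1(N))_*.
\]
Base-changing $\beta_1$ along $\lambda_2$ and composing with $\beta$, one obtains a bijection
\[
\gamma := \beta \circ \beta_1 : (\widetilde\TT_N)_\cI \otimes_{(\TT_{N\ell^2}, \lambda_2 \phi_1)} H^1(Y_1(N\ell^2))_* \xrightarrow{\sim} (\widetilde\TT_N)_\cI \otimes_{\TT_N} H^1(Y_1(N))_*.
\]
Because $\lambda_2 \phi_1$ sends $U_\ell$ to $\lambda_2(0) = 0$ and restricts to the natural map on the other generators, it factors through the homomorphism $\phi: \TT_{N\ell^2} \to \TT_N$ appearing in the Corollary. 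Hence both sides of $\gamma$ arise from the source and target of the Corollary's map by base change from $(\TT_N)_\cI$ to $(\widetilde\TT_N)_\cI$.

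It then remains to check that $\gamma$ matches the Corollary's formula after this base change. Expanding $\gamma$ via $\pr_i = B_j A_k$ and using the Hecke-equivariance identities relating $U_\ell$ to the pushforwards $(B_j)_*$ (of the same kind as those derived in the proof of Proposition \ref{prop:equivariance}), one computes
\[
\gamma(x) \;=\; (\pr_1)_* x \;-\; \tfrac{T_\ell}{\ell}(\pr_2)_* x \;+\; \tfrac{\langle\ell\rangle}{\ell}(\pr_3)_* x,
\]
where the quadratic contribution coming from the $-\tfrac{U_\ell}{\ell}(A_2)_* x$ piece of $\beta_1$ is simplified via the relation $X^2 = T_\ell X - \ell \langle \ell \rangle$ that holds in $\widetilde\TT_N$. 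Since $\widetilde\TT_N$ is free of rank $2$ over $\TT_N$, hence faithfully flat, a $(\TT_N)_\cI$-linear map is an isomorphism iff its base change to $(\widetilde\TT_N)_\cI$ is, and the Corollary follows by descent.

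The principal obstacle is the final verification: tracking how the $U_\ell$-correction in $\beta_1$ combines with the $X$-term of $\beta$ to produce the $\tfrac{\langle\ell\rangle}{\ell}(\pr_3)_*$ summand, which requires careful bookkeeping of the Hecke identities between the source- and target-side $U_\ell$ operators through the maps $A_i$ and $B_j$, together with a well-timed invocation of the Cayley--Hamilton-type relation defining $\widetilde\TT_N$.
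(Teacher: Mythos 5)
Your argument is correct and is essentially the paper's own proof: the Corollary is deduced by combining Theorem \ref{thm:normmaps1} with the $r=1$ case of Theorem \ref{thm:normmaps2}, which gives bijectivity after tensoring with $\widetilde\TT_N$, and then descending along the faithfully flat (free of rank $2$) extension $\TT_N \to \widetilde\TT_N$. Your explicit check that the composite equals the stated map, via $\beta\circ U_\ell = X\circ\beta$ and the relation $X(T_\ell - X) = \ell\langle\ell\rangle$, simply fills in a verification the paper leaves implicit.
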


   \begin{proof}
    This follows directly from Theorem \ref{thm:normmaps1} and case $r = 1$ of Theorem \ref{thm:normmaps2}: combining these theorems gives the bijectivity of the above map after tensoring with $\widetilde \TT_N$, but $\widetilde \TT_N$ is free of rank 2 over $\TT_N$ and hence faithfully flat. Alternatively, a direct argument using Ihara's lemma and lemma \ref{lemma:wiles} is given in \cite[(2.14)]{wiles95} (see also \cite[\S 4.5]{darmondiamondtaylor97}).
   \end{proof}

  \subsection{Hida theory}

   We now prove an analogue of the above results in the case setting of Hida theory, where we consider a limit over all $p$-power levels. Here $p$ will be an odd prime not dividing $N$.

   \begin{definition}
    Let
    \[ H^1_{\ord}(Y_1(Np^\infty)) = e_{\ord} \cdot \varprojlim_{r \ge 1} H^1(Y_1(Np^r), \Zp)_*,\]
    where $e_{\ord} \coloneqq \lim_{n \to \infty} (U_p)^{n!}$ is Hida's ordinary projector.
   \end{definition}

   \begin{remark}
    Note that we are using the covariant action of the Hecke algebra here, and the covariant action of $U_p$ coincides with the contravariant action of $U_p'$, so this is the same module as the one denoted $e'_{\ord} \cdot GES_p(N, \Zp)$ in \cite{ohta00} and in our previous paper.
   \end{remark}

   We let $\TT_{Np^\infty}$ be the subalgebra of $\End_{\Zp} H^1_{\ord}(Y_1(Np^\infty))$ generated by the $\langle d \rangle$ and $T_n$ operators.

   \begin{definition}
    \label{def:pdist}
    Let $\cI$ be a characteristic $p$ maximal ideal of the Hecke algebra $\TT_{Np^r}$, for $r \ge 1$. We say $\cI$ is \emph{$p$-ordinary} if $U_p \notin \cI$. We say $\cI$ is \emph{$p$-distinguished} if it is ordinary and non-Eisenstein, and the restriction of the Galois representation $\overline{\rho}_{\cI}$ to a decomposition group $D_p$ at $p$ satisfies
    \[ \overline{\rho}_{\cI}|_{D_p} \cong \begin{pmatrix} \chi_1 & * \\ 0 & \chi_2\end{pmatrix},\]
    with $\chi_1$ and $\chi_2$ distinct characters of $D_p$.
   \end{definition}

   The following theorem summarizes some of the major results of Hida theory:

   \begin{theorem}
    \label{thm:hidatheory}
    The module $H^1_{\ord}(Y_1(Np^\infty))$ is a finite-rank free module over the Iwasawa algebra $\Lambda = \Zp[[(1 + p\Zp)^\times]]$ (with the module structure given by the diamond operators). The algebra $\TT_{Np^\infty}$ is a finite flat $\Lambda$-algebra, and its maximal ideals biject with the $p$-ordinary maximal ideals of $\TT_{Np}$.

    If $\cI$ is a $p$-distinguished maximal ideal, then $(\TT_{Np^\infty})_{\cI}$ is Gorenstein, and the $(\TT_{Np^\infty})_{\cI}$-module $H^1_{\ord}(Y_1(Np^\infty))_{\cI}$ is free of rank 2.
   \end{theorem}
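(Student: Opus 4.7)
The plan is to derive the statement entirely from classical Hida theory and its refinements due to Mazur--Tilouine and Ohta; the proof amounts essentially to a sequence of citations, organized in three stages corresponding to the three assertions of the theorem.

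First, the finite freeness of $H^1_{\ord}(Y_1(Np^\infty))$ over $\Lambda$, and the finite flatness of $\TT_{Np^\infty}$ over $\Lambda$, are the original content of Hida's control theorem. The key input is that the ordinary projector $e_{\ord}$ exists on each $H^1(Y_1(Np^r), \Zp)$ and commutes with the covariant pushforwards $H^1(Y_1(Np^{r+1}))_* \to H^1(Y_1(Np^r))_*$; moreover these maps are compatible with the $\Lambda$-action given by the diamond operators. A Nakayama-style specialization argument then shows that the inverse limit is finite free over $\Lambda$. The algebra $\TT_{Np^\infty}$ embeds into the endomorphism ring of this free module and is generated over $\Lambda$ by the finitely many operators $T_n$, hence is finite over $\Lambda$; flatness is a consequence of freeness of the module on which it acts faithfully.

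Second, the bijection between maximal ideals of $\TT_{Np^\infty}$ and $p$-ordinary maximal ideals of $\TT_{Np}$ follows by specializing $\Lambda$ at its augmentation ideal $\mathfrak{m}_\Lambda$: by the control theorem this identifies $\TT_{Np^\infty}/\mathfrak{m}_\Lambda \TT_{Np^\infty}$ with the ordinary quotient of $\TT_{Np} \otimes \mathbf{F}_p$, and every maximal ideal of the finite $\Lambda$-algebra $\TT_{Np^\infty}$ sits above $\mathfrak{m}_\Lambda$ since its residue field has characteristic $p$.

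The main obstacle, and the one requiring the $p$-distinguished hypothesis, is the Gorenstein property of $(\TT_{Np^\infty})_{\cI}$ together with the rank-$2$ freeness of $H^1_{\ord}(Y_1(Np^\infty))_{\cI}$ over it. I would cite Ohta's work \cite{ohta00}, which establishes precisely these statements. Morally, $p$-distinguishedness ensures that the residual local representation at $p$ admits a genuine ordinary filtration whose graded pieces are distinct characters, which allows one to split the ordinary part cleanly off the full cohomology and to transfer the finite-level Gorenstein/freeness statement of Theorem \ref{thm:gorenstein} along the tower; without this hypothesis the argument would break down precisely where one needs the two characters $\chi_1, \chi_2$ to be non-isomorphic to control the $\Lambda$-adic deformation.
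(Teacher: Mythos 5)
Your route is essentially the paper's: the proof in the paper is likewise a sequence of citations, namely Ohta \cite[Theorem 1.3.5]{ohta00} for the $\Lambda$-freeness of $H^1_{\ord}(Y_1(Np^\infty))$, Ohta \cite[Theorem 1.5.7]{ohta00} for the finite flatness of $\TT_{Np^\infty}$ and for the control isomorphism $\TT_{Np^\infty}/(p,X) \cong e_{\ord}\cdot \TT_{Np}/p$ giving the bijection of maximal ideals (by exactly the argument you describe: a finite algebra over the complete local ring $\Lambda$ has all its maximal ideals above $\mathfrak{m}_\Lambda$), and \cite[Proposition 3.1.1]{emertonpollackweston06} --- which is deduced from \cite[Theorem 2.1]{wiles95}, i.e.\ from transferring the finite-level statement of Theorem \ref{thm:gorenstein} along the tower using $p$-distinguishedness --- for the Gorenstein and rank-2 freeness assertions. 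So your attribution of the last step to \cite{ohta00} differs from the paper's source, but the mechanism you describe is the one the paper actually uses.

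Two of your supplementary justifications are not correct as stated, though they do not affect the citation-based proof. First, flatness of $\TT_{Np^\infty}$ over $\Lambda$ does \emph{not} follow from its faithful action on a finite free $\Lambda$-module: that only gives that $\TT_{Np^\infty}$ is a finite torsion-free $\Lambda$-module, which over the two-dimensional local ring $\Lambda$ is strictly weaker than flatness (e.g.\ the maximal ideal $(p,X)$ is finite and torsion-free but not flat). The flatness is part of what must be cited (Ohta's Theorem 1.5.7, or Hida's duality between the Hecke algebra and the space of ordinary $\Lambda$-adic forms). Second, $\TT_{Np^\infty}$ is generated by the infinitely many $T_n$ and $\langle d\rangle$; its finiteness over $\Lambda$ comes from being a $\Lambda$-submodule of $\End_\Lambda$ of a finite free $\Lambda$-module, $\Lambda$ being Noetherian (and the ideal you specialize at is the maximal ideal $(p,X)$ of $\Lambda$, not the augmentation ideal). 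With these points repaired or replaced by the references above, your proof coincides with the paper's.
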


   \begin{proof}
    For the first part of the theorem, we refer to \S 1 of \cite{ohta00}. The finiteness and freeness of  $H^1_{\ord}(Y_1(Np^\infty))$ over $\Lambda$ is Theorem 1.3.5 of \emph{op.cit.}; the fact that $\TT_{Np^\infty}$ (denoted by $e^*\mathcal{H}^*(N; \Zp)$ in \emph{op.cit.}) is finite and flat over $\Lambda$ is Theorem 1.5.7. Moreover, since $\TT_{Np^\infty}$ is a finite flat algebra over a complete local ring, its maximal ideals biject with the maximal ideals of the Artinian ring $\TT_{Np^\infty} / J$ where $J = (p, X)$ is the maximal ideal of $\Lambda$; Theorem 1.5.7(iii) of \emph{op.cit.} shows that $\TT_{Np^\infty} / J = e_{\ord} \cdot \TT_{Np} / p$, whose maximal ideals are precisely the $p$-ordinary maximal ideals of $\TT_{Np}$.

    For the statement on freeness, we refer to \cite[Proposition 3.1.1]{emertonpollackweston06}, where the result is deduced from \cite[Theorem 2.1]{wiles95}.
   \end{proof}

   \begin{proposition}
    \label{prop:normmapshida}
    If $\cI$ is $p$-distinguished, then Theorems \ref{thm:normmaps1} and \ref{thm:normmaps2} hold with $Np^r$ in place of $N$, for any $r \ge 1$.
   \end{proposition}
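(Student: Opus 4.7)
The plan is to rerun the proofs of Theorems~\ref{thm:normmaps1} and~\ref{thm:normmaps2} verbatim with $N$ replaced by $Np^r$ throughout, the auxiliary prime $\ell$ being any prime with $\ell \nmid Np^r$ (so in particular $\ell \neq p$). Every geometric ingredient used in those proofs -- Ihara's lemma (Lemma~\ref{lemma:ihara}), Wiles' three-term exact sequence (Lemma~\ref{lemma:wiles}) and its $U_\ell$-depleted form Corollary~\ref{cor:ulseq}, the surjectivity in Lemma~\ref{lemma:diamonds}, and Ribet's omit-primes statement Lemma~\ref{lemma:omitprimes} -- is a statement about modular curves of some level $M \ge 5$ and a prime $\ell \nmid M$ that is entirely indifferent to whether $p$ divides $M$. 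These therefore continue to hold at $M = Np^r$ without any change.

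The single input that does not survive the substitution as stated is the Gorenstein / rank-$2$-freeness result of Theorem~\ref{thm:gorenstein}, whose hypothesis requires $p \nmid N$. Closing this gap is precisely what the $p$-distinguished hypothesis is designed for: under it, $(\TT_{Np^r})_{\cI}$ is Gorenstein and $H^1(Y_1(Np^r))_{\cI}$ is free of rank~$2$ over it. At finite level this is the extension of Wiles' theorem to $p$-distinguished residual representations invoked, via \cite{emertonpollackweston06}, in the proof of Theorem~\ref{thm:hidatheory}; alternatively it may be obtained by specializing the Hida-theoretic freeness statement of Theorem~\ref{thm:hidatheory} at an arithmetic point of weight~$2$ and level~$Np^r$ via the control theorem.

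Once these two observations are combined, the arguments of Theorems~\ref{thm:normmaps1} and~\ref{thm:normmaps2} reproduce themselves mechanically. Well-definedness of the analogues of $\beta$ and $\beta_r$ at level $Np^r \ell$ versus $Np^r$ is the same Hecke computation as in Proposition~\ref{prop:equivariance}; injectivity follows from freeness over $\Zp$ together with the fact that each map becomes a bijection after inverting~$p$ by classical $\ell$-stabilization / old-forms theory, which depends only on the behaviour at~$\ell$ and is insensitive to the presence of a $p^r$-factor in the tame level; and the two surjectivity arguments go through unchanged, the first as a consequence of Ihara plus Lemma~\ref{lemma:diamonds} applied to $H^1(Y_1(Np^r))_*^{\oplus 2}$ with the same $\widetilde \TT_{Np^r}$-action by $\stbt{T_\ell}{-\langle \ell \rangle}{\ell}{0}$ as before, and the second by the same induction on the exponent of~$\ell$ using Corollary~\ref{cor:ulseq}.

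The only real obstacle is supplying the Gorenstein / freeness input at $p$-power level; everything else in the two original proofs is local at $\ell$ and is automatic. As the $p$-distinguished hypothesis is precisely tailored to deliver this input, there is nothing genuinely new to prove once that is in hand.
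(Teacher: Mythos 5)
Your proposal is correct and follows essentially the same route as the paper: identify that the only place the hypothesis $p \nmid N$ enters the proofs of Theorems \ref{thm:normmaps1} and \ref{thm:normmaps2} is the Gorenstein/rank-2-freeness input of Theorem \ref{thm:gorenstein}, and supply that input at level $Np^r$ from the $p$-distinguished hypothesis via the Hida-theoretic freeness of Theorem \ref{thm:hidatheory} combined with Ohta's control theorem. The paper's proof is exactly this (in fact it is the second of your two suggested ways of obtaining the freeness), so there is nothing further to add.
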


   \begin{proof}
    The only ingredient of the proofs of the two theorems which required the assumption $p \nmid N$ was the freeness result of Theorem \ref{thm:gorenstein}. However, if $\cI$ is $p$-distinguished, then we know that $H^1(Y_1(Np^\infty))_{\cI}$ is free over $(\TT_{Np^\infty})_{\cI}$ by Theorem \ref{thm:hidatheory}, and the control theorem (Theorem 1.5.7(iii) of \cite{ohta00}) then implies that $H^1(Y_1(Np^r))_{\cI}$ is free over $(\TT_{Np^r})_{\cI}$.
   \end{proof}

   We also have a companion result relating forms of level prime to $p$ and level divisible by $p$.

   \begin{proposition}
    \label{prop:normmapshidap}
    Let $\cI$ be a non-Eisenstein maximal ideal of $\widetilde \TT_N$ of residue characteristic $p \nmid N$, such that $X \notin \cI$. Then the ideal of $\TT_{Np}$ corresponding to $\cI$ is ordinary and $p$-distinguished; we have $T_p - X \in p \cdot (\widetilde \TT_N)_{\cI}$; and the morphism $\beta$ of Theorem \ref{thm:normmaps1} gives an isomorphism
    \[ (\widetilde \TT_N)_{\cI} \otimes_{\TT_{Np}} H^1(Y_1(Np))_* \to (\widetilde \TT_N)_{\cI} \otimes_{\TT_{N}} H^1(Y_1(N))_*.\]
   \end{proposition}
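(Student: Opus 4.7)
The plan is to reduce the isomorphism assertion to a direct application of Theorem \ref{thm:normmaps1} in the ``bad'' case $\ell = p$, after dispensing with the two preliminary claims. For the first preliminary, I would take $\mathfrak{m}$ to be the preimage of $\cI$ under the surjection $\lambda_2 \colon \TT_{Np} \onto \widetilde \TT_N$ (which sends $U_p \mapsto X$); since $X \notin \cI$ we have $U_p \notin \mathfrak{m}$, so $\mathfrak{m}$ is $p$-ordinary. For $p$-distinguished I would use that $\overline{\rho}_\mathfrak{m} = \overline{\rho}_\cI$ restricted to a decomposition group at $p$ is upper-triangular with unramified quotient character $\chi_2$ (standard for ordinary representations, going back to Mazur--Wiles). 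The determinant relation in Definition \ref{def:nonEis} then forces $\chi_1|_{I_p}$ to agree with the restriction of the mod-$p$ cyclotomic character (the nebentypus being unramified at $p$ since $p \nmid N$); and since $p$ is odd this restriction is non-trivial of order $p-1$, so $\chi_1 \ne \chi_2$.

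The second preliminary is the congruence $T_p \equiv X \pmod{p}$. Reducing the defining relation $X^2 - T_p X + p\langle p \rangle = 0$ of $\widetilde \TT_N$ modulo $p$ yields $X(X - T_p) \equiv 0$, and since $X$ is a unit in the local ring $(\widetilde \TT_N)_\cI$ (because $X \notin \cI$), dividing by $X$ gives the claim. In particular $\tfrac{T_p - X}{p}$ is a well-defined element of $(\widetilde \TT_N)_\cI$, so the map $\beta = (\pr_1)_* - \tfrac{T_p - X}{p}(\pr_2)_*$ makes sense on the tensor products in the statement.

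With these in hand, the isomorphism is just the argument of Theorem \ref{thm:normmaps1} repeated at $\ell = p$. The only inputs of that argument specific to the level are (i) freeness of $H^1(Y_1(N))_\cI$ over $(\TT_N)_\cI$, which is Theorem \ref{thm:gorenstein} since $p \nmid N$, and (ii) freeness of $H^1(Y_1(Np))_\mathfrak{m}$ over $(\TT_{Np})_\mathfrak{m}$, which follows from Theorem \ref{thm:hidatheory} plus the control theorem (as in the proof of Proposition \ref{prop:normmapshida}), available precisely because $\mathfrak{m}$ is $p$-distinguished. The remaining steps---well-definedness, injectivity via the generic isomorphism plus $\Zp$-freeness, and surjectivity via Ihara's lemma combined with Lemma \ref{lemma:diamonds}---do not care whether $\ell$ equals $p$. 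I expect the $p$-distinguished verification to be the main obstacle: it is the only place at which the hypotheses $X \notin \cI$, $p$ odd, and $p \nmid N$ all interact, and it is the sole gateway through which Hida-theoretic freeness enters the argument.
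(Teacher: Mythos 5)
Your proposal is correct and follows essentially the same route as the paper: ordinarity from $X \notin \cI$, $p$-distinguishedness from the unramified quotient of the ordinary residual representation versus its twist by the (inverse) mod-$p$ cyclotomic character, which is nontrivial on inertia, and then the argument of Theorem \ref{thm:normmaps1} repeated at $\ell = p$, with the freeness input at level $Np$ supplied by Theorem \ref{thm:hidatheory} and the control theorem (as in Proposition \ref{prop:normmapshida}) in place of Theorem \ref{thm:gorenstein}. Your explicit derivation of $T_p - X \in p\,(\widetilde\TT_N)_{\cI}$ from the quadratic relation $X^2 - T_pX + p\langle p\rangle = 0$ and the invertibility of $X$ in the local ring is exactly the (unstated) justification the paper's terse proof relies on.
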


   \begin{proof}
    This is clear by the same argument as Theorem \ref{thm:normmaps1}. (The only subtle point is that $\cI$ is $p$-distinguished as an ideal of $\TT_{Np}$; but it is ordinary since $X \notin \cI$, and of the two characters appearing in the semisimplification of $\widetilde \rho_{\cI} |_{D_p}$, one is unramified at $p$ and the other is the product of an unramified character and inverse of the mod $p$ cyclotomic character, so they are certainly distinct.)
   \end{proof}

 \section{Euler systems in \'etale cohomology}

  We now use the Hecke algebra theory of the previous section to show that if we apply the $p$-adic \'etale regulator map to the Euler system of \S \ref{sect:motivicES} and localize at a suitably chosen prime ideal of the Hecke algebra, the resulting family of classes -- all living on different modular curves -- can be ``massaged'' into an Euler system in the more conventional sense, a family of classes in the cohomology of one fixed Galois representation over varying extensions of the field $K$.

  \subsection{CM ideals of Hecke algebras}

   Let $K, L, \psi, \ff$ be as in \S \ref{sect:cmsetup} above. We fix primes $\fP \mid \fp \mid p$ of $L$, $K$ and $\QQ$ respectively, with $p \ge 5$, $p$ unramified in $K$, and $(\ff, p) = 1$.

   For convenience we shall write $E$ for the field $L_{\fP}$, $\cO = \fO_{L, \fP}$ for its ring of integers, and $\mathbf{k} = \fO_L / \fP$ for its residue field.

   Let us write $\psi_{\fP}$ for the continuous $E$-valued character of $K^\times \backslash \AA_{K, \mathrm{fin}}^\times$ defined by
   \[ \psi_{\fP}(x) = x_{\fp}^{-1} \psi(x).\]

   \begin{definition}
    \label{def:In}
    Let $\fn$ be any ideal of $K$ divisible by $\ff$, and let $N = N_{K/\QQ}(\fn) \cdot \disc$ as before. Let $\cI_{\fn}$ be the maximal ideal of the Hecke algebra $\TT_{N}$ given by the kernel of the composite map
    \[ \TT_{N} \rTo^{\phi_{\fn}}  \fO_L[H_{\fn}] \rTo \fO_L \rTo \fO_L / \fP,\]
    where $\phi_{\fn}$ is as defined in \S \ref{sect:cmsetup} and the map $\fO_L[H_{\fn}] \to \fO_L$ is the augmentation map.
   \end{definition}

   \begin{proposition}
    For any $\fn$ as above, the ideal $\cI_{\fn}$ is a non-Eisenstein maximal ideal in the sense of Definition \ref{def:nonEis}. If $p$ is split and $\fp \mid \fn$, but $\fpb \nmid \fn$, then $\cI_\fn$ is ordinary and $p$-distinguished.
   \end{proposition}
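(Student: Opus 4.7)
The plan is to exhibit $\overline{\rho}_{\cI_\fn}$ as the reduction modulo $\fP$ of the CM Galois representation $\rho_\psi \coloneqq \Ind_{G_K}^{G_\QQ}(\psi_\fP)$, where I abuse notation and also write $\psi_\fP$ for the character of $G_K$ attached by class field theory to the idele-class character $x \mapsto x_\fp^{-1}\psi(x)$. Non-Eisensteinness will follow once I verify the trace and determinant conditions of Definition \ref{def:nonEis} and show that $\overline{\psi_\fP} \ne \overline{\psi_\fP^c}$ as characters of $G_K$.

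First, for the trace: the standard character formula for an induced representation gives $\Tr \rho_\psi(\sigma_\ell^{-1}) = \sum_{\fl} \psi_\fP(\sigma_\fl^{-1})$, where $\fl$ runs over the primes of $K$ of residue degree one above $\ell$. For $\ell \nmid Np$ this equals $\sum \psi(\fl)$, which is exactly the image of $T_\ell$ under the composite of $\phi_\fn$ with the augmentation $\fO_L[H_\fn] \onto \fO_L/\fP$ --- the restriction $\fl \nmid \fn$ in the definition of $\phi_\fn$ being harmless when $\ell \nmid N$. The determinant check $\det \overline{\rho}_{\cI_\fn}(\sigma_\ell^{-1}) = \ell\langle\ell\rangle$ is analogous, using the standard determinant formula for induced representations together with the relation $\psi((\ell)) = \ell\chi(\ell)$ recorded after the setup of \S \ref{sect:cmsetup}.

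For absolute irreducibility I would restrict both characters to inertia at $\fp$. Since $\psi$ is unramified away from $\ff$ and $(\ff,p)=1$, the factor $x_\fp^{-1}$ in the definition of $\psi_\fP$ forces $\psi_\fP|_{I_\fp}$ to equal the inverse of the local reciprocity map $\cO_{K_\fp}^\times \to I_\fp^{ab}$, i.e.\ the inverse of the $p$-adic cyclotomic character of $K_\fp = \QQ_p$, whose reduction modulo $\fP$ is non-trivial because $p \ge 5$. By contrast $\psi_\fP^c|_{I_\fp}$ equals $\psi_\fP|_{I_{\overline\fp}}$ transported along complex conjugation, which is unramified. So $\overline{\psi_\fP}$ and $\overline{\psi_\fP^c}$ already disagree on $I_\fp$, and absolute irreducibility follows.

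For the second half, suppose $p = \fp\overline\fp$ splits, $\fp \mid \fn$ and $\overline\fp \nmid \fn$. Then $\overline\fp$ is the only prime of $K$ of norm $p$ not dividing $\fn$, so $\phi_\fn(U_p) = [\overline\fp]\,\psi(\overline\fp)$ and its image in $\fO_L/\fP$ is $\psi(\overline\fp)$; since $\psi(\fp)\psi(\overline\fp) = \psi((p)) = p\chi(p)$ and $\fP$ is unramified over $p$, exactly one of these factors is a $\fP$-unit. The infinity type $(-1,0)$ together with the choice $\fP \mid \fp$ forces $\psi(\fp)$ to be the factor of valuation $1$, so $U_p \notin \cI_\fn$, which gives ordinarity. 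Finally, Mackey's formula in the split case yields $\overline{\rho}_{\cI_\fn}|_{D_p} \cong \overline{\psi_\fP}|_{D_\fp} \oplus \overline{\psi_\fP^c}|_{D_\fp}$; the second summand is $\overline{\psi_\fP}|_{D_{\overline\fp}}$, which is unramified, while the first is ramified by the inertia computation above, so the two diagonal characters are distinct, as required by Definition \ref{def:pdist}. The only delicate point in the entire argument is the bookkeeping that identifies $\psi(\fp)$ (rather than $\psi(\overline\fp)$) as the factor of positive $\fP$-valuation; everything else is routine manipulation with induced representations.
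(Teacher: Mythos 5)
There is a genuine gap: your irreducibility argument silently assumes that $p$ is split in $K$. The proposition, and the setup of this section, only assume $p$ unramified in $K$, so the inert case must also be treated (and it is genuinely used elsewhere in the paper, e.g.\ in the inert cases of Proposition \ref{prop:h1f} and Theorem \ref{thm:finiteSelK}). When $p$ is inert, $\fp = \fpb = (p)$, $K_\fp$ is the unramified quadratic extension of $\Qp$ rather than $\Qp$ itself, and the conjugate character $\psi_\fP^c$ is \emph{also} ramified at $\fp$; so your key step ``$\psi_\fP^c|_{I_\fp}$ is unramified, hence the two characters disagree on inertia'' breaks down. One still separates the two characters on inertia, but by a different computation: restricting to $\fO_{K,p}^\times$ via local class field theory, $\overline{\psi_\fP}$ acts through $\omega_2^{-1}$ and its conjugate through $\omega_2^{-p}$, where $\omega_2$ is the Teichm\"uller character of $(\fO_K/p)^\times \cong \mathbf{F}_{p^2}^\times$; these are distinct (indeed interchanged by Frobenius), so $\Ind_K^{\QQ}(\overline{\psi_\fP})$ is irreducible already as a representation of the decomposition group at $p$. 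This is exactly how the paper handles the inert case; your split-case argument (inverse cyclotomic character on $I_\fp$ versus unramified conjugate, then Mackey) and your treatment of ordinarity and $p$-distinguishedness via $\phi_\fn(U_p) = [\fpb]\psi(\fpb)$ coincide in substance with the paper's.

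A smaller point: your assertion that ``exactly one of $\psi(\fp), \psi(\fpb)$ is a $\fP$-unit'' leans on $\fP$ being unramified over $p$, which is not among the hypotheses in force in this section (it appears only as a technical condition in the introduction's main theorem). It is also unnecessary: $\psi_\fP$ is a continuous character of the compact group $\Gal(\overline{K}/K)$, hence takes values in $\cO^\times$, and it is unramified at $\fpb$ with Frobenius value $\psi(\fpb)$, so $\psi(\fpb) \in \cO^\times$ directly; equivalently, unit-valuedness of $\psi_\fP$ on a uniformizer at $\fp$ gives $v_\fP(\psi(\fp)) = v_\fP(p)$, which is the bookkeeping you wanted. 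With these two repairs the argument matches the paper's proof.
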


   \begin{proof}
    We can interpret $\psi_{\fP}$ as a character of $\Gal(\overline{K} / K)$ via class field theory.\footnote{We normalize the global Artin map $\AA_K^\times / K^\times \to \Gal(\overline{K} / K)^{\ab}$ in the geometric fashion, so uniformisers map to geometric Frobenius elements.} Then $\Ind_{K}^\QQ(\psi_\fP \bmod \fP)$ is the unique semisimple Galois representation with values in $\TT_{N} / \cI_\fn \cong \cO_{L} / \fP$ satisfying the trace and determinant condition of the $\overline{\rho}_{\cI}$ of Definition \ref{def:nonEis}. By Mackey theory, this induced representation is irreducible if and only if $\psi_\fP$ and its conjugate are distinct modulo $\fP$.

    If $p$ is split, then $\psi_\fP$ is ramified at $\fp$ (its restriction to inertia at $\fp$ is the inverse cyclotomic character) but $\psi_{\fP} \circ \sigma$ is not; hence these two characters are not even congruent locally at $p$. Thus $\cI_\fn$ is non-Eisenstein; and if $\fp \mid \fn$ (so that $p \mid N$ and $\phi_\fn(U_p) = \psi(\fpb) \bmod \cI_\fn$) then it is ordinary and $p$-distinguished.

    If $p$ is inert, then the restriction of $\psi_{\fP}$ to $\fO_{K, p}^\times$ is the direct sum $\omega_2^{-1} \oplus \omega_2^{-p}$, where $\omega_2$ is the Teichm\"uller character of $(\fO_{K} / p)^\times \cong \mathbf{F}_{p^2}^\times$. The characters $\omega_2$ and $\omega_2^p$ are distinct, and they are interchanged by the conjugation action of the Frobenius element of $D_p / I_p$. Hence $\Ind_{K}^\QQ(\psi_\fP \bmod \fP)$ is irreducible as a representation of $D_p$, and in particular it is irreducible as a representation of $\Gal(\overline{K} / K)$.
   \end{proof}

   \begin{remark}
    If $p$ is split, then Proposition 5.1.2 also holds if $\fp \mid \ff$, as long as we assume that $\fpb \nmid \ff$ and $\psi |_{\cO_{K, \fp}^\times}$ is not congruent to the Teichm\"uller character modulo $\fP$.
   \end{remark}

  \subsection{Patching CM Hecke modules}
   \label{sect:patching}

   We now apply the integral Hecke theory results of Section \ref{sect:hecke} to show that we can patch together the modules $H^1(\psi, \fn, \fO_L)$ after localizing at $\cI_\fn$, and identify them (non-canonically) with Galois modules induced from abelian extensions of $K$. We continue to assume that $\fn$ is an integral ideal of $K$ divisible by $\ff$.

   \begin{definition}
    Let $H_\fn^{(p)}$ denote the largest quotient of $H_\fn$ whose order is a power of $p$, and let $\Lambda_{\fn}^{\fP} = \cO[H_\fn^{(p)}]$.
   \end{definition}

   The ring $\Lambda_{\fn}^{\fP}$ is a finite, flat, local $\cO$-algebra. We let
   \[ \phi_{\fn}^{\fP} : \TT_{N} \otimes \Zp \to \Lambda_{\fn}^{\fP}\]
   be the composition of the map $\phi_{\fn}$ defined above with the natural map $\fO_L[H_{\fn}] \to \Lambda_{\fn}^{\fP}$.

   \begin{definition}
    For each $\fn$ as above, define
    \[
     H^1(\psi, \fn, \fP) \coloneqq
     \Lambda_{\fn}^{\fP}  \otimes_{(\TT_{N} \otimes \Zp, \phi_{\fn}^{\fP})} H^1_{\et}(\overline{Y_1(N)}, \Zp(1))_*,
    \]
    where the lower star signifies that we consider $H^1_{\et}(\overline{Y_1(N)}, \Zp(1))$ as a $\TT_{N}$-module via the covariant action.
   \end{definition}

   \begin{proposition}
    \label{prop:cmfree}
    Suppose either that $p$ is inert and $(p, \fn) = 1$, or $p$ is split and $(\fpb, \fn) = 1$. Then the module $H^1(\psi, \fn, \fP)$ is free of rank 2 over $\Lambda_{\fn}^{\fP}$.
   \end{proposition}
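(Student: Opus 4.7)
The plan is to unravel the tensor product defining $H^1(\psi, \fn, \fP)$ and reduce to the Hecke-module freeness results of Section \ref{sect:hecke}. By construction, the composite $\TT_{N} \otimes \Zp \xrightarrow{\phi_{\fn}^{\fP}} \Lambda_{\fn}^{\fP} \twoheadrightarrow \mathbf{k}$ is the mod-$\fP$ augmentation, so the preimage of the maximal ideal of the local $\cO$-algebra $\Lambda_{\fn}^{\fP}$ is exactly $\cI_{\fn}$. Hence every element outside $\cI_{\fn}$ is sent to a unit, and $\phi_{\fn}^{\fP}$ factors through the completed localization $(\TT_{N})_{\cI_{\fn}}$. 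Combining this with the standard comparison isomorphism between \'etale and singular cohomology (which is $\TT_{N}$-equivariant), one rewrites
\[ H^1(\psi, \fn, \fP) \cong \Lambda_{\fn}^{\fP} \otimes_{(\TT_{N})_{\cI_{\fn}}} H^1(Y_1(N))_{\cI_{\fn}, *}. \]
It therefore suffices to show that $H^1(Y_1(N))_{\cI_{\fn}, *}$ is free of rank $2$ over $(\TT_{N})_{\cI_{\fn}}$, since extension of scalars of a free module preserves rank.

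The preceding proposition shows $\cI_{\fn}$ is non-Eisenstein. In the first case---either $p$ inert and $(p, \fn) = 1$, or $p$ split with $(p, \fn) = 1$---the hypothesis that $p$ is unramified in $K$ combined with $(p, \fn) = 1$ forces $p \nmid N$, so Theorem \ref{thm:gorenstein} delivers the desired freeness directly. The Atkin--Lehner remark at the start of Section \ref{sect:hecke} identifies the contravariant and covariant Hecke modules, so the covariant version $H^1(Y_1(N))_{\cI_\fn, *}$ is free of rank $2$ as well.

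The remaining case is $p$ split with $\fp \mid \fn$ but $\fpb \nmid \fn$. Write $\fn = \fp^r \fn_0$ with $(\fp, \fn_0) = 1$, so $N = p^r N_0$ with $p \nmid N_0$. Proposition \ref{prop:defphin} applied to $\ell = p$---noting that $\fpb$ is the unique ideal of norm $p$ not dividing $\fn$---gives $\phi_{\fn}(U_p) = \psi(\fpb) [\fpb]$, whose image in $\mathbf{k}$ is $\psi(\fpb) \bmod \fP$. This is a unit, since the associated $p$-adic character $\psi_{\fP}$ is unramified at $\fpb$ and takes unit values there. Hence $U_p \notin \cI_{\fn}$, so $\cI_{\fn}$ is $p$-ordinary; the preceding proposition then tells us it is $p$-distinguished. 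By Theorem \ref{thm:hidatheory}, $H^1_{\ord}(Y_1(N_0 p^\infty))_{\cI_{\fn}}$ is free of rank $2$ over $(\TT_{N_0 p^\infty})_{\cI_{\fn}}$; the control theorem of Hida (Theorem 1.5.7(iii) of \cite{ohta00}, cf.\ the proof of Proposition \ref{prop:normmapshida}) descends this freeness to level $N_0 p^r = N$, and invertibility of $U_p$ modulo $\cI_{\fn}$ ensures that the ordinary projector acts as the identity after localizing, giving the desired rank-$2$ freeness of $H^1(Y_1(N))_{\cI_{\fn}, *}$.

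The main obstacle, in my view, is the second case: one must be careful that the Hida-theoretic control theorem really does yield rank-$2$ freeness of the specific finite-level module $H^1(Y_1(N))_{\cI_{\fn}, *}$ as a $(\TT_N)_{\cI_{\fn}}$-module, rather than just a weaker freeness statement over some intermediate quotient, and that the identification of the $\cI_\fn$-localization with its ordinary part (via $U_p \notin \cI_\fn$) is compatible with this descent.
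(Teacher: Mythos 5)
Your proof is correct and follows essentially the same route as the paper: factor the map through the completion at the non-Eisenstein ideal $\cI_{\fn}$, then invoke Theorem \ref{thm:gorenstein} when $p \nmid N$ and Theorem \ref{thm:hidatheory} (with Ohta's control theorem, exactly as in the proof of Proposition \ref{prop:normmapshida}) when $p$ is split with $\fp \mid \fn$. Your spelled-out treatment of the Hida-theoretic case simply makes explicit what the paper's terse parenthetical ``(or Theorem \ref{thm:hidatheory}, respectively)'' leaves implicit.
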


   \begin{proof}
    Since $\Lambda_{\fn}^{\fP}$ is a complete local ring, and the preimage of its maximal ideal under $\phi_{\fn}^{\fP}$ is the ideal $\cI_{\fn}$, the map $H^1_{\et}(\overline{Y_1(N)}, \Zp(1)) \to H^1(\psi, \fn, \fP)$ factors through the completion at $\cI_\fn$. However, since $\fP$ is assumed to be non-Eisenstein, the completion of $H^1_{\et}(\overline{Y_1(N)}, \Zp(1))$ at $\cI_{\fn}$ is free of rank 2 over the completed Hecke algebra $\TT_{\cI_\fn}$, by Theorem \ref{thm:gorenstein} (or Theorem \ref{thm:hidatheory}, respectively), so the tensor product is free over $\Lambda_{\fn}^{\fP}$.
   \end{proof}

   \begin{theorem}
    \label{thm:cmrep}
    For any modulus $\fn$ divisible by $\ff$, the module $H^1(\psi, \fn, \fP)[1/p]$ is isomorphic as a $\Lambda_{\fn}^{\fP}[1/p]$-linear representation of $\Gal(\Qb/\QQ)$ to the induced representation $\Ind_{K(\fn)}^\QQ\left(E(\psi_{\fP})\right)^*$, where $K(\fn)$ is the largest abelian $p$-extension of $K$ of conductor dividing $\fn$ (i.e.~the ray class field corresponding to $H_{\fn}^{(p)}$).
   \end{theorem}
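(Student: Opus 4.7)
The plan is to invert $p$ and reduce the claim to a character-by-character comparison. The ring $\Lambda_{\fn}^{\fP}[1/p]$ equals $E[H_\fn^{(p)}]$, which is a semisimple commutative $E$-algebra; thus it decomposes as a product $\prod_\chi E_\chi$ indexed by $\Gal(\overline{E}/E)$-orbits of characters $\chi\colon H_\fn^{(p)} \to \overline{E}^\times$, where $E_\chi = E(\chi)$. Both sides of the claimed isomorphism split as direct sums of $E_\chi[G_\QQ]$-modules along this decomposition, so it suffices to exhibit, for each $\chi$, a $G_\QQ$-equivariant $E_\chi$-linear isomorphism between the corresponding $\chi$-components.

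For the right-hand side, I would compute the $\chi$-specialization via the tower $\QQ \subset K \subset K(\fn)$. Since $\psi_{\fP}$ is already defined on $G_K$ and restricts to its own restriction on $G_{K(\fn)}$, a projection-formula manipulation gives an isomorphism of $\Lambda_\fn^\fP[1/p][G_K]$-modules $\Ind_{K(\fn)}^K E(\psi_\fP)[1/p] \cong E(\psi_\fP) \otimes_E \Lambda_\fn^\fP[1/p]$, with $\Lambda_\fn^\fP$ acting on the second factor. Applying $\Ind_K^\QQ$, dualising, and tensoring with $E_\chi$ over $\Lambda_\fn^\fP[1/p]$ twists the $G_K$-action by $\chi$ (viewed as a character of $G_K$ via class field theory and the projection $G_K \onto H_\fn^{(p)}$), yielding $\Ind_K^\QQ(E_\chi(\psi_\fP \chi))^*$ as the $\chi$-component of the right-hand side.

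For the left-hand side, the key input is the theta-series theorem cited at the start of \S\ref{sect:cmsetup}: the Gr\"ossencharacter $\psi\chi$ has infinity-type $(-1,0)$ and conductor dividing $\fn$, so the associated theta series $g_{\psi\chi}$ is a Hecke eigenform in $S_2(\Gamma_1(N), \varepsilon \varepsilon_K)$ for an appropriate Dirichlet character $\varepsilon$ (possibly an oldform at level $N$), whose Hecke eigenvalues match the composition of $\phi_\fn^\fP$ with $\chi$. Hence the $\chi$-component of the left-hand side is the $g_{\psi\chi}$-isotypic piece of $H^1_\et(\overline{Y_1(N)}, E_\chi(1))_*$ under the covariant Hecke action. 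By Eichler--Shimura, combined with the non-Eisensteinness of $\cI_\fn$ and the Gorenstein freeness results of Theorem \ref{thm:gorenstein} (or Theorem \ref{thm:hidatheory} in the ordinary case), this piece is a 2-dimensional $E_\chi$-vector space realizing the dual, Tate-twisted Deligne representation $V_{g_{\psi\chi}}^*(1)$; and for a CM theta series this representation is classically the induction $\Ind_K^\QQ(E_\chi(\psi_\fP \chi))^*$, matching the right-hand side component computed above.

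Assembling these character-by-character isomorphisms along the product decomposition $\Lambda_\fn^\fP[1/p] = \prod_\chi E_\chi$ then produces the required $\Lambda_\fn^\fP[1/p]$-linear, $G_\QQ$-equivariant isomorphism. The main obstacle is careful bookkeeping of conventions: covariant versus contravariant Hecke actions, the Tate twist, the dual ``$*$'', and the geometric normalization of class field theory used to define $\psi_{\fP}$, so that the Eichler--Shimura and CM identifications produce precisely $\Ind_K^\QQ(E_\chi(\psi_\fP \chi))^*$ on each $\chi$-component rather than some twist of it. A secondary technical point is handling those $\chi$ for which $\psi\chi$ has conductor properly dividing $\fn$, so that $g_{\psi\chi}$ appears as an oldform at level $N$: the specific Hecke eigensystem $\phi_\fn$ pins down the correct oldform by prescribing the $U_\ell$-eigenvalues at primes of $\fn$ outside the conductor of $\psi\chi$, and the Gorenstein freeness ensures that the resulting eigenspace is still exactly 2-dimensional over $E_\chi$.
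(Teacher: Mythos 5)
Your proposal is correct and takes essentially the same route as the paper: invert $p$, decompose $\Lambda_{\fn}^{\fP}[1/p]$ along characters $\eta$ of $H_{\fn}^{(p)}$, identify the $\eta$-component of the left-hand side as the ($\fn$-depleted) theta-series eigenspace $g_{\psi\eta}$ inside $H^1_{\et}(\overline{Y_1(N)}, L_{\fP}(1))$ with the covariant Hecke action, and conclude via the classical CM identification $V(g_{\psi\eta})^* \cong \Ind_K^{\QQ}(\psi\eta)^*$. The only remark worth making is that after inverting $p$ the two-dimensionality follows from the classical multiplicity one theorem alone (which is what the paper uses); the integral Gorenstein and Hida freeness results you invoke are unnecessary here, and indeed would not apply for every modulus $\fn$ allowed in the statement (e.g.\ when both primes above a split $p$ divide $\fn$).
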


   \begin{proof}
    This statement is unaffected by enlarging $L$, so we may assume $L_{\fP}$ is sufficiently large that all characters $H_{\fn}^{(p)} \to \Qpb^\times$ take values in $L_{\fP}$. Then $\Lambda_{\fn}^{\fP}[1/p] = L_{\fP}[H_{\fn}^{(p)}]$ is a product of copies of $L_{\fP}$, indexed by the characters of $H_{\fn}^{(p)}$; so it suffices to check that for $\eta$ such a character, the $L_\fP$-vector space
    \begin{equation}
     \label{eq:vspace}
     L_{\fP} \otimes_{\Lambda_{\fn}^{\fP}, \eta} H^1(\psi, \fn, \fP)[1/p]
    \end{equation}
    is 2-dimensional and isomorphic to the $\eta$-isotypical component of $\Ind_{K(\fn)}^\QQ V_{L_{\fP}} \left(\psi\right)^*$, which is $\Ind_{K}^\QQ V_{L_{\fP}}(\psi\eta)^*$.

    However, this vector space \eqref{eq:vspace} is the maximal quotient of $H^1_{\et}(\overline{Y_1(N)}, L_{\fP}(1))$ on which the covariant Hecke operators act via the character of $\TT_{N}$ corresponding to the level $N$ eigenform
    \[ g_{\psi \eta} \coloneqq \sum_{\fa: (\fa, \fn) = 1} \psi(\fa)\eta(\fa)q^{N(\fa)}.\]
    By the multiplicity one theorem, the corresponding quotient of $H^1_{\et}(\overline{Y_1(N)}, L_{\fP})$ is 2-dimensional, and realizes the Galois representation $V_{L_\fP}(\overline{g_{\psi \eta}})$ attached to the complex conjugate eigenform $\overline{g_{\psi \eta}}$. Since we have $V_{L_\fP}(\overline{g_{\psi \eta}})(1) = V_{L_\fP}(g_{\psi \eta})^* = \Ind_{K}^\QQ(\psi \eta)^*$ we are done.
   \end{proof}

   \begin{proposition}
    \label{prop:cmnorm}
    Suppose either that $p$ is inert, $p \nmid \fn$, and $\fl$ is a prime not equal to $p$; or that $p$ is split, $\fpb \nmid \fn$ and $\fl \ne \fpb$. Then the norm map
    \[ \mathcal{N}_{\fn}^{\fl\fn}: \Lambda_{\fn}^{\fP} \otimes_{\Lambda_{\fl \fn}^{\fP}} H^1(\psi, \fl \fn, \fP) \rTo H^1(\psi, \fn, \fP)\]
    is a bijection.
   \end{proposition}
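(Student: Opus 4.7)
The plan is to proceed by case analysis on the splitting type of $\fl$ in $K/\QQ$ and on whether $\fl \mid \fn$. As an initial reduction, both the source and target of $\mathcal{N}_{\fn}^{\fl\fn}$ are free $\Lambda_{\fn}^{\fP}$-modules of rank $2$ by Proposition \ref{prop:cmfree} (the relevant hypotheses on $p$ and $\fn$ are preserved under passing to $\fn' = \fl\fn$, since under our assumptions $\fl \ne p$ in the inert scenario and $\fl \ne \fpb$ in the split scenario). Since a surjection between free modules of the same finite rank over a local ring is automatically an isomorphism, it suffices in each case to establish surjectivity. The strategy is to identify $\mathcal{N}_{\fn}^{\fl\fn}$, after a suitable base change, with one of the Hecke-module maps proved bijective in Section \ref{sect:hecke}.

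In the principal sub-case $\fl \nmid \fn$ with $\fl$ split and $\overline{\fl} \nmid \fn$, one has $\ell \nmid N$, and over $\Lambda_{\fn}^{\fP}$ the Hecke polynomial $X^2 - T_\ell X + \ell\langle\ell\rangle$ factors as $(X - [\fl]\psi(\fl))(X - [\overline{\fl}]\psi(\overline{\fl}))$. Extending $\phi_{\fn}^{\fP}$ to a homomorphism $\widetilde\TT_N \to \Lambda_{\fn}^{\fP}$ by $X \mapsto [\overline{\fl}]\psi(\overline{\fl})$ identifies the formula $1 \otimes \pr_1 - \tfrac{[\fl]\psi(\fl)}{\ell}\otimes \pr_2$ with the map $(\pr_1)_* - \tfrac{T_\ell - X}{\ell}(\pr_2)_*$ of Theorem \ref{thm:normmaps1}, which is bijective; when $\ell = p$ (which forces $\fl = \fp$), Proposition \ref{prop:normmapshidap} is used in its place. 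For $\fl \nmid \fn$ with $\fl$ inert, the identities $\phi_{\fn}^{\fP}(T_\ell) = 0$, $\phi_{\fn}^{\fP}(\langle\ell\rangle) = -\chi(\ell)[\fl]$, and $\psi(\fl) = \ell\chi(\ell)$ (from the Gr\"ossencharacter relation $\psi((n)) = n\chi(n)$) identify $\mathcal{N}_{\fn}^{\fn'}$ with the three-term map of Corollary \ref{cor:normmaps3}. The remaining configurations with $\fl \nmid \fn$ but $\ell \mid N$ (namely, $\fl$ split with $\overline{\fl}\mid\fn$, or $\fl$ ramified) are reduced to the preceding by factoring $\mathcal{N}_{\fn}^{\fn'}$ through an auxiliary modulus and invoking the path-independence of the composed norm maps.

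The case $\fl \mid \fn$ is handled separately: here $\mathcal{N}_{\fn}^{\fn'} = 1\otimes \pr_1$, and the finite cover $\pr_1 : Y_1(N\ell) \to Y_1(N)$ has degree equal to a power of $\ell$. The standard identity $(\pr_1)_* \circ \pr_1^* = (\deg \pr_1)\cdot\mathrm{id}$ then gives surjectivity whenever $\deg \pr_1$ is a unit in $\Zp$, which is automatic when $\fl \ne \fp$ (so that $\ell \ne p$). The only remaining sub-case is $\fl = \fp$ with $\fp \mid \fn$ (which under our hypotheses forces the $p$-split scenario): here surjectivity is deduced from the Hida-theoretic control theorem (Theorem \ref{thm:hidatheory}), by realising both $H^1(\psi,\fn,\fP)$ and $H^1(\psi,\fn',\fP)$ as successive quotients of the $(\TT_{Np^\infty})_{\cI}$-free module $H^1_{\ord}(Y_1(Np^\infty))_{\cI}$ by ideals of the Iwasawa algebra, with $(\pr_1)_*$ identified with the induced surjection.

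The main technical obstacle is this last sub-case, $\fl = \fp$ with $\fp \mid \fn$: one must verify that the finite-level pushforward is compatible with the Iwasawa-theoretic specialisation map, i.e.\ that $\phi_{\fn}^{\fP}$ and $\phi_{\fn'}^{\fP}$ genuinely appear as two specialisations of a single family living on the big Hida Hecke algebra. This ultimately rests on the $p$-distinguishedness of $\cI_\fn$ and $\cI_{\fn'}$ guaranteed by Proposition \ref{prop:cmfree}. By comparison, the remaining cases are essentially bookkeeping once the correct root of the Hecke polynomial has been selected.
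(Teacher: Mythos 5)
Your overall skeleton matches the paper for most cases (reduction to surjectivity via freeness, Theorem \ref{thm:normmaps1} with $X \mapsto [\overline\fl]\psi(\overline\fl)$ for split $\fl$ with $\overline\fl \nmid \fn$, Corollary \ref{cor:normmaps3} with $\phi_\fn(T_\ell)=0$, $\phi_\fn(\langle\ell\rangle)=-\chi(\ell)[\fl]$ for inert $\fl$, the $(\pr_1)_*$ degree argument when $\fl \mid \fn$, and the Hida-theoretic inputs when $\ell = p$). But there is a genuine gap in your treatment of the remaining sub-cases, namely $\fl \nmid \fn$ with $\ell \mid N$ (i.e.\ $\fl$ ramified, or $\fl$ split with $\overline\fl \mid \fn$). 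You propose to dispose of these ``by factoring $\mathcal{N}_{\fn}^{\fn'}$ through an auxiliary modulus and invoking path-independence of the composed norm maps'', but no such formal reduction exists. If $\fl$ is ramified then $\ell \mid \disc$, so \emph{every} level occurring for \emph{every} modulus is divisible by $\ell$, and every decomposition of any composite norm map that removes a power of $\fl$ contains a step of exactly the type you are trying to prove. In the split case with $\fn = \overline\fl^{a}\fn_0$ ($a \ge 1$, $\fl,\overline\fl \nmid \fn_0$), any chain of single-prime removals from $\fl\fn$ down to $\fn_0$ consists of one $\fl$-removal and $a$ $\overline\fl$-removals; if the $\fl$-removal happens while some $\overline\fl$ remains it is of the hard type, and if it happens last then the final $\overline\fl$-removal took place while $\fl$ still divided the modulus, which is again the hard type. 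So every path contains at least one step not covered by your ``preceding'' cases, and path-independence cannot manufacture its bijectivity from the others. The paper handles precisely these cases by a further nontrivial input: Theorem \ref{thm:normmaps2} (the ``$\ell$-depletion'' statement, whose proof rests on Lemma \ref{lemma:wiles} and Ihara's lemma at level divisible by $\ell$), base-changed along $\phi_\fn$ with $U_\ell \mapsto \psi(\fl)[\fl]$, so that the map $(\pr_1)_* - \tfrac{U_\ell}{\ell}(\pr_2)_*$ becomes $\mathcal{N}_{\fn}^{\fl\fn}$. Without this (or an equivalent) your argument does not cover these sub-cases, which do occur in the Euler system.

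A secondary omission: in the split-$p$ scenario with $\fp \mid \fn$ and $\fl \ne \fp, \fpb$, one has $p \mid N$, so Theorem \ref{thm:normmaps1}, Theorem \ref{thm:normmaps2} and Corollary \ref{cor:normmaps3} do not apply as stated (they require residue characteristic prime to $N\ell$, via Theorem \ref{thm:gorenstein}); here the paper substitutes Proposition \ref{prop:normmapshida}, justified by $p$-distinguishedness of $\cI_\fn$. You invoke Hida theory only when $\ell = p$, so this case should be flagged and routed through Proposition \ref{prop:normmapshida} as well; this part is only bookkeeping, unlike the gap above.
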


   \begin{proof}
    We assume first that $(p, \fn\fl) = 1$. Since both source and target of the map concerned are free $\Zp$-modules, and the map $\mathcal{N}_{\fn}^{\fl\fn}$ is an isomorphism after inverting $p$, it suffices to check that it is surjective. As before, let $N = N_{K/\QQ}(\fn) \cdot \disc$ and $N' = N_{K/\QQ}(\fn\fl) \cdot \disc$, and let $\ell$ be the rational prime below $\fl$.

    If $\fl \mid \fn$, then $\mathcal{N}_{\fn}^{\fl\fn}$ is the map induced by $(\pr_1)_*: H^1(Y_1(N')) \to H^1(Y_1(N))$, and this is evidently surjective. (Indeed, since $\ell \mid N$, the map $\pr_1: Y_1(N') \to Y_1(N)$ has degree either $\ell$ or $\ell^2$, neither of which is divisible by $p$, so $(\pr_1)_* (\pr_1)^*$ is even surjective.)

    Hence we may assume $\fl \nmid \fn$. There are three cases to consider. Firstly, if $\fl$ is a ramified prime, or if $\fl$ is split and $\overline\fl \mid \fn$, then $\ell \mid N$ and $N' = \ell N$. In this case, comparing Theorem \ref{thm:normmaps2} and Definition \ref{def:curlyN}, we see that $\mathcal{N}_{\fn}^{\fl\fn}$ is the map deduced from the map $\beta$ of Theorem \ref{thm:normmaps2} via base extension along the map $\phi_{\fn}: \TT_{N} \to \Lambda_{\fn}^{\fP}$ of Proposition \ref{prop:defphin} (mapping $U_\ell$ to $\psi(\fl) [\fl]$).

    If $\fl$ is a split prime and $\overline\fl \nmid \fn$, then $\ell \nmid N$, and we apply Theorem \ref{thm:normmaps1} instead. We extend the map $\phi_{\fn}: \TT_{N} \to \Lambda_{\fn}^{\fP}$ to $\widetilde \TT_{N}$ by mapping $X$ to $\psi(\overline\fl)[\overline\fl]$. Since this is also the image of $U_\ell$ under $\phi_{\fl \fn}$, and $T_\ell - X$ maps to $\psi(\fl)[\fl]$, the map $\beta$ of Theorem \ref{thm:normmaps1} again gives rise to $\mathcal{N}_{\fn}^{\fl\fn}$.

    Finally, if $\fl = (\ell)$ is inert in $K$ then we apply Corollary \ref{cor:normmaps3}, and the calculation proceeds similarly, using the fact that $\phi_{\fn}$ maps $T_\ell$ to 0 and $\langle \ell \rangle$ to $-\tfrac{\psi(\ell)[\ell]}{\ell}$.

    If $p$ is split, $\fp \mid \fn$, and $\fl \ne \fp$, then we argue similarly using Proposition \ref{prop:normmapshida} in place of Theorems \ref{thm:normmaps1} and \ref{thm:normmaps2}, using the fact that $\cI_{\fn}$ is $p$-distinguished. If $\fl = \fp$ and $\fp \mid \fn$, then the result is immediate from Ohta's control theorem; and if $\fl = \fp$ and $\fp \nmid \fn$, we use Proposition \ref{prop:normmapshidap}.
   \end{proof}

   \begin{corollary}
    \label{cor:nui}
    Let $A$ be the set of ideals of $K$ coprime to $\fp$ and divisible by $\ff$. Then we may find a family of isomorphisms
    \[ \nu_\fn: H^1(\psi, \fn, \fP) \rTo^\cong \Ind_{K(\fn)}^{\QQ} \cO(\psi_{\fP}^{-1}) \]
    of $\Lambda_{\fn}^{\fP}[\Gal(\overline{\QQ} / \QQ)]$-modules, for all $\fn \in A$, with the property that for any two moduli $\fn, \fn' \in A$ with $\fn \mid \fn'$, the diagram
    \begin{diagram}
     H^1(\psi, \fn', \fP) & \rTo_{\cong}^{\nu_{\fn'}} & \Ind_{K(\fn')}^{\QQ} \cO(\psi_{\fP}^{-1})\\
     \dTo^{\mathcal{N}^{\fn'}_{\fn}} & & \dTo \\
     H^1(\psi, \fn, \fP) & \rTo_{\cong}^{\nu_{\fn}} & \Ind_{K(\fn)}^{\QQ} \cO(\psi_{\fP}^{-1})
    \end{diagram}
    commutes.
   \end{corollary}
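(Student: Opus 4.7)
The plan is to combine the rational isomorphism of Theorem \ref{thm:cmrep} with the integral freeness of Proposition \ref{prop:cmfree} via Schur's lemma and Nakayama's lemma, and then to enforce compatibility by passing to an Iwasawa-theoretic inverse limit.

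For each fixed $\fn \in A$, write $M_\fn = H^1(\psi, \fn, \fP)$ and $N_\fn = \Ind_{K(\fn)}^\QQ \cO(\psi_{\fP}^{-1})$. By Proposition \ref{prop:cmfree}, $M_\fn$ is free of rank $2$ over the local ring $\Lambda_\fn^\fP = \cO[H_\fn^{(p)}]$; since $[K:\QQ] = 2$ and $\Lambda_\fn^\fP \cong \cO[\Gal(K(\fn)/K)]$, the induced representation $N_\fn$ is likewise free of rank $2$. Both reductions $M_\fn \otimes_{\Lambda_\fn^\fP} \mathbf{k}$ and $N_\fn \otimes_{\Lambda_\fn^\fP} \mathbf{k}$ realise the absolutely irreducible representation $\overline{\rho}_{\cI_\fn}$, the first by construction of $\cI_\fn$ together with Theorem \ref{thm:gorenstein}, the second by a direct trace computation together with Brauer--Nesbitt. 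Schur's lemma then gives $\Hom_{\mathbf{k}[\Gal(\Qb/\QQ)]}(M_\fn\otimes\mathbf{k}, N_\fn\otimes\mathbf{k}) \cong \mathbf{k}$, so the $\Lambda_\fn^\fP$-module of Galois-equivariant homomorphisms $M_\fn \to N_\fn$ has residue fibre of $\mathbf{k}$-dimension at most $1$. On the other hand, Theorem \ref{thm:cmrep} furnishes an isomorphism after inverting $p$, which after rescaling by an appropriate power of the uniformiser of $\cO$ lies in $\Hom_{\Lambda_\fn^\fP[\Gal(\Qb/\QQ)]}(M_\fn, N_\fn)$ with non-zero image in the residue fibre. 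Nakayama then shows the Hom module is cyclic, generated by this element; its mod-$\mathfrak{m}$ reduction is a non-zero Galois-equivariant endomorphism of $\overline{\rho}_{\cI_\fn}$, hence (Schur) a scalar automorphism, and a second application of Nakayama to the free rank-$2$ source and target upgrades this to an integral isomorphism $\nu_\fn$.

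To make the $\nu_\fn$ coherent with the norm maps, I would pass to the limit. By Proposition \ref{prop:cmnorm}, each $\mathcal{N}_\fn^{\fn'}$ induces an isomorphism $\Lambda_\fn^\fP \otimes_{\Lambda_{\fn'}^\fP} M_{\fn'} \xrightarrow{\sim} M_\fn$, and analogously for $N$ via the corestriction maps. Setting $\Lambda_\infty = \varprojlim_{\fn \in A} \Lambda_\fn^\fP$ (a Noetherian complete local ring, since $\Gal(K(\infty)/K)$ is a finitely generated $\Zp$-module) and $M_\infty = \varprojlim M_\fn$, $N_\infty = \varprojlim N_\fn$, we obtain two free $\Lambda_\infty$-modules of rank $2$ whose residue fibres again realise the relevant irreducible representation. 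Repeating the Schur--Nakayama argument one level up produces an isomorphism $\nu_\infty : M_\infty \xrightarrow{\sim} N_\infty$, and we take $\nu_\fn$ to be the reduction of $\nu_\infty$ modulo $\ker(\Lambda_\infty \twoheadrightarrow \Lambda_\fn^\fP)$. The commutativity of the square in the statement is then automatic from this construction.

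The only real technical point I expect to require care is identifying both mod-$\mathfrak{m}$ reductions with \emph{the same} copy of $\overline{\rho}_{\cI_\fn}$ with compatible conventions about Tate twists, duals, and covariant versus contravariant Hecke actions; absent such an identification, one gets only an isomorphism up to a $\Gal$-linear twist that could a priori obstruct the Schur step. Once this bookkeeping is in place, the Schur--Nakayama argument is essentially formal and the Iwasawa limit takes care of the coherence of the choices.
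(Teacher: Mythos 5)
Your finite-level step is essentially the argument the paper leaves implicit when it says each $\nu_\fn$ ``exists by Proposition \ref{prop:cmfree} and Theorem \ref{thm:cmrep}'', but as written it is too quick at one point: rescaling the rational isomorphism of Theorem \ref{thm:cmrep} by a power of the uniformiser of $\cO$ only guarantees that the resulting map is not in $\varpi\cdot\Hom$, not that it is non-zero in the residue fibre $\Hom\otimes_{\Lambda_\fn^{\fP}}\mathbf{k}$, since the maximal ideal of $\Lambda_\fn^{\fP}$ is strictly larger than $(\varpi)$ whenever $H_\fn^{(p)}\neq 1$; the clean way to close this (and to handle the twist/duality bookkeeping you flag) is a Carayol--Serre style argument from equality of traces plus residual absolute irreducibility, rather than rescaling.

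The genuine gap is in the coherence step, which is the actual content of the corollary. First, $\Lambda_\infty=\varprojlim_{\fn\in A}\Lambda_\fn^{\fP}$ is \emph{not} Noetherian: the moduli range over all ideals prime to $\fp$, so $\varprojlim_\fn H_\fn^{(p)}$ is the Galois group of the maximal abelian $p$-extension of $K$ ramified at arbitrarily many primes, and every prime $\fq$ with $N\fq\equiv 1\bmod p$ contributes an essentially independent cyclic $p$-extension; you appear to be thinking of the conductor-$\ff p^\infty$ tower (a finitely generated $\Zp$-module situation), which is a different object. Second, and more seriously, even in a pseudocompact formalism with topological Nakayama, ``repeating the Schur--Nakayama argument one level up'' has no seed: at finite level the non-zero Galois-equivariant map came from Theorem \ref{thm:cmrep}, whereas $\Hom_{\Lambda_\infty[\Gal(\Qbar/\QQ)]}(M_\infty,N_\infty)=\varprojlim_\fn \Hom_{\Lambda_\fn^{\fP}[\Gal(\Qbar/\QQ)]}(M_\fn,N_\fn)$, so producing an element of the limit that generates at every level is exactly the compatible family you are trying to construct --- the argument is circular unless you prove the transition maps on these rank-one Hom-modules are surjective. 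That surjectivity is precisely what the paper's proof supplies: it fixes a cofinal chain $\ff=\fn_1\mid\fn_2\mid\cdots$, measures the discrepancy between an arbitrary isomorphism at level $\fn_{i+1}$ and the chosen $\nu_{\fn_i}$ by a unit $a\in(\Lambda_{\fn_i}^{\fP})^\times$ (Schur for the induced module), lifts it through the surjection $(\Lambda_{\fn_{i+1}}^{\fP})^\times\onto(\Lambda_{\fn_i}^{\fP})^\times$, and corrects, then defines $\nu_\fn$ for all $\fn\in A_{i+1}$ by pushing down via Proposition \ref{prop:cmnorm}. Once you add that unit-lifting step your limit formulation reduces to the same induction; without it, the commutativity of the diagrams is not established.
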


   \begin{proof}
    Firstly, let $(\fn_i)_{i \ge 1}$ be a sequence of ideals in $A$ such that
    \begin{itemize}
     \item $\fn_1 = \ff$,
     \item $\fn_{i+1} = \fl_i \fn_i$ for all $i \ge 1$, where $\fl_i$ is prime,
     \item every $\fn \in A$ divides $\fn_i$ for some $i \gg 0$.
    \end{itemize}

    Let $A_i$ be the finite set $\{ \fn \in A: \fn \mid \fn_i\}$. Since $\bigcup_{i \ge 1} A_i = A$, it suffices to show that for each $i \ge 1$, there exists a system of isomorphisms $\nu_{\fn}$ for $\fn \in A_i$ such that the compatibility diagram commutes when $\fn, \fn' \in A_i$. We shall prove this claim by induction on $i$.

    We let $\nu_{\ff}$ be any choice of isomorphism
    \[ H^1(\psi, \ff, \fP) \cong \Ind_{K(\ff)}^\QQ \cO(\psi_{\fP}^{-1}), \]
    (which exists by Proposition \ref{prop:cmfree} and Theorem \ref{thm:cmrep}). As $A_1 = \{\ff\}$, this proves our claim for $i = 1$.

    Now suppose that $\nu_{\fn}$ is defined for all $\fn \mid \fn_i$. Let $\nu'$ be any choice of isomorphism $H^1(\psi, \fn_{i+1}, \fP) \cong \Ind_{K(\fn_{i+1})}^{\QQ} \cO(\psi_{\fP}^{-1})$ (which exists, again, by Proposition \ref{prop:cmfree} and Theorem \ref{thm:cmrep}). There is a unique $a \in \Lambda_{\fn_i}^{\fP}$ such that the isomorphism $H^1(\psi, \fn_{i}, \fP) \cong \Ind_{K(\fn_i)}^{\QQ} \cO(\psi_{\fP}^{-1})$ induced by $\nu'$ is equal to $a \cdot \nu_{\fn_i}$.

    Since the morphism $(\Lambda_{\fn_{i+1}}^{\fP})^\times \to (\Lambda_{\fn_{i}}^{\fP})^\times$ is surjective, we can choose a lifting $b$ of $a$ to $\Lambda_{\fn_{i+1}}^{\fP}$, and define $\nu_{\fn_{i+1}} = b^{-1} \nu'$.

    We now define $\nu_{\fn}$, for any $\fn \in A_{i+1}$, to be the morphism induced by $\nu_{\fn_{i+1}}$. This agrees with the existing definition of $\nu_{\fn}$ for $\fn \in A_{i} \subset A_{i+1}$, and the diagram now commutes for all $\fn, \fn' \in A_{i+1}$ as required.
   \end{proof}

  \subsection{\'Etale cohomology classes}

   We now bring the eigenform $f$ back into the picture. We assume henceforth (largely for convenience) that $p \nmid N_f$.

   Recall the motivic cohomology space $H^3_{\mot}(f, \psi, m, \fn, \fO_L(2))$ constructed above. The \'etale regulator map
   \begin{multline*}
    \operatorname{reg}_{\et}: H^3_{\mot}(Y_1(N_f) \times Y_1(N) \times \Spec \QQ(\mu_m), \ZZ(2)) \\\to H^1\left( \QQ(\mu_m), H^1_{\et}(\overline{Y_1(N_f)}, \Zp(1)) \otimes_{\Zp} H^1_{\et}(\overline{Y_1(N)}, \Zp(1))\right)\end{multline*}
   is compatible with correspondences, and therefore descends to a map
   \[ H^3_{\mot}(f, \psi, m, \fn, \fO_L(2)) \to H^1\left(\QQ(\mu_m), T_{\cO}(f)^* \otimes H^1(\psi, \fn, \fP)\right)\]
   where $T_{\cO}(f)^*$ is the quotient of $H^1_{\et}(\overline{Y_1(N_f)}, \cO(1))$ defined as in \cite{leiloefflerzerbes13}, and $H^1(\psi, \fn, \fP)$ is as defined above.

   We now choose a set of isomorphisms $\{\nu_{\fn}: \fn \in A\}$ as in Corollary \ref{cor:nui}. By Shapiro's lemma, we have a canonical isomorphism
   \[ H^1(\QQ, \Ind_{K(\fn)}^{\QQ} \cO(\psi^{-1})) \cong H^1(K(\fn), \cO(\psi^{-1}).\]

   \begin{definition}
    For $c > 1$ coprime to $6 N_f N_\psi$, and $\fn \in A$, let
    \[ {}_c \bfz_{\fn}^{f, \psi} \in H^1\left(K(\fn), T_{\cO}(f)^*(\psi^{-1})\right)\]
    be the image of ${}_c \Xi_{1, \fn}^{f, \psi}$ under the above map.

    If $\fn$ is an ideal coprime to $\fp$, but not divisible by $\ff$, we define ${}_c \bfz_{\fn}^{f, \psi}$ as the image under corestriction of ${}_c \bfz_{\fn \ff}^{f, \psi}$.
   \end{definition}

   We first show that we may get rid of the factor $c$. Let $\varepsilon = \varepsilon_f \cdot \chi \cdot \varepsilon_K$ be the product of the Nebentypus characters of $f$ and $g_\psi$. Let us write $N_\psi = N_{K/\QQ}(\ff) \cdot \disc$, which is coprime to $p$.

   We know that if $\fn$ is divisible by $\ff$ and $c, d$ are two integers $> 1$ and coprime to $6 N_f N$, where $N = N_{K/\QQ}(\fn) \cdot \disc$ as usual, then
   \[ (c^2 - \varepsilon(c)^{-1}[c]^{-2}) {}_d \bfz_{\fn}^{f, \psi} \]
   is symmetric in $c$ and $d$ (cf.~\cite[Proposition 2.7.5(5)]{leiloefflerzerbes13}).

   Since $p > 3$ and $p$ does not divide $N_f N_\psi$, there exists some $d > 1$ such that $d^2 \ne 1 \bmod p$ and $d = 1 \bmod N_f N_\psi$. We may also assume that $d$ is coprime to $6 N$. We have $\varepsilon(d) = 1$, so $d^2 - \varepsilon(d)^{-1}[d]^{-1}$ is invertible in $\Lambda_{\fn}^{\fP}$; and if we define
   \[ \bfz^{f, \psi}_{\fn} = (d^2 - \varepsilon(d)^{-1}[d]^{-1})^{-1} {}_d \bfz^{f, \psi}_{\fn} \in H^1(K(\fn), T),\]
   then $\bfz^{f, \psi}_{\fn}$ is independent of $d$ and we have ${}_c \bfz^{f, \psi}_{\fn} = (c^2 - \varepsilon(c)^{-1}[c]^{-2})\bfz^{f, \psi}_{\fn}$ for any valid choice of $c$.

   \begin{theorem}
    \label{thm:eulersystem}
    Let $\mathcal{N} = p N_f \ff$. Then the elements
    \[ \{ \bfz_{\fn}^{f, \psi}: (\fn, \mathcal{N}) = 1)\}\]
    form an Euler system for $(T, \mathcal{K}, \mathcal{N})$ in the sense of \cite{rubin00}, where $\mathcal{K}$ is the composite of the $K(\fn)$ for all $\fn$ coprime to $\mathcal{N}$.
   \end{theorem}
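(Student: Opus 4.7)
The plan is to verify the norm-compatibility axiom of Rubin \cite{rubin00}: for $\fn \mid \fn'$ with both coprime to $\mathcal{N}$,
\[
\mathrm{cor}_{K(\fn')/K(\fn)} \bfz_{\fn'}^{f,\psi} = \left(\prod_{\fl \mid (\fn'/\fn)} P_\fl(\mathrm{Frob}_\fl^{-1})\right) \bfz_\fn^{f,\psi},
\]
where $P_\fl$ is the Euler factor at $\fl$ of $T = T_\cO(f)^*(\psi^{-1})$. By multiplicativity it suffices to treat $\fn' = \fl\fn$ with $\fl$ a single prime not dividing $\mathcal{N}\fn$.

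I would first handle the case in which both $\fn$ and $\fn'=\fl\fn$ are divisible by $\ff$. Start from Theorem \ref{thm:normcompat} applied with $m=1$ (so the Galois twist $\sigma_\fl$ is trivial), which gives the motivic identity
\[
\mathcal{N}_{\fn}^{\fn'}\bigl({}_c\Xi_{1,\fn'}^{f,\psi}\bigr) = P_\fl\bigl([\fl]\,N(\fl)^{-1}\bigr) \cdot {}_c\Xi_{1,\fn}^{f,\psi}.
\]
Applying the \'etale regulator (functorial under pushforward and hence under $\mathcal{N}_\fn^{\fn'}$) lands the relation in $H^1(\QQ, T_\cO(f)^*\otimes H^1(\psi,\fn,\fP))$. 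Transporting it through the compatible system of $\Gal(\Qb/\QQ)$-equivariant isomorphisms $\nu_\fn,\nu_{\fn'}$ provided by Corollary \ref{cor:nui}, the left-hand side becomes, via Shapiro's lemma and the projection formula, precisely the corestriction from $K(\fn')$ to $K(\fn)$ of the class associated to ${}_c\Xi_{1,\fn'}^{f,\psi}$. On the right-hand side, the Hecke-algebra element $[\fl]\in H_\fn^{(p)}$ acts on $\cO(\psi_\fP^{-1})$ through $\psi_\fP^{-1}(\fl)$, which under the geometric Artin normalisation used in the proof of Proposition 5.1.2 is exactly $\mathrm{Frob}_\fl^{-1}$ on $\cO(\psi_\fP^{-1})$ (and trivially on $T_\cO(f)^*$, up to the contribution from its own Frobenius action accounted for by the polynomial $P_\fl$). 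The polynomial $P_\fl$ is by construction the Euler factor of $f\otimes\psi$ at $\fl$, matching the polynomial demanded by Rubin for the representation $T$.

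To reduce from general $\fn$ to the divisible-by-$\ff$ case, use the defining corestriction $\bfz_\fn^{f,\psi}=\mathrm{cor}_{K(\fn\ff)/K(\fn)}\bfz_{\fn\ff}^{f,\psi}$: transitivity of corestriction along $K(\fn'\ff)\supset K(\fn\ff)\supset K(\fn)$, together with the fact that $K(\fn\ff)/K(\fn)$ only ramifies at primes dividing $\ff$ (hence coprime to $\fl$), reduces the required identity to the one already proved. The factor $(c^2-\varepsilon(c)^{-1}[c]^{-2})$ relating ${}_c\bfz$ and $\bfz$ is a central element commuting with all operations and so descends harmlessly. The main obstacle is the bookkeeping in step two: one must verify that the four case-by-case definitions of $\mathcal{N}_\fn^{\fn'}$ in Definition \ref{def:curlyN} (split with $\overline\fl\nmid\fn$, split with $\overline\fl\mid\fn$, inert, ramified), combined with the corresponding cases of Theorem \ref{thm:asymmetricnorm} used to prove Theorem \ref{thm:normcompat}, all assemble into the single polynomial $P_\fl$ applied at $\mathrm{Frob}_\fl^{-1}$ once $[\fl]$ is transported across $\nu_\fn$ and interpreted Galois-theoretically via class field theory. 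This identification in the split-vs-inert cases — where $N(\fl)$ equals $\ell$ versus $\ell^2$, and Frobenius at $\fl$ corresponds to $\mathrm{Frob}_\ell$ versus $\mathrm{Frob}_\ell^2$ — is where the residue-degree factors must be tracked with care.
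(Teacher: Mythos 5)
Your proposal is correct and follows essentially the same route as the paper: the paper's own proof is a one-line appeal to Theorem \ref{thm:normcompat} (with $m=1$) together with the compatibility of the \'etale regulator with correspondences, the identification of $\mathcal{N}_{\fn}^{\fn'}$ with corestriction being built into the construction of the classes via the patched isomorphisms of Corollary \ref{cor:nui}. You have simply made explicit the bookkeeping (Shapiro's lemma, the Galois-theoretic reading of $[\fl]$ as a Frobenius, removal of $c$, and the reduction to moduli divisible by $\ff$) that the paper leaves implicit.
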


   \begin{proof}
    By Theorem \ref{thm:normcompat} (and the compatibility of the \'etale regulator with correspondences), these elements satisfy the Euler system compatibility relation.
   \end{proof}

  \subsection{Local properties}

   We now show that the classes $\bfz_{\fn}^{f, \psi}$ have good local behaviour. We recall the following definition, due to Bloch and Kato \cite{blochkato90}:

   \begin{definition}
    If $V$ is any continuous $\Qp$-linear representation of $\Gal(\overline{L} / L)$, where $L$ is a finite extension of $\QQ_\ell$, the Bloch--Kato Selmer subspace $H^1_f(L, V)$ is defined as follows (cf.~\cite{blochkato90}):
    \begin{itemize}
     \item if $\ell \ne p$, we define $H^1_f(L, V) = H^1(L^{\operatorname{nr}} / L, V^{I_L})$, where $I_L$ is the inertia subgroup of $\Gal(\overline{L} / L)$;
     \item if $\ell = p$, we define $H^1_f(L, V) = \operatorname{ker}\left(H^1(L, V) \to H^1(L, V \otimes \mathbf{B}_{\mathrm{cris}})\right)$ where $\mathbf{B}_{\mathrm{cris}}$ is Fontaine's crystalline period ring.
    \end{itemize}
    For $T \subseteq V$ a $\Gal(\overline{L} / L)$-stable $\Zp$-lattice, we define $H^1_f(L, T)$ and $H^1_f(L, V/T)$ as the preimage (resp.~image) of $H^1(L, V)$.
   \end{definition}

   \begin{notation}
    For convenience we will use the shorthand $T := T_{\cO}(f)^*(\psi_\fP^{-1})$.
   \end{notation}

   \begin{proposition}
    \label{prop:h1f}
    Suppose that one of the following conditions holds:
    \begin{enumerate}[(i)]
     \item $p$ is split in $K / \QQ$, and the polynomial
     \[ P_p\left(\frac{\psi(\fp)}{p} X\right) \]
     does not vanish at any $p$-power root of unity.
     \item $p$ is inert in $K / \QQ$ and $v_{\fP}(a_p(f)) < \tfrac 1 2$.
    \end{enumerate}
    Then for any $\fn$ coprime to $\mathcal{N}$, and any prime $v \nmid p$ of $K(\fn)$, we have
    \[ \loc_v\left(\bfz_{\fn}^{f, \psi}\right) \in H^1_f(K(\fn)_v, T).\]
   \end{proposition}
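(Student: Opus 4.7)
Here is the proof plan.

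My approach is to exploit the motivic origin of $\bfz_\fn^{f,\psi}$ and the standard fact that, for $\ell \ne p$, the Bloch--Kato subspace $H^1_f$ coincides with the unramified subspace $H^1_{\ur}$. The first step is a translation via Corollary \ref{cor:nui} and Shapiro's lemma: the localization $\loc_v(\bfz_\fn^{f,\psi})$ at a prime $v \mid \ell$ of $K(\fn)$ corresponds to the localization at $\ell$ of the ``pre-Shapiro'' global class $\tilde z \in H^1(\QQ, T_\cO(f)^* \otimes H^1(\psi, \fn, \fP))$, and the $H^1_f$-condition on the former matches the unramifiedness condition on the latter. Since $\tilde z$ is itself obtained by applying the \'etale regulator to ${}_c\Xi(N_f, N) \in H^3_{\mot}(Y_1(N_f) \times Y_1(N), \ZZ(2))$ and then projecting along the Hecke-equivariant tensoring construction, it suffices to show that this \'etale regulator class restricts to an unramified class at every rational prime $\ell \ne p$.

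For $\ell \nmid N_f \cdot N_{K/\QQ}(\fn) \cdot \disc$ (the good-reduction case), this is essentially automatic: the scheme $Y_1(N_f) \times Y_1(N)$ extends to a smooth scheme over $\Spec\ZZ_{(\ell)}$; the Steinberg symbols of Siegel units defining ${}_c\Xi(N_f, N)$ extend to classes in the motivic cohomology of this integral model (the $c$-stabilization being precisely what annihilates the residues at cuspidal divisors); and the \'etale regulator of a motivic class on a smooth integral model factors through $H^1(\QQ_\ell^{\ur}/\QQ_\ell,-)$, giving unramifiedness.

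The remaining case -- $\ell \ne p$ with $\ell$ dividing $N_f \cdot N_{K/\QQ}(\fn) \cdot \disc$ -- is the main obstacle. Here the cleanest route is to invoke a general principle, going back to work of Nekov\'{a}\v{r} and Nizio{\l}, asserting that the \'etale regulator of motivic cohomology of any smooth $\QQ$-variety lands in $H^1_f$ at every prime $\ell \ne p$, regardless of the reduction behavior; this is ultimately a consequence of purity of the weight filtration on the \'etale cohomology of smooth (not necessarily proper) varieties. Alternatively, one can invoke local Tate duality, which reduces the claim to the vanishing of $H^0(K(\fn)_v, V^*(1))$ for $V = T \otimes E$, and then verify case by case -- using the local structure of $f$ and $g_\psi$ at $\ell$ supplied by the local Langlands correspondence for $\mathrm{GL}_2$ -- that 1 is not a Frobenius eigenvalue on the inertia-invariants of $V^*(1)$. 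The delicate point is handling the various combinations ($\ell \mid N_f$ only, $\ell \mid N_{K/\QQ}(\fn)$ only, $\ell$ ramified in $K/\QQ$, and $\ell$ dividing several of these simultaneously). Note that the conditions (i) and (ii) in the statement concern primes above $p$ and appear to be needed only for the parallel (and genuinely non-trivial) statement at $v \mid p$ alluded to in the introduction; they play no role in the argument at $v \nmid p$.
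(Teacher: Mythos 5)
Your reduction at the good-reduction primes is fine, but the heart of the statement is the primes $v \nmid p$ dividing $N_f\, N_{K/\QQ}(\fn\ff)\, \disc$, and there your proposal has a genuine gap. The ``general principle'' you invoke is not a theorem you can cite: for $\ell \ne p$ the obstruction to lying in $H^1_f$ is the image of the class in $H^1(I_v, V)$, and controlling this for the \emph{open} surface $Y_1(N_f) \times Y_1(N)$ at a prime of bad reduction is a monodromy--weight problem for which no general unramifiedness result for regulators is available (this is precisely why Kato, \cite{leiloefflerzerbes13} and the present paper do not argue geometrically away from $p$). The Tate-duality fallback fares no better: it requires $H^0(K(\fn)_v, V^*(1)) = 0$ for \emph{every} $v \nmid p$, but $V^*(1)$ has motivic weight $0$, so purity gives no vanishing, and a coincidence such as a common Frobenius eigenvalue of $f$ and $\overline{g_{\psi\eta}}$ at some $\ell$ (equivalently, a zero of the relevant local Euler factor over $K(\fn)$ at $v$) cannot be excluded by local Langlands considerations; at such a prime membership in $H^1_f$ is a genuinely nontrivial condition which your argument does not verify.

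Moreover, your closing claim that hypotheses (i) and (ii) play no role at $v \nmid p$ is exactly backwards: in the paper they are used precisely to prove the statement away from $p$. In case (i) the paper compares $\bfz_{\fn}^{f,\psi}$ with $\bfz_{\fn\fp}^{f,\psi}$: the latter is a universal norm in the $\Zp$-extension $K(\fn\fp^\infty)/K(\fn\fp)$, in which no finite prime splits completely, hence lies in $H^1_f$ at all primes away from $p$ by \cite[Cor.~B.3.5]{rubin00}; the norm relation $\mathcal{N}_{\fn}^{\fn\fp}\left(\bfz_{\fn\fp}^{f,\psi}\right) = P_p\left(\frac{\psi(\fp)}{p}[\fp]\right)\bfz_{\fn}^{f,\psi}$ then transfers this to $\bfz_{\fn}^{f,\psi}$ \emph{provided} that factor is invertible in $\Lambda_{\fn}^{\fP}[1/p]$ --- which is exactly what hypothesis (i) guarantees. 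In case (ii) the paper invokes \cite[Cor.~6.7.9]{leiloefflerzerbes13} applied to $f$ and $g_{\psi\eta}$, whose hypotheses amount to the slope condition $v_{\fP}(a_p(f)) < \tfrac12$. The geometric ``smooth integral model'' input you lean on is reserved in the paper for the primes above $p$ (Proposition \ref{prop:h1fp}), where it applies because $p$ divides none of the levels involved.
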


   \begin{proof}
    In case (i), to show that $\bfz_{\fn}^{f, \psi}$ lies in the local $H^1_f$, we compare it with the class $\bfz_{\fn\fp}^{f, \psi}$. We know that $\bfz_{\fn\fp}^{f, \psi}$ is a universal norm from the tower $K(\fn \fp^\infty) / K(\fn\fp)$, which is a $\Zp$-extension in which no finite prime splits completely. Hence it is automatically in $H^1_f$ at all primes away from $p$, by \cite[Corollary B.3.5]{rubin00}. However, we have
    \[ \mathcal{N}_{\fn}^{\fn\fp}\left(\bfz_{\fp\fn}^{f, \psi}\right) = P_{p}\left(\frac{\psi(\fp)}{p}[\fp] \right)  \bfz^{f, \psi}_{\fn}.\]
    If no root of $P_{p}\left(\frac{\psi(\fp)}{p} X\right)$ is a root of unity of order dividing $\# H_{\fn}^{(p)}$, the element $P_{p}\left(\frac{\psi(\fp)}{p} [\fp]\right)$ is a unit in $\Lambda_{\fn}^{\fP}[1/p]$; but the action of $\Lambda_{\fn}^{\fP}[1/p]$ preserves $H^1_f$, so we are done.

    In case (ii), we use Corollary 6.7.9 of \cite{leiloefflerzerbes13}. This shows that for $f, g$ of level prime to $p$, the class $\bfz^{f, g}_{1}$ is in $H^1_f$ if there exist $p$-stabilizations $\alpha$ of $f$ and $\gamma$ of $g$ such that $v_p(\alpha \gamma) < 1$ and none of the elements
    \[ \left\{ \alpha\gamma, \frac{\alpha\delta}{p}, \frac{\beta \gamma}{p}, \frac{\beta\delta}{p}\right\}. \]
    are equal to 1. We apply this with $g = g_{\psi\eta}$ for each character $\eta$ of $H_{\fn}^{(p)}$; then we have $v_\fP(\alpha) < \tfrac12$, $v_{\fP}(\beta) > \tfrac12$, and $v_{\fP}(\gamma) = v_{\fP}(\delta) = \tfrac12$, so none of these four quantities can be a $\fP$-adic unit.
   \end{proof}

   \begin{remark}
    We take the opportunity to note that there is a small gap in the proof of Proposition 6.6.2 of \cite{leiloefflerzerbes13} (on which the cited corollary 6.7.9 relies). The argument actually only proves that $\bfz^{f, g}_{1}$ is in $H^1_f$ if $\alpha_f \alpha_g \ne 1$, since $\bfz^{f,g}_1 = (\alpha_f \alpha_g - 1) \norm_{\QQ}^{\QQ(\mu_p)} \bfz^{f,g}_p$. It can actually happen that $\alpha_f \alpha_g = 1$ (e.g.~if $f$ and $g$ both correspond to elliptic curves with split multiplicative reduction at $p$). However, we are interested in the case when $f$ and $g$ are the $p$-stabilizations of eigenforms of level prime to $p$, in which case $\alpha_f \alpha_g$ is a Weil number of weight 2, and thus cannot be equal to 1.
   \end{remark}

   We now consider the local properties of our Euler system at the primes of $K$ above $p$. This is very straightforward\footnote{Straightforward, that is, modulo the rather deep fact that the \'etale regulator maps classes in the $K$-theory of a smooth proper $\Zp$-scheme to classes in $H^1_f$.} from the construction of the Beilinson--Flach elements.

   \begin{proposition}
    \label{prop:h1fp}
    If $(p, \fn) = 1$, then we have
    \[ \loc_w\left(\bfz_{\fn}^{f, \psi}\right) \in H^1_f\left(K(\fn)_w, T\right)\]
    for all primes $w \mid p$ of $K(\fn)$.
   \end{proposition}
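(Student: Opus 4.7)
The plan is to deduce the local-at-$p$ statement from the general principle, alluded to in the footnote, that the $p$-adic \'etale regulator carries motivic cohomology classes on a smooth proper $\Zp$-scheme into the Bloch--Kato subspace $H^1_f$; this is a theorem of Niziol (with related versions due to Nekovar and Besser), proved via syntomic cohomology and $p$-adic Hodge theory.

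Since $p \nmid N_f$ and $(p, \fn) = 1$, both $Y_1(N_f)$ and $Y_1(N)$ (with $N = N_{K/\QQ}(\fn) \cdot \disc$) have smooth integral models over $\Zp$, admitting smooth proper compactifications $X_1(N_f)$ and $X_1(N)$ with simple normal crossings cuspidal boundary. The first step is to replace the open surface by its compactification: after localization at the non-Eisenstein ideals associated to $f$ and $\cI_{\fn}$, the natural maps $H^1_c(Y_1(\cdot)) \to H^1(X_1(\cdot)) \to H^1(Y_1(\cdot))$ are isomorphisms by the Manin--Drinfeld-type proposition already proved in Section 4, so the target Galois module for $\bfz_{\fn}^{f,\psi}$ coincides with the corresponding quotient of $H^1_{\et}(\overline{X_1(N_f)}, \Zp(1)) \otimes H^1_{\et}(\overline{X_1(N)}, \Zp(1))$. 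Tracing through the definitions, one sees that $\bfz_{\fn}^{f,\psi}$ is then realized as the image, under the \'etale regulator, of a motivic cohomology class on the smooth proper $\Zp$-scheme $X_1(N_f) \times X_1(N)$, suitably pushed forward from a smaller product level and projected to the $(f, \psi)$-isotypical quotient.

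Applying Niziol's theorem then gives that the localization at $p$ of the resulting global cohomology class lies in $H^1_f(\Qp, -)$. For any prime $w \mid p$ of $K(\fn)$, the completion $K(\fn)_w / \Qp$ is unramified, since $p$ is unramified in $K$ and $(p, \fn) = 1$ implies that $K(\fn) / K$ is unramified at primes of $K$ above $p$; hence the Bloch--Kato local condition is preserved under restriction, and we obtain $\loc_w(\bfz_{\fn}^{f,\psi}) \in H^1_f(K(\fn)_w, T)$ as required. The main obstacle I would expect is verifying that the Beilinson--Flach-type class ${}_c\Xi(1, N_f, N)$ genuinely lifts (after the non-Eisenstein Hecke projection) to motivic cohomology of the compactified product, rather than merely to the \'etale cohomology of the open surface; this reduces to analyzing the motivic cohomology of the boundary divisor of $X_1(N_f) \times X_1(N)$, whose components are products of cusps and modular curves, and checking that the corresponding obstruction is Eisenstein on at least one factor.
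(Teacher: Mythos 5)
Your overall instinct---that the proposition should follow from the theorem of Niziol (or Nekov\'a\v{r}--Niziol) that $p$-adic regulators of motivic classes on smooth \emph{proper} $\Zp$-schemes land in $H^1_f$---is indeed the deep input alluded to in the footnote, but your argument has a genuine gap at exactly the step you flag at the end, and it is not a minor one. The class ${}_c\Xi(N_f, N)$ is built from Siegel units on the \emph{open} surface $Y_1(N_f) \times Y_1(N)$, and these units have nontrivial divisors supported at the cusps; consequently its residue in the localization sequence for $X_1(N_f) \times X_1(N)$ relative to the boundary is nonzero in general, so the class does not lift to the motivic cohomology of the compactification, and Niziol's theorem cannot be invoked as stated. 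The Manin--Drinfeld isomorphisms of Section 4 do not repair this: they are isomorphisms of \'etale (or Betti) cohomology after localization at a non-Eisenstein maximal ideal, i.e.\ they identify the \emph{target} of the regulator, but they produce no motivic class on the proper model mapping to $\bfz_{\fn}^{f, \psi}$. To turn your suggestion that ``the obstruction is Eisenstein'' into a proof you would need a motivic analogue of Manin--Drinfeld---for instance a Hecke element, invertible in the relevant completions, annihilating the boundary residues inside the motivic cohomology of the boundary divisor---and you neither exhibit such an element nor show that the residues die; note also that these boundary motivic cohomology groups are large and not finitely generated, so localization arguments there require care. As written, the central assertion that $\bfz_{\fn}^{f,\psi}$ is the regulator of a class on a smooth proper $\Zp$-scheme is unproven.

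For comparison, the paper does none of this directly: decomposing $H^1(\psi, \fn, \fP)$ over the characters $\eta$ of $H_{\fn}^{(p)}$ (via Theorem \ref{thm:cmrep} and Shapiro's lemma), the claim reduces to the case $\fn = 1$ with $\psi$ replaced by $\psi\eta$, i.e.\ to the statement that the Beilinson--Flach class attached to the pair $(f, g_{\psi\eta})$---two eigenforms whose levels are prime to $p$, thanks to $(p, \fn) = 1$ and $p \nmid N_f$---is crystalline at $p$; this is exactly Proposition 6.5.4 of \cite{leiloefflerzerbes13}, where the required $p$-adic Hodge-theoretic input that your proposal attempts to reprove has already been established. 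Two smaller points: the unramifiedness of $K(\fn)_w / \Qp$ plays no role, since restriction along any finite extension carries $H^1_f$ into $H^1_f$ (triviality in $H^1(\,\cdot\,, V \otimes \mathbf{B}_{\mathrm{cris}})$ is preserved under restriction); and what your final step really uses is the compatibility of the Bloch--Kato condition with Shapiro's lemma, $H^1_f\bigl(\Qp, \Ind_{K(\fn)}^{\QQ} T\bigr) \cong \bigoplus_{w \mid p} H^1_f\bigl(K(\fn)_w, T\bigr)$, which is standard but should be stated.
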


   \begin{proof}
    It suffices to check the result for $\fn = 1$ with $\psi$ replaced by $\psi \eta$, for each character $\eta$ of $H_{\fn}^{(p)}$. This is immediate from \cite[Proposition 6.5.4]{leiloefflerzerbes13} applied to the modular forms $f$ and $g_{\psi\eta}$, which both have level coprime to $p$.
   \end{proof}

 \section{P-adic L-functions}

  We now collect some results on $p$-adic $L$-functions attached to $f$ over $K$. We shall assume throughout that $f$ does not have CM by $K$, so the base-change of $f$ to an automorphic representation of $\GL_2(\AA_K)$ is cuspidal.

  \subsection{Definition of the $L$-functions}
   \label{sect:defLfcn}
   Let $\Psi$ be any $L$-valued algebraic Gr\"ossen\-character of $K$, of some arbitrary infinity-type $(a,b)$. We write $L(f/K, \Psi, s)$ for the $L$-function attached to the base-change of $f$ to $K$ twisted by $\Psi$. Then the point $s = 1$ is a critical value of the $L$-function $L(f/K, \Psi, s)$ if and only if one of the following holds:
   \begin{itemize}
    \item we have $a = b = 0$ (region $\Sigma^{(1)}$);
    \item we have $a \le -1$ and $b \ge 1$ (region $\Sigma^{(2)}$);
    \item we have $b \le -1$ and $a \ge 1$ (region $\Sigma^{(2')}$);
   \end{itemize}
   See Figure \ref{fig:hecke} below.

   \begin{remark} Our notation for the critical regions is taken from Definition 4.1 of \cite{BDP13}, but our conventions are slightly different, since we work with $L(f/K, \Psi, 1)$ rather than $L(f/K, \Psi^{-1}, 0)$. Thus our Figure 1 is Figure 1 of \cite{BDP13} rotated by $180^\circ$ around the point $(\tfrac{1}{2}, \tfrac{1}{2})$.
   \end{remark}

   \begin{figure}[ht!]
    \includegraphics[width=\textwidth]{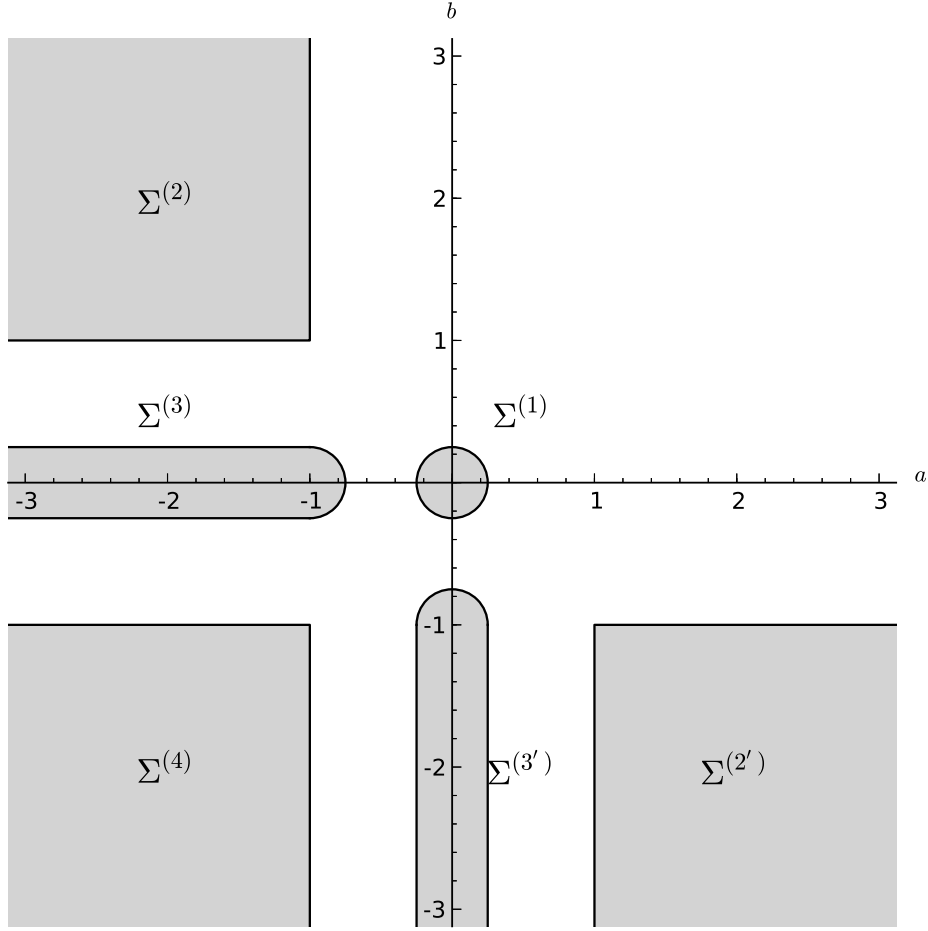}
    \caption{\label{fig:hecke}Infinity-types of Hecke characters of $K$}
   \end{figure}

   The regions $\Sigma^{(3)}$, $\Sigma^{(3')}$, and $\Sigma^{(4)}$ in Figure \ref{fig:hecke} correspond to characters where the archimedean $\Gamma$-factor $L_\infty(f/K, \Psi, s)$ has a pole at $s = 1$, of order 1 for $\Sigma^{(3)}$ and $\Sigma^{(3')}$, and of order 2 for $\Sigma^{(4)}$. Since the completed $L$-function $\Lambda(f/K, \Psi,  s) = L(f/K, \Psi, s)L_\infty(f/K, \Psi, s)$ is holomorphic on $\CC$ and nonzero\footnote{If $a - b > 1$ then $s = 1$ is in the region of convergence of the Euler product and thus the $L$-value is nonzero; the case $a - b < - 1$ follows from this via the functional equation. The remaining cases $a - b = \pm 1$ follow from a deep global non-vanishing statement of Jacquet and Shalika \cite{jacquetshalika76}.} at $s = 1$ whenever the $\infty$-type of $\Psi$ does not lie on the at line $b = -a$, it follows that $L(f/K, \Psi, s)$ must vanish at $s = 1$ to order exactly 1 for $\Psi \in \Sigma^{(3)} \cap \Sigma^{(3')}$ and to order exactly 2 for $\Psi \in \Sigma^{(4)}$.

   \begin{remark}
    Beilinson's conjecture \cite{beilinson84} predicts that the vanishing of the $L$-value $L(f/K, \Psi, 1)$ is related to the existence of classes in a motivic cohomology group $H^1_f(K, M_f^*(\psi^{-1}))$, where $M_f$ is the motive of $f$. When $(a, b) = (-1, 0)$ or $(0, -1)$, the conjecture predicts that the motivic cohomology group should be 1-dimensional, and spanned by the Beilinson--Flach classes. It seems reasonable to expect\footnote{Since this paper was originally written, this predicted extension of the construction has been carried out in the paper \cite{kingsloefflerzerbes14}.} that the construction of Beilinson--Flach classes can be generalized to any $(a, b) \in \Sigma^{(3)} \cup \Sigma^{(3')}$. When $(a, b) \in \Sigma^{(4)}$ the motivic cohomology should be 2-dimensional, and Beilinson's conjecture predicts the existence of classes in the group $\bigwedge^2 H^1_f(K, M_f^*(\psi^{-1}))$, but constructing such classes appears to be beyond the reach of present techniques.
   \end{remark}

   We now interpolate $p$-adically, where as above $p$ is a prime $\ge 5$ unramified in $K$ and not dividing $N_f$. Let $\ff$ be a modulus of $K$ with $(p, \ff) = 1$. The ray class group $H_{\ff p^\infty}$ is a $p$-adic analytic group, and algebraic Gr\"ossencharacters of $K$ of conductor dividing $\ff p^\infty$ correspond bijectively with locally algebraic $\overline{\Qp}$-valued characters of $H_{\ff p^\infty}$.

   \begin{theorem}
    Assume $(N_f, N_{\psi}) = 1$, where $N_\psi = N_{K/\QQ}(\ff) \cdot \disc$ as usual, and let $N$ be an integer divisible by $N_\ff N_{\psi}$ and having the same prime factors as $N_\ff N_{\psi}$. Let $\alpha, \beta$ be the roots of the Hecke polynomial of $f$.
    \begin{enumerate}
     \item Suppose $f$ is ordinary at $p$ and $\alpha$ is the unit root. Then there exists an element $L_{\fP}(f/K, \Sigma^{(1)}) \in \Lambda_E(H_{\ff p^\infty})$ with the property that for Gr\"ossencharacters $\psi$ of $K$ of conductor dividing $\ff$ and infinity-type in $\Sigma^{(1)}$, we have
     \[ L_{\fP}(f/K, \Sigma^{(1)})(\psi_{\fP}) = \frac{\mathcal{E}(f, \psi, 1)}{\left(1 - \frac{\beta}{\alpha}\right)\left(1 - \frac{\beta}{p\alpha}\right)} \cdot \frac{i N}{8\pi^2 \langle f, f \rangle_N} \cdot L(f/K, \psi, 1),\]
     where $\mathcal{E}(f, \psi, 1)$ is given by
     \[  \mathcal{E}(f, \psi, 1) =
      \begin{cases}
       \prod_{v \mid p}(1 - p^{-1} \beta \psi(v))(1 - \alpha^{-1} \psi(v)^{-1}) & \text{if $p$ is split},\\
       (1 - p^{-2} \beta^2 \psi(p))(1 - \alpha^{-2} \psi(p)^{-1}) & \text{if $p$ is inert.}
      \end{cases}
     \]
     \item Suppose $p$ is split in $K$. Then there exists an element $L_{\fP}(f/K, \Sigma^{(2)}) \in \operatorname{Frac} \Lambda_E(H_{\ff p^\infty})$ with the property that for Gr\"ossencharacters $\psi$ of $K$ of conductor dividing $\ff$ and infinity-type $(a,b) \in \Sigma^{(2)}$, we have
     \[ L_{\fP}(f/K, \Sigma^{(2)})(\psi_{\fP}) = \frac{\mathcal{E}(\psi, f, 1)}{\left(1 - \frac{\psi(\fp)}{\psi(\fpb)}\right) \left(1- \frac{\psi(\fp)}{p \psi(\fpb)}\right)} \cdot \frac{2^{a-b} i^{b-a-1} b! (b-1)! N^{a + b + 1}}{(2\pi)^{1 + 2b} \langle g_{\lambda}, g_{\lambda}\rangle_N} \cdot L(f/K, \psi, 1),\]
     where the factor
     $\mathcal{E}(\psi, f, 1)$ is given by
     \[  \mathcal{E}(\psi, f, 1) =
      (1 - p^{-1} \psi(\fp) \alpha)(1 - p^{-1} \psi(\fp) \beta)(1 - \psi(\fpb)^{-1} \alpha^{-1}) (1 - \psi(\fpb)^{-1} \beta^{-1}),
     \]
     and $g_{\lambda}$ is the CM eigenform of level $N_\psi$ and weight $1 - a + b \ge 3$ corresponding to the Gr\"ossencharacter $\lambda = \psi |\cdot|^{-b}$ of $\infty$-type $(a-b, 0)$.
    \end{enumerate}
   \end{theorem}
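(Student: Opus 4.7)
The plan for both parts is to interpolate the Rankin--Selberg $L$-function $L(f \otimes \theta_\psi, s) = L(f/K, \psi, s)$, where $\theta_\psi$ is the theta series of $\psi$. Both $L$-functions will arise by specializing a single two-variable $p$-adic Rankin--Selberg $L$-function of Hida type for the pair $(f_\alpha, \mathbf{g})$, where $f_\alpha$ is the ordinary $p$-stabilization of $f$ and $\mathbf{g}$ is the CM Hida family built from the theta series of characters of $H_{\ff p^\infty}$. The different shapes of the two interpolation formulas reflect different regions in the weight space of $\mathbf{g}$.

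For part (1), the first step is to construct the CM family $\mathbf{g}$ of tame level $N_\psi$ (via Hida's theory of CM families, equivalently Katz's $p$-adic CM modular forms) whose weight-one specializations at characters $\eta : H_{\ff p^\infty} \to \overline{\Qp}^\times$ are the theta series $\theta_{\psi\eta}$. Next, apply Hida's construction of the $p$-adic Rankin--Selberg $L$-function attached to the ordinary form $f_\alpha$ and the family $\mathbf{g}$; this interpolates the algebraic parts of $L(f \otimes \theta_{\psi\eta}, 1)$ normalized by $\langle f, f \rangle_N$. The correction factor $\bigl(1 - \tfrac{\beta}{\alpha}\bigr)\bigl(1 - \tfrac{\beta}{p\alpha}\bigr)$ arises from the adjustment between $\langle f, f\rangle_N$ and $\langle f_\alpha, f_\alpha\rangle_N$, while the factors in $\mathcal{E}(f, \psi, 1)$ come from the $p$-depletion of $\theta_{\psi\eta}$ at the primes of $K$ above $p$.

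For part (2), the same CM family $\mathbf{g}$ is used, now specialized at higher-weight arithmetic points corresponding to $\psi \in \Sigma^{(2)}$; the CM newform of the formula is the $g_\lambda$ obtained from this specialization. The hypothesis that $p$ splits in $K$ is essential: only then does $\mathbf{g}$ vary continuously through ordinary forms (for $p$ inert, every CM form is supersingular at $p$). The construction proceeds by pairing $f_\alpha$ with the ordinary projection of a suitable Rankin--Selberg Eisenstein series against $\mathbf{g}$; because $\Sigma^{(2)}$ lies outside the convergence region of the Mellin transform being interpolated, the resulting distribution on $H_{\ff p^\infty}$ is generally unbounded, which forces $L_\fP(f/K, \Sigma^{(2)})$ to live in $\operatorname{Frac} \Lambda_E(H_{\ff p^\infty})$. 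The denominator $\bigl(1 - \tfrac{\psi(\fp)}{\psi(\fpb)}\bigr)\bigl(1 - \tfrac{\psi(\fp)}{p\psi(\fpb)}\bigr)$ captures exactly the poles of the unnormalized distribution.

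The principal obstacle is verifying the precise shape of $\mathcal{E}(f, \psi, 1)$ and $\mathcal{E}(\psi, f, 1)$ and matching the transcendental periods. Both ultimately reduce to an explicit unfolding of the Rankin--Selberg integral combined with a careful comparison of ordinary versus classical Hecke-eigenform projections at $p$; the bookkeeping of local Gauss sums at primes dividing $N_\psi$ and the appearance of the weight-dependent period $\langle g_\lambda, g_\lambda \rangle_N$ in part (2) is where most of the work lies.
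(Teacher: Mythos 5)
There is a genuine conceptual gap in your framing. The two objects $L_{\fP}(f/K,\Sigma^{(1)})$ and $L_{\fP}(f/K,\Sigma^{(2)})$ are \emph{not} specializations of a single two-variable Hida Rankin--Selberg $L$-function for $(f_\alpha,\bfg)$ to two regions of the weight space of $\bfg$: they carry different period normalizations ($\langle f,f\rangle_N$ in part (1), $\langle g_\lambda,g_\lambda\rangle_N$ in part (2)), i.e.\ one is the ``$f$-dominant'' and the other the ``$g$-dominant'' $p$-adic Rankin--Selberg $L$-function, and these are genuinely distinct elements of $\operatorname{Frac}\Lambda_E(H_{\ff p^\infty})$ (compare Theorem \ref{thm:Lvalue}, where at the same non-critical character $\psi$ they compute pairings of $\log_{p,V}(\bfz_1^{f,\psi})$ against two different de Rham vectors; the hypothesis of the main theorem that they are not both zero would be vacuous if they coincided). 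In particular the $f$-dominant $L$-function interpolates nothing critical on $\Sigma^{(2)}$, so the interpolation formula of part (2), with its $\langle g_\lambda,g_\lambda\rangle_N$ period, cannot be extracted from it; one must build a second $L$-function in which the ordinary projection and the Petersson pairing are taken on the CM side -- which is in fact what your detailed sketch of part (2) does, contradicting your opening claim. This is exactly the paper's route: for $\Sigma^{(1)}$ it pairs $e_{\ord}\bigl[\mathcal{E}_\alpha\cdot\Theta\bigr]$ (a $p$-depleted theta family times a $p$-depleted Eisenstein family) against the ordinary stabilization $(f^*)^{(p)}$ and divides by $\langle (f^*)^{(p)},(f^*)^{(p)}\rangle_{N,p}$, while for $\Sigma^{(2)}$ it pairs $e_{\ord}\bigl[\mathcal{E}_\alpha\cdot f\bigr]$ against the specializations $\bfg(\lambda)^*$ of the ordinary CM family and divides by $\langle\bfg(\lambda),\bfg(\lambda)\rangle$; in both cases the stated formulas are then obtained by quoting Proposition 5.4.2 of \cite{leiloefflerzerbes13} at classical points, rather than by redoing the unfolding.

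A second, related gap: part (1) of the theorem includes the case $p$ inert in $K$ (the factor $\mathcal{E}(f,\psi,1)$ has an inert branch), but your construction of part (1) rests on the ordinary CM Hida family $\bfg$, which -- as you yourself note in part (2) -- does not exist when $p$ is inert, since CM forms are then non-ordinary at $p$. The point is that in region $\Sigma^{(1)}$ the dominant ordinary form is $f$ itself, so no ordinarity is required on the theta side: the paper works with the formal $p$-depleted family $\Theta=\sum_{(\fa,\ff p)=1}[\fa]q^{N(\fa)}$, whose specializations at finite-order characters are ($p$-depleted) \emph{weight-one} theta series (not the weight-two forms $\theta_{\psi\eta}$ you name), and places the ordinary projector on the $f$-side. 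With that replacement, your account of where the factor $\bigl(1-\tfrac{\beta}{\alpha}\bigr)\bigl(1-\tfrac{\beta}{p\alpha}\bigr)$ and the depletion factors $\mathcal{E}(f,\psi,1)$ come from, and your remarks on why part (2) requires $p$ split and lands only in $\operatorname{Frac}\Lambda_E(H_{\ff p^\infty})$, are consistent with the paper's argument.
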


   We give a brief sketch of the proof below, since it will be important for our purposes to know how these $L$-functions are related to the $p$-adic Rankin--Selberg $L$-functions considered in \cite{leiloefflerzerbes13}. We shall not need to consider the case $i = 2'$ explicitly, since complex conjugation interchanges the critical regions $\Sigma^{(2)}$ and $\Sigma^{(2')}$.

  \subsection{The case $i = 1$}

   We consider the formal $q$-expansion
   \[ \Theta = \sum_{\fa: (\fa, \ff p) = 1} [\fa] q^{N(\fa)} \in \Lambda(H_{\ff p^\infty})[[q]].\]
   We can regard this as a $q$-expansion with coefficients that are functions on the formal scheme $\mathcal{W} = \operatorname{Spf} \Lambda(H_{\ff p^\infty})$ parametrizing characters of $H_{\ff p^\infty}$.

   We choose an integer $N$ coprime to $p$ and divisible by $N_f$ and by $N_\psi = N_{K/\QQ}(\ff) \cdot \disc$. For each $\alpha \in \frac{1}{N}\ZZ / \ZZ$, we can consider the family $\Xi^{\ord, p}_{\alpha}(\Sigma^{(1)}, -)$ of $q$-expansions over $\mathcal{W}$ given by
   \[ \Xi^{\ord, p}_{\alpha}(\Sigma^{(1)}, \omega) = e_{\ord}\left[\mathcal{E}_{\alpha}(\omega_{\QQ}^{-1}, 0) \Theta(\omega)\right],\]
   where $\mathcal{E}_{\alpha}(\phi_1, \phi_2)$ is the family of $p$-depleted Eisenstein series over $\Spec \Lambda(\Zp^\times)^2$ defined in \S 5 of \cite{leiloefflerzerbes13}, and $\omega_{\QQ}$ denotes the measure on $\Zp^\times$ obtained by composing $\omega$ with the map $\Zp^\times \into (\fO_K \times \Zp)^\times \to H_{\ff p^\infty}$.

   This defines a measure $\Xi^{\ord, p}_{\alpha}(\Sigma^{(1)})$ on $H_{\ff p^\infty}$ with values in the finite-dimensional $E$-vector space $S_2(\Gamma_1(N) \cap \Gamma_0(p), E)^{\ord}$. We define the $p$-adic $L$-function $L_\fP(f/K, \Sigma^{(1)}) \in \Lambda_E(H_{\ff p^\infty})$ by
   \[ L_\fP(f/K, \Sigma^{(1)}) = \frac{ \left\langle (f^*)^{(p)}, \Xi^{\ord, p}_{1/N}(\Sigma^{(1)}) \right\rangle_{N,p} }
   {\left\langle(f^*)^{(p)}, (f^*)^{(p)}\right\rangle_{N,p}},\]
   where $\langle, \rangle_{N,p}$ denotes the Petersson scalar product at level $\Gamma_1(N_f) \cap \Gamma_0(p)$ (normalized to be conjugate-linear in the first variable and linear in the second), $f^*$ denotes the complex conjugate of $f$, and $(f^*)^{(p)}$ its ordinary $p$-stabilization (whose $U_p$-eigenvalue is $p \beta^{-1}$). It is clear by construction that the $p$-adic Rankin--Selberg $L$-value $\mathcal{D}_{\fP}(f, g_{\psi}, 1/N, 1)$ considered in our previous work is given by
   \[ \mathcal{D}_{\fP}(f, g_{\psi}, 1/N, 1) = L_\fP(f/K, \Sigma^{(1)})(\psi_{\fP}).\]

   On the other hand, the specialization of the family $\Theta$ at a finite-order character $\eta$ of $H_{\ff}$ is the $p$-stabilization of the classical weight 1 theta series corresponding to $\eta$. Applying Proposition 5.4.2 of \cite{leiloefflerzerbes13} gives a formula for $L_\fP(f/K, \Sigma^{(1)})(\eta)$ in terms of the critical $L$-value $L(f/K, \psi, 1)$, which simplifies to the formula stated in the theorem above.

   \begin{remark}
    Computing the value of $L_\fP(f/K, \Sigma^{(1)})$ at a finite-order character $\eta$ which may be ramified at the primes above $p$ is clearly possible in principle, but the calculations involved are unpleasant and messy. See \cite{perrinriou88} for a closely related computation.
   \end{remark}

  \subsection{The case $i = 2$}

   In this case we replace $\Theta$ by the $p$-adic family of \emph{ordinary} theta series indexed by $\Lambda(H_{\ff \fp^\infty})$, given by the formal $q$-expansion
   \[ \bfg = \sum_{(\fa, \ff \fp) = 1} [\fa] \psi(\fa) q^{N(\fa)} \in \Lambda(H_{\ff \fp^\infty})[[q]].\]
   We can write any character of $H_{\ff p^\infty}$ uniquely in the form $\lambda \mu$ where $\lambda$ factors through $H_{\ff p^\infty}$ and $\mu$ factors through the norm map $H_{\ff p^\infty} \to H_{p^\infty} \to \Zp^\times$. We define a measure on $H_{\ff p^\infty}$, with values in $p$-adic ordinary modular forms of tame level $N$, by
   \[ \Xi^{\ord, p}_{\alpha}(\Sigma^{(2)}, \lambda \mu) = e_{\ord}\left[ \mathcal{E}_{\alpha}(\mu - 1, -1 - \lambda_{\QQ} - \mu) \cdot f\right].\]
   Note that the weight-character of $\Xi_{\alpha}(\lambda \mu)^{\ord, p}$ at $p$ is $1-\lambda_{\QQ}$, which is the same as that of the specialization $\bfg(\lambda)$ of the family $\bfg$ at $\lambda$. The theory of $p$-adic interpolation of Petersson products thus gives us a $p$-adic $L$-function $L_{\fP}(f/K, \Sigma^{(2)})\in \operatorname{Frac} \Lambda_E(H_{\ff p^\infty})$ satisfying
   \[ L_{\fP}(f/K, \Sigma^{(2)})(\lambda\mu) = \frac{\left\langle \bfg({\lambda})^*, \Xi_{1/N}(\lambda \mu)^{\ord, p}\right\rangle }{\langle \bfg(\lambda), \bfg(\lambda)\rangle}.\]
   On the one hand, it is clear by construction that $L_{\fP}(f/K, \Sigma^{(2)})(\psi_{\fP})$ is the quantity $\mathcal{D}_{\fP}(g_{\psi}, f, 1/N, 1)$ appearing in \cite{leiloefflerzerbes13}.

   On the other hand, if we evaluate $L_{\fP}(f/K, \Sigma^{(2)})$ at a Gr\"ossencharacter $\omega = \lambda \mu$ of infinity-type $(a, b)$ lying in $\Sigma^{(2)}$ and having conductor prime to $p$, then the infinity-type of $\lambda$ is $(a-b, 0)$, while $\mu = |\cdot|^b$. Thus $\bfg(\lambda)$ is the $p$-stabilization of the classical ordinary CM form $g_{\lambda}$ of level $N$ and weight $k = 1 - a + b \ge 3$. Applying \cite[Proposition 5.4.2]{leiloefflerzerbes13} with $f$,$g$, and $j$ replaced by $g_\lambda$, $f$, and $1 + b$, we obtain a formula for $L_{\fP}(f/K, \Sigma^{(2)})(\lambda \mu)$ in terms of the critical $L$-value $L(f/K, \lambda, 1 + b) = L(f/K, \psi, 1)$ which simplifies to the one given above.

  \subsection{Relation to the Euler system classes}

   In \cite[\S 6.10]{leiloefflerzerbes13}, following \cite{darmonrotger12}, we defined -- for any two modular forms $f, g$ of weight 2, CM or otherwise, with $f$ ordinary -- a vector $\eta_f^{\ur} \otimes \omega_g \in \Fil^1 \DdR(V_E(f) \otimes V_E(g))$.

   In our situation, we thus have vectors $\eta_f^{\ur} \otimes \omega_{g_{\psi}}$ (if $f$ is ordinary) and $\eta_{g_{\psi}}^{\ur} \otimes \omega_{f}$ (if $p$ is split), both lying in the space
   $\Fil^1 \DdR(K\otimes \Qp, V^*)$, where $V = V_E(f)^*(\psi^{-1})$ as before.

   Theorem 5.6.4 of \cite{leiloefflerzerbes13}, which is a very slight variation on the main theorem of \cite{BDR12}, now gives the following:

   \begin{theorem}
    \label{thm:Lvalue}
    If $f$ is ordinary, then
    \[ L_{\fP}(f/K, \Sigma^{(1)})(\psi_{\fP}) = -\frac{\mathcal{E}(f, \psi, 1)}{\left(1 - \frac{\beta}{\alpha}\right)\left(1 - \frac{\beta}{p\alpha}\right)} \left\langle \log_{p, V}(\bfz_1^{f,\psi}), \eta_f^{\ur} \otimes \omega_g\right\rangle,\]
    and if $p$ is split, then
     \[ L_{\fP}(f/K, \Sigma^{(2)})(\psi_{\fP}) = -\frac{\mathcal{E}(\psi, f, 1)}{\left(1 - \frac{\psi(\fp)}{\psi(\fpb)}\right) \left(1- \frac{\psi(\fp)}{p \psi(\fpb)}\right)} \left\langle \log_{p, V}(\bfz_1^{f,\psi}), \omega_f \otimes \eta_{g_\psi}^{\ur}\right\rangle.\]
    \end{theorem}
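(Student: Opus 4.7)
The plan is to reduce both formulae to the explicit reciprocity law for Beilinson--Flach classes in weight $(2,2)$ that was already established as Theorem 5.6.4 of \cite{leiloefflerzerbes13} (itself a mild sharpening of the main theorem of \cite{BDR12}). The work is thus essentially one of matching conventions.

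First, I would unwind the definitions of the two $p$-adic $L$-functions at the point $\psi_{\fP}$. In case $(i)$, since $\psi$ has infinity-type $(-1,0) \in \Sigma^{(1)}$, the Petersson-product formula defining $L_{\fP}(f/K, \Sigma^{(1)})$ reduces, upon specialising $\Theta$ to $\psi$, to the $p$-adic Rankin--Selberg $L$-value $\mathcal{D}_{\fP}(f, g_{\psi}, 1/N, 1)$ from \cite[\S 5]{leiloefflerzerbes13}, with $f$ in the role of the ordinary form. In case $(ii)$, although the point $\psi_{\fP}$ lies outside the natural interpolation region $\Sigma^{(2)}$, the defining formula for $L_{\fP}(f/K, \Sigma^{(2)})$ continues to make sense; writing $\psi_{\fP} = \lambda\mu$ with $\lambda = \psi$ and $\mu = 1$, the specialization $\bfg(\psi)$ is the $p$-stabilization of the weight $2$ CM form $g_{\psi}$, and one reads off
\[ L_{\fP}(f/K, \Sigma^{(2)})(\psi_{\fP}) = \mathcal{D}_{\fP}(g_{\psi}, f, 1/N, 1),\]
now with $g_{\psi}$ playing the role of the ordinary form.

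Secondly, I would apply Theorem 5.6.4 of \cite{leiloefflerzerbes13} to each of these Rankin--Selberg $L$-values. Case $(i)$ uses the hypothesis that $f$ is ordinary. Case $(ii)$ requires instead that $g_{\psi}$ be ordinary at $p$, which holds exactly when $p$ is split in $K$: in that case $a_p(g_{\psi}) = \psi(\fp)+\psi(\fpb)$ and the summand $\psi(\fpb)$ is a $\fP$-adic unit since $\psi_{\fP}$ is ramified at $\fp$ but unramified at $\fpb$. In both cases the theorem produces the displayed pairing of $\log_{p,V}$ with the vector $\eta_F^{\ur}\otimes\omega_G$, and the Euler-factor denominator $(1-\tfrac{\beta}{\alpha})(1-\tfrac{\beta}{p\alpha})$ (respectively $(1-\tfrac{\psi(\fp)}{\psi(\fpb)})(1-\tfrac{\psi(\fp)}{p\psi(\fpb)})$) arises as the ratio of Petersson norms $\langle F, F\rangle / \langle F^{(p)}, F^{(p)}\rangle$ for $F=f$ (respectively $F=g_{\psi}$).

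Thirdly, I would verify that the class $\bfz_{1}^{f, g_{\psi}}$ appearing on the right-hand side of Theorem 5.6.4 of \emph{op.~cit.} coincides with the class $\bfz_{1}^{f, \psi}$ defined in \S 5.3 above. Both are obtained by applying the \'etale regulator to the same motivic class ${}_c \Xi(N_f, N)$ (or its $c$-desingularisation) and then projecting to the $(f, g_{\psi})$-isotypic quotient of \'etale cohomology; the sole difference is that in \S 5 we further compose with the chosen trivialisation $\nu_{1}$ of Corollary \ref{cor:nui}, which identifies this quotient with $V_{E}(f)^*(\psi_{\fP}^{-1})$. Under this identification the two classes agree.

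The main obstacle here is not a conceptual one but a bookkeeping one: one must carefully track the normalisations of Petersson products, the identification of Hida-family specialisations with classical $p$-stabilisations, and the dualisation conventions implicit in the notation $T = T_{\cO}(f)^*(\psi_{\fP}^{-1})$, in order to confirm that the Euler factors $\mathcal{E}(f, \psi, 1)$ and $\mathcal{E}(\psi, f, 1)$ and the overall sign come out as stated. The delicate case is $(ii)$, where the roles of $f$ and $g_{\psi}$ are interchanged and we are at a point just outside the interpolation region, so one must check directly from the defining Petersson-product formula (rather than from an interpolation identity) that the result of the specialization is $\mathcal{D}_{\fP}(g_{\psi}, f, 1/N, 1)$.
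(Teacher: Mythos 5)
Your proposal follows exactly the route the paper takes: the two specializations are identified, by construction of the measures $\Xi^{\ord,p}_{\alpha}(\Sigma^{(i)})$, with the Rankin--Selberg values $\mathcal{D}_{\fP}(f, g_{\psi}, 1/N, 1)$ and $\mathcal{D}_{\fP}(g_{\psi}, f, 1/N, 1)$ of \cite{leiloefflerzerbes13}, and then Theorem 5.6.4 of \emph{op.~cit.} is invoked, with the vectors $\eta_f^{\ur}\otimes\omega_{g_\psi}$ and $\omega_f\otimes\eta_{g_\psi}^{\ur}$ making sense because $f$ is ordinary, respectively because $g_\psi$ is ordinary when $p$ is split. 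Your extra care in matching $\bfz_1^{f,g_\psi}$ with $\bfz_1^{f,\psi}$ (via the regulator of ${}_c\Xi(N_f,N)$ and the identification of Theorem \ref{thm:cmrep}) is a point the paper leaves implicit, and is a welcome addition rather than a deviation.
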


 \section{Bounding Selmer groups}

  \subsection{Big image results}

   In this section, we collect some results we will need regarding the image of $\Gal(\overline{K} / K)$ acting on the representation $T = T_{\cO}(f)^*(\psi_{\fP}^{-1})$ and $V = T[1/p]$. Let $K^{\ab}$ be the maximal abelian extension of $K$.

   We impose the following assumption on $f$, which will be in force for the remainder of this paper:

   \begin{assumption}
    The modular form $f$ is not of CM type.
   \end{assumption}

   Under this assumption, it has been shown by Momose \cite{momose81} that there is a number field $F \subseteq L$, a quaternion algebra $B / F$, and an embedding $B \into M_{2 \times 2}(L)$, such that for any prime $\fP$ of $L$, the image of $G_{\QQ}$ in $\Aut V_{L_{\fP}}(f)) \cong \GL_2(L_{\fP})$ contains an open subgroup of the group
   \[ \{ x \in (B \otimes_F F_{\fP})^\times: \operatorname{norm}(x) \in \Qp^\times \} \]
   (where $F_{\fP}$ denotes the completion of $F$ at the prime below $\fP$, and $\operatorname{norm}$ is the reduced norm map of $B$).

   We now impose a restriction on the prime $\fP$:

   \begin{assumption}
    The quaternion algebra $B$ is unramified at $\fP$, so $(B \otimes_F F_{\fP})^\times = \GL_2(F_{\fP})$.
   \end{assumption}

   \begin{remark}
    Note that $B$ is split over the field generated by the Fourier coefficients of $f$; so if $f$ has rational coefficents, $B$ must be the split algebra and this assumption is automatic. In any case, the set of primes ramified in $B$ is finite, and it has been shown \cite{GhateGJQuer} that the primes ramifying in $B$ are a subset of the primes dividing $2 N_f \operatorname{disc} \QQ(f)$.
   \end{remark}

   \begin{proposition}
    \label{prop:bigimage1} \mbox{~}
    \begin{enumerate}[(i)]
     \item The representation $V$ is irreducible as a representation of $\Gal(\overline{K} / K^{\ab})$.
     \item There exists an element $\tau \in \Gal(\overline{K} / K^{\ab})$ (the derived subgroup of $\Gal(\overline{K} / K)$) such that $V / (\tau - 1) V$ is 1-dimensional.
     \item There exists an element $\gamma \in \Gal(\overline{K} / K^{\ab})$ such that $V^{\gamma = 1} = 0$.
    \end{enumerate}
   \end{proposition}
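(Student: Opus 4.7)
My plan is to derive all three conclusions from a single big-image statement: that the image of $\Gal(\overline{K}/K^{\ab})$ acting on $V$ contains an open subgroup of $\SL_2(F_\fP)$. Given this, the three parts become standard matrix-group considerations.

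The first step is the observation that, since the character $\psi_\fP^{-1}$ factors through $\Gal(K^{\ab}/K)$ and is therefore trivial on $\Gal(\overline{K}/K^{\ab})$, we have $V|_{\Gal(\overline{K}/K^{\ab})} \cong V_E(f)^*|_{\Gal(\overline{K}/K^{\ab})}$ as Galois representations, so it suffices to work with $V_E(f)^*$. Next I would invoke Momose's theorem together with our running assumption that $B$ is unramified at $\fP$: together they produce a $2$-dimensional $F_\fP$-subspace $W \subseteq V_E(f)$ with $V_E(f) = W \otimes_{F_\fP} E$, such that the image of $\Gal(\overline{\QQ}/\QQ)$ is contained in $\GL_2(F_\fP) \subseteq \GL_2(E)$ and contains an open subgroup of $\{x \in \GL_2(F_\fP) : \det x \in \Qp^\times\}$. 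Since $\Gal(\overline{\QQ}/K)$ has index $2$ in $\Gal(\overline{\QQ}/\QQ)$, its image remains open in the same group. Applying $p$-adic Lie theory to the identity $[\mathfrak{gl}_2, \mathfrak{gl}_2] = \mathfrak{sl}_2$, the closed derived subgroup contains an open subgroup $U \subseteq \SL_2(F_\fP)$, and this $U$ is contained in the image of $\Gal(\overline{K}/K^{\ab})$.

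Given the big-image result, each part is elementary. For (i), $U$ is Zariski-dense in $\SL_2$ over $F_\fP$ (being $p$-adically open of full Lie-algebra dimension), so it acts irreducibly on $W$, hence also on $V = W \otimes_{F_\fP} E$ after extension of scalars. For (ii), I would pick any non-trivial unipotent element of $U$ --- for instance one conjugate over $F_\fP$ to $\begin{pmatrix} 1 & \epsilon \\ 0 & 1 \end{pmatrix}$ with $\epsilon \in F_\fP^\times$ sufficiently small to lie in $U$ --- and lift it to $\tau \in \Gal(\overline{K}/K^{\ab})$; then $\tau - 1$ has rank $1$ on $W$, and this rank is preserved under extension of scalars, so $V/(\tau - 1)V$ is $1$-dimensional. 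For (iii), I would pick a regular semisimple $\gamma_0 \in U$ with eigenvalues $\lambda, \lambda^{-1} \in F_\fP$ and $\lambda \neq \pm 1$ (such elements exist in any neighbourhood of the identity), and lift it to $\gamma$; then neither eigenvalue is $1$, so $V^{\gamma = 1} = 0$. The main obstacle is the Lie-theoretic passage from openness in $\{x \in \GL_2(F_\fP) : \det x \in \Qp^\times\}$ to openness in $\SL_2(F_\fP)$ after taking closed commutator subgroups; this is the one step not immediate from matrix manipulation, and it is where the hypothesis that $B$ splits at $\fP$ enters in an essential way.
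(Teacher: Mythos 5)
Your argument is correct and follows essentially the same route as the paper: Momose's theorem together with the assumption that $B$ splits at $\fP$, openness of $\Gal(\overline{\QQ}/K)$ in $\Gal(\overline{\QQ}/\QQ)$, passage to the (closure of the) derived subgroup to obtain a large subgroup of $\SL_2$, and then an explicit unipotent element for (ii) and an explicit regular semisimple element close to the identity for (iii). The paper sidesteps your Lie-algebra step by choosing a basis in which the image contains an open subgroup of $\GL_2(\Zp)$ and invoking the elementary fact that the derived subgroup of such a subgroup is open in $\SL_2(\Zp)$; note also that your assertion that the full image of $G_{\QQ}$ lies in $\GL_2(F_{\fP})$ is stronger than what Momose's theorem (as quoted) provides, but your proof never actually uses it, only the containment of an open subgroup.
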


   \begin{proof}
    Because of our two assumptions above, Momose's theorem shows that there is an $L_{\fP}$-basis of $V_{L_{\fP}}(f)$ such that the image of $\Gal(\overline{\QQ} / \QQ)$ in $\GL_2(L_{\fP})$ with respect to this basis contains an open subgroup of $\GL_2(\Zp)$. The subgroup $\Gal(\overline{K} / K)$ is open, so its image also contains an open subgroup of $\GL_2(\Zp)$. However, the derived subgroup of an open subgroup of $\GL_2(\Zp)$ is an open subgroup of $\SL_2(\Zp)$, so the image of $\Gal(\overline{K} / K^{\ab})$ contains an open subgroup of $\SL_2(\Zp)$.

    This certainly implies that $V$ is irreducible restricted to $\Gal(\overline{K} / K^{\ab})$. Moreover, it implies that the image of $\Gal(\overline{K} / K^{\ab})$ in $\Aut_{L_\fP}(V)$ contains an element of the form $\begin{pmatrix} 1 & x \\ 0 & 1 \end{pmatrix}$ with $x \ne 0$; since $\Gal(\overline{K} / K^{\ab})$ acts trivially on the one-dimensional representation $L_{\fP}(\psi)^*$, it follows that a $\tau$ as in (ii) exists.

    Finally, the existence of a $\gamma$ as in (iii) is rather obvious: we may find $y \in \Zp^\times$ with $y \ne 1$, but $y$ sufficiently close to 1 that $\begin{pmatrix} y & 0 \\ 0 & y^{-1}\end{pmatrix}$ is in the image of $\Gal(\overline{K} / K^{\ab})$.
   \end{proof}

   If we impose an additional assumption on $f$ then we have stronger results:

   \begin{notation}
    We say $f$ \emph{has big image at $\fP$} if the image of $\Gal(\overline{\QQ} / \QQ)$ in the group $\Aut T_{\cO}(f)$ contains a conjugate of $\SL_2(\Zp)$.
   \end{notation}

   By a theorem of Ribet \cite{ribet85}, since we are assuming that $f$ is not of CM type, it has big image at almost all primes of $L$.

   \begin{proposition}
    \label{prop:bigimage2}
    Suppose that $f$ has big image at $\fP$. Then
    \begin{enumerate}[(i)]
     \item $T /\fP T$ is irreducible as a representation of $\Gal(\overline{K} / K^{\ab})$.
     \item There exists $\tau \in \Gal(\overline{K} / K^{\ab})$ such that $T / (\tau - 1) T$ is free of rank 1 over $\cO$.
     \item We have
     \[ H^1(\Omega / K, T \otimes_{\Zp} \Qp/\Zp) = H^1(\Omega / K, T^*(1) \otimes_{\Zp} \Qp/\Zp) = 0,\]
     where $\Omega$ is the smallest extension of $K$ containing $K(1) K(\mu_{p^\infty})$ and such that $\Gal(\overline{K} / \Omega)$ acts trivially on $T$.
     \item The $\cO$-module $H^1(K, T)$ is free.
    \end{enumerate}
   \end{proposition}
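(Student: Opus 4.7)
The key input is that the big image hypothesis, combined with the fact that $\SL_2(\Zp)$ is perfect for $p\ge 5$, forces the image of $\Gal(\overline{K}/K^{\ab})$ inside $\Aut T_\cO(f)$ (and hence, after the character twist $\psi_\fP^{-1}$ which becomes trivial on $G_{K^{\ab}}$, also inside $\Aut T$) to contain an unconjugated copy of $\SL_2(\Zp)$. Indeed, $\Gal(\overline{K}/K)$ has index $1$ or $2$ in $\Gal(\overline{\QQ}/\QQ)$, and $\SL_2(\Zp)$ has no index-$2$ subgroups for $p\ge 5$, so $\Gal(\overline{K}/K)$ already surjects onto a conjugate of $\SL_2(\Zp)$; the commutator subgroup of that image then contains $[\SL_2(\Zp),\SL_2(\Zp)]=\SL_2(\Zp)$. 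Call this subgroup $H\subseteq\Aut(T)$, realized by some subgroup $\widetilde H\subseteq \Gal(\overline K/K^{\ab})$.

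For \emph{(i)}, one observes that the residual action of $\SL_2(\Zp)$ on $T/\fP T\cong \mathbf{k}^2$ factors through $\SL_2(\mathbf{F}_p)\hookrightarrow \SL_2(\mathbf{k})$; a direct check with the two standard unipotents shows that no proper $\mathbf{k}$-line can be stable, so $T/\fP T$ is irreducible over $\widetilde H$, hence over $\Gal(\overline K/K^{\ab})$. For \emph{(ii)}, lift the unipotent $\stbt{1}{1}{0}{1}\in\SL_2(\Zp)\subseteq H$ to some $\tau\in\widetilde H$; then $\tau-1=\stbt{0}{1}{0}{0}$ in the appropriate basis, so $(\tau-1)T=\cO e_1$ and the projection onto $e_2$ identifies $T/(\tau-1)T$ with $\cO$.

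For \emph{(iii)}, apply Hochschild--Serre to the normal subgroup $\overline H\subseteq \Gal(\Omega/K)$ obtained as the image of $\Gal(\overline K/K^{\ab})$. Since the abelian characters of $G_K$ (including the cyclotomic character and the characters cutting out $K(1)$) become trivial on $G_{K^{\ab}}$, the group $\overline H$ acts on $T$ exactly through its image in $\Aut(T)$, which contains $\SL_2(\Zp)$. The same unipotent argument as in (i) shows $(T\otimes\Qp/\Zp)^{\overline H}=0$, killing the inflation term; it remains to prove
\begin{equation*}
 H^1\bigl(\SL_2(\Zp),\,(E/\cO)^2\bigr)=0,
\end{equation*}
which, after devissage via the $\fP$-torsion exact sequence, reduces to the vanishing $H^1(\SL_2(\mathbf{F}_p),\mathbf{k}^2)=0$. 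This is a classical fact for $p\ge 5$ (stable-element computation on the cyclic $p$-Sylow of upper triangular unipotents), and a standard inverse-limit/Pontryagin-dual argument gives the same conclusion for $T^*(1)\otimes\Qp/\Zp$ in view of the self-duality of the standard representation of $\SL_2$ up to a twist by the abelian part.

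For \emph{(iv)}, use the short exact sequence $0\to T\xrightarrow{\fP} T\to T/\fP T\to 0$ of $G_K$-modules: the resulting long exact sequence shows that the $\fP$-torsion in $H^1(K,T)$ is a quotient of $H^0(K,T/\fP T)$, which vanishes because $(T/\fP T)^{G_{K^{\ab}}}=0$ by \emph{(i)} (any $G_K$-fixed vector would give a one-dimensional subrepresentation of the irreducible 2-dimensional $\widetilde H$-module). Since $H^1(K,T)$ is finitely generated over $\cO$ in the Euler system sense (cohomology with restricted ramification) and torsion-free over the DVR $\cO$, it is free, completing the proof. The main subtlety I anticipate is the bookkeeping in \emph{(iii)}---verifying that the quotient $\Gal(\Omega/K)/\overline H$ really acts trivially on $(T\otimes\Qp/\Zp)^{\overline H}=0$ so that the inflation term vanishes unconditionally, and handling the $T^*(1)$ case carefully given the cyclotomic twist; the cohomological vanishing over $\SL_2(\Zp)$ itself is standard but deserves an explicit citation.
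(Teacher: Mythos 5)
Your treatment of (i), (ii) and (iv) is essentially the paper's argument: the index-$\le 2$ and perfectness observations force the image of $\Gal(\overline{K}/K^{\ab})$ in $\Aut T$ to contain a conjugate of $\SL_2(\Zp)$, irreducibility and the choice of $\tau$ follow from unipotents, and freeness of $H^1(K,T)$ follows from $H^0(K, T/\fP T)=0$ via the $\varpi$-multiplication long exact sequence. All of that is fine.

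Part (iii) is where you diverge, and as written there is a gap. You reduce the vanishing of $H^1(\overline H, T\otimes\Qp/\Zp)$ to the vanishing of $H^1\bigl(\SL_2(\Zp),(E/\cO)^2\bigr)$, but the big-image hypothesis only gives that $\overline H$ \emph{contains} a conjugate of $\SL_2(\Zp)$; when $\cO\neq\Zp$ the image of $G_{K^{\ab}}$ can be strictly larger (anything between $\SL_2(\Zp)$ and $\SL_2(\cO)$), and then $\SL_2(\Zp)$ is a closed subgroup of infinite index which is neither open nor normal, so neither inflation--restriction nor restriction--corestriction lets you pass from the subgroup to $\overline H$. (There are also smaller under-argued steps: the image of $G_{K^{\ab}}$ in $\Gal(\Omega/K)$ need not act on $T$ faithfully, so a further d\'evissage is needed; the passage from $\SL_2(\Zp)$ to $\SL_2(\mathbf{F}_p)$ requires killing $\Hom_{\SL_2(\mathbf{F}_p)}(\mathfrak{sl}_2,\mathbf{k}^2)$ coming from the congruence kernel; and the Sylow/stable-element route you sketch is more delicate than stated, since $H^1$ of the unipotent $p$-Sylow with coefficients in $\mathbf{k}^2$ is nonzero and one must check stability under fusion.) The paper avoids all of this with a one-line trick that you should adopt: choose $\gamma\in\Gal(\overline{K}/K^{\ab})$ mapping to $-1\in\SL_2(\Zp)$. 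Its image $S$ in $\Gal(\Omega/K)$ is central (it is a scalar in $\Aut T$ and trivial on $K(1)K(\mu_{p^\infty})$), and since $-1$ acts as $-1$ on $T\otimes\Qp/\Zp$ and on $T^*(1)\otimes\Qp/\Zp$ (the abelian twists being trivial on $\gamma$) while these modules are $p$-primary with $p$ odd, both $H^0(S,-)$ and $H^1(S,-)$ vanish; inflation--restriction then gives (iii) immediately, with no group-cohomology computation and no sensitivity to how large the image actually is.
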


   \begin{proof}
    For $p > 2$ the group $\SL_2(\Zp)$ has no normal subgroups of index 2. Thus the intersection of the image of $\Gal(\overline{K} / K)$ with the conjugate of $\SL_2(\Zp)$ inside $\Aut T_{\fO_{L, \fP}}(f)$ must be the whole of $\SL_2(\Zp)$. As $\SL_2(\Zp)$ is equal to its commutator subgroup, we deduce that the image of $\Gal(\overline{K} / K^{\ab})$ in $\Aut T$ also contains a conjugate of $\SL_2(\Zp)$. Thus (i) is obvious, and for (ii) we can take $\tau$ to be any element mapping to $\begin{pmatrix} 1 & 1 \\ 0 & 1 \end{pmatrix}$.

    We now prove (iii). Let $\gamma \in \Gal(\overline{K} / K^{\ab})$ be such that $\gamma$ maps to $-1 \in \SL_2(\Zp)$. Then the subgroup $S$ of $G = \Gal(\Omega / K)$ generated by the image of $\gamma$ is in the centre of $G$ and satisfies $H^0(S, T \otimes \Qp/\Zp) = H^1(S, T \otimes \Qp/\Zp) = 0$, and similarly for $T^*(1)$. Via the inflation-restriction exact sequence the required vanishing follows.

    Lastly, we check the freeness statement. From the cohomology long exact sequence arising from $0 \to T \rTo^{\times \varpi} T \to T/\fP T \to 0$, where $\varpi$ is a uniformizer of $\fO_{L, \fP}$, we have a surjection $H^0(K, T/ \fP T) \onto H^1(K^\Sigma / K, T)[\fP]$; but we know that $H^0(K, T/ \fP T) = 0$, so $H^1(K, T)$ is torsion-free and thus free.
   \end{proof}

  \subsection{Selmer groups: definitions}

   We now recall the definitions of some Selmer groups we will need. For this section (only), $K$ may be any number field, and $T$ any $\cO$-linear representation of $\Gal(\overline{K} / K)$ unramified at almost all primes. Let $T^\vee = \Hom_{\cO}(T, E/\cO)$ be the Pontryagin dual of $T$.

   \begin{definition}[{cf.~\cite[Definition 1.5.1]{rubin00}}]
    Let $\Sigma$ be a finite set of places of $K$. We define
    \[ \Sel^{\Sigma}(K, T^\vee(1)) = \ker \left( H^1(K, T^\vee(1)) \to \bigoplus_{v \notin \Sigma} \frac{H^1(K_v, T^\vee(1))}{H^1_f(K_v, T^\vee(1))}\right),\]
    and
    \[ \Sel_{\Sigma}(K, T^\vee(1)) = \ker\left( \Sel^{\Sigma}(K, T^\vee(1)) \to \bigoplus_{v \in \Sigma} H^1(K_v, T ^\vee(1))\right).\]
   \end{definition}

   When $\Sigma$ is the empty set, we simply write $\Sel(K, T^\vee(1))$ for $\Sel^\varnothing(K, T^\vee(1)) = \Sel_\varnothing(K, T^\vee(1))$, the Bloch--Kato Selmer group. We write $\Sigma_p$ for the set of primes of $K$ above $p$.

  \subsection{Bounding the strict Selmer group}

   Let us use the notation
   \[ \bfz^{f, \psi} \in H^1(K, T)\]
   for the image of $\bfz_{1}^{f, \psi}$ under evaluation at the trivial character of $H_{1}^{(p)}$.

   \begin{theorem}[Selmer finiteness]
    \label{thm:finiteSelK}
    Suppose that either
    \begin{itemize}
     \item $p$ is split in $K$ and $P_p\left(\frac{\psi(\fp)}{p}\right) \ne 0$;
     \item or $p$ is inert in $K$ and $v_\fP(a_p(f)) < \tfrac12$.
    \end{itemize}

    If $\bfz^{f, \psi} \ne 0$, then $\Sel_{\Sigma_p}(K, T^\vee(1))$ is finite.
   \end{theorem}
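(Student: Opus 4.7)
The plan is to apply the Euler system machine of \cite{rubin00} to the Euler system $\{\bfz_{\fn}^{f,\psi}\}$ of Theorem \ref{thm:eulersystem}, working over $K$ rather than over $\QQ$. The two hypotheses on $(p,f,\psi)$ in the statement of the theorem are exactly those of Proposition \ref{prop:h1f}, so in each case we know that the Euler system classes $\bfz_{\fn}^{f,\psi}$ land in $H^1_f(K(\fn)_v, T)$ for every prime $v \nmid p$ of $K(\fn)$. Crucially, no restriction on the local behaviour at primes above $p$ is needed here: the strict Selmer group $\Sel_{\Sigma_p}(K, T^\vee(1))$ is defined by imposing the kernel condition $0$ at primes above $p$, and $0$ is dual under local Tate duality to the unrestricted local condition $H^1(K_v, T)$, which the Euler system classes automatically satisfy.

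The second input I would need is the big-image hypothesis called Hyp$(K,V)$ in \cite[\S 2.1]{rubin00}: absolute irreducibility of $V$ restricted to $\Gal(\overline{K}/K^{\mathrm{ab}})$, together with the existence of elements $\tau, \gamma \in \Gal(\overline{K}/K^{\mathrm{ab}})$ such that $V/(\tau-1)V$ is one-dimensional and $V^{\gamma=1}=0$. All three of these properties are furnished by Proposition \ref{prop:bigimage1}, which exploits Momose's theorem and the non-CM assumption on $f$. Given these inputs, Rubin's machine bounds $\Sel_{\mathcal{F}^*}(K, T^\vee(1))$, where $\mathcal{F}$ is the Selmer structure on $T$ with local conditions $H^1_f$ at primes $v \nmid p$ and the unrestricted condition at primes $v \mid p$, and $\mathcal{F}^*$ is its Tate-dual. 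By the self-duality of $H^1_f$ away from $p$ and the fact that the annihilator of $H^1(K_v, T)$ is $0$, one has $\Sel_{\mathcal{F}^*}(K, T^\vee(1)) = \Sel_{\Sigma_p}(K, T^\vee(1))$; combined with the assumption $\bfz^{f,\psi} \ne 0$, this yields the desired finiteness.

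The main obstacle I anticipate is not in the logical structure of the proof but in justifying the transplantation of Rubin's formalism from $\QQ$ to $K$: Euler system classes must be indexed by ideals of $\cO_K$ rather than rational integers, the base tower $\mathcal{K}$ must be generated by the ray class fields $K(\fn)$, and the Euler factors appearing in the distribution relation (Theorem \ref{thm:normcompat}) are those of $f \otimes \psi$ at primes of $K$. Rubin's axioms are formulated in sufficient generality to accommodate an arbitrary base number field, so the construction of derivative classes and the Chebotarev density argument which lies at the heart of his machine go through without essential modification; nevertheless, the reader should expect a careful translation exercise rather than a purely black-box citation.
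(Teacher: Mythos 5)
There is a genuine gap in the split case. You assert that the hypotheses of the theorem ``are exactly those of Proposition \ref{prop:h1f}'', but they are not: in the split case Proposition \ref{prop:h1f}(i) requires that $P_p\left(\frac{\psi(\fp)}{p}X\right)$ has \emph{no $p$-power root of unity} as a root, whereas the theorem only assumes the weaker condition $P_p\left(\frac{\psi(\fp)}{p}\right) \ne 0$ (non-vanishing at $X=1$). Under the theorem's hypothesis you therefore cannot conclude that the classes $\bfz_{\fn}^{f,\psi}$ lie in $H^1_f$ away from $p$, and your argument breaks down at that point. The paper's proof circumvents this by replacing each $\bfz_{\fn}^{f,\psi}$ with $\hat{\bfz}_{\fn}^{f,\psi} = \mathcal{N}_{\fn}^{\fn\fp}\left(\bfz_{\fn\fp}^{f,\psi}\right)$: these are universal norms from the $\Zp$-extension $K(\fn\fp^\infty)/K(\fn)$ and hence automatically land in $H^1_f$ at all primes away from $p$ (by \cite[Corollary B.3.5]{rubin00}), while the assumption $P_p\left(\frac{\psi(\fp)}{p}\right) \ne 0$ guarantees that $\hat{\bfz}^{f,\psi} \ne 0$ if and only if $\bfz^{f,\psi} \ne 0$. (Alternatively, one may enlarge $\mathcal{K}$ to its compositum with $K(\fp^\infty)$, so that it contains a $\Zp$-extension, and then apply Rubin's Theorem 2.2.3 in its original form.) The inert case of your argument is fine, since there $v_\fP(a_p(f)) < \tfrac12$ is exactly the hypothesis of Proposition \ref{prop:h1f}(ii).

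A secondary point: you never say \emph{why} the $H^1_f$-away-from-$p$ property of the classes is needed, and your closing remark suggests the only issue is translating Rubin's formalism from $\QQ$ to $K$. In fact Rubin's Theorem 2.2.3 as stated requires $\mathcal{K}$ to contain a $\Zp$-extension of $K$ in which no finite prime splits completely, and our $\mathcal{K}$ (a compositum of ray class fields of conductor prime to $p$) contains none. One must therefore invoke the modified form of the theorem from \cite[\S 9.1]{rubin00} under condition (ii')(b), and it is precisely this modification that consumes the local $H^1_f$ property of the Euler system classes, the element $\gamma$ with $V^{\gamma=1}=0$ from Proposition \ref{prop:bigimage1}(iii) (which, note, is an extra input beyond $\Hyp(K,V)$, not part of it), and the vanishing $T^{G_{K(1)}} = 0$. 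With these two corrections your argument aligns with the paper's proof.
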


   \begin{proof}
    This follows by applying Theorem 2.2.3 of \cite{rubin00} to our Euler system.

    Suppose we are in the inert case. Let $\Sigma$ be the set of primes of $K$ dividing $\mathcal{N}$, where $\mathcal{N} = p N_f \ff$ as before. Via Theorem \ref{thm:eulersystem} we have an Euler system for $(T, \mathcal{K}, \mathcal{N})$ in which the base class over $K$ is non-torsion. Moreover, Rubin's hypothesis $\Hyp(K, V)$ is satisfied by Proposition \ref{prop:bigimage1}. Our $\mathcal{K}$ does not contain a $\Zp$-extension, but by Proposition \ref{prop:h1f} every class in our Euler system is in $H^1_f$ away from $p$, so we may use the modified version of Theorem 2.2.3 assuming the condition (ii')(b) in \S 9.1 of \emph{op.cit.}; the element $\gamma$ called for in this case is supplied by Proposition \ref{prop:bigimage1}; it is clear that $T^{G_{K(1)}} = 0$, so the modified version of Rubin's Theorem 2.2.3 applies and we deduce that $\Sel_{\Sigma_p}(K, T^\vee(1))$ is finite.

    When $p$ is split we proceed slightly differently: for each $\fn$ coprime to $\mathcal{N}$, we replace $\bfz_{\fn}^{f, \psi}$ with the element
    \[ \hat{\bfz}_{\fn}^{f, \psi} \coloneqq  \mathcal{N}_{\fn}^{\fn \fp}\left( \bfz_{\fn\fp}^{f, \psi}\right).\]
    Our assumption that $P_p\left(\frac{\psi(\fp)}{p}\right) \ne 0$ implies that $\hat{\bfz}^{f, \psi} \ne 0$ if and only if $\bfz^{f, \psi} \ne 0$. Moreover, each class $\hat{\bfz}_{\fn}^{f, \psi}$ is a universal norm from the $\Zp$-extension $K(\fn\fp^\infty) / K(\fn)$, and is therefore in $H^1_f$ locally away from $p$. We now proceed as before.

    (Alternatively, we can replace $\mathcal{K}$ with the compositum $\mathcal{K}'$, of $\mathcal{K}$ and $K(\fp^\infty)$; the $\hat{\bfz}_{\fn}^{f, \psi}$ extend to an Euler system for $(T, \mathcal{K}', \mathcal{N})$, and we can now apply Rubin's theorem 2.2.3 in its original form.)
   \end{proof}

   We now give a bound for the fine Selmer group.

   \begin{theorem}[Bound for Selmer]
    \label{thm:boundedSelK}
    Suppose that the modular form $f$ has big image at $\fP$, and one of the following hypotheses holds:
    \begin{itemize}
     \item $p$ is split in $K$ and no root of $P_p\left(\frac{\psi(\fp)}{p} X\right)$ is a $p$-power root of unity;
     \item $p$ is inert in $K$ and $v_\fP(a_p(f)) < \tfrac12$.
    \end{itemize}

    If $\bfz^{f, \psi}$ is non-torsion, then we have the bound
    \[ \ell_{\cO}\left(\Sel_{\Sigma_p}(K, T^\vee(1))\right) \le \ind_{\cO}\left(\bfz^{f,\psi}\right), \]

    If  $p$ is split in $K$, but $P_p\left(\frac{\psi(\fp)}{p} X\right)$ does have a root that is a $p$-power root of unity, then we have
    \[ \ell_{\cO}\left(\Sel_{\Sigma_p}(K, T^\vee(1))\right) \le  \ind_{\cO}\left(\bfz^{f,\psi}\right) + v_{\fP} P_p\left(\frac{\psi(\fp)}{p}\right).\]
   \end{theorem}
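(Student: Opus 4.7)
The plan is to apply the quantitative form of Rubin's Euler system machine, \cite[Theorem~2.2.2]{rubin00}, to the Euler system of Theorem \ref{thm:eulersystem}, following the same structural strategy as the proof of Theorem \ref{thm:finiteSelK} but extracting a sharp index bound rather than just finiteness. The crucial new input is the big image assumption on $f$, which upgrades the hypothesis $\Hyp(K,V)$ used previously to the stronger conditions that Rubin's quantitative bound requires. Specifically, Proposition \ref{prop:bigimage2} supplies (i) residual irreducibility of $T/\fP T$ over $G_{K^{\ab}}$; (ii) an element $\tau$ with $T/(\tau-1)T$ free of rank $1$ over $\cO$; (iii) the vanishing of $H^1(\Omega/K, T\otimes\Qp/\Zp)$ and of its dual, which is exactly what Rubin's Chebotarev arguments need; and (iv) the $\cO$-freeness of $H^1(K,T)$, so that $\ind_\cO(\bfz^{f,\psi})$ is a well-defined integer.

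In the inert case, and in the split case when the polynomial $P_p\!\left(\tfrac{\psi(\fp)}{p} X\right)$ has no $p$-power root of unity as a root, Proposition \ref{prop:h1f} together with Proposition \ref{prop:h1fp} shows that the classes $\bfz_\fn^{f,\psi}$ lie in $H^1_f$ locally at every prime. Feeding $(\bfz_\fn^{f,\psi})$ into Rubin's Theorem~2.2.2, in the modified form for Euler systems whose defining field $\mathcal{K}$ does not contain a $\Zp$-extension (condition (ii')(b) of \cite[\S9.1]{rubin00}), and using the element $\gamma$ provided by Proposition \ref{prop:bigimage1}(iii), gives the bound $\ell_\cO(\Sel_{\Sigma_p}(K,T^\vee(1))) \le \ind_\cO(\bfz^{f,\psi})$ directly.

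In the remaining split case, where $P_p\!\left(\tfrac{\psi(\fp)}{p} X\right)$ does admit a $p$-power root of unity root, the direct argument for $H^1_f$-compatibility of $\bfz_\fn^{f,\psi}$ at primes away from $p$ breaks down, and we instead work with the shifted system $\hat\bfz_\fn^{f,\psi} \coloneqq \mathcal{N}_\fn^{\fn\fp}(\bfz_{\fn\fp}^{f,\psi})$, exactly as in the second half of the proof of Theorem \ref{thm:finiteSelK}. Each $\hat\bfz_\fn^{f,\psi}$ is a universal norm from $K(\fn\fp^\infty)/K(\fn)$ and hence automatically lies in $H^1_f$ away from $p$, while Euler system compatibility (with modified local factors at $\fp$) is preserved by Theorem \ref{thm:normcompat}. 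Rubin's bound then yields $\ell_\cO(\Sel_{\Sigma_p}(K,T^\vee(1))) \le \ind_\cO(\hat\bfz^{f,\psi})$. The norm relation specializes to $\hat\bfz^{f,\psi} = P_p\!\left(\tfrac{\psi(\fp)}{p}\right) \cdot \bfz^{f,\psi}$, and since $H^1(K,T)$ is $\cO$-free we conclude
\[
\ind_\cO(\hat\bfz^{f,\psi}) = \ind_\cO(\bfz^{f,\psi}) + v_\fP\!\left(P_p\!\left(\tfrac{\psi(\fp)}{p}\right)\right),
\]
which produces the claimed correction term.

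The main obstacle is not conceptual but technical: one must verify carefully that the variants of Rubin's main theorem invoked here (the no-$\Zp$-extension form, or equivalently the argument obtained after enlarging $\mathcal{K}$ by $K(\fp^\infty)$) do produce a sharp index inequality rather than only finiteness, and one must track the $P_p$-factor through Shapiro's lemma and the chosen isomorphisms $\nu_\fn$ of Corollary \ref{cor:nui}. This is essentially bookkeeping inside the framework of \cite{rubin00}, but its careful execution is what distinguishes this theorem from the finiteness statement of Theorem \ref{thm:finiteSelK}.
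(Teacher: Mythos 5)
Your proposal is correct and follows essentially the same route as the paper's proof: apply Rubin's Theorem 2.2.2, with Proposition \ref{prop:bigimage2} supplying $\Hyp(K,T)$ and the vanishing that forces $n_W = n^*_W = 0$, with Proposition \ref{prop:h1f} (under the strengthened hypothesis on $P_p$) giving the $H^1_f$ condition away from $p$, and, in the exceptional split case, with the modified system $\hat{\bfz}_{\fn}^{f,\psi}$ whose index exceeds $\ind_{\cO}(\bfz^{f,\psi})$ by exactly $v_{\fP}\, P_p\bigl(\tfrac{\psi(\fp)}{p}\bigr)$. No substantive differences from the paper's argument.
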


   \begin{proof}
    We now apply Theorem 2.2.2 of \cite{rubin00} rather than Theorem 2.2.3. The additional hypothesis $\Hyp(K, T)$ required in this theorem is supplied by Proposition \ref{prop:bigimage2}, which also shows that the quantities $n_W$ and $n^*_W$ appearing in Rubin's statement are both zero in our setting.

    The first statement corresponds to applying Rubin's theorem to the Euler system for $(T, \mathcal{K}, \mathcal{N})$ as in the proof of the previous theorem. By proposition \ref{prop:h1f}, our slightly stronger assumption on $P_p$ in the split case implies that all the classes in this system are in $H^1_f$ away from $p$.

    If $P_p\left(\tfrac{\psi(\fp)}{p} X\right)$ does have roots that are $p$-power roots of unity, then we instead use the modified Euler system $\hat{\bfz}_{\fn}^{\psi}$ as in the previous proof. We have
    \[ \ind_{\cO} \left(\hat{\bfz}^{f, \psi}\right) = \ind_{\cO} \left(\bfz^{f, \psi}\right) + v_{\fP} P_p\left(\frac{\psi(\fp)}{p}\right)\]
    and this gives the weaker Selmer bound in this case.
   \end{proof}

   \begin{remark}
    If $p$ is split and $P_p\left(\frac{\psi(\fp)}{p}\right) = 0$, then the statement of Theorem \ref{thm:boundedSelK} is still true, but vacuous (the upper bound is $\infty$). This should perhaps be understood as a ``trivial zero'' phenomenon.
   \end{remark}

  \subsection{Bounding the Bloch--Kato Selmer group}

   We now show that the Euler system can also be used to bound the Bloch--Kato Selmer group $\Sel(K, T^\vee(1))$. Sadly we can only do this under very much more restrictive local hypotheses.

   \begin{assumption}
    \label{assump:ord}
    The following conditions are satisfied:
    \begin{enumerate}[(i)]
     \item $p$ is split in $K$.
     \item The modular form $f$ is ordinary at $p$ (i.e. $v_\fP(a_p(f)) = 0$).
     \item We have $\alpha \psi(\fpb) \ne 1 \bmod \fP$ and $\frac{\beta \psi(\fpb)}{p} \ne 1$, where $\alpha$ and $\beta$ are the unit and non-unit roots of the Hecke polynomial of $f$ at $p$.
     \item We have $\frac{\alpha \psi(\fp)}{p} \notin \mu_{p^\infty}$.
    \end{enumerate}
   \end{assumption}

   \begin{theorem}
    \label{thm:finiteBKSel}
    Assume that $f$ is not of CM type and Assumption \ref{assump:ord} holds. Then, if $\bfz^{f, \psi} \ne 0$, the Bloch--Kato Selmer group $\Sel(K, T^\vee(1))$ is finite.

    If in addition $f$ has big image at $\fP$, then we have
    \[ \ell_\cO\left(\Sel(K, T^\vee(1))\right) \le \ind_{\cO} \left(\bfz^{f,\psi}\right).\]
   \end{theorem}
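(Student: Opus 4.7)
\medskip

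\noindent\textbf{Proof strategy.} The plan is to follow the same broad outline as Theorems \ref{thm:finiteSelK} and \ref{thm:boundedSelK}, running Rubin's Euler system machine on the family $\{\bfz_\fn^{f,\psi}\}$, but with the trivial local condition at primes above $p$ replaced by the Bloch--Kato condition $H^1_f(K_v, -)$. The key extra input making this possible is Proposition \ref{prop:h1fp}, which says that our Euler system classes already lie in $H^1_f$ at primes above $p$; consequently the Kolyvagin derivative classes derived from them lie in the same local subspaces. Concretely, $\Sel(K, T^\vee(1))$ sits in the strict Selmer group $\Sel_{\Sigma_p}(K, T^\vee(1))$ as a subgroup, and cutting down from the strict version to the Bloch--Kato version is precisely the effect of imposing the extra local $H^1_f$-condition at $p$ that our classes respect.

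Under Assumption \ref{assump:ord} the local representation $T|_{G_{K_v}}$, for each of the two primes $v \in \{\fp,\fpb\}$ above $p$, has a natural ordinary filtration $0 \to T^+ \to T \to T^- \to 0$ coming from the ordinary filtration on $T_\cO(f)^*$ twisted by $\psi_\fP^{-1}$, and the subspace $H^1_f(K_v, T^\vee(1))$ can be described (up to bounded torsion) in terms of this filtration via the Bloch--Kato dual exponential. The three technical conditions in Assumption \ref{assump:ord}(iii)--(iv) are exactly what is needed to ensure that the corresponding local invariants are finite and controlled: the congruence $\alpha\psi(\fpb) \not\equiv 1 \bmod \fP$ rules out a nontrivial contribution of $H^0$ at $\fpb$ on the subrepresentation, $\beta\psi(\fpb)\ne p$ kills the analogous contribution on the quotient (equivalently, non-triviality of an Euler-type factor), and $\alpha\psi(\fp)/p\notin \mu_{p^\infty}$ plays, at the $\fp$-side, the same role as the analogous assumption in Theorem \ref{thm:boundedSelK} --- it guarantees that the norm-relation Euler factor $P_p\left(\tfrac{\psi(\fp)}{p}X\right)$ stays a unit along the $\fp$-part of the Iwasawa tower, so we can still pass to the $\fp$-anticyclotomic deformation to make the classes universal norms.

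The plan for the argument is then: (i) apply Rubin's Theorem 2.2.3 (for the finiteness statement) and Theorem 2.2.2 (for the length bound, using the big-image inputs supplied by Proposition \ref{prop:bigimage2}) with the local condition at $\Sigma_p$ enlarged from $0$ to $H^1_f$, after replacing $\bfz_\fn^{f,\psi}$ by $\hat{\bfz}_\fn^{f,\psi}:=\mathcal{N}_\fn^{\fn\fp}(\bfz_{\fn\fp}^{f,\psi})$ as in the split case of Theorem \ref{thm:finiteSelK}, so that each class is a universal norm along the $\Zp$-extension $K(\fn\fp^\infty)/K(\fn)$ and hence automatically lies in $H^1_f$ away from $\fp$; (ii) observe that the assumptions on $\alpha\psi(\fp)/p$ and $P_p$ ensure this substitution does not increase the index, so the final bound is in terms of $\ind_\cO(\bfz^{f,\psi})$ itself; (iii) conclude by combining the resulting Kolyvagin-theoretic bound for the strict Selmer group with the local control at $p$ to get a bound for the full Bloch--Kato Selmer group.

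The main obstacle is step (i): reworking the global-duality / Chebotarev core of Rubin's machine so that the Kolyvagin classes produced actually witness nontriviality in the \emph{Bloch--Kato} dual Selmer group rather than the strict one. In Rubin's original setup the local condition at $p$ is trivial, so there is no interaction between the Kolyvagin classes and a local subgroup at $p$; here we need the Chebotarev-density step that produces derivative primes to be carried out inside a Galois extension that is also split at the primes above $p$ relative to the $H^1_f$ condition. This is the point where Proposition \ref{prop:bigimage2}(iii) (triviality of $H^1(\Omega/K, T\otimes \Qp/\Zp)$ and its dual) and the fact that the local Bloch--Kato subgroup at $p$ is cut out by a single Bloch--Kato logarithm become essential. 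Once this local-input version of Rubin's machine is established, the length bound $\ell_\cO(\Sel(K,T^\vee(1)))\le \ind_\cO(\bfz^{f,\psi})$ follows by exactly the same bookkeeping as in the proof of Theorem \ref{thm:boundedSelK}.
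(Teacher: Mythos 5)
Your overall strategy --- rerun Rubin's machine with the local condition at $p$ enlarged to the Bloch--Kato one, exploiting Propositions \ref{prop:h1f} and \ref{prop:h1fp} --- is indeed the paper's strategy, but the step you dispose of in one clause (``consequently the Kolyvagin derivative classes derived from them lie in the same local subspaces'') is precisely the hard point, and the mechanism you sketch for it is not the one that works. It is not automatic that the $H^1_f$-condition over the varying fields $K(\fr)$ propagates through the derivative operators $D_\fr$ and then descends from $K(\fr)$ to $K$: the subgroups $H^1_f(K(\fr)_w, T)$ are not a priori preserved in a way compatible with the group-ring action and the reduction mod $M$, and descending a local condition from $K(\fr)_w$ to $K_v$ requires killing an $H^0$ of a quotient. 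The paper's resolution (Appendix \ref{sect:appendixB}, Corollary \ref{cor:kolyvagin} and Theorem \ref{thm:ESmachine}) is to replace the $H^1_f$-condition by the structural condition of lying in $H^1$ of a $D_v$-stable Panchishkin subrepresentation: one takes $V_{\fp}^+ = V$ and $V_{\fpb}^+ = \bigl(V_E(f)/\mathscr{F}^+ V_E(f)\bigr)^*(\psi_{\fP}^{-1})$, proves $H^1(K_v, V_v^+) = H^1_f(K_v, V)$ using the absence of a cyclotomic quotient of $V_v^+$ (this is where $\beta\psi(\fpb) \ne p$ and $\alpha\psi(\fp)/p \notin \mu_{p^\infty}$ enter), proves $H^0\bigl(K_v, (T/T_v^+)\otimes \mathbf{k}\bigr) = 0$ (this is where $\alpha\psi(\fpb) \not\equiv 1 \bmod \fP$ enters, and it is exactly what makes the descent from $K(\fr)_w$ to $K_v$ possible), and deals with an integral obstruction from torsion in $H^2(K_v, T_v^+)$ by enlarging the Kolyvagin modulus. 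The Chebotarev/global-duality core of Rubin's argument is not reworked at all, and Proposition \ref{prop:bigimage2} is used only as in Theorem \ref{thm:boundedSelK} (for $\Hyp(K,T)$ and $n_W = n_W^* = 0$), not to handle the local condition at $p$; the Bloch--Kato logarithm only appears later, for the critical Selmer groups. Without an argument of this kind your step (i) is a genuine gap.

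Two further inaccuracies. The containment you invoke goes the wrong way: the strict group $\Sel_{\Sigma_p}(K, T^\vee(1))$ is contained in the Bloch--Kato group $\Sel(K, T^\vee(1))$, not the reverse, and the local $H^1_f$ at $p$ has positive corank, so no bound for the Bloch--Kato group can be extracted from Theorem \ref{thm:boundedSelK} ``plus local control'' as in your step (iii) --- that is the whole reason the extra machinery is needed. Also, the substitution $\bfz_\fn^{f,\psi} \rightsquigarrow \hat{\bfz}_\fn^{f,\psi} = \mathcal{N}_{\fn}^{\fn\fp}(\bfz_{\fn\fp}^{f,\psi})$ is both unnecessary and harmful here: under Assumption \ref{assump:ord}, Proposition \ref{prop:h1f}(i) applies directly to the unmodified classes (this is how $\alpha\psi(\fp)/p \notin \mu_{p^\infty}$ is used in the paper), whereas the hat-classes rescale the bottom class by $P_p\bigl(\tfrac{\psi(\fp)}{p}\bigr)$, whose $\fP$-adic valuation can be strictly positive even under Assumption \ref{assump:ord}; so your claim that the substitution does not increase the index is false, and following that route would only reproduce the weaker bound of the second part of Theorem \ref{thm:boundedSelK}.
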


   \begin{proof}
    This follows by applying a modified version of the Euler system machinery which is summarized by Theorem \ref{thm:ESmachine} in Appendix \ref{sect:appendixB} below. So we must prove that the hypotheses of that theorem are satisfied.

    We need to show that for $v = \fp, \fpb$, there is a subspace $V_v^+ \subseteq V$ stable under the decomposition group $D_v$ satisfying the conditions of Corollary \S \ref{cor:kolyvagin}. Recall that, since $f$ is ordinary, there exists a unique one-dimensional unramified subrepresentation $\mathscr{F}^+ V_E(f) \subset V_E(f)$ stable under $D_p$. We define $V_{\fpb}^+$ by
    \[V_{\fpb}^+ = \left(\frac{V_{E}(f)}{\mathscr{F}^+ V_E(f)}\right)^*(\psi_{\fP}^{-1}) \subset V_E(f)^*(\psi^{-1}) = V.\]
    Meanwhile, we define $V_{\fp}^+ = V$. Then for each $v$, the space $V_{v}^+$ is the unique subrepresentation of $V |_{D_{v}}$ such that $V_v^+$ has all Hodge--Tate weights $\ge 1$ and $V / V_v^+$ has all Hodge--Tate weights $\le 0$.

    We set $T_v^+ = V_v^+ \cap T$. I claim that $H^0(K_v, (T/T_v^+) \otimes \mathbf{k}) = 0$. For $v = \fp$ this is selfevident, since $T_{\fp}^+ = T_{\fp}$. For $v = \fpb$, we know that $T / T_{\fpb}^+$ is unramified, with geometric Frobenius acting as $\psi(\fpb)^{-1} \alpha^{-1}$; by assumption this quantity is not congruent to 1 modulo $p$, so the $H^0$ vanishes.

    The hypothesis that $V_v^+$ has no cyclotomic quotient follows from the assumptions that $\alpha \psi(\fp) / p \notin \mu_{p^\infty}$ (so in particular this quantity is not 1) and that $\beta \psi(\fpb)/p \ne 1$.

    Finally, our classes $\bfz_{\fn}^{f, \psi}$ for $(\fn, p) = 1$ have good reduction everywhere, by Propositions \ref{prop:h1f} and \ref{prop:h1fp}; this is where we use the assumption $\alpha \psi(\fp) / p \notin \mu_{p^\infty}$. This completes the verification of the additional hypotheses needed to apply Theorem \ref{thm:ESmachine} to the Euler system $\{\bfz_{\fn}^{f, \psi} : \fn \nmid \mathcal{N}\}$.
   \end{proof}

  \subsection{Critical Selmer groups: motivation}

   Our final result on bounding Selmer groups will be an application of Theorem \ref{thm:ESmachine2} to bound the Selmer group of $T^\vee(1)$ with even weaker local conditions at $p$.

   Before doing so, we shall briefly explain some ideas from the Iwasawa theory of $f$ over the $\Zp^2$-extension of $K$; these ideas play no role in the proofs, but serve to motivate our choice of local conditions. Recall the definitions of the regions $\Sigma^{(i)}$ in Figure \ref{fig:hecke}.

   Let us suppose that $f$ is ordinary at $p$, so $V_E(f)|_{D_p}$ has a one-dimensional unramified subrepresentation $\mathscr{F}^+ V_E(f)$ (on which geometric Frobenius acts as multiplication by the unit root of the Hecke polynomial). If $\Psi$ is a Gr\"ossencharacter with infinity-type in $\Sigma^{(1)}$, and the local $L$-factors of $M_f(\Psi)(1)$ and its dual at $\fp$ and $\fpb$ do not vanish at $s = 1$, then for $v = \fp, \fpb$, we have
   \[ H^1_f(K_v, V_{E}(f)(\Psi)(1)) = H^1(K_{v}, \mathscr{F}^+ V_E(f)(\Psi)(1)).\]

   Meanwhile, whether or not $f$ is ordinary, for $\Psi \in \Sigma^{(2)}$ we have
   \begin{align*}
    H^1_f(K_{\fp}, V_E(f)(\Psi)(1)) &= 0,\\ H^1_f(K_{\fpb}, V_E(f)(\Psi)(1)) &= H^1(K_{\fpb}, V_E(f)(\Psi)(1)),
   \end{align*}
   and similarly for $\Sigma^{(2')}$ with $\fp$ and $\fpb$ reversed. In each of these critical regions, the Bloch--Kato conjecture predicts that the Selmer group $\Sel(K, T_\cO(f)(\Psi)(1) \otimes \Qp/\Zp)$ is controlled by the algebraic part of the critical $L$-value $L(f/K, \Psi, 1)$; in particular, for a ``generic'' character in these regions the Bloch--Kato Selmer group should be finite.

   Passing to a direct limit over extensions of $K$ contained in $K(\ff p^\infty)$, we obtain three Selmer groups $\Sel(K(\ff p^\infty), T_\cO(f)(1) \otimes \Qp/\Zp, \Sigma^{(i)})$, which are $\Lambda(H_{\ff p^\infty})$-modules interpolating the Bloch--Kato Selmer groups for critical $\Psi$'s in the corresponding regions. These are the algebraic counterparts of the three $p$-adic $L$-functions defined in the previous section.

   The theorem of the next subsection should then be understood as follows. We shall show, roughly, that if we specialize either of the groups $\Sel(K(\ff p^\infty), T_\cO(f)(1) \otimes \Qp/\Zp, \Sigma^{(1)})$ and $\Sel(K(\ff p^\infty), T_\cO(f)(1) \otimes \Qp/\Zp, \Sigma^{(2)})$ at a character $\psi_{\fP}$ of $H_{\ff p^\infty}$ corresponding to a Gr\"ossen\-character of conductor prime to $p$ and infinity-type $(-1, 0)$ -- thus lying in $\Sigma^{(3)}$, rather than any of the three critical regions -- then this specialization is controlled by the value at $\psi$ of the corresponding $p$-adic $L$-function.

  \subsection{Critical Selmer groups: the theorems}
   \label{sect:criticalSel}

   Let $T = T_{\cO}(f)^*(\psi^{-1})$, as before, so that
   \[ T^\vee(1) = \left(\frac{V_E(f)}{T_{\cO}(f)}\right)(\psi)(1).\]
   Throughout this section we continue to impose the assumptions \ref{assump:ord}. We shall define two Selmer groups $\Sel(K, T^\vee(1), \Sigma^{(1)})$ and $\Sel(K, T^\vee(1), \Sigma^{(2)})$.

   \begin{definition}
    \begin{enumerate}[(i)]
     \item The group $\Sel(K, T^\vee(1), \Sigma^{(1)})$ consists of all classes $c \in \Sel^{\Sigma_p}(K, T^\vee(1))$ such that for $v = \fp, \fpb$ we have
     \[ \loc_v(c) \in \operatorname{image} H^1\left(K_v, \mathscr{F}^+ V_E(f)(\psi)(1)\right).\]
     \item The group $\Sel(K, T^\vee(1), \Sigma^{(2)})$ consists of all classes $c \in \Sel^{\Sigma_p}(K, T^\vee(1))$ such that $\loc_{\fp}(c) = 0$ (with no condition on $\loc_{\fpb}(c)$).
    \end{enumerate}
   \end{definition}

   Note that the Bloch--Kato Selmer group $\Sel(K, T^\vee(1))$ is exactly the intersection of the groups $\Sel(K, T^\vee(1), \Sigma^{(1)})$ and $\Sel(K, T^\vee(1), \Sigma^{(2)})$.

   We now relate these Selmer groups to linear functionals on the local $H^1_f$ defined using the Bloch--Kato logarithm map. Recall the vectors $\eta_f^{\ur} \otimes \omega_{g}$ and $\omega_{g}^{\ur} \otimes \omega_{f}$ appearing in \S 6 above. In our situation, for $g = g_{\psi}$ a CM form and $p$ split, we have $V_E(g_\psi) \cong \Ind_{K}^\QQ(\psi_{\fP})$; and thus
   \[ \Fil^1 \DdR(V_E(f) \otimes V_E(g)) = \Fil^1 \DdR(K_\fp, V^*) \oplus \Fil^1\DdR(K_{\fpb}, V^*),\]
   since $V^* = V_E(f)(\psi_{\fP})$. Clearly we have
   \[\eta_f^{\ur} \otimes \omega_g \in \Fil^1 \DdR(K_\fp, V^*) = \DdR(K_\fp, V^*),\] and $\omega_f \otimes \eta_g^{\ur} \in \Fil^1 \DdR(K_{\fpb}, V^*)$.

   \begin{proposition}
    \begin{enumerate}[(i)]
     \item The kernel of the linear functional $\lambda_1: H^1_f(K_{\fp}, V^*) \to E$ given by
     \[ x \mapsto \left\langle \log_{K_{\fp}, V}(x), \eta_f^{\ur} \otimes \omega_g\right\rangle\]
     is $H^1\left(K_\fp, \left(\frac{V_E(f)}{\mathscr{F}^+ V_E(f)}\right)^*(\psi^{-1})\right)$.
     \item The linear functional $\lambda_2: H^1_f(K_{\fpb}, V^*) \to E$ given by
    \[ x \mapsto \left\langle \log_{K_{\fpb}, V}(x), \omega_f \otimes \eta_g^{\ur}\right\rangle\]
    is injective.
    \end{enumerate}
    Equivalently, for $i = 1, 2$, the local condition defining $\Sel(K, T^\vee(1), \Sigma^{(i)})$ is the orthogonal complement of the kernel of $\lambda_i$.
   \end{proposition}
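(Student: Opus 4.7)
The plan is to treat the two parts as localized computations of the de~Rham pairing, one at $\fp$ and one at $\fpb$, exploiting the $G_K$-equivariant decomposition $V_E(g_\psi)|_{G_K} = E(\psi_\fP) \oplus E(\psi_\fP^c)$ (valid because $p$ splits in $K$ and $g_\psi$ is the induced theta series). Under this decomposition, the projection $V_E(f) \otimes V_E(g_\psi) \onto V^* = V_E(f)(\psi_\fP)$ onto the $\psi_\fP$-summand is $G_K$-equivariant, and the key calculation is to track, at each of the two places of $K$ above $p$, which summand carries $\omega_{g_\psi}$ (the $\Fil^1$-piece of $\DdR(V_E(g_\psi))$) and which carries the ordinary subrepresentation $\mathscr{F}^+ V_E(g_\psi)$ (the unit-root piece at $p$).

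For part~(i), at $\fp$ the summand $E(\psi_\fP)|_{K_\fp}$ has Hodge--Tate weight $1$ whereas $E(\psi_\fP^c)|_{K_\fp}$ has weight $0$; thus $\omega_{g_\psi}|_{K_\fp}$ lies entirely in the $\psi_\fP$-summand, and so $\eta_f^{\ur} \otimes \omega_{g_\psi}|_{K_\fp}$ projects to a nonzero element of the one-dimensional subspace $\DdR(\mathscr{F}^+ V_E(f)(\psi_\fP)|_{K_\fp}) \subset \DdR(V^*|_{K_\fp})$. Dualizing the ordinary filtration of $V_E(f)|_{G_{K_\fp}}$ and twisting by $\psi_\fP^{-1}$ shows that the annihilator of this line, under the perfect de~Rham pairing $\DdR(V|_{K_\fp}) \otimes \DdR(V^*|_{K_\fp}) \to E$, is exactly $\DdR(V_\fpb^+|_{K_\fp})$. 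Composing with the Bloch--Kato logarithm $\log_{K_\fp, V}$, which is an isomorphism because $\Fil^0 \DdR(V|_{K_\fp}) = 0$ and none of the Frobenius eigenvalues on $\Dcris(V|_{K_\fp})$ equal $1$ (by Assumption~\ref{assump:ord}), identifies $\ker \lambda_1$ with the image of $H^1_f(K_\fp, V_\fpb^+)$ in $H^1_f(K_\fp, V)$. A short Euler--Poincar\'e computation for the one-dimensional representation $V_\fpb^+|_{K_\fp}$, using $\beta\psi(\fpb) \ne p$ and $\alpha\psi(\fp)/p \notin \mu_{p^\infty}$ to kill $H^0$ of both $V_\fpb^+$ and its Tate dual, then shows $H^1(K_\fp, V_\fpb^+) = H^1_f(K_\fp, V_\fpb^+)$, completing the identification.

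For part~(ii), the symmetric analysis at $\fpb$ exhibits a \emph{swap} of summands: now $E(\psi_\fP)|_{K_\fpb}$ is the unramified (ordinary) factor, so $\eta_{g_\psi}^{\ur}|_{K_\fpb}$ lies in the $\psi_\fP$-summand, and $\omega_f \otimes \eta_{g_\psi}^{\ur}|_{K_\fpb}$ projects to a nonzero generator of $\Fil^1 \DdR(V^*|_{K_\fpb}) = \Fil^1 \DdR(V_E(f)|_{K_\fpb}) \otimes \DdR(E(\psi_\fP)|_{K_\fpb})$. Since $V|_{K_\fpb}$ has Hodge--Tate weights $-1, 0$, the space $\DdR(V|_{K_\fpb})/\Fil^0$ is one-dimensional, and the induced perfect pairing $\DdR(V|_{K_\fpb})/\Fil^0 \times \Fil^1 \DdR(V^*|_{K_\fpb}) \to E$ identifies $\lambda_2$ with the composition of an isomorphism of one-dimensional spaces with $\log_{K_\fpb, V}$, which is itself injective because $H^0(K_\fpb, V) = 0$ under Assumption~\ref{assump:ord}.

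The main technical point I expect to require care is verifying that $\omega_{g_\psi}$ and $\eta_{g_\psi}^{\ur}$ land in \emph{opposite} summands of the decomposition $V_E(g_\psi)|_{G_K} = E(\psi_\fP) \oplus E(\psi_\fP^c)$ at the two places of $K$ above $p$, and that at each of $\fp$ and $\fpb$ the relevant factor of the tensor $\omega_f \otimes \eta_{g_\psi}^{\ur}$ or $\eta_f^{\ur} \otimes \omega_{g_\psi}$ actually survives the projection to $V^*$. This place-dependent swap (which is precisely the reason that $g_\psi$ is ordinary at $p$ exactly when $p$ splits in $K$) is what decouples $\lambda_1$ and $\lambda_2$ onto the two primes above $p$, and makes the de~Rham pairing analysis on each side a purely one-variable calculation.
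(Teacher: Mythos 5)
Your proposal is correct and follows essentially the same route as the paper's proof: use Assumption \ref{assump:ord} to make the Bloch--Kato logarithm an isomorphism at $\fp$ and at $\fpb$, then apply de~Rham duality, identifying the annihilator of $\eta_f^{\ur} \otimes \omega_g$ in $\DdR(K_\fp, V)$ with $\DdR$ of the subrepresentation $\left(\frac{V_E(f)}{\mathscr{F}^+ V_E(f)}\right)^*(\psi^{-1})$, and pairing against the nonzero vector $\omega_f \otimes \eta_g^{\ur} \in \Fil^1 \DdR(K_{\fpb}, V^*)$ to get injectivity of $\lambda_2$. The extra bookkeeping you supply --- tracking which summand of $V_E(g_\psi)|_{G_K}$ carries $\omega_{g_\psi}$ and $\eta_{g_\psi}^{\ur}$ at each place above $p$, and the Euler--Poincar\'e argument that $H^1(K_\fp, \cdot) = H^1_f(K_\fp, \cdot)$ for the subrepresentation --- simply makes explicit steps the paper asserts tersely (the ``clearly'' before the placement of the two vectors, and the identification of the orthogonal complement as the slope-$2$ eigenspace).
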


   \begin{proof}
    Our local assumptions at $\fp$ imply that the Bloch--Kato logarithm is an isomorphism of $E$-vector spaces from $H^1(K_\fp, V) = H^1_f(K_\fp, V)$ to $\DdR(K_{\fp}, V)$. The orthogonal complement of $\eta_f^{\ur} \otimes \omega_g$ is the eigenspace of slope 2, which corresponds to $\DdR$ of the subrepresentation $\left(\frac{V_E(f)}{\mathscr{F}^+ V_E(f)}\right)^*(\psi^{-1})$. Thus the kernel of $\lambda_1$ is exactly the cohomology of this subrepresentation.

    Likewise, our local assumptions at $\fpb$ imply that of 1-dimensional $E$-vector spaces
    \[ H^1_f(K_{\fpb}, V) \rTo^\cong \left( \Fil^1 \DdR(K_{\fpb}, V^*)\right)^*,\] and $\omega_f \otimes \eta_g^{\ur}$ is a nonzero element of $\Fil^1 \DdR(K_{\fpb}, V^*)$, so the linear functional $\lambda_1$ given by pairing with this element is injective.
   \end{proof}

   Applying Theorem \ref{thm:ESmachine2} gives the following:

   \begin{corollary}
    Let $i \in \{1, 2\}$. If $\lambda_i\left(\loc_p \bfz^{f, \psi}\right) \ne 0$, then the Selmer group $\Sel(K, T^\vee(1), \Sigma^{(i)})$ is finite.

    If, in addition, $f$ has big image at $\fP$, then we have
    \begin{equation}
     \label{eq:bound}
     \ell_{\cO}\left(\Sel(K, T^\vee(1), \Sigma^{(i)})\right) \le v_{\fP} \lambda_i\left(\loc_p \bfz^{f, \psi}\right)+  c_i,
    \end{equation}
    where $c_i$ is the integer such that $\lambda_i \left(H^1_f(K \otimes \Qp, T)\right) = \fP^{-\nu_i}\cO$.
   \end{corollary}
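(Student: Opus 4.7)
The plan is to apply Theorem \ref{thm:ESmachine2} from Appendix \ref{sect:appendixB} to the Euler system $\{\bfz_{\fn}^{f,\psi} : (\fn, \mathcal{N}) = 1\}$ of Theorem \ref{thm:eulersystem}, taking as local conditions at $\fp$ and $\fpb$ precisely those defining $\Sel(K, T^\vee(1), \Sigma^{(i)})$. By the preceding proposition, for $i = 1, 2$ these local conditions are the exact annihilators of $\ker(\lambda_i)$ under the local Tate pairing, so Theorem \ref{thm:ESmachine2} is presumably formulated to accept input of precisely this shape: a linear functional $\lambda_i$ on $H^1_f(K \otimes \Qp, T)$ whose kernel cuts out the local condition at $p$ defining the modified Selmer group.

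The first step is to verify the global hypotheses that allowed us to apply the Euler system machine in Theorem \ref{thm:finiteBKSel}: non-CM-ness of $f$ (Assumption in force), the irreducibility and big-image statements of Propositions \ref{prop:bigimage1} and \ref{prop:bigimage2}, and the existence of elements $\tau, \gamma \in \Gal(\overline K / K^{\ab})$ supplied by those propositions; these are used to verify Rubin's hypotheses $\Hyp(K, V)$ and (under the big-image assumption) $\Hyp(K, T)$. Next, I would verify the local hypotheses at primes away from $p$: by Proposition \ref{prop:h1f}, under Assumption \ref{assump:ord}(i),(ii) (split $p$ and $f$ ordinary), combined with the non-vanishing condition on $P_p$ at $\psi(\fp)/p$ which is implicit in Assumption \ref{assump:ord}(iv), every class in the Euler system is locally in $H^1_f$ at all primes not above $p$ (passing if necessary to the modified system $\hat{\bfz}_{\fn}^{f,\psi}$ as in the proof of Theorem \ref{thm:finiteSelK}). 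Similarly, Proposition \ref{prop:h1fp} places the classes in $H^1_f$ at primes above $p$.

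The third step is to feed the base class $\bfz^{f,\psi} \in H^1_f(K \otimes \Qp, T)$ into the functional $\lambda_i$ and extract the bound. By construction of $\Sel(K, T^\vee(1), \Sigma^{(i)})$ as an orthogonal-complement Selmer group, Theorem \ref{thm:ESmachine2} will yield finiteness whenever $\lambda_i(\loc_p \bfz^{f,\psi}) \ne 0$. Under the additional big-image assumption, the resulting quantitative bound is given in terms of the $\fP$-adic valuation of $\lambda_i(\loc_p \bfz^{f,\psi})$; the correction term $c_i$ in \eqref{eq:bound} arises because the Euler system machine naturally produces a bound in terms of the index of $\loc_p \bfz^{f,\psi}$ inside the $\cO$-lattice $H^1_f(K \otimes \Qp, T)$, and the functional $\lambda_i$ may enlarge this index by a bounded amount, precisely measured by $\nu_i$ with $\lambda_i(H^1_f(K \otimes \Qp, T)) = \fP^{-\nu_i}\cO$.

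The main obstacle is somewhat hidden: it is the verification that Theorem \ref{thm:ESmachine2} can in fact accommodate the two different local conditions simultaneously. For $i = 1$ the local condition at $\fp$ is strictly smaller than $H^1_f$, while for $i = 2$ it is strictly smaller at $\fp$ (namely zero) and strictly larger at $\fpb$ (namely the whole $H^1$). Thus one must check that the machine of Appendix \ref{sect:appendixB} permits \emph{both} strict and relaxed local conditions at primes above $p$; in the $\Sigma^{(2)}$ case the relaxed condition at $\fpb$ is handled using the vanishing $H^0(K_{\fpb}, (T/T_{\fpb}^+) \otimes \mathbf{k}) = 0$ verified in the proof of Theorem \ref{thm:finiteBKSel}, together with the fact that the dual Selmer condition is automatically strict. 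Once this is in place, the bound \eqref{eq:bound} follows by a direct unwinding of Theorem \ref{thm:ESmachine2}.
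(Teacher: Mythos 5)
Your proposal is correct and follows essentially the same route as the paper: the paper's proof is simply the observation that the preceding proposition identifies the local conditions for $\Sel(K, T^\vee(1), \Sigma^{(i)})$ as the orthogonal complement of $\ker\lambda_i$, so Theorem \ref{thm:ESmachine2} applies verbatim with $\lambda = \lambda_i$, the hypotheses (big image, $\Hyp(\gamma)$, the Panchishkin subspaces $V_v^+$, and everywhere good reduction via Propositions \ref{prop:h1f} and \ref{prop:h1fp}) having already been checked in the proof of Theorem \ref{thm:finiteBKSel} under Assumption \ref{assump:ord}. Your worry about mixing strict and relaxed conditions at $\fp$ and $\fpb$ is already absorbed into the formulation of Theorem \ref{thm:ESmachine2} (a single functional on $H^1_f(K \otimes \Qp, V)$, extended by zero on the other component), and the detour through the modified classes $\hat{\bfz}_{\fn}^{f,\psi}$ is not needed here since Assumption \ref{assump:ord} already guarantees the original classes lie in $H^1_f$ away from $p$.
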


   We now relate the right-hand side to an $L$-value. By Proposition 6.10.8 of \cite{leiloefflerzerbes13}, the quantities $c_1$ and $c_2$ are bounded above in terms of the congruence ideals $I_f$ and $I_{g_\psi}$ of $f$ and $g_{\psi}$ respectively (cf.~\cite[Definition 6.10.4]{leiloefflerzerbes13}). On the other hand, Theorem \ref{thm:Lvalue} tells us that
   \[ \lambda_1(\bfz^{f, \psi}) = -\frac{\mathcal{E}(f) \mathcal{E}^*(f)}{\mathcal{E}(f, \psi, 1)} L_\fP(f, \Sigma^{(1)})(\psi),\]
   and similarly
   \[  \lambda_2(\bfz^{f, \psi}) = -\frac{\mathcal{E}(g_{\psi}) \mathcal{E}^*(g_{\psi})}{\mathcal{E}(\psi, f, 1)} L_\fP(f, \Sigma^{(2)})(\psi).\]
    Substituting and deleting factors which are obviously in $\cO^\times$, we obtain:

   \begin{theorem}
    \label{thm:finitecriticalSel}
    If $f$ has big image at $\fP$, we have the bounds
    \[ \ell_{\cO}\left(\Sel(K, T^\vee(1), \Sigma^{(1)})\right) \le v_{\fP}\left(\frac{(1 - p^{-1} \beta \alpha^{-1})}{(1 - p^{-1} \beta \psi(\fpb))} L_\fP(f, \Sigma^{(1)})(\psi)\right) + v_{\fP}(I_f)\]
    and
    \[ \ell_{\cO}\left(\Sel(K, T^\vee(1), \Sigma^{(2)})\right) \le v_{\fP}\left(\frac{(1 - p^{-1} \psi(\fp) \psi(\fpb)^{-1})}{(1 - p^{-1} \psi(\fp) \alpha)} L_\fP(f, \Sigma^{(2)})(\psi)\right) + v_{\fP}(I_{g_\psi}).\]
   \end{theorem}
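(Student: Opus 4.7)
The plan is to combine the Selmer bound from the Corollary immediately preceding with the explicit $L$-value formula of Theorem \ref{thm:Lvalue}, and then to simplify using Assumption \ref{assump:ord} by discarding Euler factors that are forced to be $\fP$-adic units. Concretely, the Corollary gives, for $i \in \{1,2\}$ under the big-image hypothesis,
$$\ell_{\cO}\left(\Sel(K, T^\vee(1), \Sigma^{(i)})\right) \le v_{\fP} \lambda_i\left(\loc_p \bfz^{f, \psi}\right) + c_i,$$
and the displayed formula
$$\lambda_1(\bfz^{f,\psi}) = -\frac{\mathcal{E}(f)\,\mathcal{E}^*(f)}{\mathcal{E}(f,\psi,1)} L_{\fP}(f,\Sigma^{(1)})(\psi), \qquad \lambda_2(\bfz^{f,\psi}) = -\frac{\mathcal{E}(g_\psi)\,\mathcal{E}^*(g_\psi)}{\mathcal{E}(\psi,f,1)} L_{\fP}(f,\Sigma^{(2)})(\psi)$$
(obtained by substituting Theorem \ref{thm:Lvalue} into the definitions of $\lambda_1$, $\lambda_2$) translates the right-hand side into an expression in $L_{\fP}$-values times a ratio of Euler factors.

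The second step is the unit check. In case $i = 1$, the explicit formula
$\mathcal{E}(f,\psi,1) = \prod_{v\mid p}(1 - p^{-1}\beta\psi(v))(1 - \alpha^{-1}\psi(v)^{-1})$
contains factors of three types. The factors $(1 - \alpha^{-1}\psi(\fpb)^{-1})$ are $\fP$-adic units by Assumption \ref{assump:ord}(iii), while $(1 - \alpha^{-1}\psi(\fp)^{-1})$ is a unit because $\alpha$ is a unit and $v_{\fP}(\psi(\fp)^{-1}) > 0$ (the infinity-type $(-1,0)$ together with ordinarity forces $v_{\fP}(\psi(\fp)^{-1}) = 1$). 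Similarly $\mathcal{E}(f) = (1 - \beta/\alpha)$ is a unit, since $v_{\fP}(\beta) = 1$ and $\alpha$ is a unit. The two factors that need \emph{not} be units, and therefore survive to contribute to the valuation, are $\mathcal{E}^*(f) = (1 - p^{-1}\beta\alpha^{-1})$ in the numerator and $(1 - p^{-1}\beta\psi(\fpb))$ in the denominator; this matches exactly the quotient appearing in the theorem. The case $i = 2$ is entirely parallel, reading off the analogous cancellations from the explicit form of $\mathcal{E}(\psi,f,1)$, with the roles of $f$ and $g_\psi$ swapped and the conditions $\alpha\psi(\fpb)\not\equiv 1$, $\beta\psi(\fpb)/p \ne 1$ of Assumption \ref{assump:ord}(iii) handling the corresponding factors involving $\psi(\fp)/\psi(\fpb)$.

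Finally, to control the integers $c_i$ I would invoke Proposition 6.10.8 of \cite{leiloefflerzerbes13}, which bounds the image of $H^1_f(K\otimes\Qp, T)$ under $\lambda_i$ in terms of the congruence ideals: this yields $c_1 \le v_{\fP}(I_f)$ and $c_2 \le v_{\fP}(I_{g_\psi})$. Substituting into the inequality from the Corollary and combining with the $L$-value identity and the unit cancellations gives precisely the two displayed bounds.

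The main obstacle is purely bookkeeping: tracking the $\fP$-adic valuations of all the Euler factors at $\fp$ and at $\fpb$ under the split-ordinary hypotheses, and checking that each of the four clauses of Assumption \ref{assump:ord} is used exactly where needed to kill the unwanted denominators. Once that tabulation is done, the theorem is a direct substitution.
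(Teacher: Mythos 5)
Your proposal follows the paper's own route: the theorem is obtained exactly by substituting the formulae of Theorem \ref{thm:Lvalue} into the bound $\ell_{\cO}(\Sel(K,T^\vee(1),\Sigma^{(i)})) \le v_{\fP}\lambda_i(\loc_p \bfz^{f,\psi}) + c_i$ from the corollary to Theorem \ref{thm:ESmachine2}, bounding $c_i$ by the congruence ideals via Proposition 6.10.8 of \cite{leiloefflerzerbes13}, and then discarding Euler factors that are $\fP$-adic units. One item in your unit tabulation is backwards, however: ordinarity of $\cI_{\fn}$ is precisely the statement that the $U_p$-eigenvalue $\psi(\fpb)$ is a $\fP$-adic unit, so $v_{\fP}(\psi(\fpb)) = 0$ and $v_{\fP}(\psi(\fp)) = +1$, not $-1$ (this is also what makes Assumption \ref{assump:ord}(iv) on $\alpha\psi(\fp)/p$ non-vacuous, and it is why $(1-p^{-1}\beta\psi(\fpb))$, rather than the factor at $\fp$, survives in the denominator). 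Consequently $(1 - \alpha^{-1}\psi(\fp)^{-1})$ is not a unit --- it has valuation $-1$ --- and the analogous factor $(1-\psi(\fpb)^{-1}\beta^{-1})$ in $\mathcal{E}(\psi,f,1)$ for $i=2$ likewise has valuation $-1$, so your stated justification for deleting these two factors does not stand as written. The paper is equally brisk at this point (``deleting factors which are obviously in $\cO^\times$''), and the surviving quotients you obtain agree with those in the statement; any valuation discrepancy of this kind has to be absorbed into the congruence-ideal terms coming from Proposition 6.10.8, so the architecture of your argument is the intended one, but the bookkeeping step you correctly identify as the crux must be redone with the corrected valuations of $\psi(\fp)$ and $\psi(\fpb)$.
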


   \begin{remark}
    The appearance of the factors $v_{\fP}(I_f)$ and $v_{\fP}(I_{g_\psi})$ is a consequence of our normalization of periods: the $L$-functions $L_\fP(f, \Sigma^{(i)})$ are defined by interpolating the quotient of $L$-values $L(f/K, \psi, 1)$ for $\psi \in \Sigma^{(i)}$ by Petersson norms (of $f$ for $i = 1$, and of the appropriate CM form $g_{\lambda}$ for $i = 2$). The congruence ideals $I_f$ and $I_{g_{\psi}}$ are related to the quotients $\frac{\langle f, f \rangle}{\Omega^+_f \Omega^-_f}$, where $\Omega^\pm_f$ are the canonical periods, and similarly for $g_{\psi}$; these are essentially the algebraic parts of critical values of the adjoint $L$-function.
   \end{remark}

\appendix

 \section{Proofs of the norm relations}
  \label{sect:appendix}
  In this appendix, we give the proof of Theorem \ref{thm:asymmetricnorm}.

  \subsection{Preliminaries}

   Recall the definition of the modular curve $Y(m, N)$, for integers $m \ge 1$ and $N \ge 5$ with $m \mid N$, given in \cite[\S 2.1]{leiloefflerzerbes13}. The curve $Y(m, N)$ is an irreducible variety over $\QQ$, but it is not geometrically connected if $m \ge 3$, since there is a surjective map $Y(m, N) \to \Spec \QQ(\mu_m)$ with geometrically connected fibres (Definition 2.1.6 of \emph{op.cit.}). When we take products such as $Y(m, N)^2$ or $Y(m, N) \times Y(m, N')$, we shall always understand the fibre product to be over $\Spec \QQ(\mu_m)$ (not over $\Spec \QQ$).

   For $m, N$ as above, $c > 1$ an integer coprime to $6N$, and $j \in \ZZ / m \ZZ$, let ${}_c \cZ(m, N, j)$ denote the class in $\CH^2(Y(m, N)^2, 1)$ constructed in \S 2.6 of \emph{op.cit.}. (We have made a slight change of notation from \emph{op.cit.}; in the notation of our previous work this class would be denoted by ${}_c \cZ_{m, N/m, j}$.)

   Given integers $N, N' \ge 5$, both divisible by $m$, we define
   \[ {}_c \cZ(m, N, N', j) \in \CH^2(Y(m, N) \times Y(m, N'), 1)\]
   as the pushforward of ${}_c \cZ(m, R, j)$ along the natural degeneracy map $Y(m, R)^2 \to Y(m, N) \times Y(m, N')$, for some integer $R$ divisible by $N$ and $N'$ and with the same prime factors as $NN'$. As in \S \ref{sect:asymmzetadefs} above, this element is independent of the choice of $R$, by Theorem 3.1.1 of \cite{leiloefflerzerbes13}.

   For $\ell$ prime, we write $\pr_1, \pr_2$ for the maps $Y(m, N\ell) \to Y(m, N)$ given by $z \mapsto z$ and $z \mapsto \ell z$, as in the $Y_1$ case above.

  \subsection{Norm relations for symmetric $\cZ$'s}

   \begin{lemma}
    \label{lemma:asymmetricnorm}
    We have
     \[
      (\pr_1 \times \pr_2)_* \left( {}_c \cZ(m, \ell N, j) \right) =
      \begin{cases}
       (U_\ell', 1) \cdot {}_c \cZ(m, N, \ell j) & \text{if $\ell \mid N$,}\\
       \left[(T_\ell', 1) \Delta_{\ell^{-1}} - (\langle \ell^{-1} \rangle, T_\ell') \Delta_{\ell^{-2}} \right] \cdot {}_c \cZ(m, N, j) & \text{if $\ell \nmid N$,}
      \end{cases}
     \]
     where $\Delta_x$, for $x \in (\ZZ / m\ZZ)^\times$, denotes the action of any element of $\GL_2(\ZZ / N \ZZ)^2$ of the form $\left(\begin{pmatrix} y & 0 \\ 0 & 1\end{pmatrix}, \begin{pmatrix} y & 0 \\ 0 & 1\end{pmatrix}\right)$ with $y = x \bmod m$, and in the second case $\langle \ell^{-1} \rangle$ denotes the action of the element $\begin{pmatrix} \ell & 0 \\ 0 & \ell^{-1}\end{pmatrix} \in \SL_2(\ZZ / N\ZZ)$.
   \end{lemma}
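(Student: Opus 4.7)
The plan is to unfold the definition of ${}_c\cZ(m, N\ell, j)$ from \cite[\S 2.6]{leiloefflerzerbes13} and reduce the computation to an explicit manipulation on $Y(m, N\ell)$ together with the norm/distribution relations for Siegel units. Recall that ${}_c\cZ(m, N\ell, j)$ is constructed as the pushforward along a canonical embedding $Y(m, N\ell) \hookrightarrow Y(m, N\ell)^2$ of a Beilinson--Flach class built from a Siegel unit whose parameter encodes the integer $j$.

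First I would factor the composite
\[ Y(m, N\ell) \hookrightarrow Y(m, N\ell)^2 \xrightarrow{\pr_1 \times \pr_2} Y(m, N)^2. \]
Using the matrix description of $\pr_2$ as the correspondence associated to $\stbt{\ell}{0}{0}{1}$, this composite can be identified, via moduli, with a map factoring through an intermediate modular curve carrying a $\Gamma_0(\ell)$-type structure on one factor. Since the Siegel unit class pushes forward functorially, the question reduces to identifying the resulting composite class with one of the form $T \cdot {}_c \cZ(m, N, j')$ for an appropriate correspondence $T$ and parameter $j'$.

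For the case $\ell \mid N$, the matrix $\stbt{\ell}{0}{0}{1}$ normalises $\Gamma(m, N)$ and acts as $U_\ell'$ on the first factor; the change of variable $z \mapsto \ell z$ simultaneously transforms the torsion-point coordinate from $j/(N\ell)$ into $j/N$ viewed on $Y(m,N)$ with parameter $\ell j$, which yields the first formula. For the case $\ell \nmid N$, I would invoke the Manin--Drinfel'd-type distribution relation for Siegel units (in the same spirit as \cite[\S 3]{leiloefflerzerbes13}), which expresses the norm of ${}_c g_{0, a/(N\ell)}$ under $Y(m, N\ell) \to Y(m, N)$ as a sum over coset representatives; regrouping these according to the $T_\ell'$ Hecke correspondence and the diamond operator produces the two terms in the formula. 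The twists $\Delta_{\ell^{-1}}$ and $\Delta_{\ell^{-2}}$ then appear from the action of $\stbt{\ell}{0}{0}{1}$ on the component structure of $Y(m, N\ell)$ over $\Spec \QQ(\mu_m)$.

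The main obstacle will be accurate bookkeeping of the $\Delta_x$ twists. These arise because $Y(m, N)$ is not geometrically connected for $m \ge 3$, and the matrix $\stbt{\ell}{0}{0}{1}$ permutes the geometric components via its determinant action on $\mu_m$. Carefully matching the powers of $\ell^{-1}$ appearing in the two summands of the $\ell \nmid N$ case with the correct components is the subtle point; by contrast, the underlying identities for Siegel units and Hecke correspondences are standard, and the argument is largely a careful translation of the computations already used to establish the asymmetric relations of \cite[\S 3]{leiloefflerzerbes13}.
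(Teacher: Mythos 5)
Your outline follows the paper's own argument: both $\pr_1$ and $\pr_2$ are factored through the intermediate curve $Y(m, N(\ell))$ with $\Gamma_0(\ell)$-structure, Kato's norm relation for the Siegel unit ${}_c g_{0, 1/N\ell}$ is applied along that intermediate pushforward (one term if $\ell \mid N$, two terms involving the extra unit ${}_c g_{0, \quot{\ell^{-1}}/N}$ if $\ell \nmid N$), and the resulting pieces are regrouped as $(U_\ell',1)$, $(T_\ell',1)$ and $(\langle \ell^{-1}\rangle, T_\ell')$ acting on ${}_c\cZ$-classes, with the $\Delta$-twists produced by the determinant/component action via the scalar matrix $\stbt{\ell^{-1}}{0}{0}{\ell^{-1}}$ and the identity $T_\ell = \langle \ell \rangle T_\ell'$. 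The only bookkeeping corrections are the ones you already flag and defer: $j$ indexes the shifted diagonal cycle $\cC_{m,N,j}$ rather than the Siegel-unit parameter, so the passage $j \mapsto \ell j$ comes from transporting the cycle under $\varphi_\ell$, and the norm of the unit must be taken only down to $Y(m, N(\ell))$ (not all the way to $Y(m,N)$) so that the two distinct projections can afterwards be applied to the pair (cycle, unit) — exactly as in the paper's proof.
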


   \begin{proof}
    Consider the intermediate modular curve $Y(m, N(\ell))$ (notation as in \cite[\S 2.8]{kato04}). Both $\pr_1$ and $\pr_2$ factor through the natural projection $\alpha: Y(m, N\ell) \to Y(m, N(\ell))$, and we have a commutative diagram
    \begin{diagram}
     Y(m, N\ell) & \rInto^{\left(1, \stbt 1 j 0 1\right)} & Y(m, N\ell)^2\\
     \dTo^\alpha & & \dTo_{\alpha \times \alpha} \\
     Y(m, N(\ell)) & \rInto^{\left(1, \stbt 1 j 0 1\right)} & Y(m, N(\ell))^2.
    \end{diagram}
    Let $\cC_{m, N(\ell), j}$ be the image of the lower horizontal map. The pushforward of ${}_c g_{0, 1/N\ell} \in \cO(Y(m, N\ell))^\times$ to $\cO(Y(m, N(\ell)))^\times$ is given by
    \[ \varphi_\ell^* \left( {}_c g_{0, 1/N}\right)\]
    if $\ell \mid N$, and by
    \[ \varphi_\ell^*\left( {}_c g_{0, 1/N}\right) \cdot \left({}_c g_{0, \quot{\ell^{-1}}/N}\right)^{-1}\]
    if $\ell \nmid N$; see \cite[\S 2.13]{kato04}. Here $\varphi_\ell$ is the map $Y(m, N(\ell)) \to Y(m(\ell), N)$ given by $z \mapsto \ell z$. Thus
    \[
     (\alpha \times \alpha)_*\left({}_c \cZ(m, N\ell, j)\right) =
     \begin{cases}
      \left(\cC_{m, N(\ell), j}, \varphi_\ell^* \left({}_c g_{0, 1/N}\right)\right) & \text{if $\ell \mid N$,}\\
      \left(\cC_{m, N(\ell), j}, \varphi_\ell^* \left({}_c g_{0, 1/N}\right)\right) - \left(\cC_{m, N(\ell), j}, {}_c g_{0, \quot{\ell^{-1}}/N}\right) & \text{if $\ell \nmid N$.}
     \end{cases}
    \]

    Now let $\pi_1$ and $\pi_2$ be the degeneracy maps $Y(m, N(\ell)) \to Y(m, N)$, so that $\pr_i = \pi_i \circ \alpha$. We must study the image of the elements given above under pushforward by the map $\pi_1 \times \pi_2$.
    We claim that:
    \begin{itemize}
     \item If $\ell \mid N$, then
     \begin{equation}
      \label{eq:normrel0}
      (\pi_1 \times \pi_2)_* \left(\cC_{m, N(\ell), j}, \varphi_\ell^* \left({}_c g_{0, 1/N}\right)\right) = (U_\ell', 1) \cdot {}_c \cZ(m, N, \ell j).
     \end{equation}
     \item If $\ell \nmid N$, then
     \begin{subequations}
      \begin{equation}
       \label{eq:normrel1}
       (\pi_1 \times \pi_2)_* \left(\cC_{m, N(\ell), j}, \varphi_\ell^* \left({}_c g_{0, 1/N}\right)\right) = (T_\ell', 1) \cdot {}_c \cZ(m, N, \ell j)
      \end{equation}
      and
      \begin{equation}
       \label{eq:normrel2}
       (\pi_1 \times \pi_2)_* \left(\cC_{m, N(\ell), j}, {}_c g_{0, \quot{\ell^{-1}}/N}\right) = (\langle \ell^{-1} \rangle, T_\ell') \sigma_\ell^{-2} \cdot {}_c \cZ(m, N, j).
      \end{equation}
     \end{subequations}
    \end{itemize}

    For formulae \eqref{eq:normrel0} and \eqref{eq:normrel1}, we use the isomorphism $\varphi_\ell: Y(m, N(\ell)) \cong Y(m(\ell), N)$ to write
    \[ (\pi_1 \times \pi_2)_* \left(\cC_{m, N(\ell), j}, \varphi_\ell^* \left({}_c g_{0, 1/N}\right)\right) = (\pi_2' \times \pi_1')_* \left(\cC_{m, N(\ell), j}', {}_c g_{0, 1/N}\right)\]
    where $\cC_{m, N, j}'$ is the locus of points in $Y(m(\ell), N)$ of the form $(z, z + \ell j)$, and $\pi_1', \pi_2': Y(m(\ell), N) \to Y(m, N)$ are given by $z \mapsto z$ and $z \mapsto z/\ell$ respectively. However, one sees readily that under $1 \times \pi_1'$, $\cC_{m, N(\ell), j}'$ maps isomorphically to its image in $Y(m(\ell), N) \times Y(m, N)$, and this image coincides with the inverse image of $\cC_{m, N, \ell j}$ under the map $\pi_1' \times 1$. Hence
    \[  (\pi_2' \times \pi_1')_* \left(\cC_{m, N(\ell), j}', \left({}_c g_{0, 1/N}\right)\right) = (\pi_2' \times 1)_* (\pi_1' \times 1)^* \left(\cC_{m, N, \ell j}, {}_c g_{0, 1/N}\right);\]
    and the map $(\pi_2' \times 1)_* (\pi_1' \times 1)^*$ is the definition of the operator $(U_\ell', 1)$ or $(T_\ell', 1)$ in the cases $\ell \mid N$ or $\ell \nmid N$ respectively.

    For formula \eqref{eq:normrel2}, we note similarly that $\cC_{m, N, j}$ maps isomorphically to its image $Y(m, N) \times Y(m, N(\ell))$; and if we temporarily write ${}_c \cZ(m, N, j, \alpha)$, for $\alpha \in \ZZ / N\ZZ$, for the analogue of ${}_c \cZ(m, N, j)$ formed with ${}_c g_{0, \alpha/N}$ in place of ${}_c g_{0, 1/N}$, then it is immediate from the definitions that
    \[ (\pi_1 \times \pi_2)_* \left(\cC_{m, N(\ell), j}, {}_c g_{0, \quot{\ell^{-1}}/N}\right) = (1, T_\ell) \cdot {}_c \cZ(m, N, j, \ell^{-1}),\]
    (since $T_\ell$ acts as $(\pi_2)_* (\pi_1)^*$). But we also have the relation
    \[ {}_c \cZ(m, N, j, \ell^{-1}) = \left(\begin{pmatrix} \ell^{-1} & 0 \\ 0 & \ell^{-1} \end{pmatrix}, \begin{pmatrix} \ell^{-1} & 0 \\ 0 & \ell^{-1} \end{pmatrix}\right) \cdot {}_c \cZ(m, N, j) = (\langle \ell^{-1} \rangle, \langle \ell^{-1} \rangle) \Delta_{\ell^{-2}} \cdot {}_c\cZ(m, N, j),\]
    and $T_\ell = \langle \ell \rangle T_\ell'$ (see \cite[\S 4.9]{kato04}), hence
    \begin{align*}
      (\pi_1 \times \pi_2)_* \left(\cC_{m, N(\ell), j}, {}_c g_{0, \quot{\ell^{-1}}/N}\right) &= (1, T_\ell) \cdot {}_c \cZ(m, N, j, \ell^{-1})\\
      &= (1, T_\ell) \cdot \left(\langle \ell^{-1} \rangle, \langle \ell^{-1} \rangle\right) \Delta_\ell^{-2} \cdot {}_c \cZ(m, N, j)\\
      &= (\langle \ell^{-1} \rangle, T_\ell') \Delta_{\ell^{-2}} \cdot {}_c \cZ(m, N, j)
     \end{align*}
    as required.
   \end{proof}

   \begin{remark}
    As we shall see in the following subsections, all of the norm relations we use in both this paper and our previous paper \cite{leiloefflerzerbes13} can be derived from Theorems 3.1.1 and 3.3.1 of \cite{leiloefflerzerbes13} and the above lemma, using only elementary identities for Hecke operators and pushforward maps.
   \end{remark}

  \subsection{Norm relations for asymmetric $\cZ$'s}

   We now state and prove a theorem which is the analogue of Theorem \ref{thm:asymmetricnorm} for the elements ${}_c \cZ(m, N, N', j)$.

   \begin{theorem}
    \label{thm:asymmetricnormz}
    Let $m \ge 1, N, N' \ge 5$ be integers with $m \mid N$ and $m \mid N'$, $\ell$ a prime, $j \in \ZZ / m\ZZ$, and $c > 1$ an integer coprime to $6\ell NN'$.
    \begin{enumerate}[(a)]
     \item We have
     \[ (1 \times \pr_1)_* \left( {}_c \cZ(m, N, \ell N', j) \right) =
      \begin{cases}
       {}_c \cZ(m, N, N', j)  & \text{if $\ell \mid N N'$,}\\
       \left[1 - \left(\stbt {\ell^{-1}} 00{\ell^{-1}}, \stbt {\ell^{-1}} 00{\ell^{-1}}\right)^*\right] \cdot \cZ(m, N, N', j) & \text{if $\ell \nmid N N'$,}
      \end{cases}
     \]
     where in the latter case $\left(\stbt {\ell^{-1}} 00{\ell^{-1}}, \stbt {\ell^{-1}} 00{\ell^{-1}}\right)$ is considered as an element of $\GL_2(\ZZ / N \ZZ) \times \GL_2(\ZZ / N'\ZZ)$.
     \item We have
     \begin{multline*}
      (1 \times \pr_2)_* \left( {}_c \cZ(m, N, \ell N', j) \right)
      \\=
      \begin{cases}
       (U_\ell', 1) \cdot {}_c \cZ(m, N, N', \ell j) & \text{if $\ell \mid N$,}\\
       \left[(T_\ell', 1) \sigma_\ell^{-1} - (\langle \ell^{-1} \rangle, U_\ell') \sigma_\ell^{-2} \right] \cdot {}_c \cZ(m, N, N', j) & \text{if $\ell \nmid N$ but $\ell \mid N'$,}\\
       \left[(T_\ell', 1) \sigma_\ell^{-1} - (\langle \ell^{-1} \rangle, T_\ell') \sigma_\ell^{-2} \right] \cdot {}_c \cZ(m, N, N', j) & \text{if $\ell \nmid NN'$.}\\
      \end{cases}
     \end{multline*}
     where in the second and third cases $\sigma_\ell^{-1}$ denotes any element of $\GL_2(\ZZ / N \ZZ) \times \GL_2(\ZZ / N'\ZZ)$ congruent to $\left(\stbt {\ell^{-1}} 001, \stbt {\ell^{-1}} 001\right)$ modulo $m$.
    \end{enumerate}
   \end{theorem}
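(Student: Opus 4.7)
The proof of Theorem \ref{thm:asymmetricnormz} proceeds by reducing each case to Lemma \ref{lemma:asymmetricnorm} via a careful choice of the auxiliary level $R$ appearing in the definition of ${}_c \cZ(m, N, \ell N', j)$, combined with the independence-of-$R$ statement (Theorem 3.1.1 of \cite{leiloefflerzerbes13}). In every case the plan is the same: represent ${}_c \cZ(m, N, \ell N', j)$ as a pushforward from $Y(m, R)^2$ for a well-chosen $R$, compose with the map $1 \times \pr_i$ on the second factor, and recognize a ``symmetric'' sub-step of the resulting composition either as a direct instance of Lemma \ref{lemma:asymmetricnorm} or as the analogous norm relation for $(\pr_1 \times \pr_1)_*$ (which can be derived by the same $Y(m, R(\ell))$-factorization argument and Siegel-unit distribution relation used in the proof of the lemma).

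For part (a), the case $\ell \mid NN'$ is immediate: choose $R$ divisible by $N$, $N'$ and $\ell N'$ with the same prime factors as $N \ell N' = NN'$; this same $R$ also represents ${}_c \cZ(m, N, N', j)$, and the identity follows by composition of pushforwards. For the case $\ell \nmid NN'$, take $R = \ell R_0$ where $R_0$ is divisible by $N$ and $N'$ with the same prime factors as $NN'$. Then because both natural degeneracies $Y(m, \ell R_0) \to Y(m, N)$ and $Y(m, \ell R_0) \to Y(m, \ell N') \to Y(m, N')$ factor through $\pr_1 : Y(m, \ell R_0) \to Y(m, R_0)$, the pushforward $(1 \times \pr_1)_* {}_c \cZ(m, N, \ell N', j)$ equals the further pushforward to $Y(m, N) \times Y(m, N')$ of $(\pr_1 \times \pr_1)_* {}_c \cZ(m, \ell R_0, j)$; computing the latter by factoring $\pr_1 = \pi_1 \circ \alpha$ through $Y(m, R_0(\ell))$ and invoking the Siegel-unit distribution relation yields the operator $1 - (\stbt{\ell^{-1}}00{\ell^{-1}}, \stbt{\ell^{-1}}00{\ell^{-1}})^*$ as claimed.

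Part (b) is treated case by case in the same spirit, with $\pr_2 : z \mapsto \ell z$ on the second factor in place of $\pr_1$. When $\ell \mid N$, one chooses $R$ as in part (a) case 1; the composite $Y(m, R) \to Y(m, \ell N') \xrightarrow{\pr_2} Y(m, N')$ factors through $Y(m, R(\ell))$ on the second factor and, via $\pi_2 \circ \alpha$ together with the Siegel-unit identity, introduces the operator $(U_\ell', 1)$ along with the shift $j \mapsto \ell j$; the $U_\ell'$ (rather than $T_\ell'$) reflects $\ell \mid N$. When $\ell \mid N'$ but $\ell \nmid N$, take $R = \ell R_0$ with $\ell \mid R_0$; a parallel analysis yields the two-term expression $(T_\ell', 1)\sigma_\ell^{-1} - (\langle \ell^{-1}\rangle, U_\ell')\sigma_\ell^{-2}$, where the $T_\ell'$ comes from the $\ell \nmid N$ side and the $U_\ell'$ from the $\ell \mid N'$ side. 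Finally, if $\ell \nmid NN'$, the composite pushforward factors through $(\pr_1 \times \pr_2)_* {}_c \cZ(m, \ell R_0, j)$ on $Y(m, R_0)^2$, so Lemma \ref{lemma:asymmetricnorm} applies directly and the $\Delta_{\ell^{-i}}$'s of the lemma descend to the Galois twists $\sigma_\ell^{-i}$ in the theorem.

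The principal technical obstacle is the bookkeeping of Hecke and diamond/Galois twists under the transition between the auxiliary level $R$ and the target levels $(N, N')$, particularly in the mixed subcases of part (b) where $\ell$ divides exactly one of $N, N'$. There the asymmetry between $T_\ell'$ (prime-to-level) and $U_\ell'$ (at-level) on the two factors forces careful use of the identities for $\pi_{i,*} \varphi_\ell^*$ at each relevant level. In addition, the diamond operators $\Delta_{\ell^{-i}}$ from Lemma \ref{lemma:asymmetricnorm} must be correctly decomposed as the product of a Galois action of $\sigma_\ell^{-i}$ on the $\mu_m$-part and a scalar diamond on the remainder of the level structure; this decomposition is canonical only up to elements of $\GL_2(\ZZ/N\ZZ) \times \GL_2(\ZZ/N'\ZZ)$ that are trivial modulo $m$, which is precisely the ambiguity that the theorem's definition of $\sigma_\ell$ is designed to absorb, and ensuring it remains harmless at every step is the most delicate part of the argument.
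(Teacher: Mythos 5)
Your overall strategy coincides with the paper's: part (a) is deduced from the independence-of-$R$ statement (Theorem 3.1.1 of \cite{leiloefflerzerbes13}), and part (b) is reduced to the symmetric norm relation of Lemma \ref{lemma:asymmetricnorm} at an auxiliary level, after which one commutes Hecke operators with the level-lowering pushforwards. Your treatment of part (a) and of the case $\ell \nmid NN'$ in part (b) is essentially the paper's argument. However, two steps of your sketch need repair.

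First, in case (b)(i) you assert that the composite $Y(m,R) \to Y(m,\ell N') \xrightarrow{\ \pr_2\ } Y(m,N')$ ``factors through $Y(m,R(\ell))$''. There is no natural map out of $Y(m,R)$ to $Y(m,R(\ell))$ (the forgetful maps run $Y(m,\ell R) \to Y(m,R(\ell)) \to Y(m,R)$), so this step fails as written; and even with the intended intermediate curve (something like $Y(m,N'(\ell))$) you would be redoing a mixed-level Siegel-unit computation rather than quoting the lemma. The paper avoids this by taking the big level to be $\ell R$ uniformly, with $R$ divisible by $N$ and $N'$ and having the same prime factors as $NN'$: then $(1\times\pr_2)$ precomposed with the degeneracy $Y(m,\ell R)^2 \to Y(m,N)\times Y(m,\ell N')$ equals the degeneracy $Y(m,R)^2 \to Y(m,N)\times Y(m,N')$ precomposed with $\pr_1\times\pr_2$, so Lemma \ref{lemma:asymmetricnorm} applies verbatim in all three cases of (b); when $\ell \mid N$ the resulting $(U_\ell',1)$ simply commutes with the pushforward $Y(m,R)\to Y(m,N)$ and one is done. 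Second, in the mixed case (b)(ii) you state the two-term formula but omit the mechanism producing it: after the lemma yields $(U_\ell',1)\,{}_c\cZ(m,R,\ell j)$, the point is to push $U_\ell'$ down the first factor, where $\ell \nmid N$; this is done by factoring $Y(m,R)\to Y(m,N)$ through $Y(m,\ell N)$ and using $(\pr_1)_*\circ U_\ell' = T_\ell'\circ(\pr_1)_* - \langle \ell^{-1}\rangle\circ(\pr_2)_*$, and the resulting second term is then evaluated by the already-proven case (b)(i) with the two factors interchanged (legitimate because $\ell \mid N'$), which is exactly where the $(\langle \ell^{-1}\rangle, U_\ell')$ term and the extra twist arise. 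This bootstrap through the previously established case, rather than further appeals to the Siegel-unit identities, is the ingredient your sketch is missing.
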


   \begin{proof}
    Part (i) is immediate from Theorem 3.1.1 of \cite{leiloefflerzerbes13} (we have only included it here for completeness).

    We will reduce part (ii) to properties of the ``symmetric'' zeta elements ${}_c \cZ(m, N, j)$. As usual, let $R$ be an integer divisible by $N$ and $N'$ and with the same prime factors as $NN'$. We have a commutative diagram
    \begin{diagram}
     Y(m, \ell R)^2 & \rTo & Y(m, N) \times Y(m, \ell N') \\
     \dTo^{\pr_1 \times \pr_2} & & \dTo_{1 \times \pr_2} \\
     Y(m, R)^2 & \rTo & Y(m, N) \times Y(m, N')
    \end{diagram}
    where the horizontal arrows are the natural degeneracy maps; and the elements ${}_c \cZ(m, N, \ell N', j)$ and ${}_c \cZ(m, N, N', j)$ are by definition the pushforwards of ${}_c \cZ(m, \ell R, j)$ and ${}_c \cZ(m, R, j)$ along these horizontal maps.

    If $\ell \mid N$, then $\ell \mid R$, so we may apply the first case of Lemma \ref{lemma:asymmetricnorm} to deduce that
    \[ (\pi_1 \times \pi_2)_* {}_c \cZ(m, \ell R, j) = (U_\ell', 1) {}_c \cZ(m, R, \ell j).\]
    The assumption that $\ell \mid N$ implies that $U_\ell'$ commutes with the pushforward map $Y(m, R) \to Y(m, N)$, so we are done in this case.

    If $\ell \nmid N$, but $\ell \mid N'$, then the pushforward from $Y(m, R)$ to $Y(m, \ell N)$ commutes with $U_\ell'$, but from level $N\ell$ to level $N$ we have the commutation relation $(\pr_1)_* \circ U_\ell' = T_\ell' \circ (\pr_1)_* - \langle \ell^{-1} \rangle \circ (\pr_2)_*$. Thus the pushforward of $(U_\ell', 1) {}_c \cZ(m, R, \ell j)$ is
    \[ (T_\ell', 1) {}_c \cZ(m, N, \ell j) - (\langle \ell^{-1} \rangle, 1) (\pr_2 \times \pr_1)_* {}_c \cZ(m, \ell N, N', \ell j).\]
    Since $\ell \mid N'$ we can apply the previously-considered case to conclude that
    \[ (\pr_2 \times \pr_1)_* {}_c \cZ(m, \ell N, N', \ell j) = (1, U_\ell') {}_c \cZ(m, N, N', \ell^2 j)\]
    as required.

    This leaves only the case $\ell \nmid NN'$. Then $\ell \nmid R$, so the pushforward $Y(m, R)^2 \to Y(m, N) \times Y(m, N')$ commutes with $(T_\ell', 1)$ and $(1, T_\ell')$; and we are done by the second case of Lemma \ref{lemma:asymmetricnorm}.
   \end{proof}

  \subsection{Norm relations for ${}_c \Xi$'s: proof of Theorem \ref{thm:asymmetricnorm}}

   We now deduce Theorem \ref{thm:asymmetricnorm} from Theorem \ref{thm:asymmetricnormz}. Let us begin by recalling the relation between the classes $\cZ(m, N, N', j)$ of the preceding sections and the classes ${}_c \Xi(m, N, N', j)$ of Definition \ref{def:xi}.

   Recall the map $t_m: Y(m, mN) \to Y_1(N) \times \Spec \QQ(\mu_m)$ defined in \S 2.1 of \cite{leiloefflerzerbes13}. This map commutes with the operators $T_\ell'$ for $\ell \nmid mN$, $U_\ell'$ for $\ell \mid N$, and $\langle d \rangle$ for all $d$. Moreover, it intertwines the action of $\begin{pmatrix} \ell & 0 \\ 0 & 1 \end{pmatrix}$ with the arithmetic Frobenius $\sigma_\ell$. Moreover, for $i = 1, 2$ we have $\pr_i \circ \mathop{t}_m = t_m \circ \pr_i$ as maps $Y(m, \ell m N) \to Y_1(N) \times \Spec \QQ(\mu_m)$.

   It is immediate from the definitions that we have
   \begin{equation}
    \label{eq:zsandxis}
    {}_c \Xi(m, N, N', j) = (t_m \times t_m)_* \left({}_c \cZ(m, mN, mN', j)\right).
   \end{equation}

   Let us now recall the statement of the theorem.

   \begin{theorem}[Theorem \ref{thm:asymmetricnorm}]
    Let $m \ge 1, N, N' \ge 5$ be integers, $\ell$ a prime, $j \in \ZZ / m\ZZ$, and $c > 1$ an integer coprime to $6 \ell mNN'$. Let $\pr_1, \pr_2$ be the two degeneracy maps $Y_1(\ell N') \to Y_1(N')$, corresponding to $z \mapsto z$ and $z \mapsto \ell z$ respectively.
    \begin{enumerate}[(a)]
     \item We have
     \[ (1 \times \pr_1)_* \left( {}_c \Xi(m, N, \ell N', j) \right) =
      \begin{cases}
       {}_c \Xi(m, N, N', j)  & \text{if $\ell \mid m N N'$,}\\
       \left[1 - (\langle \ell^{-1}\rangle, \langle \ell^{-1} \rangle) \sigma_\ell^{-2}\right] \cdot {}_c \Xi(m, N, N', j) & \text{if $\ell \nmid m N N'$.}
      \end{cases}
     \]
     \item The pushforward $(1 \times \pr_2)_* \left( {}_c \Xi(m, N, \ell N', j) \right)$ is given by the following formulae:
     \begin{enumerate}[(i)]
      \item if $\ell \mid N$, then
      \[ (1 \times \pr_2)_* \left( {}_c \Xi(m, N, \ell N', j) \right) = (U_\ell', 1) \cdot {}_c \Xi(m, N, N', \ell j);\]
      \item if $\ell \nmid N$ but $\ell \mid N'$,then
      \[ (1 \times \pr_2)_* \left( {}_c \Xi(m, N, \ell N', j) \right) = (T_\ell', 1) \cdot {}_c \Xi(m, N, N', \ell j) - (\langle \ell^{-1} \rangle, U_\ell') \cdot {}_c \Xi(m, N, N', \ell^2 j);\]
      \item if $\ell \nmid mNN'$, then
      \[ (1 \times \pr_2)_* \left( {}_c \Xi(m, N, \ell N', j) \right) =
       \left[(T_\ell', 1) \sigma_\ell^{-1} - (\langle \ell^{-1} \rangle, T_\ell') \sigma_\ell^{-2} \right] \cdot {}_c \Xi(m, N, N', j).
      \]
     \end{enumerate}
    \end{enumerate}
   \end{theorem}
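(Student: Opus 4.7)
The plan is to deduce Theorem \ref{thm:asymmetricnorm} from Theorem \ref{thm:asymmetricnormz} via the identity \eqref{eq:zsandxis}, namely ${}_c\Xi(m, N, N', j) = (t_m \times t_m)_*\big({}_c\cZ(m, mN, mN', j)\big)$, and then to transport the $\cZ$-norm formulae through the projection $t_m \times t_m$ using its known intertwining properties. Concretely, since $\pr_i \circ t_m = t_m \circ \pr_i$, the functoriality of pushforward gives
\[ (1 \times \pr_i)_*\, {}_c\Xi(m, N, \ell N', j) = (t_m \times t_m)_* \big((1 \times \pr_i)_*\, {}_c\cZ(m, mN, m\ell N', j)\big). \]
I would apply Theorem \ref{thm:asymmetricnormz} to the inner pushforward and then translate the operators appearing in those formulae through $(t_m \times t_m)_*$. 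The three key intertwining facts are: $t_m$ commutes with the diamond operators $\langle d\rangle$ and with the Hecke operators $T_\ell'$ (for $\ell \nmid mN$) and $U_\ell'$ (for $\ell \mid N$); and it intertwines $\stbt{\ell}{0}{0}{1}$ with the arithmetic Frobenius $\sigma_\ell$.

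For part (a), the condition $\ell \mid (mN)(mN')$ from Theorem \ref{thm:asymmetricnormz}(a) coincides with $\ell \mid mNN'$, so the case split matches. In the non-trivial case one must identify the image of the scalar $\bigl(\stbt{\ell^{-1}}{0}{0}{\ell^{-1}}, \stbt{\ell^{-1}}{0}{0}{\ell^{-1}}\bigr)^*$ under $(t_m \times t_m)_*$; decomposing $\stbt{\ell^{-1}}{0}{0}{\ell^{-1}} = \stbt{\ell^{-1}}{0}{0}{1} \cdot \stbt{1}{0}{0}{\ell^{-1}}$ and applying the intertwining properties yields $(\langle \ell^{-1}\rangle, \langle \ell^{-1}\rangle) \sigma_\ell^{-2}$, as required.

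For part (b) I would proceed case by case. In (i), $\ell \mid N$ puts us in the first case of Theorem \ref{thm:asymmetricnormz}(b), and $U_\ell'$ on $Y(m, mN)$ pushes forward directly to $U_\ell'$ on $Y_1(N)$. In (iii), $\ell \nmid mNN'$ puts us in the third case, and all operators are at primes away from the level and from $m$, so they descend through $t_m$ without modification; the matrices congruent to $\stbt{\ell^{-k}}{0}{0}{1}$ modulo $m$ appearing in the $\cZ$ formula intertwine with Galois $\sigma_\ell^{-k}$, matching the answer. Case (ii), $\ell \nmid N$ and $\ell \mid N'$, splits further according to whether $\ell \mid m$: when $\ell \nmid m$ the second case of Theorem \ref{thm:asymmetricnormz}(b) applies directly, and the $U_\ell'$ appearing on the second factor descends via $t_m$ to $U_\ell'$ on $Y_1(N')$ since $\ell \mid N'$; when $\ell \mid m$ one falls into the first case of the $\cZ$ theorem (since then $\ell \mid mN$), and the resulting $U_\ell'$ on $Y(m, mN)$ must be rewritten in terms of $T_\ell'$ on $Y_1(N)$ and a diamond-twisted Frobenius term in order to recover the same formula.

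The main obstacle is precisely this last sub-case: when $\ell \mid m$ but $\ell \nmid N$, the Hecke operator $U_\ell'$ on the auxiliary curve $Y(m, mN)$ has no direct counterpart on $Y_1(N)$, and its pushforward through $t_m$ must be computed by unwinding the definition of the cover $Y(m, mN) \to Y_1(N) \times \Spec \QQ(\mu_m)$ at the prime $\ell$ and comparing with the standard degeneracy maps. A direct double-coset calculation should reconcile it with the formula $(T_\ell', 1)\sigma_\ell^{-1} - (\langle \ell^{-1}\rangle, U_\ell')\sigma_\ell^{-2}$ dictated by case (ii), but this is the only step requiring substantive computation beyond the formal transport of identities through $(t_m \times t_m)_*$. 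The cases excluded in the remark ($\ell \mid m$ but $\ell \nmid NN'$) are precisely those in which neither the $\cZ$-formula nor any obvious manipulation of it yields a clean closed form on the $\Xi$-side, which justifies their omission.
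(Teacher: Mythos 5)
Your reduction of the theorem to Theorem \ref{thm:asymmetricnormz} via \eqref{eq:zsandxis}, and your treatment of part (a), of (b)(i), of (b)(iii), and of (b)(ii) when $\ell \nmid m$, all match the paper's argument: in those cases $(t_m \times t_m)_*$ intertwines every operator that appears, and the translation is immediate. The problem is the one case you correctly single out as the obstacle, namely (b)(ii) with $\ell \mid m$: there your proposal has a genuine gap. You reduce it to computing the pushforward through $t_m$ of $U_\ell'$ on $Y(m, mN)$ (an operator built from the level structure at $\ell$ coming from the $m$-part, with no counterpart on $Y_1(N)$) by an unexecuted ``direct double-coset calculation,'' and you even aim this calculation at the wrong target: you quote the formula $(T_\ell',1)\sigma_\ell^{-1} - (\langle\ell^{-1}\rangle, U_\ell')\sigma_\ell^{-2}$, but $\sigma_\ell$ is not defined when $\ell \mid m$, and the correct statement (b)(ii) instead carries the twist in the index shifts $j \mapsto \ell j$ and $j \mapsto \ell^2 j$. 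Moreover, the intertwining fact you rely on elsewhere ($t_m$ carrying $\stbt{\ell}{0}{0}{1}$ to $\sigma_\ell$) is only available for $\ell \nmid m$, so the tools you list do not suffice to carry out the computation you defer.

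The paper avoids this computation entirely by a formal bootstrapping trick that your proposal is missing: since $\ell \mid m$, part (a) (first case, with the roles of the two factors swapped) gives ${}_c \Xi(m, N, \ell N', j) = (\pr_1 \times 1)_*\, {}_c \Xi(m, \ell N, \ell N', j)$; commuting the two pushforwards and applying the already-proved case (b)(i) at the auxiliary level $\ell N$ in the first factor yields $(\pr_1 \times 1)_* (U_\ell', 1)\, {}_c \Xi(m, \ell N, N', \ell j)$, and the elementary relation $(\pr_1)_* \circ U_\ell' = T_\ell' \circ (\pr_1)_* - \langle \ell^{-1} \rangle \circ (\pr_2)_*$ on $Y_1$-curves then produces two terms, evaluated respectively by part (a) and by part (b)(i) with the factors interchanged. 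This gives exactly $(T_\ell', 1)\, {}_c \Xi(m, N, N', \ell j) - (\langle \ell^{-1} \rangle, U_\ell')\, {}_c \Xi(m, N, N', \ell^2 j)$ with no analysis of $Y(m, mN)$ at $\ell$ at all. To repair your proof, replace the deferred double-coset computation by this level-raising-in-the-first-factor argument (the same device, iterated, is what produces the formula of Remark \ref{remark:higherpowers} in the case $\ell \mid m$, $\ell \nmid NN'$ that the theorem excludes).
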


  \begin{proof}

   Using equation \eqref{eq:zsandxis}, part (a) of the theorem follows directly from Theorem \ref{thm:asymmetricnormz}(a), and many cases of part (b) follow from Theorem \ref{thm:asymmetricnormz}(b): more precisely, all the cases where $\ell \nmid m$ are immediate, as are all the cases where $\ell \mid N$, since in these cases the map $(t_m \times t_m)_*$ intertwines the relevant Hecke operators on $Y(m, mN) \times Y(m, mN')$ with those on $Y_1(N) \times Y_1(N') \times \QQ(\mu_m)$.

   The only case that remains is (ii) with $\ell \mid N$. In this case, we can argue that
   \begin{align*}
    (1 \times \pr_2)_* {}_c \Xi(m, N, \ell N', j) &= (1 \times \pr_2)_* (\pr_1 \times 1)_* {}_c \Xi(m, \ell N, \ell N', j) \\
    &= (\pr_1 \times 1)_* (1 \times \pr_2)_* {}_c \Xi(m, \ell N, \ell N', j) \\
    &= (\pr_1 \times 1)_* (U_\ell', 1) {}_c \Xi(m, \ell N, N', \ell j) \\
    &= (T_\ell', 1) (\pr_1 \times 1)_* {}_c \Xi(m, \ell N, N', \ell j) \\ &\qquad - (\langle \ell^{-1}\rangle, 1) (\pr_2 \times 1)_* {}_c \Xi(m, \ell N, N', \ell j).
   \end{align*}
   Since $\ell \mid N'$, both of these terms can be calculated using previously-considered cases of the present theorem: the first term is $(T_\ell', 1) {}_c \Xi(m, N, N', \ell j)$, by part (a), while the second term is $(\langle \ell^{-1}\rangle, U_\ell') {}_c \Xi(m, N, N', \ell^2 j)$ by part (b)(i) (with the roles of $N$ and $N'$ interchanged).
  \end{proof}

  \begin{remark}
   \label{remark:higherpowers}
   In the above theorem, we excluded the most awkward case, which is when $\ell \mid m$ but $\ell \nmid NN'$. We briefly indicate how to obtain a formula in this case as well. In this setting, applying the argument of the final paragraph of the proof above shows that
   \[
    (1 \times \pr_2)_* {}_c \Xi(m, N, \ell N', j) = (T_\ell', 1){}_c \Xi(m, N, N',\ell j) \\- (\langle \ell^{-1}\rangle, 1) (\pr_2 \times 1)_* {}_c \Xi(m, \ell N, N', \ell j).
   \]
   Proceeding inductively, interchanging the roles of $N$ and $N'$ at each step, we find that for any $h \ge 0$ we have
   \begin{multline*}
    (1 \times \pr_2)_* {}_c \Xi(m, N, \ell N', j) = \\
    (T_\ell', 1) \sum_{\substack{1 \le a \le h \\ \text{$a$ odd}}} (\langle \ell^{-1}\rangle, \langle \ell^{-1}\rangle)^{(a-1)/2} {}_c \Xi(m, N, N', \ell^a j)\\
    - (\langle \ell^{-1} \rangle, T_\ell') \sum_{\substack{2 \le a \le h \\ \text{$a$ even}}} (\langle \ell^{-1}\rangle, \langle \ell^{-1}\rangle)^{(a-2)/2} {}_c \Xi(m, N, N', \ell^a j) \\ +
    \begin{cases}
     (\langle \ell^{-1}\rangle^{h/2}, \langle \ell^{-1}\rangle^{h/2}) (1 \times \pr_2)_* {}_c \Xi(m, N, \ell N',\ell^{h} j) & \text{if $h$ even,}\\
     - (\langle \ell^{-1}\rangle^{(h+1)/2}, \langle \ell^{-1}\rangle^{(h-1)/2}) (\pr_2 \times 1)_* {}_c \Xi(m, \ell N, N',\ell^{h} j)& \text{if $h$ odd.}
    \end{cases}
   \end{multline*}
   If we take $h = v_p(m)$, then ${}_c \Xi(m, \ell N, N',\ell^h j) = {}_c \Xi(\ell^{-h} m, \ell N, N', j)$ etc, and we can now apply the formulae in the $\ell \nmid mNN'$ case previously studied.
  \end{remark}

 \section{Euler systems with crystalline local conditions}
  \label{sect:appendixB}

  In this appendix we'll prove some theorems which are slight variations on the results of \cite{rubin00}. This section is the outcome of an email exchange with Karl Rubin and we are very grateful to him for his patient explanations; any mistakes below are, however, ours.

  \subsection{Local properties of Kolyvagin classes}

   Let $K$ be a number field, $\fn$ an integral ideal of $K$, and $\mathcal{K}$ a pro-$p$ extension\footnote{This is perhaps not quite standard terminology: we mean that $\mathcal{K}$ is a possibly infinite extension of $K$ which is a union of finite extensions of $p$-power degree.} of $K$ containing $K(\fq)$ for every prime $\fq \nmid \fn$. We consider a finite extension $E/\Qp$ with ring of integers $\cO$ and residue field $\mathbf{k}$, and a finite-rank free $\cO$-module $T$ with an action of $\Gal(\overline{K} /K)$ unramified outside the primes dividing $\fn$. For $M \in \cO$, let $W_M = T / M T$.

   Let $\bfc = \{\bfc_F : K \subset_f F \subset \mathcal{K}\}$ an Euler system for $(T, \mathcal{K}, \fn)$ in the sense of \cite{rubin00}. Recall the construction -- cf.~\cite[\S 4.4]{rubin00} -- of ``Kolyvagin derivative'' classes
   \[ \kappa_{[\fr, M]} \in H^1(K, W_M) \]
   for each $\fr \in \mathcal{R}_{M}$, where $\mathcal{R}_{M} = \mathcal{R}_{K, M}$ is the set of ideals of $K$ defined in Definition 4.1.1 of \emph{op.cit.}.

   We shall not need the details of the construction here; let it suffice to note the following property:

   \begin{proposition}[{cf.~\cite[Proposition 4.4.13]{rubin00}}]
    The restriction
    \[ \operatorname{res}_{K(\fr) / K} \left(\kappa_{[\fr, M]}\right) \in H^1(K(\fr), W_M)\]
    is the image modulo $M$ of $D_{\fr} \left(\bfc_{K(\fr)}\right) \in H^1(K(\fr), T)$, where $D_{\fr}$ is a certain element of the group ring $\ZZ[\Gal(K(\fr) / K)]$.
   \end{proposition}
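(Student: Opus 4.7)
The plan is to follow the construction in \cite[\S 4.4]{rubin00} essentially verbatim: the class $\kappa_{[\fr, M]}$ is \emph{defined} by the very property the proposition asserts. That is, one first writes down the candidate class $c_\fr \coloneqq D_\fr(\bfc_{K(\fr)}) \bmod M$ inside $H^1(K(\fr), W_M)$, then verifies that $c_\fr$ is fixed by $\Gal(K(\fr)/K)$, and finally uses inflation-restriction to produce a unique lift to $H^1(K, W_M)$, which is $\kappa_{[\fr, M]}$ by definition. With this in place, $\operatorname{res}_{K(\fr)/K}(\kappa_{[\fr, M]}) = c_\fr$ is immediate. So the proposition reduces to checking these two preparatory statements.

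\emph{Step 1: $\Gal(K(\fr)/K)$-invariance.} Recall $D_\fr = \prod_{\fq \mid \fr} D_\fq$ with $D_\fq = \sum_{i=0}^{|\fq|-1} i \sigma_\fq^i \in \ZZ[\Gal(K(\fq)/K)]$, where $\sigma_\fq$ generates this cyclic group of order $|\fq|$. The elementary telescoping identity $(\sigma_\fq - 1) D_\fq = |\fq| - N_\fq$, where $N_\fq$ is the norm, combined with the Euler system norm-compatibility $\operatorname{cor}_{K(\fr)/K(\fr/\fq)}(\bfc_{K(\fr)}) = P_\fq(\mathrm{Fr}_\fq^{-1}) \bfc_{K(\fr/\fq)}$, reduces checking $(\sigma_\fq - 1) D_\fr \bfc_{K(\fr)} \equiv 0 \bmod M$ to the fact that for $\fq \in \mathcal{R}_M$ both $|\fq|$ and $P_\fq(\mathrm{Fr}_\fq^{-1}) \bmod M$ annihilate the relevant cohomology classes (this is precisely the definition of $\mathcal{R}_M$). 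Induction on the number of prime divisors of $\fr$ then gives $(\sigma - 1) c_\fr = 0$ in $H^1(K(\fr), W_M)$ for every $\sigma \in \Gal(K(\fr)/K)$.

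\emph{Step 2: Unique descent via inflation-restriction.} The exact sequence
\[
 0 \to H^1\left(K(\fr)/K, W_M^{G_{K(\fr)}}\right) \to H^1(K, W_M) \to H^1(K(\fr), W_M)^{\Gal(K(\fr)/K)} \to H^2\left(K(\fr)/K, W_M^{G_{K(\fr)}}\right)
\]
shows that $c_\fr$ lifts uniquely to $H^1(K, W_M)$ provided the outer terms vanish. Under Rubin's hypothesis $\operatorname{Hyp}(K, T)$ (or the weaker form $\operatorname{Hyp}(K, V)$ together with the auxiliary element $\gamma$ of Proposition \ref{prop:bigimage1}, which is what is actually used in our applications), this vanishing holds because $\mathcal{K}$ is a pro-$p$ extension and the Galois action on $W_M^{G_{K(\fr)}}$ is sufficiently non-degenerate; this is carried out in \cite[Lemmas 4.5.1--4.5.3]{rubin00}. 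Setting $\kappa_{[\fr,M]}$ to be this unique lift concludes the definition and the proof simultaneously.

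The main obstacle, such as there is one, is the invariance calculation in Step 1: all the Kolyvagin bookkeeping happens there. It is a finite, mechanical argument in the group ring modulo $M$, so rather than reproducing it we would simply invoke \cite[Proposition 4.4.13]{rubin00}, since our Euler system and our hypotheses fit directly into Rubin's framework. No modification of the classical argument is required, as the only difference from Rubin's setup -- namely that we work over an imaginary quadratic base field $K$ rather than over $\QQ$ -- is irrelevant for this particular statement.
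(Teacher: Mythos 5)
Your bottom line---that the statement is exactly Rubin's Proposition 4.4.13, that the Euler system of this paper fits Rubin's framework over the number field $K$, and that one should simply cite it---is precisely what the paper does: the text offers no proof beyond the citation, so in substance your proposal and the paper agree.

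However, your framing of the construction is not Rubin's, and taken literally it has a gap. In \cite[\S 4.4]{rubin00} the class $\kappa_{[\fr, M]}$ is \emph{not} defined as ``the unique lift'' of $D_{\fr}\bfc_{K(\fr)} \bmod M$ through inflation--restriction; it is constructed unconditionally, at the level of cochains, using the universal Euler system, and Proposition 4.4.13 is then a genuine verification rather than a tautology. The descent you describe in Step 2 would require the vanishing of $H^1\left(K(\fr)/K,\, W_M^{G_{K(\fr)}}\right)$ (and control of the $H^2$ term), which amounts to $H^0(K(\fr), W_M) = 0$; this is not among the hypotheses in force where the derivative classes are introduced. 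In the appendix of the paper the classes $\kappa_{[\fr,M]}$ are used for a $T$ subject only to the local Assumption at $v$, and the big-image hypotheses $\Hyp(K,V)$, $\Hyp(K,T)$ and $\Hyp(\gamma)$ enter only afterwards, for the Selmer-group bounds. Moreover, $\Hyp(\gamma)$ (or the element $\gamma$ of the big-image proposition) produces an element of $\Gal(\overline{K}/K)$ with $T^{\gamma=1}=0$, but nothing forces $\gamma$ to lie in $G_{K(\fr)}$, so it does not by itself give the vanishing of $W_M^{G_{K(\fr)}}$ that your unique-descent definition needs; for general $M$ and $\fr \in \mathcal{R}_M$ this module can be nonzero, and then your Step 2 breaks down. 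None of this affects your conclusion, since you ultimately fall back on citing Rubin; but the sketch should either be dropped, or replaced by the accurate remark that the restriction property \emph{characterizes} $\kappa_{[\fr,M]}$ only when $H^0(K(\fr), W_M) = 0$, whereas Rubin's construction (and hence the proposition as used here) requires no such hypothesis.
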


   We are interested in the local properties of $\kappa_{[\fr, M]}$ at primes of $K$ not dividing $\fr$ (but possibly dividing $p$). Let $v$ be a prime of $K$ dividing $\fn$. We make the following assumption:

   \begin{assumption}
    \label{assump:h1f}
    The following conditions are satisfied:
    \begin{enumerate}[(i)]
     \item There exists a subspace $V^+ \subseteq V$ stable under $G_{K_v}$.
     \item We have
     \[ H^0(K_v, (T / T^+) \otimes \mathbf{k}) = 0,\]
     where $T^+ = T \cap V^+$.
    \end{enumerate}
   \end{assumption}

   \begin{remark}
    Note that if assumption (ii) is satisfied, we automatically have the apparently stronger result that $H^0(L, (T / T^+) \otimes \mathbf{k}) = 0$ for any finite Galois extension $L/K_v$ of $p$-power degree, since if $H^0(L, (T / T^+) \otimes \mathbf{k})$ were nonzero, it would be a finite-dimensional $\mathbf{F}_p$-vector space equipped with an action of the finite $p$-group $\Gal(L/K_v)$, so it would necessarily have non-zero invariants under this $p$-group, contradicting our assumption (ii).
   \end{remark}

   \begin{theorem}
    Suppose $T$ satisfies Assumption \ref{assump:h1f}, and the Euler system $\bfc$ has the property that for every $K \subset_f F \subset \mathcal{K}$, and each prime $w \mid v$ of $F$, we have
    \[ \loc_w\left(\bfc_{F}\right) \in H^1(F_w, V^+) \subseteq H^1(F_w, V).\]
    Then for any nonzero $M \in \cO$ and any $\fr \in \mathcal{R}_{M}$, we have
    \[ \loc_v \left(\kappa_{[\fr, M]}\right) \in H^1(K_v, W_M^+) \subset H^1(K_v, W_M),\]
    where $W_M^+$ is the image of $T^+$ in $W_M$.
   \end{theorem}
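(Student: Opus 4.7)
The plan is to reduce to a local cohomology statement over the field $K(\fr)$, where the hypothesis on the Euler system classes can be applied. Writing $w$ for a prime of $K(\fr)$ above $v$, I will use the identity $\operatorname{res}_{K(\fr)/K}(\kappa_{[\fr,M]}) = D_\fr(\bfc_{K(\fr)}) \bmod M$ to transfer the question to $K(\fr)_w$, then use Assumption \ref{assump:h1f}(ii) to establish injectivity of restriction at the level of the quotient $W_M/W_M^+$. Throughout, the target is to show that the image of $\loc_v(\kappa_{[\fr,M]})$ in $H^1(K_v, W_M/W_M^+)$ vanishes, which is equivalent to the desired containment.

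First I would check that, since $T^+ = T \cap V^+$ is saturated in $T$, the quotient $T/T^+$ is a free $\cO$-module and $W_M/W_M^+ \cong (T/T^+)/M(T/T^+)$. Because $\mathcal{K}/K$ is pro-$p$, the extension $K(\fr)_w/K_v$ is abelian of $p$-power degree, so the remark following Assumption \ref{assump:h1f} gives $H^0(K(\fr)_w, (T/T^+) \otimes \mathbf{k}) = 0$; a Nakayama argument then yields $H^0(K(\fr)_w, T/T^+) = H^0(K(\fr)_w, W_M/W_M^+) = 0$. Feeding this into the long exact sequence of cohomology associated to
\[ 0 \to T/T^+ \xrightarrow{\times M} T/T^+ \to W_M/W_M^+ \to 0, \]
I would conclude that multiplication by $M$ is injective on $H^1(K(\fr)_w, T/T^+)$, so this $\cO$-module is torsion-free.

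Next, by the assumption on $\bfc$, the class $\loc_w(\bfc_{K(\fr)})$ lies in the image of $H^1(K(\fr)_w, V^+) \to H^1(K(\fr)_w, V)$, hence its image in $H^1(K(\fr)_w, V/V^+)$ vanishes. The torsion-freeness established above upgrades this to vanishing of the image in the integral group $H^1(K(\fr)_w, T/T^+)$, and a fortiori in $H^1(K(\fr)_w, W_M/W_M^+)$. Applying the $\ZZ$-linear operator $D_\fr$ and reducing modulo $M$ thus kills
\[ D_\fr(\loc_w \bfc_{K(\fr)}) \bmod M \;=\; \operatorname{res}_{K(\fr)_w/K_v}(\loc_v \kappa_{[\fr,M]}) \]
in $H^1(K(\fr)_w, W_M/W_M^+)$.

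Finally, inflation--restriction together with the vanishing of $H^0(K(\fr)_w, W_M/W_M^+)$ forces the restriction map $H^1(K_v, W_M/W_M^+) \to H^1(K(\fr)_w, W_M/W_M^+)$ to be injective (here we use that $K(\fr)_w/K_v$ is Galois, which follows from $K(\fr)/K$ being abelian). Hence $\loc_v(\kappa_{[\fr,M]})$ itself maps to zero in $H^1(K_v, W_M/W_M^+)$, i.e.\ lies in $H^1(K_v, W_M^+)$. I expect the main technical hinge to be the torsion-freeness of $H^1(K(\fr)_w, T/T^+)$: without it, the ``lies in $H^1$ of $V^+$'' hypothesis, which is a rational statement, would not suffice to force vanishing of the integral image modulo $M$, and the argument would collapse.
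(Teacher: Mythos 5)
Your argument is correct and follows essentially the same route as the paper: both proofs reduce the claim to showing that the image of $\loc_v(\kappa_{[\fr, M]})$ in $H^1(K_v, W_M / W_M^+)$ vanishes, use the vanishing $H^0(K(\fr)_w, (T/T^+) \otimes \mathbf{k}) = 0$ (via the remark following Assumption \ref{assump:h1f}) both to upgrade the rational hypothesis on $\loc_w(\bfc_{K(\fr)})$ to an integral statement and to get injectivity of restriction from $K_v$ to $K(\fr)_w$, and then invoke $\operatorname{res}_{K(\fr)/K}(\kappa_{[\fr,M]}) = D_\fr \bfc_{K(\fr)} \bmod M$. The one point to state explicitly (as the paper does) is that $D_\fr$ acts on the semi-local cohomology $\bigoplus_{w \mid v} H^1(K(\fr)_w, \cdot)$ by permuting the primes above $v$, so the argument needs (and your proof does supply) the vanishing of the image of $\loc_{w'}(\bfc_{K(\fr)})$ in $H^1(K(\fr)_{w'}, T/T^+)$ for \emph{every} $w' \mid v$, not just the chosen $w$.
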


   (Note that $W_M^+ = T^+ / MT^+$, since $T^+$ is saturated in $T$.)

   \begin{proof}

    From the remark above, we know that for every $K \subset_f F \subset\mathcal{K}$, and each $w \mid v$ of $F$, we have an injection $H^1(F_w, T^+) \into H^1(F_w, T)$, and the cokernel is torsion-free, so we have $H^1(F_w, T^+) = H^1(F_w, T) \cap H^1(F_w, V^+)$. So our assumption on $\bfc_F$ implies that $\loc_w\left(\bfc_{F}\right) \in H^1(F_w, T^+)$. Moreover, $\bigoplus_{w \mid v} H^1(F_w, T^+)$ is stable under the action of $\ZZ[\Gal(F/K)]$.

    Consequently, $\loc_w \left(D_{\fr} \bfc_{K(\fr)}\right) \in H^1(K(\fr)_w, T^+)$ for each $\fr$ and each prime $w \mid v$ of $K(\fr)$; and thus
    \[ \loc_w\left[ \operatorname{res}_{K(\fr) / K} \left(\kappa_{[\fr, M]}\right) \right] \in H^1(K(\fr)_w, W_M^+)\]
    whenever $\fr \in \mathcal{R}_{M}$. This is equivalent to the statement that
    \[ \pi \left( \loc_w\left[ \operatorname{res}_{K(\fr) / K} \left(\kappa_{[\fr, M]}\right) \right]\right) = 0,\]
    where $\pi$ is the map $H^1(K(\fr)_w, W_M) \to H^1(K(\fr)_w, W_M / W_M^+)$ induced by the projection $W_M \to W_M/W_M^+$.

    Equivalently, we have
    \[ \operatorname{res}_{K(\fr)_w / K_v} \left[\pi \left( \loc_v \left(\kappa_{[\fr, M]}\right) \right]\right) = 0
    \]
    for each $w \mid v$, since $\pi$ commutes with restriction. But the kernel of the restriction map
    \[ \operatorname{res}_{K(\fr)_w / K_v}: H^1(K_v, W_M / W_M^+) \to H^1(K(\fr)_w, W_M / W_M^+)\]
    is $H^1\left(K(\fr)_w / K_v, H^0(K(\fr)_w, W_M / W_M^+)\right)$, and (again by the remark above) we know that the space $H^0(K(\fr)_w, W_M / W_M^+)$ is zero. Thus $\loc_v \left(\kappa_{[\fr, M]}\right) \in \ker(\pi)$ as required.
   \end{proof}

   \begin{corollary}
    \label{cor:kolyvagin}
    Suppose that $V$ has a subspace $V^+$ preserved by the decomposition group $D_v$ at $v$, and satisfying the following conditions:
    \begin{enumerate}[(i)]
     \item the residue characteristic of $v$ is $p$,
     \item the representation $V$ is de Rham,
     \item \label{item:panchishkin} for every embedding $K_v \into \mathbf{C}_p$, all Hodge--Tate weights of $V^+$ are $\ge 1$ and all Hodge--Tate weights of $V/V^+$ are $\le 0$,
     \item there is no nonzero quotient of $V^+$ on which $G_{K_v}$ acts via the cyclotomic character,
     \item we have $H^0(K_v, (T/T^+) \otimes \mathbf{k}) = 0$, where $T$ is a lattice in $V$ and $T^+ = T \cap V^+$.
    \end{enumerate}

    Let $\bfc$ be an Euler system for $(T, \mathcal{K}, \fn)$ and suppose that for all $K \subset_f F \subset \mathcal{K}$, and all $w \mid v$, we have $\loc_w \bfc_F \in H^1_f(F_w, T)$. Then there is a power $m$ of $p$ such that for any nonzero $M \in \cO$ and any $\fr \in \mathcal{R}_{Mm}$, we have
    \[ \loc_v \left(\kappa_{[\fr, M]}\right) \in H^1_f(K_v, W_M),\]
    where $H^1_f(K_v, W_M)$ is the image of $H^1_f(K_v, T)$ in $H^1(K_v, W_M)$.
   \end{corollary}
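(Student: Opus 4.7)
The plan is to combine the preceding theorem with the standard Panchishkin description of the Bloch--Kato $H^1_f$. Under hypotheses (i)--(iv), one has
\[ H^1_f(F_w, V) \;=\; \mathrm{image}\bigl(H^1(F_w, V^+) \to H^1(F_w, V)\bigr) \]
for every finite subextension $F/K$ of $\mathcal{K}/K$ and every $w \mid v$ of $F$. Indeed, condition (iii) is precisely the Panchishkin condition and forces both $H^1(F_w, V^+) \subseteq H^1_f(F_w, V)$ and $H^1_f(F_w, V/V^+) = 0$, while condition (iv) together with local Tate duality ensures that no obstruction arises in degree zero.

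Consequently the image of $\loc_w(\bfc_F) \in H^1_f(F_w, T)$ in $H^1(F_w, T/T^+)$ is $\cO$-torsion. I would next establish that this torsion is annihilated by a fixed power $m$ of $p$, independent of $F$: the relevant torsion injects into the $\cO$-module $H^0(F_w, (V/V^+)/(T/T^+))$, which is uniformly bounded as $F$ ranges over all finite subextensions of $\mathcal{K}/K$ thanks to the pro-$p$ hypothesis on $\mathcal{K}/K$ and the propagation of condition (v) through $p$-power Galois extensions (as noted in the remark following Assumption \ref{assump:h1f}). Thus $m \cdot \loc_w(\bfc_F)$ lies in $H^1(F_w, T^+)$ for all $F$ and all $w \mid v$.

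The preceding theorem now applies to the modified Euler system $\{m \cdot \bfc_F\}$, giving that $m \cdot \loc_v(\kappa_{[\fr,M]})$ lies in $H^1(K_v, W_M^+)$ for every $\fr \in \mathcal{R}_M$. The refined hypothesis $\fr \in \mathcal{R}_{Mm}$ in the statement is designed to absorb this factor of $m$: for such $\fr$ the Kolyvagin derivative $\kappa_{[\fr, Mm]}$ is defined in $H^1(K, W_{Mm})$, and applying the preceding theorem to $\{m \cdot \bfc_F\}$ at the modulus $Mm$ and then reducing to modulus $M$ upgrades the conclusion to $\loc_v(\kappa_{[\fr,M]}) \in H^1(K_v, W_M^+)$ itself. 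Finally, $H^1(K_v, W_M^+) \hookrightarrow H^1_f(K_v, W_M)$: injectivity of $W_M^+ \to W_M$ follows from $H^0(K_v, (T/T^+)/M) = 0$, proved by filtration via condition (v); and the image lies in $H^1_f(K_v, W_M)$ because $H^1(K_v, T^+) \to H^1(K_v, T)$ has image inside $H^1_f(K_v, T)$ by the Panchishkin description of the first paragraph.

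The main obstacle is the second step: establishing the uniform bound on $m$ as $F$ ranges over the infinite pro-$p$ tower $\mathcal{K}/K$. This is precisely why the statement must replace $\mathcal{R}_M$ with $\mathcal{R}_{Mm}$ for some $p$-power $m$: the loss of a bounded denominator at the level of local classes is packaged into an extra divisibility constraint on the admissible Kolyvagin primes, rather than into the output classes themselves.
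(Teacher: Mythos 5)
Your overall skeleton (Panchishkin identification of the local condition, then the preceding theorem, then passage to $W_M$) is the right one, but the factor $m$ is spent on a non-problem while the step it is actually needed for is asserted without proof. Concerning your second paragraph: by condition (v) and the remark following Assumption \ref{assump:h1f}, $H^0(F_w, (T/T^+) \otimes \mathbf{k}) = 0$ for \emph{every} $F_w$ in the pro-$p$ tower, so $H^0(F_w, (T/T^+) \otimes E/\cO) = 0$ and $H^1(F_w, T/T^+)$ is torsion-free; hence the image of $\loc_w(\bfc_F)$ there is not merely torsion but zero, and $\loc_w(\bfc_F)$ already lies in $H^1(F_w, T^+)$ with no denominator (this is exactly how the proof of the preceding theorem begins). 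In any case that theorem only requires $\loc_w(\bfc_F) \in H^1(F_w, V^+)$, a rational condition unaffected by multiplying by $m$, so the ``modified Euler system'' $m \cdot \bfc$ buys nothing. (A smaller point: in your first paragraph, condition (iii) alone yields neither $H^1(F_w, V^+) \subseteq H^1_f(F_w, V)$ nor $H^1_f(F_w, V/V^+) = 0$; the former needs (iv), which is only assumed -- and only needed -- at $K_v$, and the latter needs $H^0(F_w, V/V^+) = 0$, i.e.\ condition (v) propagated through the tower.)

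The genuine gap is your final assertion that $H^1(K_v, W_M^+) \into H^1_f(K_v, W_M)$. Since $H^1_f(K_v, W_M)$ is \emph{defined} as the image of the integral group $H^1_f(K_v, T) = H^1(K_v, T^+)$, and since $H^1(K_v, W_M^+) \to H^1(K_v, W_M)$ is injective (by (v)), your claimed inclusion is equivalent to surjectivity of $H^1(K_v, T^+) \to H^1(K_v, W_M^+)$. That surjectivity can fail: from the sequence $0 \to T^+ \to T^+ \to W_M^+ \to 0$ its cokernel is $H^2(K_v, T^+)[M]$, and although $H^2(K_v, T^+)$ is finite (condition (iv) kills $H^2(K_v, V^+)$), it is dual to $H^0(K_v, (T^+)^\vee(1))$ and need not vanish. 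This integral lifting obstruction is the \emph{actual} reason for the power $m$ and the hypothesis $\fr \in \mathcal{R}_{Mm}$: as in Rubin's Corollary 4.6.5 one has $\kappa_{[\fr, M]} = m\,\kappa_{[\fr, Mm]}$, under which the obstruction class is multiplied by $m$, and one chooses $m$ once and for all so that multiplication by $m$ kills the torsion of $H^2(K_v, T^+)$ (e.g.\ so that $H^2(K_v, T^+)[Mm] \to H^2(K_v, T^+)[M]$ is zero); only then does $\loc_v(\kappa_{[\fr,M]})$ lift to $H^1(K_v, T^+) = H^1_f(K_v, T)$ and hence land in $H^1_f(K_v, W_M)$ as defined. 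Without this argument your proof establishes only $\loc_v(\kappa_{[\fr,M]}) \in H^1(K_v, W_M^+)$, which is strictly weaker than the statement, and your choice of $m$ (made to bound a group that is in fact zero) has no reason to annihilate the relevant $H^2$-torsion.
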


   \begin{remark}
    If a subrepresentation $V^+$ satisfying condition (\ref{item:panchishkin}) exists, it is unique. The existence of such a subspace is sometimes referred to as the ``Panchishkin condition''.
   \end{remark}

   \begin{proof}
    Let us first show that $H^1(K_v, T^+) = H^1_f(K_v, T)$. Both sides are saturated in $H^1(K_v, T)$; this is true by definition for $H^1_f(K_v, T)$, and for $H^1(K_v, T^+)$ it is a consequence of the vanishing of $H^0(K_v, (T/T^+) \otimes \mathbf{k})$. So it suffices to check this after inverting $p$, i.e.~to check that $H^1_f(K_v, V) = H^1(K_v, V^+)$.

    We recall the formula for the dimension of $H^1_f$ of an arbitrary crystalline Galois representation:
    \[ \dim_{E} H^1_f(K_v, V) = \dim_{E}\left( \frac{\DdR(V)}{\Fil^0 \DdR(V)}\right) + \dim_{E} H^0(K_v, V).\]

    Comparing this formula for $V$ and for $V^+$, and noting that $H^0(K_v, V/V^+) = 0$ (since we are assuming the stronger statement that the $H^0$ is trivial after tensoring with $\mathbf{k}$), we see that $H^1_f(K_v, V) = H^1_f(K_v, V^+)$, and moreover that
    \[ \dim_{E} H^1_f(K_v, V^+) = [K_v : \Qp] \dim_{E}(V^+) + \dim_E H^0(K_v, V^+).\]
    By Tate's local Euler characteristic formula, we have $H^1_f(K_v, V^+) = H^1(K_v, V^+)$ if (and only if) $H^2(K_v, V^+) = 0$; but we are assuming that $V^+$ has no cyclotomic quotient, so this $H^2$ is indeed zero and the claim follows.

    Now, by the previous theorem, for any $\fr \in \mathcal{R}_M$ we have $\kappa_{[\fr, M]} \in H^1(K_v, W_M^+)$. It is not necessarily true that $H^1(K_v, T^+) \to H^1(K_v, W_M^+)$ is necessarily surjective; there is an obstruction arising from the torsion in $H^2(K_v, T^+)$. To circumvent this, we argue as in Corollary 4.6.5 of \cite{rubin00}: one knows that if $\fr \in \mathcal{R}_{Mm}$, we have $\kappa_{[\fr, M]} = m \kappa_{[\fr, Mm]}$; and since the torsion subgroup of $H^2(K_v, T^+)$ is finite, we may choose $m$ such that the multiplication-by-$m$ map
    \[ H^2(K_v, T^+)[Mm] \to H^2(K_v, T^+)[M]\]
    is the zero, from which it follows that $\kappa_{[\fr, M]} \in H^1_f(K_v, W_M)$.
   \end{proof}

  \subsection{Applications to Selmer groups}

   We now apply the results in the previous section to deduce variants of two of the main theorems of \cite{rubin00}.

   \begin{definition}
    Let $K$ be a number field, $\fn$ an integral ideal of $K$, $T$ an $\cO$-linear representation of $\Gal(\overline{K} / K)$ unramified outside $\fn$, and $\mathcal{K}$ a pro-$p$ extension of $K$ containing $K(\fq)$ for all primes $\fq \nmid \fn$.

    We say an Euler system $\bfc$ for $(T, \mathcal{K}, \fn)$ has \emph{everywhere good reduction} if for all fields $F$ with $K \subset_f F \subset \mathcal{K}$, we have $\bfc_F \in \Sel(F, T)$.
   \end{definition}

   We make the following supplementary hypothesis which we denote by ``$\Hyp(\gamma)$'': there exists $\gamma \in \Gal(\overline{K} / K)$ such that $T^{\gamma = 1} = 0$ and $\gamma$ acts trivially on the field $K(1) K(\mu_{p^\infty}, (\cO_K)^\times)^{1/p^\infty})$. We write, as usual, $\Sigma_p$ for the set of primes dividing $p$.

   \begin{theorem}
    \label{thm:ESmachine}
    Let $\bfc$ be an Euler system for $(T, \mathcal{K}, \fn)$ with everywhere good reduction. Suppose that $\Hyp(\gamma)$ holds, and that for every prime $v \mid p$, there exists a subrepresentation $V_v^+ \subseteq V$ satisfying the hypotheses of Corollary \ref{cor:kolyvagin}.

    Then:
    \begin{enumerate}
     \item If $\Hyp(K, V)$ is satisfied and $\bfc_K \notin H^1(K, T)_{\mathrm{tors}}$, then $\Sel(K, T^\vee(1))$ is finite.
     \item If $\Hyp(K, T)$ is satisfied and $p > 2$, then we have
     \[ \ell_{\cO}(\Sel(K, T^\vee(1)) \le \ind_{\cO}(\bfc_K) + n_W + n^*_W\]
     where $n_W$ and $n^*_W$ are as in \cite{rubin00}.
    \end{enumerate}
   \end{theorem}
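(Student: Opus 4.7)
The strategy is to adapt Rubin's proofs of Theorems 2.2.2 and 2.2.3 of \cite{rubin00} by replacing his ``strict'' local condition at primes above $p$ (which imposes vanishing) by the finer $H^1_f$ condition. Concretely, define a Selmer structure $\mathcal{F}$ on $T$ whose local condition at every prime $v$ is $H^1_f(K_v, T)$; for $v \nmid p$ this agrees with the unramified condition $H^1_{\ur}$. Under local Tate duality, the dual Selmer structure $\mathcal{F}^*$ on $T^\vee(1)$ has as its local condition at each $v$ the exact annihilator of $H^1_f(K_v, T)$, which is $H^1_f(K_v, T^\vee(1))$; hence the $\mathcal{F}^*$-Selmer group equals the Bloch--Kato group $\Sel(K, T^\vee(1))$.

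The heart of the proof is to verify that the Kolyvagin derivative classes $\kappa_{[\fr, M]}$ constructed from $\bfc$ land in the Selmer group for $\mathcal{F}(M)$ (the reduction mod $M$ of $\mathcal{F}$). At primes $v \nmid \fn p$ this is handled verbatim by Rubin's Chapter 4. At primes $v \mid \fn$, $v \nmid p$, the standard maneuver of temporarily enlarging $\Sigma$ and then restricting back reduces matters to the remaining case. At primes $v \mid p$, the assumption of everywhere good reduction means that $\loc_w(\bfc_F) \in H^1_f(F_w, T)$ for all $w \mid v$, so the hypotheses of Corollary \ref{cor:kolyvagin} are satisfied; consequently, for some fixed $p$-power $m$ depending only on $T$ and on the chosen subspaces $V_v^+$, we have $\loc_v(\kappa_{[\fr, M]}) \in H^1_f(K_v, W_M)$ for every $\fr \in \mathcal{R}_{Mm}$.

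With the Kolyvagin classes established in the correct Selmer group, Rubin's duality-plus-Chebotarev machinery runs almost unchanged: the hypothesis $\Hyp(\gamma)$ together with $\Hyp(K, V)$ (respectively $\Hyp(K, T)$) supplies the required large-image input, giving a dense supply of primes $\fq$ whose Frobenius acts in prescribed ways on $T$ and $T^\vee(1)$; the global reciprocity law then inductively annihilates a chosen basis of the $\mathcal{F}^*$-Selmer group. For statement (1), the non-triviality of $\bfc_K$ combined with the Chebotarev step gives finiteness of $\Sel(K, T^\vee(1))$; for statement (2), the stronger hypothesis $\Hyp(K, T)$ allows one to control lengths (rather than merely ranks) of the cohomology modules that appear.

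The principal obstacle I anticipate is the careful tracking of the auxiliary factor $m$ from Corollary \ref{cor:kolyvagin}. One must show either that $m$ is absorbed into the existing error terms $n_W + n^*_W$ in Rubin's statement, or that by taking $M$ to run through sufficiently large $p$-powers and passing to the limit, the factor $m$ contributes no net loss to the bound. Once this bookkeeping is in place, no genuinely new Euler-system input is required beyond Corollary \ref{cor:kolyvagin}; the remaining modifications to Rubin's arguments are essentially cosmetic reformulations of his Selmer-structure formalism to accommodate the local $H^1_f$ condition at $p$.
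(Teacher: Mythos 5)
Your proposal is correct and follows essentially the same route as the paper: invoke Rubin's Theorems 2.2.2 and 2.2.3 (in the modified form of \S 9.1 of \cite{rubin00}, where $\Hyp(\gamma)$ and good reduction away from $p$ replace the $\Zp$-extension hypothesis), and use Corollary \ref{cor:kolyvagin} to place the derivative classes $\kappa_{[\fr,M]}$ in the Selmer group with the $H^1_f$ condition at $p$, so that Rubin's argument runs with $\Sigma_{p\fr}$ replaced by $\Sigma_\fr$. The bookkeeping issue you flag about the auxiliary factor $m$ is resolved exactly as you suggest: since $M$ is arbitrary one simply works with $\fr \in \mathcal{R}_{Mm}$, which costs nothing in the final bound, just as the paper notes ``at the cost of possibly increasing $M$ by a finite factor.''
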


   \begin{proof}
    We shall argue as in the modified form of Theorems 2.2.2 and 2.2.3 of \cite{rubin00} proved in \S 9.1 of \emph{op.cit.}, where it is shown that $\Hyp(\gamma)$ and the assumption that the Euler system has good reduction outside $\Sigma_p$ may be used to dispense with the more usual assumption that $\mathcal{K}$ contains at $\Zp$-extension.

    Corollary \ref{cor:kolyvagin} shows that under our hypotheses, and at the cost of possibly increasing $M$ by a finite factor, the Kolyvagin classes $\kappa_{[\fr, M]}$ are in $\Sel^{\Sigma_\fr}(K, W_M)$ (not just in $\Sel^{\Sigma_{p\fr}}(K, W_M)$). Hence Rubin's proofs go through with $\Sigma_{p\fr}$ replaced by $\Sigma_{\fr}$ throughout, and we obtain the above theorem.
   \end{proof}

   We also have a version with modified local conditions at $p$, paralleling Rubin's Theorem 2.2.10. We continue to suppose that $\Hyp(\gamma)$ holds, and that for all primes $v \mid p$ of $K$, there exists a subrepresentation $V^+ \subseteq V$ satisfying the hypotheses of Corollary \ref{cor:kolyvagin}.

   Let us choose a nonzero $E$-linear functional $\lambda$ on the space
   \[ H^1_{f}(K \otimes \Qp, V) \coloneqq \bigoplus_{v \mid p} H^1_f(K_v, V).\]
   We write $H^1_\lambda(K \otimes \Qp, T)$ for the fractional $\cO$-ideal which is the image of $H^1_{f}(K \otimes \Qp, T)$ under $\lambda$.

   Let $\Sel_\lambda(K, T^\vee(1)) \subseteq \Sel(K, T^\vee(1))$ be the Selmer group with local conditions at $v \mid p$ given by the orthogonal complement of $\ker \lambda$.

   \begin{theorem}
    \label{thm:ESmachine2}
    Let $\bfc$ be an Euler system for $(T, \mathcal{K}, \fn)$ with everywhere good reduction.
    \begin{enumerate}
     \item If $\Hyp(K, V)$ is satisfied and $\lambda(\loc_p \bfc_K) \ne 0$, then $\Sel_\lambda(K, T^\vee(1))$ is finite.
     \item If $\Hyp(K, T)$ is satisfied and $p > 2$, then we have
     \[ \ell_\cO \left(\Sel_\lambda(K, T^\vee(1))\right) \le \ell_\cO\left(\frac{H^1_\lambda(K \otimes \Qp, T)}{\cO \lambda(\loc_p \bfc_K)}\right) + n_W + n^*_W.\]
    \end{enumerate}
   \end{theorem}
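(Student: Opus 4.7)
The plan is to mimic Rubin's Kolyvagin-type argument, in the spirit of \cite[Theorem 2.2.10]{rubin00}, replacing the usual global index $\ind_\cO(\bfc_K)$ by the $\fP$-adic index of $\lambda(\loc_p \bfc_K)$ inside the fractional $\cO$-ideal $H^1_\lambda(K\otimes\Qp, T)$. As in the proof of Theorem \ref{thm:ESmachine}, we first build the derivative classes $\kappa_{[\fr, M]} \in H^1(K, W_M)$ for $\fr \in \mathcal{R}_{Mm}$. Corollary \ref{cor:kolyvagin} together with the everywhere-good-reduction hypothesis on $\bfc$ guarantees that each $\kappa_{[\fr, M]}$ lies in $H^1_f(K_v, W_M)$ at every $v \nmid \fr$, in particular at all primes above $p$, and $\kappa_{[1, M]}$ is the reduction of $\bfc_K$ modulo $M$, so $\lambda(\loc_p \kappa_{[1, M]}) \equiv \lambda(\loc_p \bfc_K) \not\equiv 0 \bmod \fP^M$ for $M$ sufficiently divisible.

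Next, for any class $d \in \Sel_\lambda(K, T^\vee(1))[M]$, the localization $\loc_p d$ lies by definition in the local Tate-perpendicular of $\ker\lambda$. Hence the restriction of the pairing $\langle -, \loc_p d\rangle_p$ to the subspace $H^1_f(K \otimes \Qp, W_M)$ (in which our Kolyvagin classes land) vanishes on $\ker\lambda$ and thus factors through $\lambda$: there is a unique scalar $\alpha_d$, taking values in $H^1_\lambda(K \otimes \Qp, T)^{-1} \otimes_\cO (\cO/M)$, with
\[
 \langle \loc_p \kappa_{[\fr, M]}, \loc_p d\rangle_p = \alpha_d \cdot \lambda(\loc_p \kappa_{[\fr, M]})
\]
for every $\fr$. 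Running Rubin's inductive Chebotarev argument essentially unchanged, with $\alpha_d$ playing the role of a global coefficient of $d$: the $\fr = 1$ case of global reciprocity $\sum_v \langle \loc_v \kappa_{[\fr, M]}, \loc_v d\rangle_v = 0$ bounds $\alpha_d$ by $v_\fP(\lambda(\loc_p \bfc_K))$ modulo the usual $n_W + n^*_W$ correction coming from primes of $\fn$; iterating over Chebotarev primes $\fq_1, \dots, \fq_s \in \mathcal{R}_{Mm}$ and applying global reciprocity to $\kappa_{[\fr \fq_i, M]}$ bounds the remaining classes, namely those in the subspace $\{d : \alpha_d = 0\}$, which is exactly the Bloch--Kato Selmer subgroup. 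Summing the two contributions yields the claimed bound. The finiteness statement (1) follows identically from Rubin's Theorem 2.2.3 variant, with the non-vanishing of $\lambda(\loc_p \bfc_K)$ substituting for the non-torsion hypothesis on $\bfc_K$ used in Theorem \ref{thm:ESmachine}.

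The main obstacle is the careful identification of the $p$-local duality in the second step: one must verify, using local Tate duality for the relevant Panchishkin-type representation at each $v \mid p$, that $\alpha_d$ genuinely takes values in the sharp fractional ideal $H^1_\lambda(K \otimes \Qp, T)^{-1}$, so that the final bound assumes the form $\ell_\cO(H^1_\lambda/\cO \lambda(\loc_p \bfc_K))$ asserted in the theorem, rather than something weaker. Once this local bookkeeping is in place, the remainder of the proof is a faithful adaptation of Rubin's argument, with the linear functional $\lambda \circ \loc_p$ taking the place of a swapped-out local condition at an auxiliary prime.
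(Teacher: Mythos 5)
Your route is genuinely different from the paper's, and as written it does not reach the stated bound. The paper proves Theorem \ref{thm:ESmachine2} without touching the Kolyvagin--Chebotarev induction again: it is deduced formally from Theorem \ref{thm:ESmachine} by Poitou--Tate global duality, exactly as Rubin deduces his Theorem 2.2.10 from Theorems 2.2.2 and 2.2.3 --- one compares the two Selmer structures on $T^\vee(1)$ that differ only at $p$ (Bloch--Kato versus the $(\ker\lambda)^\perp$ condition), and the quotient $\Sel_\lambda/\Sel$ is controlled by the image of the \emph{whole} group $\Sel(K,T)$ under $\lambda\circ\loc_p$. Your plan instead splits $\Sel_\lambda(K,T^\vee(1))[M]$ into the subgroup $\{d:\alpha_d=0\}$ (the Bloch--Kato part) and the piece detected by $\alpha_d$, and then ``sums the two contributions''. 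That is where the gap lies: the $\alpha_d$-piece is bounded via the $\fr=1$ reciprocity by $\ell_\cO\bigl(H^1_\lambda(K\otimes\Qp,T)/\cO\,\lambda(\loc_p\bfc_K)\bigr)$, but the Bloch--Kato part is bounded by the Chebotarev iteration only up to $\ind_\cO(\bfc_K)+n_W+n_W^*$ (this \emph{is} Theorem \ref{thm:ESmachine}), so your total is $\ell_\cO\bigl(H^1_\lambda/\cO\,\lambda(\loc_p\bfc_K)\bigr)+\ind_\cO(\bfc_K)+n_W+n_W^*$. Since $\lambda(\loc_p\bfc_K)$ is divisible by $\varpi^{\ind_\cO(\bfc_K)}$ inside $H^1_\lambda$ (up to the usual torsion ambiguities), the extra $\ind_\cO(\bfc_K)$ is not absorbed, and your bound is strictly weaker than the theorem whenever $\ind_\cO(\bfc_K)>0$.

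The missing idea is the cancellation built into the duality argument: writing $\bfc_K=\varpi^{k}x$ with $x\in\Sel(K,T)$ and $k=\ind_\cO(\bfc_K)$ (possible because the local $H^1_f$-conditions are saturated), the quotient $\Sel_\lambda/\Sel$ is annihilated by the image of $\lambda\circ\loc_p$ on all of $\Sel(K,T)$, hence bounded by $\ell_\cO\bigl(H^1_\lambda/\cO\,\lambda(\loc_p x)\bigr)=\ell_\cO\bigl(H^1_\lambda/\cO\,\lambda(\loc_p\bfc_K)\bigr)-k$, and this $-k$ exactly offsets the $\ind_\cO(\bfc_K)$ spent on the Bloch--Kato part. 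If you insist on a single interlaced induction instead, you must confront the nonvanishing $p$-local terms $\lambda(\loc_p\kappa_{[\fr,M]})$ for $\fr\neq 1$, which your split was designed to avoid. By contrast, the point you single out as the main obstacle --- that $\alpha_d$ lands in $H^1_\lambda^{-1}\otimes\cO/M$ --- is comparatively routine bookkeeping. Part (1), the finiteness statement, is unaffected by all of this and your argument for it is fine.
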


   \begin{proof}
    This follows from Theorem \ref{thm:ESmachine} via exactly the same argument as Theorem 2.2.10 of \cite{rubin00} is deduced from Theorems 2.2.2 and 2.2.3 of \emph{op.cit.}.
   \end{proof}

   \begin{remark}
    The correct context for these results is clearly that of the ``Selmer structures'' of \cite{mazurrubin04}. The results of \emph{op.cit.}~are only written up for $K = \QQ$, whereas in the present paper we are interested in $K$ a quadratic extension of $\QQ$, but the generalization is routine.

    The results of the previous section show that if $\bfc$ has everywhere good reduction and the hypotheses of Corollary \ref{cor:kolyvagin} hold, then the Kolyvagin system $\pmb{\kappa}$ derived from $\bfc$ is a Kolyvagin system for the ``Bloch--Kato Selmer structure'' $\mathcal{F}_{BK}$, where $\mathcal{F}_{BK}$ is given by the $H^1_f$ local conditions at all primes (including $v \mid p$).

    In the theory of \cite{mazurrubin04} a major role is played by a quantity $\chi(T) = \chi(T, \mathcal{F})$ attached to the representation $T$ and the Selmer structure $\mathcal{F}$ (cf.~Definition 5.2.4 of \emph{op.cit.}). The module of Kolyvagin systems is zero if $\chi(T) = 0$, free of rank one over $\cO$ if $\chi(T) = 1$, and not even finitely-generated over $\cO$ if $\chi(T) > 1$.

    If we define $\mathcal{F}_B$ to be the Selmer structure given by the canonical $H^1_f$ local condition at primes away from $p$, and at $p$ by some arbitrarily chosen subspace $B$ of $H^1(K \otimes \Qp, V)$, then a straightforward generalization of Theorem 5.2.15 of \emph{op.cit.} shows that
    \[ \chi(T, \mathcal{F}) = \dim_E(V^-) + \dim_{E} H^0(K \otimes \Qp, V^*(1)) - \dim_{E} \left(\frac{H^1(K \otimes \Qp, V)}{B}\right),\]
    where $V^-$ is the minus eigenspace for complex conjugation acting on $\Ind_K^{\QQ} V$. In our situation, we have taken $B = H^1_f(K \otimes \Qp, V)$, which has dimension 3; and since $K$ is totally complex, $\dim_E(V^-) = \tfrac 1 2 [K : \QQ] \dim_E(V) = 2$. Thus we have $\chi(T, \mathcal{F}_{BK}) = 1$, which explains why one should expect ``interesting'' Kolyvagin systems with this local condition at $p$. Theorem \ref{thm:ESmachine} can then be seen as an instance of Theorem 5.2.2 of \emph{op.cit.}, suitably generalized to $K \ne \QQ$.
   \end{remark}

\providecommand{\bysame}{\leavevmode\hbox to3em{\hrulefill}\thinspace}
\renewcommand{\MR}[1]{MR \#\href{http://www.ams.org/mathscinet-getitem?mr=#1}{#1}.}

\end{document}